\newtheorem{thm}{Theorem}[section]
\newtheorem{cor}[thm]{Corollary}
\newtheorem{lem}[thm]{Lemma}
\newtheorem{prop}[thm]{Proposition}
\newtheorem{thmx}{Theorem}
\theoremstyle{definition}
\theoremstyle{remark}
\newtheorem{rem}[thm]{Remark}
\numberwithin{equation}{section}
\newcommand{\pa}{\partial}
\newcommand{\ep}{\epsilon}
\newcommand{\vep}{\varepsilon}
\newcommand{\bx}{\bar{x}}
\newcommand{\bw}{\bar{w}}
\newcommand{\N}{\mathbb{N}}
\newcommand{\R}{\mathbb{R}}
\renewcommand{\S}{\mathbb{S}}
\newcommand{\mfd}{\mathbf{d}}
\newcommand{\mca}{\mathcal{A}}
\newcommand{\mce}{\mathcal{E}}
\newcommand{\mcf}{\mathcal{F}}
\newcommand{\mch}{\mathcal{H}}
\newcommand{\mci}{\mathcal{I}}
\newcommand{\mcj}{\mathcal{J}}
\newcommand{\mck}{\mathcal{K}}
\newcommand{\mcl}{\mathcal{L}}
\newcommand{\mcr}{\mathcal{R}}
\newcommand{\mcv}{\mathcal{V}}
\newcommand{\mcw}{\mathcal{W}}
\newcommand{\mcx}{\mathcal{X}}
\newcommand{\mcy}{\mathcal{Y}}
\newcommand{\ty}{\tilde{y}}
\newcommand{\tta}{\tilde{\tau}}
\newcommand{\wtg}{\widetilde{G}}
\newcommand{\wth}{\widetilde{H}}
\newcommand{\wtom}{\widetilde{\Omega}}
\newcommand{\supp}{\textnormal{supp }}
\renewcommand{\(}{\left(}
\renewcommand{\)}{\right)}
\begin{document}
\title[]{Asymptotic behavior of least energy solutions to the Lane-Emden system near the critical hyperbola}

\author{Woocheol Choi}
\address[Woocheol Choi]{Department of Mathematics Education, Incheon National University, 12 Gaetbeol-ro Yeonsu-gu, Incheon 22012, Republic of Korea}
\email{choiwc@inu.ac.kr}

\author{Seunghyeok Kim}
\address[Seunghyeok Kim]{Department of Mathematics and Research Institute for Natural Sciences, Hanyang University, 222 Wangsimni-ro Seongdong-gu, Seoul 04763, Republic of Korea}
\email{shkim0401@gmail.com}

\begin{abstract}
The Lane-Emden system is written as
\begin{equation*}
\begin{cases}
-\Delta u = v^p &\text{in } \Omega,\\
-\Delta v = u^q &\text{in } \Omega,\\
u, v > 0 &\text{in } \Omega,\\
u = v = 0 &\text{on } \pa \Omega
\end{cases}
\end{equation*}
where $\Omega$ is a smooth bounded domain in the Euclidean space $\R^n$ for $n \ge 3$ and $0< p < q <\infty$.
The asymptotic behavior of least energy solutions near the critical hyperbola was studied by Guerra \cite{G} when $p \geq 1$ and the domain is convex.
In this paper, we cover all the remaining cases $p < 1$ and extend the results to any smooth bounded domain.
\end{abstract}

\date{\today}
\subjclass[2010]{Primary 35J60}
\keywords{Lane-Emden system. Critical hyperbola. Blow-up analysis. Non-convex domain. Exponent less than 1}
\maketitle

\section{introduction}
In this paper, we consider the following elliptic system
\begin{equation}\label{eq-main}
\begin{cases}
-\Delta u = v^p &\text{in } \Omega,\\
-\Delta v = u^q &\text{in } \Omega,\\
u, v > 0 &\text{in } \Omega,\\
u = v = 0 &\text{on } \pa \Omega
\end{cases}
\end{equation}
where $\Omega$ is a smooth bounded domain in the Euclidean space $\R^n$ for $n \ge 3$ and $p, q \in (0, \infty)$.
This problem, often referred to the Lane-Emden system, has been a subject of strong interest to many researchers,
because it is one of the simplest Hamiltonian-type strongly coupled elliptic systems but yet has rich structure.

\subsection{Brief history and motivation}
The existence theory for system \eqref{eq-main} is associated with so-called the critical hyperbola
\begin{equation}\label{eq-cr-hy}
\frac{1}{p+1} + \frac{1}{q+1} = \frac{n-2}{n}
\end{equation}
introduced by Cl\'ement et al. \cite{CFM} and van der Vorst \cite{V}.
Thanks to the works of Hulshof and van der Vorst \cite{HV}, Figueiredo and Felmer \cite{FF} and Bonheure et al. \cite{BMR}, it is known that if $pq \ne 1$ and
\begin{equation}\label{pq-cr-1}
\frac{1}{p+1} + \frac{1}{q+1} > \frac{n-2}{n},
\end{equation}
then \eqref{eq-main} has a solution. In contrast, as shown by Mitidieri \cite{M1}, if the domain $\Omega$ is star-shaped and
\[\frac{1}{p+1} + \frac{1}{q+1} \le \frac{n-2}{n},\]
then \eqref{eq-main} has no solution.

To deduce the above results, the authors used the fact that system \eqref{eq-main} has a variational structure.
Indeed, a solution of \eqref{eq-main} can be characterized as a positive critical point of the energy functional
\[E(u,v) = \int_{\Omega} \nabla u \cdot \nabla v \, dx - \frac{1}{p+1} \int_{\Omega} |u|^{p+1} dx - \frac{1}{q+1} \int_{\Omega} |v|^{q+1} dx\]
defined for $(u,v) \in (H^1_0(\Omega))^2$.
We say that $(u, v)$ is a least energy solution to \eqref{eq-main} if it solves \eqref{eq-main} and attains the minimal value of $E$ among all nontrivial solutions.
It is well-known that there exists a least energy solution whenever $pq > 1$ and \eqref{pq-cr-1} is valid.

Once the existence theory is established, one of the next natural questions is to examine the shape of solutions.
A well-known method related to this issue is the moving plane method, which shows that symmetries of solutions are inherited from those of the equation and the domain,
and works also for \eqref{eq-main}.
A further important progress to this direction was achieved by Guerra \cite{G} where he investigated the precise profile of least energy solutions to \eqref{eq-main} on convex domains.
His result can be described in the following way:
Fix any number $p \ge 1$ that belongs to the interval $(\frac{2}{n-2}, \frac{n+2}{n-2}]$, and for each small $\ep > 0$, determine $q_{\ep}$ by
\begin{equation}\label{eq-pq-e}
\frac{1}{p+1} + \frac{1}{q_{\ep}+1} = \frac{n-2}{n} + \ep.
\end{equation}
Then $(p,q_{\ep})$ satisfies the subcriticality condition \eqref{pq-cr-1} and approaches the critical hyperbola as $\ep \to 0$.
Let $q_0$ be the limit of $q_{\ep}$ as $\ep \to 0$ so that $(p,q_0)$ satisfies \eqref{eq-cr-hy} and $p \le q_0$.
For a least energy solution $(u_{\ep}, v_{\ep})$ to \eqref{eq-main} with $q=q_{\ep}$, it holds that
\begin{equation}\label{eq-energy}
S_{\ep}(\Omega) = \frac{ \int_{\Omega} |\Delta u_{\ep}|^{p+1 \over p} dx} {\|u_{\ep}\|_{L^{q_{\ep}+1}(\Omega)}^{p+1 \over p}} \to S \quad \text{as } \ep \to 0
\end{equation}
where $S > 0$ is the best constant of the Sobolev inequality
\begin{equation}\label{eq-Sob}
\|u\|_{L^{q_0+1}(\R^n)} \le S^{-\frac{p}{p+1}} \| \Delta u\|_{L^{\frac{p+1}{p}}(\R^n)} \quad \text{for all } u \in C^{\infty}_c(\R^n).
\end{equation}
Let $G$ and $\tau$ be the Green's function and the Robin function of the Dirichlet Laplacian in $\Omega$, respectively.
Also, for $p \in (\frac{2}{n-2}, \frac{n}{n-2}]$, set by $\wtg$ the unique solution of
\begin{equation}\label{eq-h2-2}
\begin{cases}
-\Delta_x \wtg(x,y) = G^p(x,y) &\text{for } x \in \Omega,\\
\wtg(x,y) = 0 &\text{for } x \in \pa\Omega,
\end{cases}
\end{equation}
by $\wth$ the $C^1$-regular part of $\wtg$ (see \eqref{eq-h2-1} for its precise definition) and $\tta(x) = \wth(x,x)$ for each $x \in \Omega$.
Granted these notions, we have
\begin{thmx}[Theorem 1.1 of \cite{G}]\label{gue}
Suppose that $\Omega$ is a convex smooth bounded domain in $\R^n$, $p \ge 1$, $p \in (\frac{2}{n-2}, \frac{n+2}{n-2}]$ and $\ep > 0$ is sufficiently small.
Let $\{(u_{\ep}, v_{\ep})\}_{\ep > 0}$ be a family of least energy solutions to \eqref{eq-main} with $q = q_{\ep}$.
Then, along a subsequence, $(u_{\ep}, v_{\ep})$ blows-up at a point $x_0 \in \Omega$ as $\ep \to 0$,
which means that for any $\{\ep_k\}_{k \in \N}$ of small positive numbers such that $\ep_k \to 0$, we have
\[\max_{x \in \Omega} u_{\ep_k}(x) = u_{\ep_k}(x_{\ep_k}) \to \infty, \quad x_{\ep_k} \to x_0 \in \Omega
\quad \text{and} \quad
u_{\ep_k} \to 0 \quad \text{in } C_{\textnormal{loc}}(\overline{\Omega} \setminus \{x_0\})\]
as $k \to \infty$, up to subsequence. In addition, the followings are true:

\medskip \noindent \textnormal{(1)} If $p \in [\frac{n}{n-2}, \frac{n+2}{n-2}]$, then $x_0$ is a critical point of the Robin function $\tau$.
If $p \in (\frac{2}{n-2}, \frac{n}{n-2})$, then $x_0$ is a critical point of the function $x \in \Omega \mapsto \wth(x,x_0)$.

\medskip \noindent \textnormal{(2)} It holds that
\[\begin{cases}
\displaystyle \lim_{\ep \to 0} \ep \|u_{\ep}\|_{L^{\infty}(\Omega)}^{\frac{n}{(n-2)p-2}+1}
= S^{\frac{1-pq_0}{p(q_0+1)}} \|U_0\|_{L^{q_0}(\R^n)}^{q_0} \|V_0\|_{L^p(\R^n)}^p  |\tau(x_0)|
&\text{if } p \in (\frac{n}{n-2}, \frac{n+2}{n-2}], \\
\displaystyle \lim_{\ep \to 0} \frac{\ep \|u_{\ep}\|_{L^{\infty}(\Omega)}^{\frac{n}{n-2}+1}} {\log \|u_{\ep}\|_{L^{\infty}(\Omega)}}
= \frac{p+1}{n-2} L^{\frac{n}{n-2}} S^{\frac{1-pq_0}{p(q_0+1)}} \|U_0\|_{L^{q_0}(\R^n)}^{q_0} |\tau(x_0)|
&\text{if } p = \frac{n}{n-2}, \\
\displaystyle \lim_{\ep \to 0} \ep \|u_{\ep}\|_{L^{\infty}(\Omega)}^{p+1} = S^{\frac{1-pq_0}{p(q_0+1)}} \|U_0\|_{L^{q_0}(\R^n)}^{q_0(p+1)} |\tta(x_0)|
&\text{if } p \in (\frac{2}{n-2}, \frac{n}{n-2})
\end{cases}\]
where $(U_0, V_0)$ is a solution of the system
\begin{equation}\label{eq-entire}
\begin{cases}
-\Delta U_0 = V_0^p,\ -\Delta V_0 = U_0^{q_0} &\text{in } \R^n, \\
U_0, V_0 > 0 &\text{in } \R^n, \\
U_0(x), V_0(x) \to 0 &\text{as } |x| \to \infty,\\
U_0(0) = 1 = \max_{x \in \R^n} U_0(x)
\end{cases}
\end{equation}
and $L := \lim_{|x| \to \infty} |x|^{n-2} V_0(x) \in (0, \infty)$.

\medskip \noindent \textnormal{(3)} It holds that
\[\lim_{\ep \to 0} \|u_{\ep}\|_{L^{\infty}(\Omega)} v_{\ep}(x) = \|U_0\|_{L^{q_0}(\R^n)}^{q_0} G(x,x_0)\]
and
\[\begin{cases}
\displaystyle \lim_{\ep \to 0} \|u_{\ep}\|_{L^{\infty}(\Omega)}^{n \over (n-2)p-2} u_{\ep}(x) = \|V_0\|_{L^p(\R^n)}^p G(x,x_0)
&\text{if } p \in (\frac{n}{n-2}, \frac{n+2}{n-2}], \\
\displaystyle \lim_{\ep \to 0} \frac{\|u_{\ep}\|_{L^{\infty}(\Omega)}^{n \over n-2}}
{\log \|u_{\ep}\|_{L^{\infty}(\Omega)}} u_{\ep}(x) = \frac{p+1}{n-2} L^{n \over n-2} G(x,x_0)
&\text{if } p = \frac{n}{n-2}, \\
\displaystyle \lim_{\ep \to 0} \|u_{\ep}\|_{L^{\infty}(\Omega)}^p u_{\ep}(x) = \|U_0\|_{L^{q_0}(\R^n)}^{pq_0} \wtg(x,x_0)
&\text{if } p \in (\frac{2}{n-2}, \frac{n}{n-2})
\end{cases}\]
in $C^1_{\textnormal{loc}}(\Omega \setminus \{x_0\})$-sense.
\end{thmx}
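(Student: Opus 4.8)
The plan is to run a blow-up analysis for the family $\{(u_\ep,v_\ep)\}_{\ep>0}$ of least energy solutions, adapting the scheme developed for the scalar almost-critical problem to the Hamiltonian system \eqref{eq-main}, and to remove the convexity hypothesis of \cite{G} by basing the whole argument on energy bounds and Pohozaev identities on balls rather than on the moving plane method. Throughout it is convenient to reformulate the variational problem in terms of $u_\ep$ alone (equivalently, in terms of $f_\ep := -\Delta u_\ep = v_\ep^p$), as already reflected by the quotient $S_\ep(\Omega)$ in \eqref{eq-energy}. Inserting suitably truncated rescalings of the entire solutions $(U_0,V_0)$ of \eqref{eq-entire} as test functions into the Rayleigh quotient defining $S_\ep(\Omega)$ yields the upper bound $S_\ep(\Omega)\le S+o(1)$, while the matching lower bound $S_\ep(\Omega)\ge S-o(1)$ follows from the Sobolev inequality \eqref{eq-Sob} together with a standard concentration argument. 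If $\|u_\ep\|_{L^\infty(\Omega)}$ remained bounded along a subsequence, then elliptic regularity would give convergence of $(u_\ep,v_\ep)$ to a nontrivial solution of \eqref{eq-main} with $q=q_0$, that is, on the critical hyperbola \eqref{eq-cr-hy}; but no such solution exists because $S$ is not attained on a bounded domain. Hence $\|u_\ep\|_{L^\infty(\Omega)}\to\infty$ and the solutions blow up.

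\textbf{Step 2 (rescaling and identification of the bubble).} Let $x_\ep\in\Omega$ be a maximum point of $u_\ep$ and let $\mu_\ep\to 0$ be the concentration scale determined by $\|u_\ep\|_{L^\infty(\Omega)}$, the exact exponent of $\|u_\ep\|_{L^\infty(\Omega)}$ depending on which of the three $p$-ranges of the statement we are in (the scaling being dictated by \eqref{eq-cr-hy}). Rescaling $(u_\ep,v_\ep)$ about $x_\ep$ at scale $\mu_\ep$ and combining the resulting $L^\infty$ bound with elliptic estimates, one shows that the rescaled pair converges in $C^2_{\mathrm{loc}}(\R^n)$ to $(U_0,V_0)$; here one invokes the classification of finite-energy solutions of \eqref{eq-entire}. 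When $p<1$ the map $v\mapsto v^p$ is only H\"older continuous, but this causes no difficulty because one always differentiates through $u_\ep$, which solves $-\Delta u_\ep=(\,\cdot\,)^p$ with a smooth inner function and is therefore of class $C^{2,\alpha}$.

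\textbf{Step 3 (global pointwise estimates and simplicity of the blow-up; the main obstacle).} Using the representations $u_\ep(x)=\int_\Omega G(x,y)\,v_\ep^p(y)\,dy$ and $v_\ep(x)=\int_\Omega G(x,y)\,u_\ep^q(y)\,dy$ together with the energy bound of Step 1, I would prove that the blow-up is \emph{simple}: there is a single concentration point $x_0=\lim_\ep x_\ep\in\Omega$, no mass escapes to $\pa\Omega$, and $u_\ep\to 0$ in $C_{\mathrm{loc}}(\overline{\Omega}\setminus\{x_0\})$. The ingredient that replaces the convexity/moving plane argument of \cite{G} is a Pohozaev identity on a fixed small ball (or half-ball near $\pa\Omega$): its boundary integrals are controlled by the $C^1$ estimates valid away from $x_\ep$, and the positivity and monotonicity of $G$ then force $x_0$ to lie at positive distance from $\pa\Omega$. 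Bootstrapping the two representation formulas upgrades the convergence to the sharp bounds $u_\ep\lesssim \|u_\ep\|_{L^\infty(\Omega)}^{-\gamma}\,G(\cdot,x_0)$, respectively $u_\ep\lesssim \|u_\ep\|_{L^\infty(\Omega)}^{-\gamma}\,\wtg(\cdot,x_0)$ when $p<\tfrac{n}{n-2}$, with the corresponding estimate for $v_\ep$; these are precisely the profiles asserted in part (3). This is the hardest part of the argument, since on a general, possibly non-convex domain the symmetry tools of \cite{G} are unavailable and everything must be extracted from energy bounds and Pohozaev identities on balls, with careful simultaneous bookkeeping of the coupled decay rates of $u_\ep$ and $v_\ep$ across the three $p$-ranges and with only H\"older regularity of the nonlinearity when $p<1$.

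\textbf{Step 4 (location of the blow-up point and blow-up rate).} With the sharp estimates of Step 3 in hand, I would substitute $u_\ep$ into a Pohozaev-type identity on the small ball $B_\rho(x_\ep)$, testing the equation for $u_\ep$ against $(x-x_\ep)\cdot\nabla u_\ep$ and against $u_\ep$, and expand every term. After rescaling, the interior integrals reproduce the explicit constants $\|U_0\|_{L^{q_0}(\R^n)}^{q_0}$, $\|V_0\|_{L^p(\R^n)}^p$ and $L$ attached to \eqref{eq-entire}, while the boundary integrals, once $G$ (resp.\ $\wtg$) is split into its singular part plus the regular part $H$ (resp.\ $\wth$), produce the Robin function $\tau$ evaluated at $x_0$ when $p\ge\tfrac{n}{n-2}$ and the value $\tta(x_0)$ of $x\mapsto\wth(x,x_0)$ when $p<\tfrac{n}{n-2}$; the borderline case $p=\tfrac{n}{n-2}$ produces the logarithmic factor because there the convolution $\int_\Omega G^p$ is logarithmically divergent. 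Equating the two sides of the identity yields at once the criticality statement of part (1) and the precise rates of part (2), and feeding these back into the representation formulas fixes the normalizing constants in part (3).
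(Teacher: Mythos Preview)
Theorem~A is not proved in this paper; it is quoted verbatim from Guerra~\cite{G}. Your proposal is therefore not competing against a proof here but against Guerra's original argument (convexity, moving planes, Kelvin transform) and, implicitly, against this paper's extension to Theorem~B. Since you explicitly set out to ``remove the convexity hypothesis of~\cite{G}'', you are really sketching Theorem~B, so I will compare against the paper's actual route to that.

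There are two genuine gaps. First, in Step~3 you assert that ``the positivity and monotonicity of $G$ then force $x_0$ to lie at positive distance from $\pa\Omega$'' via a Pohozaev identity on a ball. This is not the mechanism, and as stated it does not work: positivity/monotonicity of $G$ alone gives no contradiction. What the paper does (Sections~\ref{sec-thm-11}--\ref{sec-thm-2}) is apply the translation Pohozaev identity \eqref{eq-poho-0} on $\pa B^n(x_\ep,2d_\ep)$, expand both sides using the sharp asymptotics \eqref{eq-g1-0}--\eqref{eq-g1-8} and \eqref{eq-g1-20}--\eqref{eq-g1-7} (or \eqref{eq-u-70}--\eqref{eq-u-71} when $p<\tfrac{n}{n-2}$), and then invoke the quantitative diagonal estimate $\nu_x\!\cdot\!\nabla_x H(x,y)|_{y=x}\ge C\,\mfd(x)^{-(n-1)}$ of \eqref{eq-hn} (resp.\ the much harder $\nu_x\!\cdot\!\nabla_x\wth(x,y)|_{y=x}\ge C\,\mfd(x)^{1-(n-2)p}$ of Proposition~\ref{prop-h2}). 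The contradiction comes from comparing powers of $\Lambda_\ep=d_\ep\lambda_\ep$ on the two sides. Without these pointwise lower bounds on the regular part --- whose proof, for $\wth$, occupies the entire Appendix~\ref{sec-app-2} --- your Pohozaev step has no teeth.

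Second, your ``bootstrapping the two representation formulas'' in Step~3 presupposes exactly the uniform decay you are trying to prove. The sharp pointwise estimates $U_\ep\le C U_0$, $V_\ep\le C V_0$ (Proposition~\ref{prop-dec}) are the input, not the output, of the Green-representation expansions; they are obtained independently by a Brezis--Kato argument --- via Kelvin transform for $p\ge 1$ as in \cite{G}, or via the integral reformulation \eqref{eq-ext-sol} with the doubly weighted Hardy--Littlewood--Sobolev inequality for $p<1$ (Section~\ref{sec-dec}). Your sketch contains no substitute for this step. Your Step~4, on the other hand, is essentially correct once Steps~1--3 are in place, though note that two Pohozaev identities are needed: the translation one (test with $\pa_{x_j}$) gives the criticality in part~(1), while the dilation one (test with $(x-x_\ep)\cdot\nabla$) gives the rates in part~(2).
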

\noindent The works of Chen et al. \cite{CLO} and Hulshof and Van der Vorst \cite{HV2} guarantee that the number $L$ is well-defined.

The condition $p \ge 1$ was used in \cite{G} when a decay estimate on suitably rescaled least energy solutions to \eqref{eq-main} was derived.
This kind of uniform estimate is one of the essential steps in asymptotic analysis of nonlinear elliptic problems, as well-known in the literatures.
On the basis of numerical tests, Guerra \cite{G} conjectured that the assumption on $p$ is just technical,
and Theorem \ref{gue} should hold even if $p \in (\frac{2}{n-2}, 1)$.
The first contribution of this paper is to give an affirmative answer to this conjecture.

The second contribution of this paper is to remove the convexity assumption in the above theorem.
As can be seen in \cite{G}, the convexity of the domain allows one to apply the moving plane method
in obtaining uniform boundedness of least energy solutions $(u_{\ep}, v_{\ep})$ near the boundary $\pa \Omega$ with respect to $\ep > 0$.
When it comes to the Lane-Emden equation, a special case of \eqref{eq-main} with the choice $p = q$ and $w = u = v$,
\begin{equation}\label{eq-LEF}
\begin{cases}
-\Delta w = w^{\frac{n+2}{n-2}-\ep} &\text{in } \Omega, \\
w > 0 &\text{in } \Omega, \\
w = 0 &\text{on } \pa \Omega,
\end{cases}
\end{equation}
one can treat general domains by employing the Kelvin transform to \eqref{eq-LEF} on small balls that touch $\pa \Omega$; refer to \cite{H}.
Unfortunately, this idea does not work well for \eqref{eq-main} if $p < \frac{n+2}{n-2}$; see e.g. \cite{Sou}.

To obtain the above results, we introduce two new ideas:
Firstly, to cover the case when $p$ is sub-linear, we perform a decay estimate by writing system \eqref{eq-main}
as a single non-local equation \eqref{eq-ext-sol} and applying a Brezis-Kato type argument.
Secondly, we obtain uniform boundedness of least energy solutions near $\pa \Omega$ from local Pohozaev-type identities and sharp pointwise estimates of the solutions,
not exploiting the Kelvin transform and the moving plane method.
See Subsection \ref{subsec-ideas} for more detailed explanations.

\subsection{Statement of the main theorems}
The main result of this paper is the following.
\begin{thmx} \label{thm-main}
Suppose that $p \in (\frac{2}{n-2}, \frac{n+2}{n-2})$ and $\Omega$ is any smooth bounded domain in $\R^n$. Then all the assertions in Theorem \ref{gue} remain true.
\end{thmx}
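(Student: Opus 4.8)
The plan is to adapt Guerra's blow-up analysis to the two new situations---sub-linear exponent $p<1$ and non-convex domains---by replacing the two ingredients of \cite{G} that genuinely use $p\ge 1$ or convexity with the alternatives announced in Subsection~\ref{subsec-ideas}. The skeleton of the argument is the standard one: normalize $(u_\ep,v_\ep)$ by $\mcl_\ep:=\|u_\ep\|_{L^\infty(\Omega)}$, locate the maximum point $x_\ep$, rescale around $x_\ep$ at the natural scale $\mu_\ep$ (read off from the Sobolev bubble $(U_0,V_0)$ in \eqref{eq-entire}), and show the rescaled pair converges in $C^1_{\mathrm{loc}}(\R^n)$ to $(U_0,V_0)$. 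From the energy characterization one gets \eqref{eq-energy}, hence $S_\ep(\Omega)\to S$, which forces the blow-up to be ``clean'' (single bubble, no energy loss). The delicate parts are (a) a \emph{uniform} global decay estimate on the rescaled solutions, so that the $L^\infty$ convergence and the mass identities $\|U_0\|_{L^{q_0}}^{q_0}$, $\|V_0\|_{L^p}^p$ pass to the limit, and (b) uniform boundedness of $(u_\ep,v_\ep)$ near $\pa\Omega$; both are exactly where $p\ge1$ and convexity entered \cite{G}.

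For (a), when $p<1$ the pointwise iteration Guerra uses breaks down because $v\mapsto v^p$ is not locally Lipschitz near $0$. I would instead eliminate $v$ and write the system as the single nonlocal equation
\begin{equation}\label{eq-ext-sol}
-\Delta u = \(\,(-\Delta)^{-1} u^q\,\)^p \quad\text{in }\Omega,
\end{equation}
with the Green operator of the Dirichlet Laplacian, and run a Brezis--Kato / Moser-type bootstrap on $u_\ep$ directly: starting from the $L^{q_\ep+1}$ bound coming from the energy, one improves integrability of $u_\ep$ in finitely many steps up to $L^\infty$, with constants uniform in $\ep$, using that $(-\Delta)^{-1}$ gains two derivatives (so $(p+1)/p$ and the critical-hyperbola relation \eqref{eq-pq-e} make the exponents close exactly at the critical level). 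Applying the same scheme to the rescaled solutions $\widetilde u_\ep(\,\cdot\,)=\mcl_\ep^{-1}u_\ep(x_\ep+\mu_\ep\,\cdot\,)$ on expanding balls yields the desired uniform decay $\widetilde u_\ep(x)\lesssim (1+|x|)^{-(n-2)}$ and the analogous bound for $\widetilde v_\ep$, which then upgrades weak convergence to the strong convergence needed for (2)--(3).

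For (b), on a general domain one cannot move planes, so I would argue by contradiction: if $(u_\ep,v_\ep)$ were not uniformly bounded near $\pa\Omega$, there would be a second concentration point $y_0\in\pa\Omega$ (or a boundary layer). Using the already-established interior blow-up rate, the sharp two-sided pointwise estimates of $u_\ep,v_\ep$ in terms of $G(\cdot,x_0)$ from part (3), and the boundary regularity of $G$, I would plug these into a \emph{local} Pohozaev identity centered at $y_0$ (with a cutoff supported in a small half-ball meeting $\pa\Omega$). The boundary terms in the Pohozaev identity carry a definite sign governed by $\pa_\nu u\,\pa_\nu v\ge 0$ on $\pa\Omega$ (here positivity of $u,v$ and Hopf's lemma are what replace convexity), and matching them against the interior contribution---which is $o(1)$ at the correct order because all the mass has been accounted for at $x_0$---gives a contradiction. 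Once near-boundary boundedness is known, the maximum point $x_\ep$ stays in a compact subset of $\Omega$, $x_0\in\Omega$, and the rest of Guerra's computation (the Pohozaev identity centered at $x_\ep$ that produces the Robin-function / $\wth(\cdot,x_0)$ condition in (1) and the exact constants in (2)) goes through verbatim, now with \eqref{eq-h2-2}--\eqref{eq-h2-1} and $\tta$ available in the full range $p\in(\frac{2}{n-2},\frac{n}{n-2})$.

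I expect the main obstacle to be step (b): making the local Pohozaev argument near $\pa\Omega$ actually close requires knowing the precise blow-up \emph{rate} and the $G(\cdot,x_0)$-profile \emph{before} one knows the blow-up is interior, so the logical order must be arranged carefully---probably first proving a preliminary (non-sharp) boundary estimate good enough to exclude boundary concentration qualitatively via energy quantization and the non-existence of nontrivial solutions to the limiting system on a half-space with zero boundary data (the analogue for \eqref{eq-main} of the Lane--Emden Liouville theorem, available since $p<\frac{n+2}{n-2}$ keeps us subcritical in the relevant sense), and only afterwards bootstrapping to the sharp estimates. The sub-linear decay estimate (a), while technically the new analytic input, should be comparatively routine once \eqref{eq-ext-sol} is set up, since Brezis--Kato arguments are robust to $p<1$ as long as one works with $u$ rather than $v$.
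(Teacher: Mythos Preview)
Your overall architecture matches the paper's, but two of your key technical choices diverge from what actually makes the argument close, and one of them contains a real gap.

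\medskip\noindent\textbf{Decay estimate (a).} You propose to eliminate $v$ and bootstrap on $u$ via $-\Delta u=((-\Delta)^{-1}u^q)^p$. The paper does the opposite: it eliminates $u$ and works with $V_\ep$, writing
\[
V_\ep=(-\Delta)^{-1}\!\left[\big((-\Delta)^{-1}V_\ep^p\big)^{q_\ep-\frac1p}\,\big((-\Delta)^{-1}V_\ep^p\big)^{\frac1p}\right],
\]
and the whole point is the factorization of the \emph{large} exponent $q_\ep$ as $(q_\ep-\tfrac1p)+\tfrac1p$ with $q_\ep-\tfrac1p>0$; this is what permits H\"older's inequality and feeds into a doubly weighted Hardy--Littlewood--Sobolev iteration (Lemmas in Section~\ref{sec-dec}). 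In your formulation the sublinear exponent $p<1$ sits on the \emph{outside}, so the assertion that ``Brezis--Kato is robust to $p<1$ once one works with $u$'' is not justified as stated. A symmetric trick $p=(p-\tfrac1q)+\tfrac1q$ could perhaps be made to work, but you have not identified it, and the weighted-HLS machinery (not a plain Moser iteration) is essential.

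\medskip\noindent\textbf{Boundary estimate (b).} Here there is a genuine gap. Lemma~\ref{lem-lam} already gives $\lambda_\ep\,\mathrm{dist}(x_\ep,\partial\Omega)\to\infty$, so after rescaling the domain expands to $\R^n$, not a half-space. Hence a half-space Liouville theorem cannot exclude the scenario $d_\ep\to 0$ with $\Lambda_\ep=\lambda_\ep d_\ep\to\infty$, which is precisely the case one must rule out. The paper's mechanism is different and quantitative: apply the local Pohozaev identity \eqref{eq-poho-0} on the \emph{interior} sphere $\partial B^n(x_\ep,2d_\ep)$, expand $u_\ep,v_\ep$ there in terms of $G(\cdot,x_\ep)$ (and $\wtg(\cdot,x_\ep)$ when $p<\tfrac{n}{n-2}$) using the decay from Theorem~\ref{thm-0}, and show that the left-hand side is governed by $\nu_{x_\ep}\!\cdot\!\nabla_x H(x,x_\ep)|_{x=x_\ep}$ (resp.\ $\nabla_x\wth$), which blows up like $d_\ep^{-(n-1)}$ (resp.\ $d_\ep^{1-(n-2)p}$) by \eqref{eq-hn} and Proposition~\ref{prop-h2}. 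Comparing orders in $\Lambda_\ep$ with the right-hand side yields the contradiction. Your proposal does not touch these boundary blow-up estimates for the regular parts $H$ and $\wth$; in particular, Proposition~\ref{prop-h2} (the lower bound for $\nu_x\cdot\nabla_x\wth(x,x)$) is a substantial piece of the paper with no analogue in your sketch, and neither Hopf's lemma on $\partial\Omega$ nor a half-ball Pohozaev around a putative boundary point substitutes for it.
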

\begin{rem}
We have two remarks on the above theorem.

\medskip \noindent (1) For $p \in (\frac{2}{n-2}, \frac{n}{n-2})$, the proof of the above theorem uses Proposition \ref{prop-h2},
whose validity is reduced to that of \eqref{eq-as-0} or \eqref{eq-as-1} according to the value of $p$.
In Appendix \ref{subsec-ver}, we shall give an analytic derivation of \eqref{eq-as-0} for all $n \ge 5$ and $p \in (\frac{2}{n-2}, \frac{n-1}{n-2})$,
and that of \eqref{eq-as-1} for all $n \ge 100$ and $p \in [\frac{n-1}{n-2}, \frac{n}{n-2})$.
To derive \eqref{eq-as-1} given that $5 \le n \le 99$ and $p \in [\frac{n-1}{n-2}, \frac{n}{n-2})$, we further reduce it into an inequality involving the Gauss hypergeometric function $_2F_1$.
However, the complexity of the resulting inequality compels us to use a computer software for its verification; see Case 3 (ii) of Appendix \ref{subsec-ver} and the supplement \cite{CK2}.
%We do not know if there is a simple and direct proof of Proposition \ref{prop-h2}.

\medskip \noindent (2) In the statement, it is enough to assume that $\pa \Omega$ is of class $C^2$
so that the principal curvatures of $\pa \Omega$ are well-defined and uniformly bounded.
\end{rem}

In order to prove the above theorem, we first need an adequate decay estimate for least energy solutions to \eqref{eq-main}.
\begin{thm}\label{thm-0}
Let us introduce two parameters
\[\alpha_{\ep} = \frac{2(p+1)}{pq_{\ep}-1} \quad \text{and} \quad \beta_{\ep} = \frac{2(q_{\ep}+1)}{pq_{\ep}-1},\]
and then choose a number $\lambda_{\ep} > 0$ and a point $x_{\ep} \in \Omega$ by
\begin{equation}\label{eq-lambda}
\lambda_{\ep}^{\alpha_{\ep}} = \max_{x \in \Omega} u_{\ep}(x) = u_{\ep} (x_{\ep}).
\end{equation}
Moreover, we normalize the solutions $(u_{\ep}, v_{\ep})$ to \eqref{eq-main} as
\begin{equation}\label{eq-tuv}
U_{\ep}(x) = \lambda_{\ep}^{-\alpha_{\ep}} u_{\ep} (\lambda_{\ep}^{-1}x + x_{\ep})
\quad \text{and} \quad
V_{\ep}(x) = \lambda_{\ep}^{-\beta_{\ep}} v_{\ep} (\lambda_{\ep}^{-1}x + x_{\ep})
\end{equation}
for $x \in \Omega_{\ep} := \lambda_{\ep} (\Omega - x_{\ep})$. If $(U_0, V_0)$ is a pair of functions in $C^{\infty}(\R^n,[0,1])$ satisfying
\[\lim_{r \to \infty} r^{n-2} V_0(r) = 1
\quad \text{and} \quad
\begin{cases}
\lim\limits_{r \to \infty} r^{n-2} U_0(r) = 1 &\text{if } p \in (\frac{n}{n-2}, \frac{n+2}{n-2}], \\
\lim\limits_{r \to \infty} \dfrac{r^{n-2}}{\log r} U_0(r) = 1 &\text{if } p = \frac{n}{n-2}, \\
\lim\limits_{r \to \infty} r^{(n-2)p-2} U_0(r) = 1 &\text{if } p \in (\frac{2}{n-2}, \frac{n}{n-2}),
\end{cases}\]
then there exists a constant $C > 0$ depending only on $n$, $s$, $p$ and $\Omega$ such that
\[U_{\ep} \le C U_0 \quad \text{and} \quad V_{\ep} \le C V_0 \quad \text{in } \Omega_{\ep}\]
for all $\ep > 0$ small.
\end{thm}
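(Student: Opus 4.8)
The plan is to establish the pointwise bounds $U_\ep \le C U_0$ and $V_\ep \le C V_0$ by first proving a preliminary \emph{decay estimate} on the rescaled solutions --- namely that $U_\ep(x), V_\ep(x) \to 0$ uniformly as $|x| \to \infty$, uniformly in small $\ep$ --- and then \emph{bootstrapping} this into the sharp rate of decay. The key new device, announced in Subsection~\ref{subsec-ideas}, is to eliminate $V_\ep$ and rewrite the system as a single nonlocal equation: since $-\Delta V_\ep = U_\ep^{q_\ep}$ with zero boundary data on $\Omega_\ep$, one has $V_\ep(x) = \int_{\Omega_\ep} G_{\Omega_\ep}(x,z) U_\ep(z)^{q_\ep}\,dz$, and feeding this into $-\Delta U_\ep = V_\ep^p$ yields $U_\ep$ as a solution of $-\Delta U_\ep = \big(\int G_{\Omega_\ep} U_\ep^{q_\ep}\big)^p$. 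When $p \ge 1$ this operator is pointwise monotone and one can compare directly against the entire profile (this is Guerra's route); the difficulty when $p < 1$ is exactly the failure of this convexity, and the fix is a \emph{Brezis--Kato / Moser iteration} argument performed on this nonlocal equation, using that $\|U_\ep\|_{L^{q_\ep+1}(\Omega_\ep)}$ and $\|V_\ep\|_{L^{p+1}(\Omega_\ep)}$ are bounded (which follows from \eqref{eq-energy}, i.e. the energy/best-constant convergence) together with the standard $\ep$-regularity fact that mass cannot concentrate away from the origin for a least energy blow-up.

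Concretely, the first step is to record that the normalized pair $(U_\ep, V_\ep)$ converges in $C^2_{\mathrm{loc}}(\R^n)$ to the entire solution $(U_0, V_0)$ of \eqref{eq-entire}; this is the standard blow-up limit and uses \eqref{eq-energy} to rule out vanishing. The second step is the uniform smallness at infinity: fix $R$ large; outside $B_R$ the $L^{q_\ep+1}$ mass of $U_\ep$ is uniformly small (else a second bubble or boundary concentration would violate the least-energy characterization), and then a Brezis--Kato iteration applied to the nonlocal equation upgrades this to $\|U_\ep\|_{L^\infty(\Omega_\ep \setminus B_{2R})} \to 0$ and likewise for $V_\ep$, uniformly in $\ep$. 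The third step is the sharp decay: once $U_\ep, V_\ep$ are small at infinity, write the Green's representation $V_\ep(x) = \int G_{\Omega_\ep}(x,z) U_\ep(z)^{q_\ep} dz$ and $U_\ep(x) = \int G_{\Omega_\ep}(x,z) V_\ep(z)^p dz$, split each integral into the region near the origin (where the integrand is $O(1)$ and concentrated, contributing the Newtonian potential $\sim |x|^{-(n-2)}$ times a convergent mass integral) and the far region (which by step two is a small perturbation, absorbed by a contraction/iteration on the space of functions dominated by $U_0$, resp.\ $V_0$). The integrability of $U_0^{q_\ep}$ and $V_0^p$ near infinity --- which is where the trichotomy in $p$ relative to $\frac{n}{n-2}$ enters, since $\int_{\R^n} U_0^{q_0} < \infty$ always but $\int_{\R^n} V_0^p < \infty$ only when $(n-2)p > 2$, i.e. $p > \frac{2}{n-2}$, and the $\log$ correction appears precisely at the threshold $p = \frac{n}{n-2}$ where $q_0 = \frac{n+2}{n-2}$ --- must be tracked carefully so that the constant $C$ depends only on $n,s,p,\Omega$ and not on $\ep$.

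The main obstacle is the sub-linear case $p < 1$ in the second step: because $t \mapsto t^p$ is concave and not Lipschitz near $0$, one cannot run the naive comparison-principle argument (a supersolution of the form $C U_0$ is not obviously a supersolution of the nonlocal equation when $p<1$, since $(A+B)^p \le A^p + B^p$ goes the wrong way for what one wants). This is why the nonlocal reformulation plus a Brezis--Kato argument --- which only needs the linear structure of the Laplacian together with $L^q$-bounds and does not require convexity of the nonlinearity --- is the correct tool: one estimates $\|U_\ep\|_{L^{s_k}}$ on shrinking neighborhoods of infinity along a sequence of exponents $s_k \to \infty$, using the boundedness of the Green operator $L^r(\Omega_\ep) \to L^{r^*}(\Omega_\ep)$ (uniformly in $\ep$, by scaling and the fact that $\Omega_\ep$ is a dilation of the fixed domain $\Omega$) and the smallness of the relevant $L$-norm in the far region to close the iteration. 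A secondary technical point is the uniformity of all Green's-function estimates in $\ep$: since $\Omega_\ep = \lambda_\ep(\Omega - x_\ep)$ with $\lambda_\ep \to \infty$, the domain exhausts $\R^n$, and one must check that $G_{\Omega_\ep} \le G_{\R^n}$ (trivially, by the maximum principle) suffices for all the upper bounds, so that no delicate boundary behavior of $G_{\Omega_\ep}$ is needed for Theorem~\ref{thm-0} itself --- the boundary geometry only re-enters later, in the Pohozaev-identity argument for uniform boundedness near $\pa\Omega$.
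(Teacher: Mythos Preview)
Your outline identifies the correct high-level strategy (the nonlocal reformulation plus a Brezis--Kato iteration), but it misses the specific mechanism that makes the iteration close when $p<1$, and it mischaracterizes both Guerra's obstacle and the paper's fix. Guerra's route is \emph{not} a comparison against a supersolution; he applies the Kelvin transform to \eqref{eq-1-2} and runs Moser iteration on the transformed system \eqref{eq-1-5}. The failure for $p<1$ is that the coefficient $|x|^{-(n+2-(n-2)p)}(V_\ep^*)^{p-1}$ acquires a \emph{negative} power of $V_\ep^*$, so no $L^{n/2}$-type potential bound is available. Your diagnosis in terms of concavity of $t\mapsto t^p$ and failure of supersolution comparison is a different (and here irrelevant) obstruction.

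More importantly, your Brezis--Kato step is a genuine gap as written. Saying that the iteration ``only needs the linear structure of the Laplacian together with $L^q$-bounds'' elides the real issue: the equation $U_\ep = (-\Delta)^{-1}\big(((-\Delta)^{-1}U_\ep^{q_\ep})^p\big)$ is not of the form $(-\Delta)^{-1}(a\cdot u)$ with a separable potential, so the standard iteration does not apply directly. The paper's key device is to iterate on $V_\ep$ (not $U_\ep$), to use the exponent splitting
\[
V_\ep = (-\Delta_\ep)^{-1}\Big[\,\underbrace{\big((-\Delta_\ep)^{-1}V_{\ep o}^p\big)^{q_\ep-\frac{1}{p}}}_{=:F_\ep}\cdot\big((-\Delta_\ep)^{-1}V_{\ep o}^p\big)^{\frac{1}{p}}\Big],
\]
and to run the estimates in the \emph{weighted} norms $\big\||\cdot|^{(n-2)-\frac{n+\delta}{b}}V_{\ep o}\big\|_{L^b}$ via the doubly weighted Hardy--Littlewood--Sobolev inequality. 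The point of the $1/p$ exponent is that after one application of $(-\Delta_\ep)^{-1}$ and the $1/p$-th root, one lands back in the \emph{same} weighted norm of $V_{\ep o}$ (up to an $O(\ep)$ shift in the Lebesgue exponent), while $F_\ep$ is small in $L^{p(q_\ep+1)/(pq_\ep-1)}$ by the energy quantization, so the term is absorbed. This closed loop is what replaces the missing convexity, and without naming it your ``iterate $s_k\to\infty$ on shrinking neighborhoods'' step does not close. A secondary point: the paper iterates on $V_\ep$ precisely because its target decay $|x|^{-(n-2)}$ is $p$-independent; once the sharp bound on $V_\ep$ is obtained, the bound on $U_\ep$ follows by a single application of the Green representation, which is also where the trichotomy in $p$ appears.
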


Once Theorem \ref{thm-0} is obtained, the most nontrivial part in the proof of Theorem \ref{thm-main} will be
to deduce that solutions $(u_{\ep}, v_{\ep})$ to \eqref{eq-main} are uniformly bounded near $\pa \Omega$ with respect to $\ep > 0$.
The cases $p \in [\frac{n}{n-2}, \frac{n+2}{n-2}]$ and $p \in  (\frac{2}{n-2}, \frac{n}{n-2})$ have to be treated separately.
\begin{thm}\label{thm-1}
Let $\Omega$ be a smooth bounded domain in $\R^n$ and $p \in [\frac{n}{n-2}, \frac{n+2}{n-2}]$.
Consider a family of solutions $\{(u_{\ep}, v_{\ep})\}_{\ep >0}$ to \eqref{eq-main} with $q = q_{\ep}$ for which \eqref{eq-energy} holds.
Then $(u_{\ep}, v_{\ep})$ are uniformly bounded near $\pa \Omega$ with respect to $\ep > 0$ small and blow-up at an interior point of $\Omega$.
\end{thm}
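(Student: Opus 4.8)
The strategy is a blow-up analysis anchored on the pointwise bounds of Theorem~\ref{thm-0}, with the moving plane method of \cite{G} replaced by Pohozaev(-Rellich) identities.

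\smallskip

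\emph{Reduction.} First, \eqref{eq-energy} forces blow-up: if $\|u_\ep\|_{L^\infty(\Omega)}$ stayed bounded along a sequence, then (as $p\ge\frac{n}{n-2}>1$) elliptic regularity would give $(u_\ep,v_\ep)\to(u_*,v_*)$ in $C^1(\overline\Omega)$, solving the critical system in $\Omega$ with zero Dirichlet data; writing $T_\ep:=\int_\Omega\nabla u_\ep\cdot\nabla v_\ep=\int_\Omega v_\ep^{p+1}=\int_\Omega u_\ep^{q_\ep+1}$ one has $S_\ep(\Omega)=T_\ep^{(pq_\ep-1)/(p(q_\ep+1))}$, so \eqref{eq-energy} yields $T_\ep\to T_0>0$; this rules out $u_*\equiv0$, while for $u_*\not\equiv0$ the quotient $\int_\Omega v_*^{p+1}\big/\|u_*\|_{L^{q_0+1}(\Omega)}^{(p+1)/p}$ would equal $S$, forcing $u_*$ to be an extremal for \eqref{eq-Sob} on $\Omega$, impossible since $S$ is not attained on bounded domains. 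Hence $\lambda_\ep^{\alpha_\ep}=\|u_\ep\|_{L^\infty(\Omega)}\to\infty$, so $\lambda_\ep\to\infty$, and Theorem~\ref{thm-0} gives $u_\ep(\cdot)\le C\lambda_\ep^{\alpha_\ep}U_0(\lambda_\ep(\cdot-x_\ep))$ and $v_\ep(\cdot)\le C\lambda_\ep^{\beta_\ep}V_0(\lambda_\ep(\cdot-x_\ep))$ on $\Omega$. In particular the blow-up set is the single point $x_0:=\lim_k x_{\ep_k}\in\overline\Omega$, and since $\alpha_\ep,\beta_\ep<n-2$ for $\ep$ small, once $\mathrm{dist}(x_\ep,\partial\Omega)\ge c_0>0$ these bounds force $u_\ep,v_\ep\to0$ uniformly on $\overline\Omega\setminus B_r(x_0)$ for every $r>0$, which proves both assertions of the theorem. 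So everything reduces to showing $\mathrm{dist}(x_\ep,\partial\Omega)\ge c_0>0$.

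\smallskip

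\emph{No half-space limit.} Set $d_\ep:=\lambda_\ep\,\mathrm{dist}(x_\ep,\partial\Omega)$. If $d_\ep\to0$ along a subsequence, a boundary gradient estimate for $-\Delta U_\ep=V_\ep^p$ (whose right-hand side is bounded by Theorem~\ref{thm-0}) on a fixed ball about $0$ gives $U_\ep(0)\le Cd_\ep\to0$, contradicting $U_\ep(0)=\max U_\ep=1$. If $d_\ep\to d_\infty\in(0,\infty)$ along a subsequence, then, using Theorem~\ref{thm-0}, interior and boundary elliptic estimates, and the flattening of $\partial\Omega_\ep$, one gets $(U_\ep,V_\ep)\to(U_*,V_*)$ in $C^1_{\mathrm{loc}}$ on the half-space $\mch=\{y_n>-d_\infty\}$, a positive classical solution of the critical system with zero Dirichlet data on $\partial\mch$, with $U_*(0)=\max U_*=1$ and $U_*\le CU_0$, $V_*\le CV_0$. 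Testing the two equations with $\partial_n V_*$ and $\partial_n U_*$, integrating over $\mch\cap B_R$ and letting $R\to\infty$ — every boundary term on $\partial B_R$ vanishes precisely because $U_0,V_0$ and their gradients decay at the rates in Theorem~\ref{thm-0} — yields $\int_{\partial\mch}(\partial_n U_*)(\partial_n V_*)\,dx'=0$, contradicting the Hopf lemma ($\partial_n U_*,\partial_n V_*>0$ on $\partial\mch$). Hence $d_\ep\to\infty$, so $\Omega_\ep\to\R^n$ and $(U_\ep,V_\ep)\to(U_0,V_0)$ in $C^1_{\mathrm{loc}}(\R^n)$.

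\smallskip

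\emph{The blow-up point is interior.} Suppose for contradiction $\delta_\ep:=\mathrm{dist}(x_\ep,\partial\Omega)\to0$ (while $d_\ep\to\infty$); let $\bx_\ep\in\partial\Omega$ realise $\delta_\ep$. For \eqref{eq-main} one has the exact identities
\[
\int_{\partial\Omega}(\partial_\nu u_\ep)(\partial_\nu v_\ep)\,\nu\,dS=0,\qquad
\int_{\partial\Omega}(\partial_\nu u_\ep)(\partial_\nu v_\ep)\,(x\cdot\nu)\,dS=n\ep\,T_\ep,
\]
obtained by testing the equations with $\partial_j v_\ep,\ \partial_j u_\ep$ and with $x\cdot\nabla v_\ep,\ x\cdot\nabla u_\ep$ and using $u_\ep=v_\ep=0$ on $\partial\Omega$. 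From the Green's representations $u_\ep=\int_\Omega G(\cdot,y)v_\ep(y)^p\,dy$ and $v_\ep=\int_\Omega G(\cdot,y)u_\ep(y)^{q_\ep}\,dy$, the concentration of $v_\ep^p$ and $u_\ep^{q_\ep}$ near $x_\ep$, and the pointwise bounds of Theorem~\ref{thm-0}, one obtains on $\partial\Omega$ (in boundary normal coordinates $x=(x',\psi(x'))$ centred at $\bx_\ep$)
\[
(\partial_\nu u_\ep)(\partial_\nu v_\ep)(x)=c\,\lambda_\ep^{-(n-2)}\,\frac{\delta_\ep^{\,2}}{(|x'|^2+\delta_\ep^{\,2})^{n}}\,\big(1+o(1)\big),\qquad c>0,
\]
uniformly for $|x'|$ small, with a tail controlled by Theorem~\ref{thm-0} for $|x'|\gtrsim1$. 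Since $(x-\bx_\ep)\cdot\nu(x)=-\tfrac12\langle A(\bx_\ep)x',x'\rangle+O(|x'|^3)$ ($A$ the second fundamental form), substituting into the second identity (recentred at $\bx_\ep$, legitimate by the first identity) gives $\ep\,\lambda_\ep^{\,n-2}=c_1\,(\mathrm{tr}\,A(\bx_\ep))\,\delta_\ep^{\,3-n}+O(\delta_\ep^{\,4-n})$ with $c_1\ne0$. On the other hand, a local Pohozaev identity on $B_{\delta_\ep/2}(x_\ep)\subset\Omega$ — the computation underlying the blow-up rate in Theorem~\ref{gue}(2), but carried out without assuming $x_0\in\Omega$ — gives $\ep\,\lambda_\ep^{\,n-2}=c_2\,\tau(x_\ep)\,(1+o(1))$ with $c_2>0$, and since $\tau(x_\ep)\sim c_3\,\delta_\ep^{\,2-n}$ with $c_3>0$ as $\delta_\ep\to0$, we conclude
\[
c_2c_3\,\delta_\ep^{-1}(1+o(1))=c_1\,\mathrm{tr}\,A(\bx_\ep)+O(\delta_\ep).
\]
The left side tends to $+\infty$ while the right side stays bounded ($\partial\Omega$ being $C^2$): a contradiction. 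Hence $\mathrm{dist}(x_\ep,\partial\Omega)\ge c_0>0$, and by the Reduction the theorem follows. (For $p=\frac{n}{n-2}$ the same scheme works, with a factor $\log\lambda_\ep$ appearing on both sides of the last display and cancelling.)

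\smallskip

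\emph{The main difficulty.} The genuinely delicate step is the last one: one must make the boundary Pohozaev analysis quantitative \emph{uniformly as $x_\ep$ approaches $\partial\Omega$}, which is exactly why the sharp pointwise — and, after elliptic estimates up to $\partial\Omega$, gradient — bounds of Theorem~\ref{thm-0} are needed, both to identify the leading profile of $(\partial_\nu u_\ep)(\partial_\nu v_\ep)$ on $\partial\Omega$ and to control its tail and the error terms. A secondary point is to verify in the previous step that the decay rates of Theorem~\ref{thm-0} are precisely strong enough to discard every boundary-at-infinity contribution in the half-space Pohozaev identity. This is where \cite{G} invoked convexity together with the moving plane method.
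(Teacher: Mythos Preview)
Your overall structure is sound and the first two steps (blow-up must occur; $\lambda_\ep\,\mathrm{dist}(x_\ep,\partial\Omega)\to\infty$) are correct and match the paper (cf.\ Lemma~\ref{lem-lam}). The third step, however, takes a genuinely different route from the paper and contains a real gap.

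The paper does \emph{not} use the global dilation Pohozaev identity on $\partial\Omega$, nor does it compare two different expressions for $\ep\lambda_\ep^{n-2}$. Instead it applies only the \emph{translation} identity \eqref{eq-poho-1} on the single sphere $\partial B^n(x_\ep,2d_\ep)$ (with $d_\ep=\tfrac14\mathrm{dist}(x_\ep,\partial\Omega)$), establishes the precise Green-function asymptotics \eqref{eq-g1-0}--\eqref{eq-g1-8} and \eqref{eq-g1-20}--\eqref{eq-g1-7} on that sphere, and shows that the left-hand side in the direction $\nu_{x_\ep}$ is bounded below by $C\lambda_\ep^{-(n-2)}d_\ep^{-(n-1)}$ via the explicit bound \eqref{eq-hn} on $\partial_{\nu_x}H(x,x)$, while the right-hand side is $O(\lambda_\ep\Lambda_\ep^{(n-1)-(n-2)(p+1)})$. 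Comparing exponents of $\Lambda_\ep$ forces $p\le\frac{n}{n-2}$, a direct contradiction for $p>\frac{n}{n-2}$; the borderline $p=\frac{n}{n-2}$ is handled separately (Section~\ref{sec-thm-12}) because $\int_{\R^n}V_0^p=\infty$, and a logarithmic factor $K_\ep\sim\log\Lambda_\ep$ enters the asymptotics of $u_\ep$.

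Your scheme instead requires \emph{two} delicate asymptotics: (i) the boundary profile $(\partial_\nu u_\ep)(\partial_\nu v_\ep)\sim c\,\lambda_\ep^{-(n-2)}\delta_\ep^{2}(|x'|^2+\delta_\ep^{2})^{-n}$ on $\partial\Omega$, and (ii) the relation $\ep\lambda_\ep^{n-2}=c_2\tau(x_\ep)(1+o(1))$ from a local dilation Pohozaev identity. Claim (ii) is the serious gap. In Guerra's argument this relation is derived \emph{after} one knows $x_\ep\to x_0\in\Omega$, with a fixed ball and $\tau(x_0)$ a fixed finite number. When $\delta_\ep\to 0$ you are asking for the same conclusion on a shrinking ball with $\tau(x_\ep)\sim c_3\delta_\ep^{2-n}\to\infty$, and you must prove that every error term in that computation is $o(\delta_\ep^{2-n})$---which is exactly the kind of uniform-near-boundary estimate you are trying to avoid. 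Establishing this rigorously would require the sphere asymptotics of Lemmas~\ref{lem-g10} and Section~\ref{sec-thm-11}, at which point the paper's one-identity argument is both shorter and self-contained. Claim (i) is more plausible but also needs the same Green-function asymptotics carried to $\partial\Omega$ with uniform error control; it is not a consequence of Theorem~\ref{thm-0} alone. Finally, your one-line treatment of $p=\tfrac{n}{n-2}$ glosses over a genuine difficulty: $A_{V_0}=\int V_0^p$ diverges, so the profile of $\partial_\nu u_\ep$ on $\partial\Omega$ is not governed by a fixed constant but by a $\log\Lambda_\ep$ factor that must be tracked through both identities; the paper devotes a full section to this.
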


\begin{thm}\label{thm-2}
Let $\Omega$ be a smooth bounded domain in $\R^n$ and $p \in (\frac{2}{n-2}, \frac{n}{n-2})$.
Consider a family of solutions $\{( u_{\ep}, v_{\ep})\}_{\ep >0}$ to \eqref{eq-main} with $q = q_{\ep}$ for which \eqref{eq-energy} holds.
Then $(u_{\ep}, v_{\ep})$ are uniformly bounded near $\pa \Omega$ with respect to $\ep > 0$ small and blow-up at an interior point of $\Omega$.
\end{thm}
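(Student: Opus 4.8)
The plan is to argue by contradiction, showing that the blow-up point cannot lie on $\pa\Omega$; the assertion of the theorem then drops out of Theorem~\ref{thm-0}. Recall first that $\|u_\ep\|_{L^\infty(\Omega)}\to\infty$: otherwise a subsequence of $(u_\ep,v_\ep)$ would converge in $C^2_{\textnormal{loc}}$ to a nontrivial solution of the limiting Lane--Emden system on $\Omega$, which is incompatible with \eqref{eq-energy} because the Sobolev constant $S$ is not attained on a bounded domain; hence $\lambda_\ep\to\infty$. Choose for $(U_0,V_0)$ smooth $[0,1]$-valued functions with $U_0(r)\sim r^{-((n-2)p-2)}$ and $V_0(r)\sim r^{-(n-2)}$ as $r\to\infty$, these being the decay rates prescribed in the hypotheses of Theorem~\ref{thm-0} for $p\in(\frac{2}{n-2},\frac{n}{n-2})$. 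Theorem~\ref{thm-0} then furnishes the uniform pointwise bounds $u_\ep(x)\le C\lambda_\ep^{\alpha_\ep}U_0(\lambda_\ep(x-x_\ep))$ and $v_\ep(x)\le C\lambda_\ep^{\beta_\ep}V_0(\lambda_\ep(x-x_\ep))$ on $\Omega$, with $\lambda_\ep,x_\ep$ as in \eqref{eq-lambda}. Let $d_\ep$ be the distance from $x_\ep$ to $\pa\Omega$. If $\liminf_\ep d_\ep>0$, these bounds immediately force $u_\ep,v_\ep\to 0$ uniformly on a fixed neighborhood of $\pa\Omega$ — because the limiting exponents obey $\alpha_0<(n-2)p-2$ and $\beta_0<n-2$ in this range — so $(u_\ep,v_\ep)$ is uniformly bounded near $\pa\Omega$ and blows up at the interior point $x_0:=\lim x_\ep$, as required. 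It therefore suffices to rule out $d_\ep\to 0$ along a subsequence.

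So assume $d_\ep\to 0$. If $\lambda_\ep d_\ep$ stayed bounded, then, rescaling about $x_\ep$ at scale $\lambda_\ep^{-1}$, the domains $\Omega_\ep$ would converge locally to a half-space, and the bounds above would allow one to pass to the limit in the rescaled system: $(U_\ep,V_\ep)$ would converge in $C^2_{\textnormal{loc}}$ to a positive solution $(W_1,W_2)$ of $-\Delta W_1=W_2^p$, $-\Delta W_2=W_1^{q_0}$ on that half-space with $W_1=W_2=0$ on its boundary and $W_1(0)=1$, contradicting the Liouville-type nonexistence theorem for the Lane--Emden system on a half-space under Dirichlet boundary conditions. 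Hence $\lambda_\ep d_\ep\to\infty$. Let $p_\ep\in\pa\Omega$ be the boundary point nearest to $x_\ep$.

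The crucial step is to upgrade the upper bounds above to two-sided estimates for $\pa_\nu u_\ep$ and $\pa_\nu v_\ep$ along $\pa\Omega$, \emph{uniform in $\ep$ even while $d_\ep\to 0$}. Starting from the representations $u_\ep=\int_\Omega G(\cdot,y)v_\ep(y)^p\,dy$ and $v_\ep=\int_\Omega G(\cdot,y)u_\ep(y)^{q_\ep}\,dy$, and using the pointwise bounds together with the facts that $U_0^{q_0}\in L^1(\R^n)$ whereas $V_0^p\notin L^1(\R^n)$ — here is where $p<\frac{n}{n-2}$ is used — one finds that $u_\ep^{q_\ep}$ concentrates as a measure at $x_\ep$ while $v_\ep^p$ stays spread over $\Omega$; consequently, on $\Omega$ away from $x_\ep$, and in particular for the boundary normal derivatives,
\[ v_\ep\sim c_1\mu_\ep\,G(\cdot,x_\ep),\qquad u_\ep\sim c_2\mu'_\ep\,\wtg(\cdot,x_\ep), \]
with positive constants $c_1,c_2$ and positive $\ep$-dependent factors $\mu_\ep,\mu'_\ep$, whence $\pa_\nu u_\ep\sim c_2\mu'_\ep\,\pa_\nu^x\wtg(\cdot,x_\ep)$ and $\pa_\nu v_\ep\sim c_1\mu_\ep\,\pa_\nu^x G(\cdot,x_\ep)$ on $\pa\Omega$, both $\pa_\nu^x\wtg(\cdot,x_\ep)$ and $\pa_\nu^x G(\cdot,x_\ep)$ being strictly negative there by Hopf's lemma. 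I expect the main obstacle to be controlling the error terms in these estimates with constants independent of $d_\ep$, which requires tracking $G$ and $\wtg$ both near their diagonals and near $\pa\Omega$ when the pole sits at distance $d_\ep$ from the boundary; this is exactly the point at which the sharp decay delivered by Theorem~\ref{thm-0} is indispensable.

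Finally, the translation Pohozaev identity — obtained by testing $-\Delta u_\ep=v_\ep^p$ against $\pa_j v_\ep$ and $-\Delta v_\ep=u_\ep^{q_\ep}$ against $\pa_j u_\ep$, adding, integrating over $\Omega$, and using $u_\ep=v_\ep=0$ on $\pa\Omega$ — reads
\[ \int_{\pa\Omega}\nu_j(x)\,(\pa_\nu u_\ep)(x)\,(\pa_\nu v_\ep)(x)\,d\sigma(x)=0,\qquad j=1,\dots,n. \]
Since $(\pa_\nu u_\ep)(\pa_\nu v_\ep)>0$ on $\pa\Omega$, this says that the unit outward normal has vanishing barycenter with respect to the positive density $(\pa_\nu u_\ep)(\pa_\nu v_\ep)\,d\sigma$. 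But $(\pa_\nu u_\ep)(\pa_\nu v_\ep)\sim c_1c_2\mu_\ep\mu'_\ep\,(\pa_\nu^x\wtg)(\pa_\nu^x G)$, and as $d_\ep\to 0$ the classical concentration of the Poisson kernel $-\pa_\nu^x G(\cdot,x_\ep)\,d\sigma$ (of total mass $1$), combined with that of $-\pa_\nu^x\wtg(\cdot,x_\ep)\,d\sigma$, forces this density to concentrate near $p_\ep$; then taking the linear combination of the $n$ identities with coefficients $\nu_j(p_\ep)$, i.e.\ integrating against $\nu(p_\ep)\cdot\nu(x)$ — which is $\ge\tfrac12$ near $p_\ep$, while $(\pa_\nu u_\ep)(\pa_\nu v_\ep)$ is negligible away from $p_\ep$ — makes the left-hand side strictly positive for $\ep$ small, a contradiction. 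Thus $d_\ep\not\to 0$, which finishes the proof. The same scheme yields Theorem~\ref{thm-1}: for $p\ge\frac{n}{n-2}$ one has $V_0^p\in L^1(\R^n)$ (with a logarithmic modification at $p=\frac{n}{n-2}$), so $v_\ep^p$ too concentrates at $x_\ep$, and then $\wtg$ and $\wth$ are replaced throughout by $G$ and its regular part, $\tta$ by the Robin function $\tau$, and $(\pa_\nu^x\wtg)(\pa_\nu^x G)$ by $|\pa_\nu^x G(\cdot,x_\ep)|^2$.
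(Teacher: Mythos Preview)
Your approach differs substantially from the paper's. The paper applies the \emph{local} Pohozaev identity \eqref{eq-poho-0} on the sphere $\partial B^n(x_\ep, 2d_\ep)$, establishes the asymptotics $u_\ep \sim \lambda_\ep^{-\alpha_0 p} A_{U_0}^p\, \wtg(\cdot, x_\ep)$ and $v_\ep \sim \lambda_\ep^{-\alpha_0} A_{U_0}\, G(\cdot, x_\ep)$ (with gradients) there (Lemmas~\ref{lem-g10}, \ref{lem-8-1}, \ref{lem-8-2}), and reduces the left-hand side to a constant multiple of $\nu_{x_\ep}\cdot\nabla_x\wth(x,x_\ep)|_{x=x_\ep}$; the contradiction then comes from the quantitative sign estimate of Proposition~\ref{prop-h2}, whose proof (Appendix~\ref{sec-app-2}) is by far the hardest part of the paper and even requires computer verification for $5\le n\le 99$. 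Your route via the \emph{global} identity $\int_{\partial\Omega}\nu_j(\partial_\nu u_\ep)(\partial_\nu v_\ep)\,d\sigma=0$ and concentration of the positive density $(\partial_\nu u_\ep)(\partial_\nu v_\ep)$ near $p_\ep$ is attractive precisely because positivity of the near contribution would follow directly from Hopf's lemma for $G$ and $\wtg$ on $\partial\Omega$, potentially bypassing Appendix~\ref{sec-app-2} altogether.

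That said, the proposal skips the step that carries the real weight. To run the concentration argument you need, for $x\in\partial\Omega$ with $|x-p_\ep|\lesssim d_\ep$, the boundary asymptotic
\[
\partial_\nu u_\ep(x)=\lambda_\ep^{-\alpha_0 p}A_{U_0}^p\,\partial_\nu\wtg(x,x_\ep)+o\bigl(\lambda_\ep^{-\alpha_0 p}d_\ep^{\,1-(n-2)p}\bigr)
\]
together with $|\partial_\nu\wtg(x,x_\ep)|\gtrsim d_\ep^{\,1-(n-2)p}$ there, and you need the far contribution $\int_{|x-p_\ep|\ge\eta}(\partial_\nu u_\ep)(\partial_\nu v_\ep)\,d\sigma$ to be $O(\lambda_\ep^{-\alpha_0(p+1)})$, hence negligible compared with the near part $\sim\lambda_\ep^{-\alpha_0(p+1)}d_\ep^{\,1-(n-2)p}$. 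The far bound does follow from Theorem~\ref{thm-0} plus local elliptic regularity. But the near asymptotic requires: (i) the refined pointwise estimate for $v_\ep$ on all of $\Omega\setminus B(x_\ep,3d_\ep/\sqrt{\Lambda_\ep})$ --- this is exactly the paper's Lemma~\ref{lem-8-1}, which you are implicitly assuming; (ii) propagating it to a boundary asymptotic for $\partial_\nu u_\ep$ via $-\partial_\nu u_\ep(x)=\int_\Omega P(x,y)v_\ep^p(y)\,dy$, with the error controlled by $o(1)\cdot\lambda_\ep^{-\alpha_0 p}\int_\Omega P(x,y)|y-x_\ep|^{-(n-2)p}\,dy$, which one must show is $o(\lambda_\ep^{-\alpha_0 p}d_\ep^{\,1-(n-2)p})$; and (iii) a half-space approximation showing that the rescaled $\wtg(\cdot,x_\ep)$ converges in $C^1$ \emph{up to the boundary} near $p_\ep$ to its half-space analogue, so that Hopf in the half-space yields the lower bound on $|\partial_\nu\wtg|$. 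Step (iii) is analogous in nature to the paper's Lemma~\ref{lem-hd}, though carried out for $\wtg$ on $\partial\Omega$ rather than for $\wth$ at an interior point. None of (i)--(iii) is actually carried out; you flag (ii) as ``the main obstacle'' but leave it at that. If these steps can be executed, your argument would indeed replace the delicate sign inequalities \eqref{eq-as-0}--\eqref{eq-as-1} by the soft positivity from Hopf's lemma; the paper's choice to work on the interior sphere $\partial B^n(x_\ep,2d_\ep)$ trades that potential simplification for cleaner asymptotics away from $\partial\Omega$, at the cost of needing Proposition~\ref{prop-h2}.
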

\noindent We emphasize that the above proposition is valid not only for $p \ge 1$ but for any $p > \frac{2}{n-2}$, which is a nontrivial fact when $n \ge 5$.

Under the validity of Theorems \ref{thm-0}-\ref{thm-2}, we can adapt the arguments in Guerra \cite{G} to conclude that Theorem \ref{thm-main} is indeed true.
A more detailed account will be given in the next subsection.

\subsection{Ideas behind the main theorems} \label{subsec-ideas}
In this subsection, we explain the key ideas on our proof of Theorem \ref{thm-main}.

\medskip
From the fact that the energy of a solution to \eqref{eq-main} with $q = q_0$ cannot be equal to the best constant $S$ of the Sobolev inequality \eqref{eq-Sob},
we see that if $(u_{\ep}, v_{\ep})$ is a least energy solution to \eqref{eq-main} with $q = q_{\ep}$ for $\ep > 0$ small, then
\[\max_{x \in \Omega} u_{\ep}(x) = u_{\ep}(x_{\ep}) \to \infty, \quad x_{\ep} \to x_0 \in \overline{\Omega}
\quad \text{and} \quad
u_{\ep} \to 0 \quad \text{in } C_{\text{loc}}(\overline{\Omega} \setminus \{x_0\})\]
as $\ep \to 0$.
As the next step, we obtain a decay estimate on a suitable rescaling of $(u_{\ep}, v_{\ep})$, which is the first main contribution of the paper.

\medskip \noindent
\textbf{$\star$ Proof of Theorem \ref{thm-0}: Decay estimate of rescaled least energy solutions}

\medskip
Here, we explain the technical difficulty related to the condition $p \ge 1$ imposed in \cite{G} and describe our strategy to overcome it.
To facilitate the reader's understanding, let us first recall the analysis of Han \cite{H} concerning a least energy solution $w_{\ep}$ to the single problem \eqref{eq-LEF}; refer also to de Figueiredo et al. \cite{FLN}.
We select a parameter $\mu_{\ep}$ and a point $y_{\ep} \in \Omega$ by the relation
\[\mu_{\ep}^{2(n-2) \over 4-(n-2)\ep} = \max_{y \in \Omega} w_{\ep}(y) = w_{\ep} (y_{\ep}).\]
Then one has that $\mu_{\ep} \to \infty$ and $\mu_{\ep}^{\ep} \to 1$ as $\ep \to 0$. We rescale the solution $w_{\ep}$ by
\[W_{\ep}(y) = \mu_{\ep}^{-{2(n-2) \over 4-(n-2)\ep}} w_{\ep}(\mu_{\ep}^{-1} y + y_{\ep}) \quad \text{for } y \in \wtom_{\ep} := \mu_{\ep} (\Omega - y_{\ep}).\]
It is a solution of
\[\begin{cases}
-\Delta W_{\ep} = W_{\ep}^{\frac{n+2}{n-2}-\ep} &\text{in } \wtom_{\ep}, \\
W_{\ep} > 0 &\text{in } \wtom_{\ep}, \\
W_{\ep} = 0 &\text{on } \pa \wtom_{\ep}.
\end{cases}\]
Now the Kelvin transform $W_{\ep}^*$ of $W_{\ep}$ defined by
\[W_{\ep}^*(y) = {1 \over |y|^{n-2}} W_{\ep}\({y \over |y|^2}\) \quad \text{in } \wtom_{\ep}^* := \left\{y \in \R^n: {y \over |y|^2} \in \wtom_{\ep} \right\}\]
satisfies
\[-\Delta W_{\ep}^* = \frac{1}{|y|^{(n-2)\ep}} (W_{\ep}^*)^{\frac{n+2}{n-2}-\ep}
= \left[ \frac{1}{|y|^{(n-2)\ep}} (W_{\ep}^*)^{\frac{4}{n-2}-\ep} \right] W_{\ep}^* \quad \text{in } \wtom_{\ep}^*.\]
By the least energy condition, it can be derived that
\[\sup_{\ep > 0} \left\| (W_{\ep}^*)^{{4 \over n-2}-\ep} \right\|_{L^{n/2}(\wtom_{\ep}^*)} \le C
\quad \text{and} \quad
\frac{1}{|y|^{(n-2)\ep}} \le 2 \text{ in } \wtom_{\ep}^*.\]
Then the Moser iteration argument yields
\[\sup_{\ep > 0} \|W_{\ep}^*\|_{L^{\infty}(B^n(0,1) \cap \wtom_{\ep}^*)} \le C,\]
from which one concludes that
\[W_{\ep}(y) \le {C \over |y|^{n-2}} \quad \text{for all } y \in \wtom_{\ep} \cap (\R^n \setminus B^n(0,1)).\]

Remarkably, it was discovered by Guerra \cite{G} that the above approach can be pursued to derive a decay estimate for a least energy solution $(u_{\ep}, v_{\ep})$ to \eqref{eq-main} with $p \ge 1$ and $q = q_{\ep}$ determined by \eqref{eq-pq-e}. If $(U_{\ep}, V_{\ep})$ is a pair of the functions defined by \eqref{eq-tuv}, then it holds that
\begin{equation}\label{eq-1-2}
\begin{cases}
-\Delta U_{\ep} = V_{\ep}^p,\, -\Delta V_{\ep} = U_{\ep}^{q_{\ep}} &\text{in } \Omega_{\ep},\\
U_{\ep}, V_{\ep} > 0 &\text{in } \Omega_{\ep},\\
U_{\ep} = V_{\ep} = 0 &\text{on } \pa \Omega_{\ep},\\
U_{\ep}(0) = 1 = \max_{x \in \Omega_{\ep}} U_{\ep}(x).
\end{cases}
\end{equation}
It follows that the Kelvin transform $(U_{\ep}^*, V_{\ep}^*)$ of $(U_{\ep}, V_{\ep})$ satisfies
\begin{equation}\label{eq-1-5}
\begin{cases}
-\Delta U_{\ep}^* = \left[ \dfrac{1}{|x|^{n+2-(n-2)p}} (V_{\ep}^*)^{p-1} \right] V_{\ep}^*, \\
-\Delta V_{\ep}^* = \left[ \dfrac{1}{|x|^{n+2-(n-2)q_{\ep}}} (U_{\ep}^*)^{q_{\ep}-1} \right] U_{\ep}^*
\end{cases}
\text{in } \Omega_{\ep}^* := \left\{x \in \R^n: {x \over |x|^2} \in \Omega_{\ep} \right\}.
\end{equation}
Then the least energy condition of the solutions $(u_{\ep}, v_{\ep})$ and a Brezis-Kato type estimate involving the weighted Hardy-Littlewood-Sobolev inequality are combined to show the uniform $L^{\infty}(B^n(0,1) \cap \Omega_{\ep}^*)$-bound of $V_{\ep}^*$,
which allows one to find the optimal decay of $U_{\ep}$ and $V_{\ep}$.
This procedure, however, breaks down if the exponent $p-1$ of $V_{\ep}^*$ in \eqref{eq-1-5} is negative.

To bypass this technical issue when dealing with the case $p < 1$, we write the system as
\begin{equation}\label{eq-ext-sol}
\begin{aligned}
V_{\ep} &= (-\Delta)^{-1} U_{\ep}^{q_{\ep}} = (-\Delta)^{-1} \((-\Delta)^{-1} V_{\ep}^p\)^{q_{\ep}} \\
&= (-\Delta)^{-1} \left[ \((-\Delta)^{-1} V_{\ep}^p\)^{q_{\ep}-\frac{1}{p}} \((-\Delta)^{-1} V_{\ep}^p\)^{\frac{1}{p}} \right]
\end{aligned}
\quad \text{in } \Omega_{\ep}
\end{equation}
instead of introducing the Kelvin transform. The crucial fact here is that $q_{\ep} - \frac{1}{p} > 0$ for any small $\ep >0$,
which enables us to apply H\"older's inequality on the corresponding term.
Then a Brezis-Kato type estimate on this integral equation with the aid of the weighted Hardy-Littlewood-Sobolev inequality shows
\[\sup_{\ep > 0}\left\| |x|^{(n-2) - \frac{n+\delta}{b}} V_{\ep}(x)\right\|_{L^b(\Omega_{\ep} \cap (\R^n \setminus B^n(0,1)))} \le C\]
for a small fixed $\delta > 0$ and any large $b > 1$. By inserting it into \eqref{eq-ext-sol}, we derive
\[V_{\ep}(x) \le {C \over |x|^{n-2}} \quad \text{for all } x \in \Omega_{\ep} \cap (\R^n \setminus B^n(0,1)).\]
Putting this estimate into \eqref{eq-1-2}, we also obtain the sharp estimate of $U_{\ep}$.

\medskip
Next, we explain our strategy to verify that $(u_{\ep}, v_{\ep})$ is uniformly bounded near $\pa \Omega$, i.e., the blow-up point $x_0$ belongs to $\Omega$, which is the second main contribution of the paper.

\medskip \noindent
\textbf{$\star$ Proof of Theorem \ref{thm-1}: Uniform estimate near the boundary for $p \in [\frac{n}{n-2}, \frac{n+2}{n-2}]$}

\medskip
In \cite{G}, the moving plane method was used as a crucial tool in the proof of uniform boundedness of solutions $(u_{\ep}, v_{\ep})$ near $\pa \Omega$, which requires the convexity of the domain.
Here we will use a local Pohozaev-type identity near the blow-up point and the boundary behavior of the Green's function $G$ of the Dirichlet Laplacian $-\Delta$ in $\Omega$.
Our approach is applicable for any smooth bounded domain and can be divided into three steps.

\medskip
\noindent \textbf{Step 1}. We assume the contrary and formulate a local Pohozaev-type identity which we will use to derive a contradiction; see \eqref{eq-poho-0}.

\medskip
\noindent \textbf{Step 2}. The left-hand side of \eqref{eq-poho-0} is the sum of integrations involving derivatives of $(u_{\ep}, v_{\ep})$
whose domains are small circles centered at $x_{\ep}$.
It is estimated in terms of derivatives of the regular part $H$ of the Green's function $G$; see \eqref{eq-g-decom}.
By applying the gradient estimate \eqref{eq-hn} of $H$ near $\pa \Omega$, we obtain its lower bound.

\medskip
\noindent \textbf{Step 3}.
The right-hand side of \eqref{eq-poho-0} is the sum of integrations involving $(u_{\ep}, v_{\ep})$ themselves
whose domains are small circles centered at $x_{\ep}$.
Employing the decay estimate of rescaled least energy solutions, we get its upper bound.
It turns out that the upper bound does not match with the lower bound obtained in the previous step, so we get a contradiction.

\medskip
Because of technical reasons, the cases $p \in (\frac{n}{n-2}, \frac{n+2}{n-2}]$ and $p = \frac{n}{n-2}$ will be dealt with separately.

\medskip \noindent
\textbf{$\star$ Proof of Theorem \ref{thm-2}: Uniform estimate near the boundary for $p \in (\frac{2}{n-2}, \frac{n}{n-2})$}

\medskip
In this range of $p$, we must handle both the function $\wtg:\Omega \times \Omega \to \R$ defined by \eqref{eq-h2-2} and the Green's function $G$ together.

In \eqref{eq-h2-1}, we will define the $C^1$-regular part $\wth$ of the function $\wtg$, which plays a similar role to the regular part $H$ of the Green's function $G$.
However, deducing the information on the boundary behavior of $\wth$ is much more involved than getting that of $H$.
Here we will analyze $\wth$ by dividing the two cases according to the value of $p$ and carefully examining its representation formula in each case.
Once it is done, we can argue as in the case $p \in [\frac{n}{n-2}, \frac{n+2}{n-2}]$, but still a more careful treatment is needed.
Particularly, estimate of $v_{\ep}$ should be sharpened.

\medskip
Once we know that the blow-up should occur at an interior point of $\Omega$, i.e., $x_0 \in \Omega$, deducing Theorem \ref{thm-main} becomes a standard task.
Indeed, given the upper estimate of solutions $(u_{\ep}, v_{\ep})$ to \eqref{eq-main} and the fact that their maximum points are uniformly bounded away from $\pa \Omega$,
one can show that the $L^{\infty}$-normalizations of $(u_{\ep}, v_{\ep})$ converge to constant multiples of the Green's function $G$ or its relative $\wtg$, as stated in Theorem \ref{gue} (3).
Then, putting this information into Pohozaev-type identities, one can characterize the blow-up rate and location in the form of Theorem \ref{gue} (1) and (2).

\subsection{Related literatures.}
As already mentioned, if $p = q$ and $u = v$, system \eqref{eq-main} is reduced to a single equation \eqref{eq-LEF}.
For this problem, the asymptotic behavior as $\ep \to 0^+$ has been thoroughly studied in a series of papers.
Han \cite{H} and Rey \cite{R} studied asymptotic behavior of least energy solutions.
The papers of Bahri et al. \cite{BLR} and Rey \cite{R3} were devoted to asymptotic behavior of finite energy solutions.
Applying the Lyapunov-Schmidt reduction method or theory of critical points at infinity, Rey \cite{R, R3}, Bahri et al. \cite{BLR} and Musso and Pistoia \cite{MP} constructed multi-peak solutions.
We remark that many techniques developed for the study of \eqref{eq-LEF} do not work well for system \eqref{eq-main}.

If $p = 1$, problem \eqref{eq-main} is reduced to the biharmonic equation
\[\begin{cases}
(-\Delta)^2 u = u^q &\text{in } \Omega, \\
u >0 &\text{in } \Omega, \\
u = \Delta u = 0 &\text{on } \pa \Omega.
\end{cases}\]
Asymptotic behavior of least energy solutions as $q \to (\frac{n+4}{n-4})^-$ was studied by Chou and Geng \cite{CG}, Geng \cite{Ge}, Ben Ayed and El Mehdi \cite{BE} and El Mehdi \cite{El}.
Our argument is close to that in \cite{Ge}, but depends on the Pohozaev-type identity and \eqref{eq-h2} below more directly.

Before finishing this subsection, we mention the Brezis-Nirenberg type problem
\begin{equation}\label{eq-BN}
\begin{cases}
-\Delta u = v^p + \mu_1 v &\text{in } \Omega, \\
-\Delta v = u^q + \mu_2 u &\text{in } \Omega, \\
u,\, v > 0 &\text{in } \Omega, \\
u = v = 0 &\text{on } \pa \Omega,
\end{cases}
\end{equation}
where $(p,q)$ satisfies \eqref{eq-cr-hy} and $\mu_1, \mu_2 > 0$.
Hulshof et al. \cite{HMV} obtained nontrivial solutions to \eqref{eq-BN} for $0 < \mu_1 \mu_2 < \lambda_1 (\Omega)^2$
where $\lambda_1(\Omega)$ is the first eigenvalue of the Dirichlet Laplacian $-\Delta$ in $\Omega$.
Asymptotic behavior of least energy solutions as $(\mu_1, \mu_2) \to (0,0)$ was studied in \cite{G} provided that $\Omega$ is a convex smooth bounded domain.
We believe that the arguments presented in this paper can be used to remove the convexity assumption for problem \eqref{eq-BN}.

\subsection{Organization of the paper}
This paper is organized as follows.

In Section \ref{sec-pre}, we examine properties of the Green's function $G$, its relative $\wtg$ and their regular parts $H$ and $\wth$.

In Section \ref{sec-pre-2}, we show that least energy solutions $(u_{\ep}, v_{\ep})$ to \eqref{eq-main} with $q = q_{\ep}$ should blow-up as $\ep \to 0$.
We also study behavior of the blow-up rates and the blow-up points,
and derive a local Pohozaev-type identity which will be used as an indispensable tool throughout the paper.

In Section \ref{sec-dec}, we obtain a sharp decay estimate on rescaled functions of $(u_{\ep}, v_{\ep})$ for all $p \in (\frac{2}{n-2}, \frac{n+2}{n-2}]$, which is the content of Theorem \ref{thm-0}.

In Section \ref{sec-vep}, under the assumption that the blow-up point $x_{\ep}$ tends to $\pa \Omega$,
we express the asymptotic behavior of $v_{\ep}$ near $x_{\ep}$ as $\ep \to 0$ in terms of the Green's function $G$.

In Section \ref{sec-thm-11}, we prove Theorem \ref{thm-1} for the case that $p \in (\frac{n}{n-2}, \frac{n+2}{n-2}]$.
Under the assumption that $x_{\ep}$ tends to $\pa \Omega$, we describe the asymptotic behavior of $u_{\ep}$ near $x_{\ep}$ as $\ep \to 0$ in terms of the function $G$.
Then we derive a contradiction using the local Pohozaev-type identity.

In Section \ref{sec-thm-12}, we modify this argument to cover the case $p = \frac{n}{n-2}$, completing the proof of Theorem \ref{thm-1}.

In Section \ref{sec-thm-2}, we prove Theorem \ref{thm-2} which concerns when $p \in (\frac{2}{n-2}, \frac{n}{n-2})$.
To handle the case, we analyze the asymptotic behavior of $u_{\ep}$ near $x_{\ep}$ as $\ep \to 0$ more carefully.
Then a desired contradiction will be derived from the local Pohozaev-type identity.

In Appendices \ref{sec-app-1} and \ref{sec-app-2}, we deduce regularity and pointwise estimate of the regular part $\wth$ of the function $\wtg$ defined by \eqref{eq-h2-1} and \eqref{eq-h2-2}.

\subsection{Notations}
We list some notational conventions which will be used throughout the paper.

\medskip \noindent - $\{(u_{\ep}, v_{\ep})\}_{\ep >0}$ always represents a family of solutions to \eqref{eq-main} with $(p,q_{\ep})$ satisfying \eqref{eq-pq-e} and the least energy condition \eqref{eq-energy}.

\medskip \noindent - For $n \in \N$, let $\R^n_+ = \R^{n-1} \times (0, \infty)$, $\R^n_- = \R^{n-1} \times (-\infty, 0)$
and $B^n(x_0, r) = \{x \in \R^n: |x-x_0| < r\}$ for each $x_0 \in \R^n$ and $r > 0$.

\medskip \noindent - For $x \in \Omega$, we denote the distance from $x$ to $\pa \Omega$ by $\text{dist}(x,\pa \Omega)$ or $\mfd(x)$.

\medskip \noindent - For $x, y, z \in \mathbb{C}$, $\Gamma(z)$ is the Gamma function and $\text{B}(x,y) = \frac{\Gamma(x)\Gamma(y)}{\Gamma(x+y)}$ is the Beta function.

\medskip \noindent - $|\S^{n-1}| = 2\pi^{n/2}/\Gamma(\frac{n}{2})$ is the Lebesgue measure of the $(n-1)$-dimensional unit sphere $\S^{n-1}$.

\medskip \noindent - The surface measure is denoted as $dS_x$ where $x$ is the variable of the integrand.

\medskip \noindent - $C > 0$ is a generic constant that may vary from line to line.

\section{Green's function and its relatives}\label{sec-pre}
In this section, we are concerned with the Green's function $G$, its regular part $H$, the function $\wtg$ defined by \eqref{eq-h2-2} and its $C^1$-regular part $\wth$.
More precisely, we will obtain pointwise estimates of $G$ and $H$ that will be used throughout the paper, and those of $\wtg$ and $\wth$ that will be crucial when we consider the case $p \in (\frac{2}{n-2}, \frac{n}{n-2})$.

\subsection{Green's function and its regular part}\label{subsec-Green}
Let $G$ be the Green's function of the Laplacian $-\Delta$ in $\Omega$ with the Dirichlet boundary condition.
If $H: \Omega \times \Omega \to \R$ is the function satisfying
\[\begin{cases}
-\Delta_x H(x,y) = 0 &\text{for } x \in \Omega, \\
H(x,y) = \dfrac{c_n}{|x-y|^{n-2}} &\text{for } x \in \pa \Omega
\end{cases}\]
where $c_n := (n-2)^{-1}|\S^{n-1}|^{-1}$. Then $G$ can be divided into
\begin{equation}\label{eq-g-decom}
G(x,y) = G(y,x) = \frac{c_n}{|x-y|^{n-2}} - H(x,y) \quad \text{for all } (x,y) \in \Omega \times \Omega,\, x \ne y.
\end{equation}
Take a sufficiently small constant $\delta > 0$. Then, for any $x \in \Omega$ such that $\mfd(x) < \delta$, there exists the unique unit vector $\nu_x \in \S^{n-1}$ such that $x + \mfd(x)\nu_x \in \pa \Omega$.
If $x^* := x + 2\mfd(x)\nu_x$, then we have the following result.
\begin{lem}\label{lem-H-est}
There exists a constant $C > 0$ such that
\begin{equation}\label{eq-h1-1}
\left| H(x,y) - \frac{c_n}{|y-x^*|^{n-2}} \right| \le \frac{C\mfd(x)}{|y-x^*|^{n-2}},
\end{equation}
\begin{equation}\label{eq-h1-2}
\left| \nabla_x H(x,y) - \nabla_x \(\frac{c_n}{|y-x^*|^{n-2}}\) \right| \le \frac{C}{|y-x^*|^{n-2}},
\end{equation}
\begin{equation}\label{eq-h1-3}
\left| \nabla_y H(x,y) + \frac{(n-2)c_n(y-x^*)}{|y-x^*|^n} \right| \le \frac{C\mfd(x)}{\mfd(y)|y-x^*|^{n-2}}
\end{equation}
and
\begin{equation}\label{eq-h1-4}
\left|\nabla_x \nabla_y H(x,y) + (n-2)c_n \nabla_x \(\frac{y-x^*}{|y-x^*|^n}\) \right| \\
\le C\(\frac{1}{\mfd(y)|y-x^*|^{n-2}} + \frac{1}{|y-x^*|^{n-1}}\)
\end{equation}
for all $(x,y) \in \Omega \times \Omega$ such that $\mfd(x) < \delta$. In particular, if we choose $C > 0$ small enough, then it holds that
\begin{equation}\label{eq-hn}
\nu_x \cdot \nabla_x H(x,y)|_{y = x} \ge C\mfd(x)^{-(n-1)}
\end{equation}
for any $x \in \Omega$ with $\mfd(x) < \delta$.
\end{lem}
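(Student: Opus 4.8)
The plan is to compare $H$ near $\pa\Omega$ with its half-space model: if $\Omega$ were a half-space and $x^*$ the exact reflection of $x$ across $\pa\Omega$, then $H(x,y)=c_n|y-x^*|^{-(n-2)}$ identically. Since $\pa\Omega$ is $C^2$, the distance function $\mfd$ is $C^2$ on $\{\mfd<\delta\}$ for $\delta$ small, and $x^*=x+2\mfd(x)\nu_x=x-\nabla(\mfd^2)(x)$, so $x\mapsto x^*$ is $C^1$ with $Dx^*=I-2\nabla\mfd\otimes\nabla\mfd-2\mfd\,D^2\mfd$ uniformly bounded; as $\nabla\mfd=-\nu_x$ near $\pa\Omega$, this is a bounded perturbation of the reflection $R_x:=I-2\nu_x\otimes\nu_x$ across the tangent plane to $\pa\Omega$ at $\bx:=x+\mfd(x)\nu_x$. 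I will use three geometric facts, all standard consequences of the uniform ball condition for $C^2$ domains (after shrinking $\delta$): (a) $\mfd(x^*,\pa\Omega)=\mfd(x)$, hence $|y-x^*|\ge\mfd(x)$ for every $y\in\overline\Omega$; (b) $|y-x|$ and $|y-x^*|$ are comparable for $y\in\pa\Omega$; (c) $|(y-\bx)\cdot\nu_x|\le C|y-\bx|^2$ for $y\in\pa\Omega$ near $\bx$, and $\le\textnormal{diam}\,\Omega$ in general. Throughout $\mfd(x)<\delta$, and I use freely that $H$ is harmonic in each of its arguments (the second by $H(x,y)=H(y,x)$).

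To prove \eqref{eq-h1-1}, fix $x$ and note that $\Phi(y):=c_n|y-x^*|^{-(n-2)}$ is positive harmonic in $y\in\Omega$ since $x^*\notin\overline\Omega$. The difference $w(y):=H(x,y)-\Phi(y)$ is harmonic in $\Omega$, continuous on $\overline\Omega$, and equals $c_n|y-x|^{-(n-2)}-c_n|y-x^*|^{-(n-2)}$ on $\pa\Omega$; from $|y-x|^2-|y-x^*|^2=4\mfd(x)(y-\bx)\cdot\nu_x$ together with (b), (c) and the mean value theorem for $t\mapsto t^{-(n-2)/2}$ one gets $|w(y)|\le C\mfd(x)|y-x^*|^{-(n-2)}$ on $\pa\Omega$, and since the right side is harmonic in $\Omega$, the maximum principle applied to $\pm w$ minus it yields \eqref{eq-h1-1}. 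For \eqref{eq-h1-3}, $w$ is harmonic, so the interior gradient estimate on $B^n(y,\mfd(y)/2)\subset\Omega$ gives $|\nabla_yw(y)|\le C\mfd(y)^{-1}\sup_{B^n(y,\mfd(y)/2)}|w|$, and on that ball $|z-x^*|\ge|y-x^*|/2$ by (a), so \eqref{eq-h1-1} turns this into \eqref{eq-h1-3} (recall $\nabla_y\Phi=-(n-2)c_n(y-x^*)|y-x^*|^{-n}$).

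The genuinely delicate point is \eqref{eq-h1-2}, because $y\mapsto c_n|y-x^*(x)|^{-(n-2)}$ is harmonic in $y$ but \emph{not} in $x$. To recover harmonicity in $x$ I freeze the reflection: for the base point $x_0$ let $R_{x_0}$ be the affine reflection across the tangent plane at $\bx_0$, so $R_{x_0}(x_0)=x_0^*$ and $x\mapsto c_n|y-R_{x_0}(x)|^{-(n-2)}$ is harmonic in $x$ near $x_0$. Writing $h(x):=H(x,y)-c_n|y-R_{x_0}(x)|^{-(n-2)}$ and splitting off $H(x,y)-c_n|y-x^*(x)|^{-(n-2)}$ (bounded by \eqref{eq-h1-1}) from $c_n\bigl(|y-x^*(x)|^{-(n-2)}-|y-R_{x_0}(x)|^{-(n-2)}\bigr)$ (bounded using $|x^*(x)-R_{x_0}(x)|\le C|x-\bx_0|^2$, a $C^2$ estimate), I get $\sup_{B^n(x_0,\mfd(x_0)/2)}|h|\le C\mfd(x_0)|y-x_0^*|^{-(n-2)}$, so the Cauchy estimate gives $|\nabla h(x_0)|\le C|y-x_0^*|^{-(n-2)}$; finally, since $x^*(x_0)=R_{x_0}(x_0)=x_0^*$ while $\|Dx^*(x_0)-DR_{x_0}\|=2\mfd(x_0)\|D^2\mfd(x_0)\|\le C\mfd(x_0)$, the chain rule shows that passing from $R_{x_0}$ back to the true map $x\mapsto x^*(x)$ in $\nabla h(x_0)=\nabla_xH(x_0,y)-\nabla_x(c_n|y-R_{x_0}(x)|^{-(n-2)})|_{x_0}$ costs at most $C\mfd(x_0)|y-x_0^*|^{-(n-1)}\le C|y-x_0^*|^{-(n-2)}$ (by (a)), proving \eqref{eq-h1-2}. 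Then \eqref{eq-h1-4} follows: $c_n|y-x^*|^{-(n-2)}$, and hence $\nabla_x(c_n|y-x^*|^{-(n-2)})$ (a $y$-independent linear combination of $y$-derivatives of $|y-x^*|^{-(n-2)}$), is harmonic in $y$, so $E(x,y):=\nabla_xH(x,y)-\nabla_x(c_n|y-x^*|^{-(n-2)})$ is harmonic in $y$, and the interior gradient estimate on $B^n(y,\mfd(y)/2)$ combined with \eqref{eq-h1-2} gives $|\nabla_yE(x,y)|\le C\mfd(y)^{-1}|y-x^*|^{-(n-2)}$, which is stronger than \eqref{eq-h1-4}.

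For \eqref{eq-hn}, I would evaluate \eqref{eq-h1-2} at $y=x$: since $|x-x^*|=2\mfd(x)$ its error term is $\le C\mfd(x)^{-(n-2)}$, so it suffices to compute the model term $\nu_x\cdot\nabla_x(c_n|y-x^*(x)|^{-(n-2)})|_{y=x}$. By the chain rule this equals $-(n-2)c_n2^{1-n}\mfd(x)^{1-n}\,\nu_x^{T}Dx^*(x)\,\nu_x$, and using $Dx^*(x)=I-2\nu_x\otimes\nu_x-2\mfd(x)D^2\mfd(x)$ with $D^2\mfd(x)\,\nu_x=0$ (from $|\nabla\mfd|\equiv1$) gives $\nu_x^{T}Dx^*(x)\nu_x=-1$, so the model term is $(n-2)c_n2^{1-n}\mfd(x)^{-(n-1)}>0$; hence $\nu_x\cdot\nabla_xH(x,y)|_{y=x}\ge(n-2)c_n2^{1-n}\mfd(x)^{-(n-1)}-C\mfd(x)^{-(n-2)}\ge\tfrac12(n-2)c_n2^{1-n}\mfd(x)^{-(n-1)}$ once $\delta$ is small, which is \eqref{eq-hn}. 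The main obstacle will be \eqref{eq-h1-2}: the natural comparison function is harmonic in $y$ but not in $x$, forcing the detour through the frozen affine reflection and a quantitative control of $Dx^*(x_0)-DR_{x_0}=O(\mfd(x_0))$; keeping only $C^2$ regularity means the nearest-point projection may be differentiated at most once, which is exactly what this argument respects. Assembling the geometric facts (a)--(c) under bare $C^2$ regularity, and checking that every comparison function used as a barrier is genuinely harmonic on $\Omega$, are the remaining routine points.
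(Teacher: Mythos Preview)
Your argument is correct and follows what is essentially the standard route—maximum-principle comparison with the half-space model $c_n|y-x^*|^{-(n-2)}$, followed by interior gradient estimates for the harmonic remainder—which is precisely what the paper intends by citing \cite{ACP} for \eqref{eq-h1-1} and \eqref{eq-h1-2} and saying \eqref{eq-h1-3}--\eqref{eq-h1-4} are analogous. Your ``frozen reflection'' device for \eqref{eq-h1-2} (replacing $x\mapsto x^*(x)$ by the affine reflection $R_{x_0}$ so as to restore harmonicity in $x$, and then correcting by the $O(\mfd(x_0))$ discrepancy $Dx^*(x_0)-DR_{x_0}$) is exactly the kind of argument one finds in \cite{ACP}, and your observation that only one derivative of $x^*$ is needed keeps everything within $C^2$ regularity of $\pa\Omega$.

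The only cosmetic difference with the paper is the derivation of \eqref{eq-hn}: the paper sets $y=x$ in \eqref{eq-h1-3} and implicitly uses the symmetry $H(x,y)=H(y,x)$, which gives $\nabla_xH(x,y)|_{y=x}=\nabla_yH(x,y)|_{y=x}$, whereas you read it off directly from \eqref{eq-h1-2}. Both yield the same leading term $(n-2)c_n2^{1-n}\mfd(x)^{-(n-1)}$ and the same $O(\mfd(x)^{-(n-2)})$ error. Your bound for \eqref{eq-h1-4}, namely $C\mfd(y)^{-1}|y-x^*|^{-(n-2)}$, is indeed at least as sharp as the stated one since $\mfd(y)\le|y-x^*|$.
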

\begin{proof}
For the derivation of \eqref{eq-h1-1} and \eqref{eq-h1-2}, see the proof of Lemma A.1 of \cite{ACP}.
%It uses the $C^2$-regularity of $\pa \Omega$, the maximum principle and elliptic regularity.
Estimates \eqref{eq-h1-3} and \eqref{eq-h1-4} can be achieved in the same way.
Putting $y = x$ in \eqref{eq-h1-3}, we obtain \eqref{eq-hn}.
\end{proof}

\begin{cor}\label{cor-H-est}
For all $x \ne y \in \Omega$, there exists a constant $C > 0$ such that
\begin{equation}\label{eq-g1-9}
0 < G(x,y) < \frac{c_n}{|x-y|^{n-2}} \quad \text{and} \quad |\nabla_x G(x,y) | \le \frac{C}{|x-y|^{n-1}}.
\end{equation}
\end{cor}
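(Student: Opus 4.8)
The plan is to derive \eqref{eq-g1-9} from the splitting \eqref{eq-g-decom}, the maximum principle, and the pointwise control of $H$ given by Lemma \ref{lem-H-est}.

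For the two-sided bound on $G$, fix $y \in \Omega$ and note that by definition $H(\cdot,y)$ is harmonic on $\Omega$ and continuous up to $\overline{\Omega}$ with boundary values $c_n|z-y|^{2-n} > 0$ for $z \in \pa\Omega$; hence the minimum principle gives $H(x,y) > 0$ for all $x \in \Omega$, which is exactly $G(x,y) = c_n|x-y|^{2-n} - H(x,y) < c_n|x-y|^{2-n}$. For the lower bound I would invoke the standard argument: $G(\cdot,y)$ is harmonic and strictly positive on $\Omega \setminus \overline{B^n(y,r)}$ for every small $r > 0$ (since $G(\cdot,y) \to +\infty$ as $x \to y$) and vanishes on $\pa\Omega$, so the strong minimum principle forces $G(\cdot,y) > 0$ throughout $\Omega \setminus \{y\}$.

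For the gradient estimate, differentiating \eqref{eq-g-decom} reduces everything to proving $|\nabla_x H(x,y)| \le C|x-y|^{1-n}$ with $C$ independent of $x \ne y$, and I would split on $\mfd(x)$ using the small constant $\delta$ from Lemma \ref{lem-H-est}. If $\mfd(x) < \delta$, then \eqref{eq-h1-2} gives $|\nabla_x H(x,y)| \le C|y-x^*|^{1-n} + C|y-x^*|^{2-n}$, and the crux is the elementary estimate $|y-x^*| \ge c|x-y|$ valid for all $y \in \Omega$: one has $|y-x^*| \ge |x-y| - |x-x^*| = |x-y| - 2\mfd(x)$, while $\mathrm{dist}(x^*,\Omega) = \mfd(x)$ once $\delta$ is chosen small enough that $x$ lies in a uniform tubular neighborhood of $\pa\Omega$ (so that $|y-x^*| \ge \mfd(x)$); combining the two bounds according to whether $|x-y| \ge 4\mfd(x)$ or not yields the claim, and since $\Omega$ is bounded $|y-x^*|^{2-n} \le \text{diam}(\Omega)|y-x^*|^{1-n}$, so this case is done. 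If $\mfd(x) \ge \delta$, set $r = \min\{\delta/2, |x-y|/2\}$; then $\overline{B^n(x,r)} \subset \Omega$ and $H(\cdot,y)$ is harmonic on it, so the interior gradient estimate gives $|\nabla_x H(x,y)| \le \tfrac{n}{r}\sup_{\overline{B^n(x,r)}}|H(\cdot,y)|$, and since $0 \le H(\cdot,y) \le c_n|\cdot-y|^{2-n}$ on $\Omega$ by the previous step and $|z-y| \ge |x-y|/2$ on $\overline{B^n(x,r)}$, this is $\le \tfrac{C}{r}|x-y|^{2-n} \le C|x-y|^{1-n}$ because $r \ge \tfrac12\min\{\delta,|x-y|\}$ and $|x-y| \le \text{diam}(\Omega)$. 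Collecting the cases gives \eqref{eq-g1-9}; the only point requiring any care is the geometric inequality $|y-x^*| \ge c|x-y|$ near the boundary, the rest being the maximum principle and interior elliptic estimates.
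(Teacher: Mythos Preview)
Your proof is correct and follows the same approach the paper has in mind: the paper's own argument is in fact terser than yours, invoking the strong maximum principle for the two-sided bound and simply referring to Lemma~A.1 of \cite{ACP} for the gradient estimate, which is the source of Lemma~\ref{lem-H-est} as well. Your case-splitting on $\mfd(x)$, the geometric comparison $|y-x^*| \ge c|x-y|$, and the interior gradient estimate for $H$ away from the boundary are exactly the standard details behind that citation, and they are carried out without error.
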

\begin{proof}
The first estimate easily follows from the strong maximum principle. For the second estimate, refer to the proof of Lemma A.1 of \cite{ACP}.
\end{proof}

\subsection{The function $\wtg$ and its regular part $\wth$}
Recall the function $\wtg: \Omega \times \Omega \to \R$ defined by \eqref{eq-h2-2}.
We set its $C^1$-regular part $\wth: \Omega \times \Omega \to \R$ by
\begin{equation}\label{eq-h2-1}
\wth(x,y) = \begin{cases}
\dfrac{\gamma_1}{|x-y|^{(n-2)p-2}} - \wtg(x,y) &\text{if } p \in (\frac{2}{n-2}, \frac{n-1}{n-2}), \\
\dfrac{\gamma_1}{|x-y|^{(n-2)p-2}} - \dfrac{\gamma_2 H(x,y)}{|x-y|^{(n-2)p-n}} - \wtg(x,y) &\text{if } p \in [\frac{n-1}{n-2}, \frac{n}{n-2})
\end{cases}
\end{equation}
where
\begin{equation}\label{eq-gamma}
\gamma_1 := \frac{c_n^p}{[(n-2)p-2][n-(n-2)p]}
\quad \text{and} \quad
\gamma_2 := \frac{pc_n^{p-1}}{[(n-2)p-2(n-1)][n-(n-2)p]}.
\end{equation}

\begin{lem}\label{lem-wth-reg}
For each $y \in \Omega$, the function $x \in \Omega \to \wth(x,y)$ is contained in $C^1_{\textnormal{loc}}(\Omega)$.
\end{lem}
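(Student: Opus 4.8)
The plan is to realize $\wth(\cdot,y)$ as a distributional solution on all of $\Omega$ of a Poisson equation $-\Delta_x\wth(\cdot,y)=f_y$ whose right-hand side $f_y$ is locally bounded away from the diagonal and, near $x=y$, no more singular than $|x-y|^{-1+\vep}$ for some $\vep=\vep(n,p)>0$. Once this is established, the interior Calder\'on--Zygmund estimate yields $\wth(\cdot,y)\in W^{2,q}_{\textnormal{loc}}(\Omega)$ for some $q>n$, and Morrey's embedding $W^{2,q}_{\textnormal{loc}}\hookrightarrow C^{1,\alpha}_{\textnormal{loc}}$ gives $\wth(\cdot,y)\in C^1_{\textnormal{loc}}(\Omega)$.

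\emph{Step 1 (off the diagonal and the distributional equation).} Since $G>0$ on $\Omega\times\Omega$ and $t\mapsto t^p$ is smooth on $(0,\infty)$, the function $G(\cdot,y)^p$ is smooth on $\Omega\setminus\{y\}$, so interior elliptic regularity applied to \eqref{eq-h2-2} gives $\wtg(\cdot,y)\in C^\infty(\Omega\setminus\{y\})$. Also $H(\cdot,y)$ is harmonic, hence in $C^\infty(\Omega)$, and the explicit terms $|x-y|^{-((n-2)p-2)}$ and $H(x,y)|x-y|^{n-(n-2)p}$ subtracted in \eqref{eq-h2-1} are smooth in $\Omega\setminus\{y\}$; therefore $\wth(\cdot,y)\in C^\infty(\Omega\setminus\{y\})$ and only the behaviour at $x=y$ needs attention. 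Moreover $\wtg(\cdot,y)$ and the two subtracted pieces all have a singularity at $x=y$ strictly weaker than the fundamental solution $|x-y|^{2-n}$ (indeed $(n-2)p-2<n-2$ and $n-(n-2)p>0$), so $\wth(\cdot,y)\in L^1_{\textnormal{loc}}(\Omega)$, no concentrated mass appears at $y$, and $-\Delta_x\wth(\cdot,y)$ agrees on all of $\Omega$, in the distributional sense, with the function computed pointwise on $\Omega\setminus\{y\}$.

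\emph{Step 2 (the equation near the diagonal).} Writing $G(x,y)=c_n|x-y|^{2-n}\bigl(1-c_n^{-1}H(x,y)|x-y|^{n-2}\bigr)$ and Taylor expanding the bracket (legitimate because $H(\cdot,y)$ is bounded near $y$ and $|x-y|^{n-2}\to0$) one gets
\[ G(x,y)^p=c_n^p|x-y|^{-(n-2)p}-pc_n^{p-1}H(x,y)|x-y|^{-(n-2)(p-1)}+O\bigl(|x-y|^{-(n-2)(p-2)}\bigr). \]
On the other hand, using $\Delta_x|x-y|^a=a(n+a-2)|x-y|^{a-2}$, the harmonicity $\Delta_xH(\cdot,y)=0$ and the product rule,
\[ -\Delta_x\Bigl(\frac{\gamma_1}{|x-y|^{(n-2)p-2}}\Bigr)=c_n^p|x-y|^{-(n-2)p} \]
and
\[ -\Delta_x\Bigl(\frac{\gamma_2H(x,y)}{|x-y|^{(n-2)p-n}}\Bigr)=pc_n^{p-1}H(x,y)|x-y|^{-(n-2)(p-1)}-2\gamma_2\,\nabla_xH(x,y)\cdot\nabla_x|x-y|^{n-(n-2)p}, \]
the constants $\gamma_1,\gamma_2$ of \eqref{eq-gamma} being exactly those making the coefficients match the expansion of $G^p$. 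Subtracting according to \eqref{eq-h2-1}, the top-order singular terms cancel and, near $x=y$,
\[ -\Delta_x\wth(x,y)=\begin{cases} pc_n^{p-1}H(x,y)|x-y|^{-(n-2)(p-1)}+O\bigl(|x-y|^{-(n-2)(p-2)}\bigr), & p\in(\tfrac{2}{n-2},\tfrac{n-1}{n-2}),\\[2mm] 2\gamma_2\,\nabla_xH(x,y)\cdot\nabla_x|x-y|^{n-(n-2)p}+O\bigl(|x-y|^{-(n-2)(p-2)}\bigr), & p\in[\tfrac{n-1}{n-2},\tfrac{n}{n-2}).\end{cases} \]
Since $|\nabla_x|x-y|^{n-(n-2)p}|\lesssim|x-y|^{(n-1)-(n-2)p}$, it now suffices to observe that each of the exponents $(n-2)(p-1)$ (when $p<\tfrac{n-1}{n-2}$), $(n-2)p-(n-1)$ and $(n-2)(p-2)$ is strictly less than $1$ throughout $p\in(\tfrac{2}{n-2},\tfrac{n}{n-2})$ and all $n\ge3$; hence $|-\Delta_x\wth(x,y)|\le C|x-y|^{-1+\vep}$ near $y$, so $f_y:=-\Delta_x\wth(\cdot,y)\in L^q_{\textnormal{loc}}(\Omega)$ for some $q>n$, and the conclusion follows as explained in the first paragraph.

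\emph{Main obstacle.} The one genuinely delicate point is the cancellation in Step 2 together with the exponent bookkeeping ensuring that what survives is less singular than $|x-y|^{-1}$; this is precisely where the threshold $p=\tfrac{n-1}{n-2}$, and correspondingly the two-case definition of $\wth$ in \eqref{eq-h2-1}, is forced. For $p<\tfrac{n-1}{n-2}$ the term $|x-y|^{-(n-2)(p-1)}$ produced by $G^p$ already has exponent below $1$ and may be retained, whereas for $p\ge\tfrac{n-1}{n-2}$ it does not, so the additional correction $\gamma_2H(x,y)|x-y|^{n-(n-2)p}$ must be subtracted, trading it for the harmless gradient term $\nabla_xH\cdot\nabla_x|x-y|^{n-(n-2)p}$. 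Everything else — the pointwise Laplacian identities, the Calder\'on--Zygmund estimate and Morrey's embedding — is routine.
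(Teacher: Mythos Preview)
Your proposal is correct and follows essentially the same strategy as the paper's proof: compute $-\Delta_x\wth(\cdot,y)$ explicitly, show it lies in $L^{q}_{\textnormal{loc}}(\Omega)$ for some $q>n$, and invoke elliptic regularity. The paper obtains the pointwise bounds via the elementary inequalities of Lemma~\ref{lem-cal-1} rather than a Taylor expansion of $(1-s)^p$, but the resulting estimates and the exponent bookkeeping are identical; your Step~1, justifying that the distributional Laplacian agrees with the pointwise one across $x=y$, is a point the paper leaves implicit.
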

\begin{proof}
Its proof is deferred to Appendix \ref{sec-app-1}.
\end{proof}

The following result, which is analogous to \eqref{eq-hn}, is turned out to be highly nontrivial in general.
For the special case $p = 1$ in which the function $\wtg$ depends on $G$ linearly (see \eqref{eq-h2-2}), there is a simple proof due to Geng \cite[Proposition 2]{Ge}.
\begin{prop}\label{prop-h2}
For any $n \ge 5$ and $p \in (\frac{2}{n-2}, \frac{n}{n-2})$, there exist small constants $C > 0$ and $\delta > 0$ such that
\begin{equation}\label{eq-h2}
\nu_x \cdot \nabla_x \wth(x,y)|_{y = x} \ge C \mfd(x)^{1-(n-2)p}
\end{equation}
for $x \in \Omega$ with $\mfd(x) = \textnormal{dist}(x,\pa \Omega) < \delta$.
Here $\nu_x \in \S^{n-1}$ is the vector such that $x + \mfd(x)\nu_x \in \pa \Omega$.
\end{prop}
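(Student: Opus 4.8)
The plan is to estimate $\wth(x,y)$ near the boundary by comparing $\wtg(x,y)$ with the solution $\wtg_0$ of the analogous problem on a half-space, using the reflected point $x^*=x+2\mfd(x)\nu_x$ as in Lemma \ref{lem-H-est}. Fix $x\in\Omega$ with $\mfd(x)<\delta$ small. The key observation is that $\wth(x,y)$ is defined by subtracting from the explicit singular kernel(s) the function $\wtg(x,y)$, which solves $-\Delta_x\wtg(x,y)=G^p(x,y)$ with zero boundary data. Writing $G(x,y)=c_n|x-y|^{-(n-2)}-H(x,y)$ via \eqref{eq-g-decom} and expanding $G^p$ (this is where the two cases in \eqref{eq-h2-1} come from: when $p<\frac{n-1}{n-2}$ the cross term $|x-y|^{-(n-2)(p-1)}H$ is already integrable and contributes to the $C^1$-regular part, whereas when $p\ge\frac{n-1}{n-2}$ it is singular enough that it must be subtracted off explicitly, producing the $\gamma_2 H/|x-y|^{(n-2)p-n}$ term), one sees that $\wth(\cdot,y)$ satisfies, in $\Omega$, an elliptic equation whose right-hand side is a fixed combination of lower-order singular terms, and whose boundary values on $\pa\Omega$ are given by the trace of the explicit kernel(s). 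So the boundary behavior of $\wth$ is governed by a Poisson-type representation.

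The first step is therefore to derive this representation formula for $\wth(x,y)$ in each of the two ranges of $p$, expressing $\wth(x,y)$ as a harmonic extension (in $x$) of an explicit boundary datum plus a Newtonian-type potential of an explicit, mildly singular source. The second step is to analyze this formula near $y=x$ and near $\pa\Omega$: the dominant contribution should come from the harmonic-extension piece, which, as in the derivation of \eqref{eq-h1-1}--\eqref{eq-h1-4}, is well approximated by the corresponding half-space object built from the reflected point $x^*$. Concretely I expect
\[
\wth(x,y)\ \approx\ \wtg_0\bigl(|y-x^*|\bigr)\ +\ \text{(lower-order terms)},
\]
where $\wtg_0(r)$ is the explicit half-space profile, with the property that $\wtg_0(r)\sim c\,r^{2-(n-2)p}$ up to the admissible power; differentiating in $x$ and setting $y=x$ (so that $|y-x^*|=2\mfd(x)$) then yields the claimed lower bound $\nu_x\cdot\nabla_x\wth(x,y)|_{y=x}\gtrsim \mfd(x)^{1-(n-2)p}$, provided the sign of the leading coefficient is correct and the error terms are of strictly smaller order in $\mfd(x)$.

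The third step, which I expect to be the main obstacle, is precisely to pin down the sign and size of the leading coefficient in the half-space model, and to show it does not degenerate for any $p\in(\frac{2}{n-2},\frac{n}{n-2})$ and $n\ge5$. Unlike the Green's function case, where positivity of $H$ and its gradient follows cleanly from the maximum principle (Corollary \ref{cor-H-est}), here $\wth$ is built from a nonlinear ($p$-th power) nonlinearity and there is no monotonicity principle available; one is forced into an explicit computation of the half-space quantity
\[
\int_{\R^n_+}\frac{G_0^p(x,z)}{|y-z|^{n-2}}\,dz
\]
(or rather of its regularized version), which reduces to a one-dimensional integral that can be written in terms of Beta functions, and ultimately to an inequality for the Gauss hypergeometric function $_2F_1$ — exactly the reduction announced in the Remark after Theorem \ref{thm-main}. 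So the proof will split along $p<\frac{n-1}{n-2}$ versus $p\ge\frac{n-1}{n-2}$, and for the latter in low dimensions will require the computer-assisted verification of \eqref{eq-as-1} recorded in the supplement \cite{CK2}; for $p<\frac{n-1}{n-2}$ (resp. $n\ge100$) the inequality \eqref{eq-as-0} (resp. \eqref{eq-as-1}) can be checked analytically, as done in Appendix \ref{subsec-ver}. The remaining bookkeeping — controlling the potential term and the boundary-layer errors by $o(\mfd(x)^{1-(n-2)p})$ using the pointwise bounds of Lemma \ref{lem-H-est} and the regularity of $\wth$ from Lemma \ref{lem-wth-reg} — is routine once the representation formula is in hand.
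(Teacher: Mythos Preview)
Your proposal is correct and follows essentially the same route as the paper: reduce \eqref{eq-h2} to a half-space model by rescaling (the paper does this via $\mch_\kappa(z)=\kappa^{(n-2)p-2}\wth(\kappa z,\kappa e_n)$ and proves $\mch_\kappa\to\mch_0$ in $C^1$, Lemma~\ref{lem-hd}), then express the sign condition $-\partial_{z_n}\mch_0(e_n)>0$ via Green's representation as the explicit integral inequalities \eqref{eq-as-0}/\eqref{eq-as-1}, and finally verify those by hand for $p<\tfrac{n-1}{n-2}$ and for $n\ge100$, and by computer for $5\le n\le 99$ with $p\in[\tfrac{n-1}{n-2},\tfrac{n}{n-2})$. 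The only caveat is that what you call ``routine bookkeeping'' --- the $C^1$ convergence of the rescaled problem to the half-space one --- is in fact the longest part of the argument in the paper (five separate error terms $\mce_{\kappa 11},\ldots,\mce_{\kappa 15}$ plus a boundary piece $\mce_{\kappa 2}$, each requiring careful pointwise control near and away from the singularity), and the verification of \eqref{eq-as-1} for large $n$ involves a rather delicate splitting and change of variables (Lemmas~\ref{lem-mcx-1}--\ref{lem-mcy}) rather than a clean reduction to a single Beta-function identity.
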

\begin{proof}
We postpone the proof until Appendix \ref{sec-app-2}.
\end{proof}

\section{Preliminary results concerning blow-up}\label{sec-pre-2}
For a family of least energy solutions $\{(u_{\ep}, v_{\ep})\}_{\ep >0}$ to \eqref{eq-main}, we set the blow-up rate $\lambda_{\ep}$ and the blow-up point $x_{\ep}$ as in \eqref{eq-lambda}.
\begin{lem}\label{lem-lam}
It holds that
\[\lim_{\ep \to 0} \lambda_{\ep}\textnormal{dist}(x_{\ep},\pa\Omega) = \infty \quad \text{and} \quad \lim_{\ep \to 0} \lambda_{\ep}^{\ep} = 1.\]
\end{lem}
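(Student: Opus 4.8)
The plan is to extract both limits from the rescaled pair $(U_\ep,V_\ep)$ of \eqref{eq-tuv} together with the least-energy identity \eqref{eq-energy}, in three moves: first some scaling bookkeeping, then $\lambda_\ep\,\textnormal{dist}(x_\ep,\pa\Omega)\to\infty$ by a half-space blow-up argument, and finally $\lambda_\ep^\ep\to1$ by comparing masses. \emph{Scaling preliminaries.} Testing the two equations in \eqref{eq-main} against $v_\ep$ and $u_\ep$ gives $\int_\Omega v_\ep^{p+1}\,dx=\int_\Omega u_\ep^{q_\ep+1}\,dx$, and since $|\Delta u_\ep|^{(p+1)/p}=v_\ep^{p+1}$, \eqref{eq-energy} then yields $\int_\Omega u_\ep^{q_\ep+1}\,dx=S_\ep(\Omega)^{1/\theta_\ep}\to S^{1/\theta_0}>0$, where $\theta_\ep:=1-\frac{p+1}{p(q_\ep+1)}=\frac{pq_\ep-1}{p(q_\ep+1)}$. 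A standard elliptic bootstrap then forces $\lambda_\ep^{\alpha_\ep}=\|u_\ep\|_{L^\infty(\Omega)}\to\infty$: were it bounded along a subsequence, $(u_\ep,v_\ep)$ would converge in $C^2(\overline\Omega)$ to a nonnegative solution of \eqref{eq-main} with $q=q_0$, which on a bounded domain must be trivial, contradicting $\int_\Omega u_\ep^{q_\ep+1}\to S^{1/\theta_0}$. Using $p\beta_\ep=\alpha_\ep+2$ and $q_\ep\alpha_\ep=\beta_\ep+2$ one has $\alpha_\ep(q_\ep+1)=\beta_\ep(p+1)=:n+\delta_\ep$; \eqref{eq-cr-hy} gives $\delta_0=0$, and since $q\mapsto\frac{q+1}{pq-1}$ is strictly decreasing while $q_\ep<q_0$ and $q_\ep\to q_0$ smoothly, $\delta_\ep>0$ for small $\ep$ with $\delta_\ep/\ep\to c_0>0$. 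Finally a change of variables gives $\int_\Omega u_\ep^{q_\ep+1}\,dx=\lambda_\ep^{\delta_\ep}\int_{\Omega_\ep}U_\ep^{q_\ep+1}\,dx$ and $\int_\Omega v_\ep^{p+1}\,dx=\lambda_\ep^{\delta_\ep}\int_{\Omega_\ep}V_\ep^{p+1}\,dx$, so, as $\lambda_\ep^{\delta_\ep}>1$, both $\int_{\Omega_\ep}U_\ep^{q_\ep+1}\,dx$ and $\int_{\Omega_\ep}V_\ep^{p+1}\,dx$ are bounded uniformly in $\ep$.

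\emph{Interior blow-up.} Suppose, for contradiction, that $\lambda_\ep\,\textnormal{dist}(x_\ep,\pa\Omega)\to\ell\in[0,\infty)$ along a subsequence; since $\pa\Omega\in C^2$ and $\lambda_\ep\to\infty$, the rescaled domains $\Omega_\ep$ then converge, locally in $C^2$ near the origin, to a half-space $\mch\subset\R^n$ with $0$ at distance $\ell$ from $\pa\mch$. Using $0\le U_\ep\le1$, the equation $-\Delta V_\ep=U_\ep^{q_\ep}$ with $V_\ep=0$ on $\pa\Omega_\ep$, the uniform integral bounds above, and elliptic $L^s$- and Schauder estimates up to the flattening boundary, one gets $(U_\ep,V_\ep)\to(U_0,V_0)$ in $C^2_{\textnormal{loc}}(\overline{\mch})$ along a further subsequence, where $(U_0,V_0)$ is a bounded nonnegative solution of $-\Delta U_0=V_0^p$, $-\Delta V_0=U_0^{q_0}$ in $\mch$, $U_0=V_0=0$ on $\pa\mch$, $U_0(0)=1=\max_{\mch}U_0$, and by Fatou $\int_{\mch}U_0^{q_0+1}+\int_{\mch}V_0^{p+1}<\infty$. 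The strong maximum principle gives $U_0,V_0>0$ in $\mch$; but then a moving-plane argument for this cooperative system on the half-space (equivalently, a Liouville-type non-existence theorem for the critical Lane--Emden system on a half-space with zero Dirichlet data) shows $U_0$ cannot attain its maximum at the interior point $0$, a contradiction. Hence $\lambda_\ep\,\textnormal{dist}(x_\ep,\pa\Omega)\to\infty$; consequently $B^n(0,R)\subset\Omega_\ep$ for every $R$ once $\ep$ is small, and the same elliptic-estimate scheme gives $(U_\ep,V_\ep)\to(U_0,V_0)$ in $C^2_{\textnormal{loc}}(\R^n)$ with $(U_0,V_0)$ solving \eqref{eq-entire}; by \cite{CLO,HV2} together with the finite-energy bound it is an extremal of \eqref{eq-Sob}, so $\int_{\R^n}U_0^{q_0+1}\,dx=S^{1/\theta_0}$.

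\emph{The limit $\lambda_\ep^\ep\to1$.} Since $\lambda_\ep\to\infty$ we have $\lambda_\ep^\ep>1$ for small $\ep$, so it suffices to prove $\ep\log\lambda_\ep\to0$, which, as $\delta_\ep/\ep\to c_0>0$, amounts to $\lambda_\ep^{\delta_\ep}\to1$. Fix a small $r>0$. From $u_\ep\to0$ in $C_{\textnormal{loc}}(\overline\Omega\setminus\{x_0\})$ and $\int_\Omega u_\ep^{q_\ep+1}\to S^{1/\theta_0}$ one gets $\int_{B^n(x_\ep,r)\cap\Omega}u_\ep^{q_\ep+1}\,dx\to S^{1/\theta_0}$, hence, rescaling, $\lambda_\ep^{\delta_\ep}\int_{B^n(0,\lambda_\ep r)\cap\Omega_\ep}U_\ep^{q_\ep+1}\,dx\to S^{1/\theta_0}$. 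For the remaining factor: for each fixed $R$, $B^n(0,R)\subset B^n(0,\lambda_\ep r)\cap\Omega_\ep$ for $\ep$ small and $U_\ep^{q_\ep+1}\to U_0^{q_0+1}$ uniformly on $B^n(0,R)$, so $\liminf_{\ep\to0}\int_{B^n(0,\lambda_\ep r)\cap\Omega_\ep}U_\ep^{q_\ep+1}\,dx\ge\int_{B^n(0,R)}U_0^{q_0+1}\,dx$, and letting $R\to\infty$ the liminf is $\ge S^{1/\theta_0}$; on the other hand, since $\lambda_\ep^{-\delta_\ep}<1$, $\int_{B^n(0,\lambda_\ep r)\cap\Omega_\ep}U_\ep^{q_\ep+1}\,dx=\lambda_\ep^{-\delta_\ep}\int_{B^n(x_\ep,r)\cap\Omega}u_\ep^{q_\ep+1}\,dx\le\int_{B^n(x_\ep,r)\cap\Omega}u_\ep^{q_\ep+1}\,dx\to S^{1/\theta_0}$, so the limsup is $\le S^{1/\theta_0}$. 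Therefore $\int_{B^n(0,\lambda_\ep r)\cap\Omega_\ep}U_\ep^{q_\ep+1}\,dx\to S^{1/\theta_0}$, and combined with the previous display this forces $\lambda_\ep^{\delta_\ep}\to1$.

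I expect the genuine obstacle to be the blow-up limit on the half-space in the second step: justifying the $C^2$-convergence up to the flattening boundary (and, when $0<p<1$, coping with the limited Hölder regularity of $v\mapsto v^p$ in the bootstrap) and then invoking the half-space Liouville property for the critical Lane--Emden system under Dirichlet conditions. The scaling preliminaries and the mass comparison in the first and third steps are, by contrast, routine once these facts are in hand.
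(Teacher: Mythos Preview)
Your argument follows the standard route to which the paper simply defers (Lemma~4.3 of \cite{CK}): rescale, rule out a half-space limit by a Liouville/monotonicity property, then recover $\lambda_\ep^{\delta_\ep}\to1$ from the mass identity. Two small points deserve tightening. The claim that a $C^2$ limit of $(u_\ep,v_\ep)$ on $\Omega$ with $q=q_0$ ``must be trivial'' is false for general smooth domains; the correct contradiction is that a nontrivial limit would realise the constant $S$ in \eqref{eq-Sob} on a bounded domain, which is impossible, while a trivial limit contradicts $\int_\Omega u_\ep^{q_\ep+1}\to S^{1/\theta_0}$. In the last step you needlessly invoke $u_\ep\to0$ in $C_{\textnormal{loc}}(\overline\Omega\setminus\{x_0\})$: working on all of $\Omega_\ep$ rather than $B^n(0,\lambda_\ep r)$, the identity $\lambda_\ep^{\delta_\ep}\int_{\Omega_\ep}U_\ep^{q_\ep+1}=\int_\Omega u_\ep^{q_\ep+1}\to S^{1/\theta_0}$ together with Fatou and the Sobolev inequality (which is what forces $\int_{\R^n}U_0^{q_0+1}=S^{1/\theta_0}$, not \cite{CLO,HV2}) already gives $\limsup\lambda_\ep^{\delta_\ep}\le1$, and $\lambda_\ep^{\delta_\ep}\ge1$ is immediate from $\delta_\ep>0$. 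Your self-assessment that the half-space Liouville step is the substantive content is accurate; for the cooperative system with $U_0\le1$ and finite energy, either a Pohozaev identity on $\mch$ or moving-plane monotonicity in the $x_n$-direction suffices.
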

\begin{proof}
Consult the proof of Lemma 4.3 of \cite{CK}. It works in our case as well, once the order $s$ of the fractional Laplacian $(-\Delta)^s$ is taken to be $1$.
\end{proof}
\noindent We set $d_{\ep} = \frac{1}{4} \text{dist}(x_{\ep},\pa\Omega)$ and $\Lambda_{\ep} = d_{\ep} \lambda_{\ep}$. Then, we see from Lemma \ref{lem-lam} that
\[ \Lambda_{\ep} \to \infty \quad \text{and} \quad \Omega_{\ep} = \lambda_{\ep} (\Omega - x_{\ep}) \to \R^n \quad \text{as } \ep \to 0.\]
By elliptic regularity, $(U_{\ep}, V_{\ep})$ in \eqref{eq-tuv} converges to a solution $(U_0, V_0) \in L^{q_0+1}(\R^n) \times L^{p+1}(\R^n)$ to \eqref{eq-entire} in $C_{\text{loc}}^2(\R^n)$.
By the result of Chen et al. \cite{CLO}, $U_0$ and $V_0$ are radially symmetric for $p \ge 1$.
In addition, Hulshof and Van der Vorst \cite{HV2} showed that if $(U_0, V_0)$ is a ground state to \eqref{eq-entire}, there exist positive numbers $a$, $b_1$, $b_2$ and $b_3$ such that
\[\lim_{r \to \infty} r^{n-2} V_0(r) = a
\quad \text{and} \quad
\begin{cases}
\lim\limits_{r \to \infty} r^{n-2} U_0(r) = b_1 &\text{if } p \in (\frac{n}{n-2}, \frac{n+2}{n-2}], \\
\lim\limits_{r \to \infty} \dfrac{r^{n-2}}{\log r} U_0(r) = b_2 &\text{if } p = \frac{n}{n-2}, \\
\lim\limits_{r \to \infty} r^{(n-2)p-2} U_0(r) = b_3 &\text{if } p \in (\frac{2}{n-2}, \frac{n}{n-2}).
\end{cases}\]
Given that $p \ge 1$ and $p \in (\frac{2}{n-2}, \frac{n+2}{n-2}]$, it was also proved in Lemma 2.3 of \cite{G} that
\[U_{\ep} \le C U_0 \quad \text{and} \quad V_{\ep} \le CV_0 \quad \text{in } \Omega_{\ep}\]
for some constant $C > 0$ and all small $\ep > 0$.

We conclude this section with a local Pohozaev-type identity for problem \eqref{eq-main}.
\begin{lem}\label{lem-poho}
Suppose that $(u,v) \in C^2 (\Omega) \times C^2 (\Omega)$ is a solution of \eqref{eq-main} and $D$ is an arbitrary smooth open subset of $\Omega$.
Then, for $1 \le j \le n$,
\begin{multline}\label{eq-poho-1}
-\int_{\pa D} \( \frac{\pa u}{\pa \nu} \frac{\pa v}{\pa x_j} + \frac{\pa v}{\pa \nu} \frac{\pa u}{\pa x_j} \) dS_x + \int_{\pa D} (\nabla u \cdot \nabla v) \nu_j\, dS_x \\
= \frac{1}{p+1} \int_{\pa D} v^{p+1} \nu_j\, dS_x + \frac{1}{q+1} \int_{\pa D} u^{q+1} \nu_j\, dS_x
\end{multline}
where the map $\nu = (\nu_1, \cdots, \nu_n): \pa D \to \R^n$ is the outward pointing unit normal vector on $\pa D$.
\end{lem}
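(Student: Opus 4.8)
The plan is to prove \eqref{eq-poho-1} by the standard device of testing each of the two equations against a first-order partial derivative of the companion unknown and integrating by parts over $D$; this is the ``translation'' counterpart of the familiar scaling Pohozaev identity. Throughout one may assume $\overline{D} \subset \Omega$ (the only case needed later), so that $u, v \in C^2(\overline{D})$ and every integrand below is continuous up to $\pa D$, making the divergence theorem applicable. First I would multiply $-\Delta u = v^p$ by $\pa_j v$ and $-\Delta v = u^q$ by $\pa_j u$, integrate each identity over $D$, and add them. On the right-hand side the bulk integrands are exact derivatives, namely $v^p\, \pa_j v = \frac{1}{p+1} \pa_j(v^{p+1})$ and $u^q\, \pa_j u = \frac{1}{q+1} \pa_j(u^{q+1})$, so the divergence theorem converts $\int_D \pa_j(v^{p+1})\, dx$ and $\int_D \pa_j(u^{q+1})\, dx$ into the surface integrals $\int_{\pa D} v^{p+1} \nu_j\, dS_x$ and $\int_{\pa D} u^{q+1} \nu_j\, dS_x$ appearing on the right of \eqref{eq-poho-1}.

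For the left-hand sides I would integrate by parts once, using the identity $-\int_D \Delta u\, \phi\, dx = \int_D \nabla u \cdot \nabla \phi\, dx - \int_{\pa D} \frac{\pa u}{\pa \nu} \phi\, dS_x$ with $\phi = \pa_j v$, together with the symmetric identity obtained by interchanging $u$ and $v$ and taking $\phi = \pa_j u$. Adding the two produces the bulk term $\int_D \big[ \nabla u \cdot \pa_j(\nabla v) + \nabla v \cdot \pa_j(\nabla u) \big]\, dx$ plus the boundary term $-\int_{\pa D} \big( \frac{\pa u}{\pa \nu} \frac{\pa v}{\pa x_j} + \frac{\pa v}{\pa \nu} \frac{\pa u}{\pa x_j} \big)\, dS_x$. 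Since $u, v \in C^2$, the equality of mixed second derivatives gives $\nabla u \cdot \pa_j(\nabla v) + \nabla v \cdot \pa_j(\nabla u) = \pa_j(\nabla u \cdot \nabla v)$, so one more application of the divergence theorem rewrites the bulk term as $\int_{\pa D} (\nabla u \cdot \nabla v) \nu_j\, dS_x$. Equating the resulting expression with the one found in the previous paragraph yields exactly \eqref{eq-poho-1}.

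The argument is entirely elementary and I foresee no genuine obstacle. The only points deserving a word of care are the symmetric bookkeeping --- one must keep the two equations on an equal footing so that the boundary pieces assemble precisely into the symmetric combination $\frac{\pa u}{\pa \nu} \frac{\pa v}{\pa x_j} + \frac{\pa v}{\pa \nu} \frac{\pa u}{\pa x_j}$ in \eqref{eq-poho-1} rather than an asymmetric one --- and the (harmless) regularity remark above, namely that the divergence theorem is applied on subdomains whose closure is contained in $\Omega$, where $u$ and $v$ are of class $C^2$.
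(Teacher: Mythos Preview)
Your proposal is correct and follows essentially the same approach as the paper: multiply $-\Delta u = v^p$ by $\pa_j v$ and $-\Delta v = u^q$ by $\pa_j u$, integrate over $D$, integrate by parts once to obtain the two intermediate identities, then add and apply one more integration by parts to convert $\int_D \pa_j(\nabla u \cdot \nabla v)\,dx$ into the boundary term. There is nothing to add.
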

\begin{proof}
Multiplying the first equation of \eqref{eq-main} by $\frac{\pa v}{\pa x_j}$ and the second equation by $\frac{\pa u}{\pa x_j}$,
integrating the results over the set $D$ and performing integration by parts, we obtain
\begin{equation}\label{eq-poho-2}
-\int_{\pa D} \frac{\pa u}{\pa \nu} \frac{\pa v}{\pa x_j} dS_x + \int_D \nabla u \cdot \frac{\pa \nabla v}{\pa x_j}\, dx = \frac{1}{p+1} \int_{\pa D} v^{p+1} \nu_j\, dS_x
\end{equation}
and
\begin{equation}\label{eq-poho-3}
-\int_{\pa D} \frac{\pa v}{\pa \nu} \frac{\pa u}{\pa x_j} dS_x + \int_D \nabla v \cdot \frac{\pa \nabla u}{\pa x_j}\, dx = \frac{1}{q+1} \int_{\pa D} v^{q+1} \nu_j\, dS_x.
\end{equation}
By combining \eqref{eq-poho-2} and \eqref{eq-poho-3}, and integrating by parts, we deduce \eqref{eq-poho-1}.
\end{proof}

\section{Proof of Theorem \ref{thm-0}}\label{sec-dec}
This section is devoted to the derivation of the next result.
\begin{prop}\label{prop-dec}
Suppose that $n \ge 3$ and $p \in (\frac{2}{n-2}, \frac{n+2}{n-2}]$.
For rescaled solutions $(U_{\ep}, V_{\ep})$ defined in \eqref{eq-tuv}, we have
\begin{equation}\label{eq-4-3}
V_{\ep}(x) \le \frac{C}{1+|x|^{n-2}} \quad \text{and} \quad
\begin{cases}
U_{\ep}(x) \le \dfrac{C}{1+|x|^{n-2}} &\text{if } p \in (\frac{n}{n-2}, \frac{n+2}{n-2}],
\\
U_{\ep}(x) \le \dfrac{C \log (1+|x|)}{1+|x|^{n-2}} &\text{if } p = \frac{n}{n-2},
\\
U_{\ep}(x) \le \dfrac{C}{1+|x|^{(n-2)p-2}} &\text{if } p \in (\frac{2}{n-2}, \frac{n}{n-2})
\end{cases}
\end{equation}
provided $\ep > 0$ sufficiently small.
\end{prop}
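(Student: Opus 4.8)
The plan is to prove \eqref{eq-4-3} by combining the $C^2_{\textnormal{loc}}$-convergence $(U_\ep,V_\ep)\to(U_0,V_0)$, which is recorded in Section \ref{sec-pre-2}, with a weighted Brezis--Kato iteration capturing the uniform decay as $|x|\to\infty$. On any fixed ball $B^n(0,R)$ the estimates in \eqref{eq-4-3} follow at once from this convergence, so it suffices to prove them on $\Omega_\ep\setminus B^n(0,1)$ with constants independent of $\ep$. Three a priori inputs hold on all of $\Omega_\ep$: (i) the least-energy condition \eqref{eq-energy} together with $\lambda_\ep^\ep\to1$ (Lemma \ref{lem-lam}) gives, after rescaling, the uniform bounds $\|V_\ep\|_{L^{p+1}(\Omega_\ep)}+\|U_\ep\|_{L^{q_\ep+1}(\Omega_\ep)}\le C$; (ii) by \eqref{eq-1-2}, $0\le U_\ep\le1$ on $\Omega_\ep$; and (iii) comparing the Dirichlet Green's function of $\Omega_\ep$ with the Newtonian kernel (Corollary \ref{cor-H-est}) gives $U_\ep\le c_n I_2[V_\ep^p\chi_{\Omega_\ep}]$ and $V_\ep\le c_n I_2[U_\ep^{q_\ep}\chi_{\Omega_\ep}]$ in $\Omega_\ep$, where $I_2g(x):=\int_{\R^n}|x-y|^{2-n}g(y)\,dy$. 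The hypothesis $p>\frac{2}{n-2}$ is precisely $\frac{p+1}{p}<\frac n2$, which ensures every Riesz potential below acts between (weighted) Lebesgue spaces with a strictly positive gap, so all quantities are finite.

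Next I would pass to the nonlocal reformulation \eqref{eq-ext-sol}. Since the curve $pq=1$ never meets the critical hyperbola \eqref{eq-cr-hy} while $pq_0>1$ at the endpoint $p=q_0=\frac{n+2}{n-2}$, continuity forces $pq_0>1$ for every admissible $p$, hence $q_\ep-\frac1p>0$ for all small $\ep$. Factoring $U_\ep^{q_\ep}=U_\ep^{q_\ep-\frac1p}\cdot U_\ep^{\frac1p}$ with the \emph{positive} exponent $q_\ep-\frac1p$ is what permits H\"older's inequality to be used on this term; in particular, it never produces the negative power $p-1$ of $V_\ep^*$ that obstructs the Kelvin-transform argument of \cite{G} (cf. \eqref{eq-1-5}) when $p<1$. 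Combined with (ii)--(iii) this yields a self-contained inequality for $V_\ep$, schematically $V_\ep\le C\,I_2\!\big[\,U_\ep^{q_\ep-\frac1p}\,\big(I_2[V_\ep^p\chi_{\Omega_\ep}]\big)^{1/p}\chi_{\Omega_\ep}\big]$ in $\Omega_\ep$, in which $U_\ep$ appears only through the factor $U_\ep^{q_\ep-1/p}$ (with positive, a priori bounded exponent, bootstrapped together with $V_\ep$) and through the benign factor $\big(I_2[V_\ep^p]\big)^{1/p}$.

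The core of the proof is a weighted Brezis--Kato iteration on this inequality. For a fixed small $\delta>0$ and $b\ge p+1$, consider the weighted norm $\big\|\,|x|^{(n-2)-\frac{n+\delta}{b}}V_\ep\,\big\|_{L^b(\Omega_\ep\setminus B^n(0,1))}$, which for $b=p+1$ is controlled by (i). Using this inequality, the weighted Hardy--Littlewood--Sobolev (Stein--Weiss) inequality $\big\|\,|x|^{-\beta}I_2g\,\big\|_{L^s}\le C\big\|\,|x|^{\alpha}g\,\big\|_{L^r}$ in its admissible range of exponents, and H\"older's inequality applied to the product $U_\ep^{q_\ep-1/p}\big(I_2[V_\ep^p]\big)^{1/p}$, I would show that a uniform bound on the weighted $L^b$-norm of $V_\ep$ upgrades to a uniform bound on the weighted $L^{b'}$-norm for a $b'$ larger than $b$ by a fixed amount. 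The slack afforded by $\delta$, the uniform energy bound, and the positivity $q_\ep-\frac1p>0$ are exactly what make the scaling and admissibility constraints of the Stein--Weiss inequality compatible with this gain at every step. After finitely many iterations the integrability exponent crosses $\frac n2$, and then $I_2$ maps into a weighted $L^\infty$; inserting the outcome back into \eqref{eq-ext-sol} and splitting the integrals over the regions $\{|y|\le1\}$ (handled by (i)--(ii)), $\{1<|y|<\tfrac12|x|\}$, $\{\tfrac12|x|\le|y|\le2|x|\}$ and $\{|y|>2|x|\}$ yields $V_\ep(x)\le C|x|^{2-n}$ on $\Omega_\ep\setminus B^n(0,1)$, hence $V_\ep(x)\le C(1+|x|^{n-2})^{-1}$ on all of $\Omega_\ep$.

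It remains to estimate $U_\ep$. Plugging $V_\ep(y)\le C(1+|y|^{n-2})^{-1}$ into $U_\ep\le c_n I_2[V_\ep^p\chi_{\Omega_\ep}]$ reduces this to evaluating
\[
\int_{\R^n}\frac{dy}{\big(1+|y|^{(n-2)p}\big)\,|x-y|^{n-2}}
\]
for $|x|$ large, once more by the near/intermediate/far splitting. Its size is governed by the sign of $n-(n-2)p$: it is $O\big(|x|^{2-n}\big)$ if $p>\frac{n}{n-2}$, $O\big(|x|^{2-n}\log|x|\big)$ if $p=\frac{n}{n-2}$, and $O\big(|x|^{2-(n-2)p}\big)$ if $p<\frac{n}{n-2}$, which are exactly the three cases of \eqref{eq-4-3}. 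I expect the main obstacle to be the iteration step: since $\Omega_\ep$ is unbounded and varies with $\ep$, the decay at infinity cannot be obtained from a fixed large ball and must be tracked through the weights, and the rigid scaling relations of the Stein--Weiss inequality have to be reconciled with a uniform-in-$\ep$ gain of integrability — this is where $\delta$, the a priori bounds (i)--(ii), and $q_\ep-\frac1p>0$ all have to be used in concert.
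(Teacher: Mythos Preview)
Your overall architecture matches the paper's proof closely: the nonlocal reformulation \eqref{eq-ext-sol}, the factoring with the positive exponent $q_\ep-\tfrac1p$, the weighted Stein--Weiss inequality, the specific weighted norm $\big\||x|^{(n-2)-\frac{n+\delta}{b}}V_\ep\big\|_{L^b}$, and the final near/intermediate/far splitting to pass from weighted integrability to pointwise decay are all exactly what the paper does.

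There is, however, a genuine gap in your ``iteration step.'' You claim that a bound on the weighted $L^b$-norm upgrades to a weighted $L^{b'}$-norm with $b'>b$ by a fixed amount. At the critical scaling this is false: when you run H\"older plus Stein--Weiss on $V_\ep\le C\,I_2\big[F_\ep\cdot(I_2[V_\ep^p])^{1/p}\big]$ with $F_\ep=\big((-\Delta_\ep)^{-1}V_\ep^p\big)^{q_\ep-1/p}$ placed in its natural space $L^{p(q_\ep+1)/(pq_\ep-1)}$, the exponent that reappears on the right equals $b$ up to $O(\ep)$ (the paper's Check~3). So there is no uniform-in-$\ep$ gain; you only get $\|V_\ep\|_{b}\le C\|F_\ep\|\cdot\|V_\ep\|_{b}+\text{l.o.t.}$, and the argument stalls unless $C\|F_\ep\|<1$.

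The paper supplies exactly this missing ingredient. It splits $V_\ep=V_{\ep i}+V_{\ep o}$ with $V_{\ep i}=V_\ep\chi_{B^n(0,R)}$, proves the \emph{strong} convergence $V_\ep\to V_0$ in $L^{p+1}(\R^n)$ from the least-energy condition (so that $\|V_{\ep o}\|_{p+1}$ is small for $R$ large), and hence makes $\|F_\ep\|_{L^{p(q_\ep+1)/(pq_\ep-1)}}\le\eta$ arbitrarily small (Lemma~\ref{lem-Fep}). The weighted estimate then follows by \emph{absorption}, not iteration, first for $b\in(\tfrac{n}{n-2},\tfrac{np}{2})$; a second step (Lemma~\ref{lem-decay-2}), which uses $p<1$ in an essential way, extends it to all $b>\tfrac{n}{n-2}$. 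Once you insert this smallness/absorption mechanism in place of the claimed step-by-step gain, your outline becomes the paper's proof.
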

\noindent Since the above proposition is already known for $p \in [1, \frac{n+2}{n-2}]$ (see \cite{G}),
we only consider when $p$ is contained in the interval $(\frac{2}{n-2}, 1)$ that is nonempty for $n \ge 5$.
For its proof, we will apply a Brezis-Kato type argument, employing the next lemmas as key tools.
\begin{lem}[Doubly weighted Hardy-Littlewood-Sobolev inequality \cite{SW}]
Suppose that $1 < a, b < \infty$, $0 < \lambda < n$, $\alpha + \beta \ge 0$,
\[1 - \frac{1}{a} - \frac{\lambda}{n} < \frac{\alpha}{n} < 1 - \frac{1}{a}
\quad \text{and} \quad
\frac{1}{a} + \frac{1}{b} + \frac{\lambda + \alpha +\beta}{n} = 2.\]
Then there exists a constant $C > 0$ depending only on $\alpha$, $\beta$, $a$, $\lambda$ and $n$ such that
\[\left| \int_{\R^n} \int_{\R^n} \frac{f(x)g(y)}{|x|^{\alpha} |x-y|^{\lambda} |y|^{\beta}} dx dy \right| \le C \|f\|_{L^a(\R^n)} \|g\|_{L^b(\R^n)}.\]
\end{lem}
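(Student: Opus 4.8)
The statement is the classical Stein--Weiss (doubly weighted Hardy--Littlewood--Sobolev) inequality, so my plan is, in effect, to recapitulate the argument of \cite{SW}; in a paper one would simply cite it, but here is the structure I would follow. Writing $I_\lambda h := |z|^{-\lambda} * h$ and $\frac1a+\frac1{a'}=1$, and assuming $f,g\ge 0$ without loss of generality (the kernel is nonnegative), Fubini and H\"older reduce the claim to the operator bound
\[\bigl\| \, |\cdot|^{-\alpha}\, I_\lambda\bigl(|\cdot|^{-\beta} g\bigr)\,\bigr\|_{L^{a'}(\R^n)} \le C\, \|g\|_{L^b(\R^n)}.\]
The scaling relation $\frac1a+\frac1b+\frac{\lambda+\alpha+\beta}{n}=2$ is exactly the homogeneity that makes both sides transform the same way under $g\mapsto g(\cdot/t)$; and, using this relation, the two strict inequalities $1-\frac1a-\frac{\lambda}{n}<\frac{\alpha}{n}<1-\frac1a$ are equivalent to the symmetric pair $1-\frac1b-\frac{\lambda}{n}<\frac{\beta}{n}<1-\frac1b$, so the hypotheses are symmetric in $(a,\alpha)$ and $(b,\beta)$, and in particular both weights are locally integrable against the relevant test objects.

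The route I would take first is a Schur-type test tailored to the fact that every factor of the kernel $K(x,y)=|x|^{-\alpha}|x-y|^{-\lambda}|y|^{-\beta}$ is a power of a linear expression. The one computation needed is the Beta (Riesz composition) integral
\[\int_{\R^n}\frac{dy}{|x-y|^{\mu}|y|^{\nu}} = c(n,\mu,\nu)\,|x|^{\,n-\mu-\nu}, \qquad \text{valid for } 0<\mu,\nu<n \text{ and } \mu+\nu>n,\]
together with its analogue over $x$. Splitting $K=K^{\theta}K^{1-\theta}$ for a suitable $\theta\in(0,1)$ and applying H\"older in $(x,y)$, one is left to estimate $\int K(x,y)^{\theta}|y|^{-\sigma}\,dy$ and $\int K(x,y)^{1-\theta}|x|^{-\sigma'}\,dx$; by the displayed identity these are exact powers of $|x|$ and of $|y|$, which one chooses (through $\theta,\sigma,\sigma'$) so that everything reassembles into a constant times $\|g\|_{L^b}$. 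All the content is then the bookkeeping of the side conditions $0<\lambda\theta<n$, $0<\lambda(1-\theta)<n$, and the two ``$\mu+\nu>n$'' constraints: these are solvable precisely when $a,b\in(1,\infty)$, $\lambda\in(0,n)$, the strict inequalities on $\alpha$ (equivalently on $\beta$) hold, and $\alpha+\beta\ge 0$ --- this last condition being exactly what keeps the ``central'' exponent admissible, and the place where I expect the verification to be most delicate.

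An alternative, more hands-on route is a dyadic decomposition in the radial variables: write $f=\sum_j f_j$, $g=\sum_k g_k$ with $f_j,g_k$ supported in the annuli $\{2^j\le|x|<2^{j+1}\}$. For $|j-k|\le 1$, rescaling the annulus to unit size and applying Young's convolution inequality (its weak form when $\alpha+\beta=0$, which is where $\lambda\in(0,n)$ and $a,b\in(1,\infty)$ enter) gives $T(f_j,g_k)\le C\|f_j\|_{L^a}\|g_k\|_{L^b}$ with a uniform constant; for $|j-k|\ge 2$, where $|x-y|$ is comparable to $\max(|x|,|y|)$, a crude estimate reduces matters to one-dimensional weighted Hardy inequalities and produces an extra geometric factor $2^{-\delta|j-k|}$ coming exactly from the strict bounds on $\alpha$. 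Summing the blocks against $(\|f_j\|_{L^a})_j\in\ell^a$ and $(\|g_k\|_{L^b})_k\in\ell^b$ is the step I expect to be the real obstacle: a naive discrete Young inequality only closes when $a'\ge b$, i.e.\ when $\lambda+\alpha+\beta\le n$, and in the complementary range one must either interpolate between sub-ranges of exponents or fall back on the Schur argument above. Once this summation is organized the proof is complete; as it is a recapitulation of \cite{SW}, in practice I would simply cite it.
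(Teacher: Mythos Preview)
The paper does not prove this lemma; it is stated with the citation \cite{SW} and used as a black box. Your own remark that ``in a paper one would simply cite it'' is exactly what the authors do, so there is nothing to compare against. Your two sketched routes (a Schur/Riesz-composition test and a dyadic block decomposition) are both standard approaches in the literature and are outlined correctly at the level of a plan; for the purposes of this paper, citing Stein--Weiss is all that is required.
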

\noindent By applying the duality argument and putting $\beta = - \alpha$ and $\lambda = n-2$, we deduce
\begin{lem}
Suppose that $a$ and $b$ obey that $1 < a,b < \infty$,
\[\frac{1}{b} - \frac{n-2}{n} < \frac{\alpha}{n} < \frac{1}{b}
\quad \text{and} \quad
\frac{1}{a} - \frac{1}{b} = \frac{2}{n}.\]
Then we have
\begin{equation}\label{eq-1-1}
\left\| |\cdot|^{-\alpha} \( |\cdot|^{-(n-2)} * g \) \right\|_{L^b(\R^n)} \le C \||\cdot|^{-\alpha} g\|_{L^a(\R^n)}.
\end{equation}
\end{lem}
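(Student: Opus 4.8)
The plan is to deduce \eqref{eq-1-1} from the doubly weighted Hardy--Littlewood--Sobolev inequality of the previous lemma by a duality argument. First I would reduce to the case $g \ge 0$: since $\left| \(|\cdot|^{-(n-2)} * g\)(x) \right| \le \(|\cdot|^{-(n-2)} * |g|\)(x)$ for every $x$, replacing $g$ by $|g|$ only enlarges the left-hand side of \eqref{eq-1-1} while leaving the right-hand side unchanged. Fixing such a $g$ and letting $b'$ denote the conjugate exponent of $b$, I would use the duality identity
\[\left\| |\cdot|^{-\alpha} \(|\cdot|^{-(n-2)} * g\) \right\|_{L^b(\R^n)} = \sup \left\{ \int_{\R^n} |x|^{-\alpha} \(|\cdot|^{-(n-2)} * g\)(x)\, h(x)\, dx : h \ge 0,\ \|h\|_{L^{b'}(\R^n)} \le 1 \right\},\]
which is valid for any nonnegative measurable integrand whether or not the left-hand side is a priori finite. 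Expanding the convolution and applying Fubini's theorem (legitimate by nonnegativity), the quantity inside the supremum equals
\[\int_{\R^n} \int_{\R^n} \frac{h(x)\, g(y)}{|x|^{\alpha}\, |x-y|^{n-2}}\, dx\, dy = \int_{\R^n} \int_{\R^n} \frac{h(x) \cdot \(|y|^{-\alpha} g(y)\)}{|x|^{\alpha}\, |x-y|^{n-2}\, |y|^{-\alpha}}\, dx\, dy,\]
where the last step merely inserts the factor $|y|^{\alpha} |y|^{-\alpha} = 1$.

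Next I would apply the doubly weighted Hardy--Littlewood--Sobolev inequality to this last double integral with the two functions taken to be $h$ and $|\cdot|^{-\alpha} g$, the exponents taken to be $b'$ (paired with $h$) and $a$ (paired with $|\cdot|^{-\alpha} g$), and the weight parameters $\alpha$, $-\alpha$ and $n-2$ playing the roles there called $\alpha$, $\beta$ and $\lambda$. It then remains to check that the hypotheses of that lemma translate exactly into the ones assumed here: the sum of the weight exponents is $\alpha + (-\alpha) = 0 \ge 0$; the exponent $n-2$ lies in $(0,n)$ since $n \ge 3$; the balance condition $\frac{1}{b'} + \frac{1}{a} + \frac{(n-2) + \alpha - \alpha}{n} = 2$ becomes, using $\frac{1}{b'} = 1 - \frac{1}{b}$, the assumed identity $\frac{1}{a} - \frac{1}{b} = \frac{2}{n}$; and the admissibility range $1 - \frac{1}{b'} - \frac{n-2}{n} < \frac{\alpha}{n} < 1 - \frac{1}{b'}$ becomes, again using $1 - \frac{1}{b'} = \frac{1}{b}$, the assumed range $\frac{1}{b} - \frac{n-2}{n} < \frac{\alpha}{n} < \frac{1}{b}$. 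Consequently the double integral is at most $C \|h\|_{L^{b'}(\R^n)} \| |\cdot|^{-\alpha} g\|_{L^a(\R^n)} \le C \| |\cdot|^{-\alpha} g\|_{L^a(\R^n)}$, and taking the supremum over admissible $h$ yields \eqref{eq-1-1}.

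I do not expect any serious obstacle here: the argument is a routine duality reduction, and the only points requiring a little care are the reduction to nonnegative $g$ (which is what makes the use of Fubini and of the duality supremum legitimate without an a priori finiteness assumption) and the purely bookkeeping verification that the two sets of exponent constraints coincide. If preferred, one may instead first establish the estimate for $g \in C^{\infty}_c(\R^n)$, where every quantity appearing is manifestly finite, and then recover the general case by a density argument in the weighted Lebesgue space.
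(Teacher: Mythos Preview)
Your proposal is correct and follows exactly the approach indicated in the paper: the paper simply states that the lemma follows ``by applying the duality argument and putting $\beta = -\alpha$ and $\lambda = n-2$,'' and you have carried out precisely that duality computation with the correct verification of the exponent conditions. There is nothing to add.
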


\medskip
Throughout the proof of Proposition \ref{prop-dec}, we denote the inverse operator of the Dirichlet Laplacian in $\Omega_{\ep}$
and that in $\R^n$ as $(-\Delta_{\ep})^{-1}$ and $(-\Delta_{\R^n})^{-1}$, respectively. It holds that
\begin{equation}\label{eq-inv}
(-\Delta_{\ep})^{-1} g \le (-\Delta_{\R^n})^{-1} g = \frac{1}{(n-2)|\S^{n-1}|} \(|\cdot|^{-(n-2)} * g\)
\end{equation}
for any nonnegative function $g$ such that $\supp g \subset \overline{\Omega}_{\ep}$.

As a starting point of the proof, we concern integrability of $V_{\ep}$.
Given a fixed large number $R > 0$, we decompose $V_{\ep} = V_{\ep i} + V_{\ep o}$ where
\begin{equation}\label{eq-43}
V_{\ep i} = \chi_{B^n(0,R)} V_{\ep}
\quad \text{and} \quad
V_{\ep o} = \chi_{\R^n \setminus B^n(0,R)} V_{\ep} \quad \text{in } \Omega_{\ep}
\end{equation}
and $\chi_D$ is the characteristic function of a set $D \subset \Omega_{\ep}$.
\begin{lem}\label{lem-Fep}
Let
\begin{equation}\label{eq-Fep}
F_{\ep} = \((-\Delta_{\ep})^{-1} V_{\ep o}^p\)^{q_{\ep}-\frac{1}{p}}.
\end{equation}
For any given number $\eta > 0$, we may choose a large number $R > 1$ in \eqref{eq-43} independent of $\ep > 0$ such that
\[\|F_{\ep}\|_{L^{p(q_{\ep}+1) \over pq_{\ep}-1}(\Omega_{\ep})} \le \eta.\]
\end{lem}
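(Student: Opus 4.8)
The plan is to estimate the $L^{\frac{p(q_\ep+1)}{pq_\ep-1}}$-norm of $F_\ep$ directly from the definition \eqref{eq-Fep} by interpolating between a uniform bound coming from the least-energy condition and the fact that $V_{\ep o}$ is supported away from the origin with small mass there. First I would note that, since $F_\ep = \bigl((-\Delta_\ep)^{-1}V_{\ep o}^p\bigr)^{q_\ep-\frac1p}$, raising to the power $\frac{p(q_\ep+1)}{pq_\ep-1}$ and recalling $\alpha_\ep = \frac{2(p+1)}{pq_\ep-1}$, $\beta_\ep=\frac{2(q_\ep+1)}{pq_\ep-1}$, one computes that the exponent to which $(-\Delta_\ep)^{-1}V_{\ep o}^p$ is raised becomes exactly the conjugate exponent that makes $(-\Delta_\ep)^{-1}V_{\ep o}^p$ appear in its natural Sobolev space; concretely
\[
\|F_\ep\|_{L^{\frac{p(q_\ep+1)}{pq_\ep-1}}(\Omega_\ep)}
= \bigl\|(-\Delta_\ep)^{-1}V_{\ep o}^p\bigr\|_{L^{\frac{(q_\ep+1)(pq_\ep-1)}{p q_\ep -1}\cdot\frac{1}{q_\ep-1/p}}(\Omega_\ep)}^{\,q_\ep-\frac1p},
\]
so that up to bookkeeping one is reduced to bounding an $L^r$-norm of $(-\Delta_\ep)^{-1}V_{\ep o}^p$ for an explicit $r$ that, by the critical-hyperbola relation \eqref{eq-pq-e}, is precisely $q_\ep+1$ in the limit $\ep\to0$ (and close to it for small $\ep$).

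Next I would control $(-\Delta_\ep)^{-1}V_{\ep o}^p$ using the pointwise bound \eqref{eq-inv}, namely $(-\Delta_\ep)^{-1}V_{\ep o}^p \le \frac{1}{(n-2)|\S^{n-1}|}\bigl(|\cdot|^{-(n-2)}*V_{\ep o}^p\bigr)$, and then apply the (unweighted, $\alpha=0$) case of \eqref{eq-1-1}, which gives $\bigl\||\cdot|^{-(n-2)}*V_{\ep o}^p\bigr\|_{L^{q_\ep+1}(\R^n)} \le C\|V_{\ep o}^p\|_{L^{a}(\R^n)} = C\|V_{\ep o}\|_{L^{ap}(\R^n)}^p$ for the admissible pair $\frac1a - \frac{1}{q_\ep+1}=\frac2n$. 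By \eqref{eq-pq-e} and the definition of $q_\ep$ one checks $ap$ equals $\frac{n(q_\ep+1)}{n+2(q_\ep+1)}\cdot p$, which tends to $p+1$; more to the point, the least-energy condition \eqref{eq-energy} together with the normalization \eqref{eq-tuv} gives a uniform bound $\sup_{\ep>0}\|V_\ep\|_{L^{p+1}(\Omega_\ep)} \le C$, and since $V_{\ep o} = \chi_{\R^n\setminus B^n(0,R)}V_\ep$ we even have $\|V_{\ep o}\|_{L^{p+1}(\Omega_\ep)} \to 0$ uniformly in $\ep$ as $R\to\infty$ (this is where the tail-smallness is used). Combining a uniform $L^{p+1}$-bound on all of $\Omega_\ep$ with the convergence $ap\to p+1$, and using that $V_\ep$ is also locally uniformly bounded (it converges to $V_0$ in $C^2_{\mathrm{loc}}$), a short Hölder interpolation converts the small $L^{p+1}$-mass of $V_{\ep o}$ into a small $L^{ap}$-norm, uniformly in $\ep$ once $R$ is large; raising to the power $p(q_\ep-\frac1p)$ preserves smallness. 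Choosing $R$ large enough then forces $\|F_\ep\|_{L^{p(q_\ep+1)/(pq_\ep-1)}(\Omega_\ep)} \le \eta$.

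The main obstacle is the bookkeeping of exponents: one must verify that the admissibility hypotheses of \eqref{eq-1-1} (i.e. $1<a,b<\infty$ and $\frac1a-\frac1b=\frac2n$ with $b=q_\ep+1$) hold for all small $\ep$ — this uses $p\in(\frac{2}{n-2},\frac{n}{n-2})$, which guarantees $q_0>\frac{2}{n-2}$ hence $q_\ep+1$ stays in the admissible range — and that the power $q_\ep - \frac1p$ is positive and bounded for small $\ep$, which is exactly the crucial inequality $q_\ep-\frac1p>0$ emphasized after \eqref{eq-ext-sol}. A secondary subtlety is that the interpolation estimate should be carried out at a fixed $\ep$-independent target exponent so that the choice of $R$ can be made uniform; since all the relevant exponents depend continuously on $\ep$ and converge as $\ep\to0$, one first fixes the limiting exponents, chooses $R$ there, and then absorbs the $O(\ep)$ discrepancy into the constant $C$ for $\ep$ small. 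None of these steps is deep, but keeping the chain of exponent identities straight is where care is required.
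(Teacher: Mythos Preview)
Your overall strategy matches the paper's exactly: reduce $\|F_\ep\|_{L^{p(q_\ep+1)/(pq_\ep-1)}}$ to $\|(-\Delta_\ep)^{-1}V_{\ep o}^p\|_{L^{q_\ep+1}}^{\,q_\ep-1/p}$, apply \eqref{eq-inv} and Hardy--Littlewood--Sobolev to bound this by a power of $\|V_{\ep o}\|_{L^{p+1}}$, and then make the latter small by choosing $R$ large. The exponent bookkeeping you describe is correct (indeed $ap\to p+1$), and the $O(\ep)$ discrepancy is handled in the paper by a direct H\"older step on the bounded set $\Omega_\ep$ together with $\lambda_\ep^{\ep}\to 1$ from Lemma~\ref{lem-lam}, which is cleaner than the interpolation-against-local-$C^2$ idea you suggest (that idea does not obviously apply to $V_{\ep o}$, which is supported \emph{outside} $B^n(0,R)$).

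There is, however, a genuine gap at the crucial point. You assert that the uniform bound $\sup_{\ep>0}\|V_\ep\|_{L^{p+1}(\Omega_\ep)}\le C$ implies that $\|V_{\ep o}\|_{L^{p+1}(\Omega_\ep)}\to 0$ uniformly in $\ep$ as $R\to\infty$. This does not follow: a uniform $L^{p+1}$ bound together with $C^2_{\mathrm{loc}}$ convergence to $V_0$ is perfectly compatible with a fixed positive fraction of the $L^{p+1}$-mass escaping to infinity along the sequence (concentration--compactness dichotomy). Ruling this out is exactly the substance of the lemma. The paper's proof supplies the missing ingredient: from \eqref{eq-energy} one shows $\int_{\Omega_\ep}V_\ep^{p+1}\to S^{\frac{n}{2}\cdot\frac{p}{p+1}}$, while weak convergence gives $\int_{\R^n}V_0^{p+1}\le S^{\frac{n}{2}\cdot\frac{p}{p+1}}$; but strict inequality here would violate the sharp Sobolev inequality \eqref{eq-Sob} for $U_0$, so in fact equality holds and $V_\ep\to V_0$ \emph{strongly} in $L^{p+1}(\R^n)$. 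Only then does the tail $\|V_{\ep o}\|_{L^{p+1}}$ become uniformly small for large $R$. You need to insert this strong-convergence argument; once you do, your proof is essentially the paper's.
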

\begin{proof}
It holds that
\begin{equation}\label{eq-44}
\begin{aligned}
\(\int_{\Omega_{\ep}} F_{\ep}^{p(q_{\ep}+1) \over pq_{\ep}-1} dx\)^{1 \over q_{\ep}+1} &= \left\|(-\Delta_{\ep})^{-1} V_{\ep o}^p\right\|_{L^{q_{\ep}+1}(\Omega_{\ep})}
\le \left\||\cdot|^{-(n-2)} \ast V_{\ep o}^p\right\|_{L^{q_{\ep}+1}(\Omega_{\ep})} \\
&\le C \lambda_{\ep}^{\ep} \left\|V_{\ep o}^p\right\|_{L^{p+1 \over p}(\Omega_{\ep})} \le C \|V_{\ep o}\|_{L^{p+1}(\Omega_{\ep})}^p
\end{aligned}
\end{equation}
where the last inequality holds due to Lemma \ref{lem-lam}. In addition, by \eqref{eq-tuv}, \eqref{eq-1-2}, Lemma \ref{lem-lam} and \eqref{eq-energy},
\[\int_{\Omega_{\ep}} U_{\ep}^{q_{\ep}+1} dx = \int_{\Omega_{\ep}} \nabla U_{\ep} \cdot \nabla V_{\ep}\, dx
= \int_{\Omega_{\ep}} |\Delta U_{\ep}|^{p+1 \over p} dx = \int_{\Omega_{\ep}} V_{\ep}^{p+1} dx \to S^{{n \over 2} \cdot {p \over p+1}}\]
as $\ep \to 0$. Elliptic regularity tells us that
\begin{equation}\label{eq-Vi}
\sup_{\ep > 0} \|V_{\ep i}\|_{L^{\infty}(\Omega_{\ep})} = \sup_{\ep > 0} \|V_{\ep}\|_{L^{\infty}(B^n(0,R))} < \infty
\end{equation}
and $(U_{\ep}, V_{\ep})$ converges to a solution $(U_0, V_0) \in L^{q_0+1}(\R^n) \times L^{p+1}(\R^n)$ to \eqref{eq-entire} in $C_{\text{loc}}^2(\R^n)$.
% Fatou's lemma implies that $U_0 \in L^{q_0+1}(\R^n)$.
Also, $V_{\ep} \rightharpoonup V_0$ in $L^{p+1}(\R^n)$ weakly. Consequently,
\begin{equation}\label{eq-2-5}
\int_{\R^n} U_0^{q_0+1} dx = \int_{\R^n} |\Delta U_0|^{p+1 \over p} dx = \int_{\R^n} V_0^{p+1} dx \le S^{{n \over 2} \cdot {p \over p+1}}.
\end{equation}
As a matter of fact, the inequality in \eqref{eq-2-5} must be the equality and so $V_{\ep} \to V_0$ in $L^{p+1}(\R^n)$ strongly; otherwise the Sobolev inequality \eqref{eq-Sob} would be violated.
Accordingly, if we choose $R > 0$ so large that
\[\int_{B^n(0,R)} V_0^{p+1} dx \ge (1-\eta) \int_{\R^n} V_0^{p+1} dx = (1-\eta) S^{{n \over 2} \cdot {p \over p+1}}\]
holds for a fixed small number $\eta > 0$, then
\begin{align*}
\int_{\Omega_{\ep}} V_{\ep o}^{p+1} dx &= \int_{\Omega_{\ep}} V_{\ep}^{p+1} dx - \int_{\Omega_{\ep}} V_{\ep i}^{p+1} dx
= \(S^{{n \over 2} \cdot {p \over p+1}} + o(1)\) - \(\int_{B^n(0,R)} V_0^{p+1} dx + o(1)\) \\
&\le \eta S^{{n \over 2} \cdot {p \over p+1}} + o(1)
\end{align*}
where $o(1) \to 0$ as $\ep \to 0$. Inserting this estimate in \eqref{eq-44}, we conclude the proof of the lemma.
\end{proof}

\begin{lem}
Suppose $p \in (\frac{2}{n-2}, \frac{n}{n-2})$ and pick numbers $\delta$ and $b$ satisfying
\begin{equation}\label{eq-2-4}
0 < \delta < \frac{n((n-2)p-2)}{(n-2)p} \quad \text{and} \quad \frac{n}{n-2} < b < \frac{np}{2}.
\end{equation}
Then there exists a constant $C > 0$ depending only on $n$, $p$ and $b$ such that
\begin{equation}\label{eq-2-0}
\left\| |\cdot|^{(n-2) - \frac{n+\delta}{b}} V_{\ep o} \right\|_{L^b(\Omega_{\ep})} \le C.
\end{equation}
\end{lem}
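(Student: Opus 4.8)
\emph{Proof proposal.} The plan is to read \eqref{eq-ext-sol} as a nonlocal scalar equation for $V_\ep$ and run a Brezis--Kato iteration based on the weighted estimate \eqref{eq-1-1}, closing it by an absorption argument that uses the smallness of $F_\ep$ from Lemma \ref{lem-Fep}. Put $\alpha:=\frac{n+\delta}{b}-(n-2)$, so that the left-hand side of \eqref{eq-2-0} is $\||\cdot|^{-\alpha}V_{\ep o}\|_{L^b(\Omega_\ep)}$; one checks from \eqref{eq-2-4} (and $0<\delta<n<(n-2)b$) that $\frac1b-\frac{n-2}{n}<\frac\alpha n<\frac1b$ and $1<a<\infty$ for $a$ given by $\frac1a=\frac1b+\frac2n$, so that \eqref{eq-1-1} is available with the weight $|\cdot|^{-\alpha}$. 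It suffices to bound $Q_\ep:=\||\cdot|^{-\alpha}V_\ep\|_{L^b(\Omega_\ep)}$ uniformly for small $\ep$; this is finite for each fixed $\ep$ because $\Omega_\ep$ is bounded, $0\in\Omega_\ep$, $V_\ep\in L^\infty(\Omega_\ep)$ and $\alpha b<n$. Since $0<p<1$ we have $V_\ep^p=V_{\ep i}^p+V_{\ep o}^p$, hence $U_\ep^{q_\ep-\frac1p}=\big((-\Delta_\ep)^{-1}V_\ep^p\big)^{q_\ep-\frac1p}\le C\big(W_{\ep i}^{q_\ep-\frac1p}+F_\ep\big)$, where $W_{\ep i}:=(-\Delta_\ep)^{-1}V_{\ep i}^p$ and $F_\ep$ is as in \eqref{eq-Fep}; substituting this into \eqref{eq-ext-sol} and using \eqref{eq-inv} yields
\[V_\ep\le C(-\Delta_{\R^n})^{-1}\!\left[\big(W_{\ep i}^{q_\ep-\frac1p}+F_\ep\big)\,U_\ep^{\frac1p}\right]\quad\text{in }\Omega_\ep,\]
which splits into two terms.

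The term with $W_{\ep i}^{q_\ep-\frac1p}$ is the mild one. Since $V_{\ep i}$ is bounded with support in $B^n(0,R)$ (see \eqref{eq-Vi}), $W_{\ep i}(x)\le C(1+|x|)^{-(n-2)}$ uniformly in $\ep$; moreover $U_\ep\le1$ and $\int_{\Omega_\ep}U_\ep^{q_0+1}\le\int_{\Omega_\ep}U_\ep^{q_\ep+1}\le C$ uniformly (cf.\ the proof of Lemma \ref{lem-Fep}), so $U_\ep^{1/p}\in L^{a_2}(\Omega_\ep)$ with a uniform bound for every $a_2\ge p(q_0+1)$. Placing the weight $|\cdot|^{-\alpha}$ on the explicit factor $W_{\ep i}^{q_\ep-\frac1p}$ and applying \eqref{eq-1-1} and H\"older's inequality with a suitable exponent $a_1$, one checks that this term contributes to $Q_\ep$ boundedly in $\ep$; it is here, in ensuring convergence of the resulting weighted integrals at the origin and at infinity, that the constraints \eqref{eq-2-4} are used.

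The term $(-\Delta_{\R^n})^{-1}[F_\ep U_\ep^{1/p}]$ is the heart of the matter. Apply \eqref{eq-1-1} to pass to $\||\cdot|^{-\alpha}F_\ep U_\ep^{1/p}\|_{L^a}$, then H\"older with exponent $a_1:=\frac{p(q_\ep+1)}{pq_\ep-1}$ on $F_\ep$, so that $\|F_\ep\|_{L^{a_1}(\Omega_\ep)}\le\eta$ by Lemma \ref{lem-Fep}. For the other factor $U_\ep^{1/p}\le C(|\cdot|^{-(n-2)}\ast V_\ep^p)^{1/p}$ by \eqref{eq-inv}; rewriting $\||\cdot|^{-\alpha}(|\cdot|^{-(n-2)}\ast V_\ep^p)^{1/p}\|_{L^{a_2}}=\||\cdot|^{-p\alpha}(|\cdot|^{-(n-2)}\ast V_\ep^p)\|_{L^{a_2/p}}^{1/p}$ and invoking \eqref{eq-1-1} a second time — its hypotheses for this step reduce, after simplification, to inequalities guaranteed by $p\in(\frac2{n-2},\frac n{n-2})$, $b>\frac n{n-2}$ and \eqref{eq-2-4} — one lands on $\||\cdot|^{-\alpha}V_\ep\|_{L^{pa'}(\Omega_\ep)}$ where $\frac1{pa'}=\frac1b+\theta_\ep$ with $\theta_\ep:=\frac{2(p+1)}{np}-\frac{pq_\ep-1}{p(q_\ep+1)}$. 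The decisive arithmetic fact is that $\theta_\ep|_{\ep=0}=0$, which is nothing but a rewriting of the critical hyperbola relation \eqref{eq-cr-hy} for $(p,q_0)$; consequently $\theta_\ep=O(\ep)$, and $\theta_\ep>0$ for small $\ep>0$. Collecting the two terms, $Q_\ep\le C+C\eta\,\||\cdot|^{-\alpha}V_\ep\|_{L^{pa'}(\Omega_\ep)}$.

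It remains to absorb the last term, and the exponent mismatch $pa'\neq b$ is the only genuine obstacle. Since $pa'<b$ and $\Omega_\ep$ has finite measure, H\"older's inequality gives $\||\cdot|^{-\alpha}V_\ep\|_{L^{pa'}(\Omega_\ep)}\le|\Omega_\ep|^{\theta_\ep}Q_\ep$; and since $|\Omega_\ep|=\lambda_\ep^n|\Omega|$ with $\theta_\ep=O(\ep)$ while $\lambda_\ep^\ep\to1$ by Lemma \ref{lem-lam}, we get $|\Omega_\ep|^{\theta_\ep}\to1$, so $|\Omega_\ep|^{\theta_\ep}\le C_0$ for a constant $C_0$ depending only on $n$, $p$ and $\Omega$. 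Fixing $C_0$, then choosing $\eta$ so small that $CC_0\eta<\frac12$, and finally $R$ large enough (Lemma \ref{lem-Fep}) that $\|F_\ep\|_{L^{a_1}(\Omega_\ep)}\le\eta$, we obtain $Q_\ep\le C+\frac12Q_\ep$, hence $Q_\ep\le2C$; this gives \eqref{eq-2-0}. The mechanism is that at the critical level $\ep=0$ the iteration is exactly self-referential (this is $\theta_0=0$), while for $\ep>0$ the output exponent drifts by $O(\ep)$; the drift is neutralized, not by a bootstrap in $b$, but by the fact that the volume factor $|\Omega_\ep|^{\theta_\ep}$ stays bounded — a consequence of the subpolynomial blow-up rate $\lambda_\ep^\ep\to1$.
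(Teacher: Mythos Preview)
Your strategy is the same as the paper's: write \eqref{eq-ext-sol} as a nonlocal scalar equation, split off a piece controlled by Lemma~\ref{lem-Fep}, apply the weighted HLS inequality \eqref{eq-1-1} twice, and absorb. You even make explicit a point the paper leaves implicit: the exponent returned by the second application of \eqref{eq-1-1} differs from $b$ by $O(\ep)$, and the resulting volume factor $|\Omega_\ep|^{\theta_\ep}$ stays bounded because $\lambda_\ep^\ep\to1$. That part is correct and matches the paper's absorption step \eqref{eq-2-3}--\eqref{eq-2-31}.

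The gap is in your treatment of the ``mild'' term. Your splitting keeps $U_\ep^{1/p}$ as a factor in \emph{both} pieces, so the mild contribution is $(-\Delta_{\R^n})^{-1}\big[W_{\ep i}^{q_\ep-1/p}U_\ep^{1/p}\big]$. Even after using $U_\ep\le1$, this integrand decays only like $|x|^{-(n-2)(q_\ep-1/p)}$, and the inequality $(n-2)(q_0-\tfrac{1}{p})>n$ is \emph{not} automatic on the full range $p\in(\tfrac{2}{n-2},\tfrac{n}{n-2})$ (it fails, for example, for $n$ large and $p$ near $\tfrac{n}{n-2}$). Your proposed H\"older step --- placing the entire weight $|\cdot|^{-\alpha}$ on $W_{\ep i}^{q_\ep-1/p}$ and the unweighted $L^{a_2}$ norm on $U_\ep^{1/p}$ --- then requires $\big\||\cdot|^{-\alpha}W_{\ep i}^{q_\ep-1/p}\big\|_{L^{a_1'}}<\infty$, which forces $\big[\alpha+(n-2)(q_\ep-\tfrac{1}{p})\big]a_1'>n$. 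When $b$ is near the upper end $\tfrac{np}{2}$ of \eqref{eq-2-4} one has $\alpha\approx\tfrac{2}{p}-(n-2)$, and then $\alpha+(n-2)(q_0-\tfrac{1}{p})$ can be negative, so no choice of $a_1'$ works.

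The paper avoids this by decomposing one step earlier: it bounds $U_\ep^{q_\ep}\le C\big[W_{\ep i}^{q_\ep}+W_{\ep o}^{q_\ep}\big]$ (using only that $V_{\ep i},V_{\ep o}$ have disjoint support --- your remark ``since $0<p<1$'' is unnecessary here), so the mild term carries the \emph{full} power $q_\ep$. Since $(n-2)q_0>n$ does hold throughout the range, one gets the pointwise bound $(-\Delta_\ep)^{-1}\big[W_{\ep i}^{q_\ep}\big]\le C(1+|x|)^{-(n-2)}$ directly, and its weighted $L^b$ norm is the finite quantity $\mca_\ep$ in \eqref{eq-2-32}. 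You can repair your argument by switching to this decomposition, or equivalently by applying Young's inequality to your cross term $W_{\ep i}^{q_\ep-1/p}U_\ep^{1/p}\le C\big(W_{\ep i}^{q_\ep}+W_{\ep o}^{q_\ep}\big)$ to reduce to it.
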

\begin{proof}
We infer from \eqref{eq-ext-sol} that
\begin{align*}
V_{\ep o} \le V_{\ep} &\le C (-\Delta_{\ep})^{-1} \left[ ((-\Delta_{\ep})^{-1} V_{\ep o}^p)^{q_{\ep}} + ((-\Delta_{\ep})^{-1} V_{\ep i}^p)^{q_{\ep}} \right] \\
&= C (-\Delta_{\ep})^{-1} \left[ F_{\ep} ((-\Delta_{\ep})^{-1} V_{\ep o}^p)^{1 \over p} + ((-\Delta_{\ep})^{-1} V_{\ep i}^p)^{q_{\ep}} \right].
\end{align*}
Hence \eqref{eq-1-1} and \eqref{eq-inv} imply
\begin{equation}\label{eq-2-1}
\begin{aligned}
\left\| |\cdot|^{-\alpha} V_{\ep o} \right\|_{L^b(\Omega_{\ep})}
&\le C \left\| |\cdot|^{-\alpha} \(|\cdot|^{-(n-2)} \ast \left[ F_{\ep} ((-\Delta_{\ep})^{-1} V_{\ep o}^p)^{1 \over p} \right]\) \right\|_{L^b(\Omega_{\ep})} + C \mca_{\ep} \\
&\le C \left\| |\cdot|^{-\alpha} F_{\ep} ((-\Delta_{\ep})^{-1} V_{\ep o}^p)^{1 \over p} \right\|_{L^{a_1}(\Omega_{\ep})} + C \mca_{\ep}
\end{aligned}
\end{equation}
where
\begin{equation}\label{eq-alpha}
\alpha = \frac{n+\delta}{b} - (n-2), \quad \frac{1}{a_1} - \frac{1}{b} = \frac{2}{n}
\end{equation}
and
\begin{equation}\label{eq-mca}
\mca_{\ep} := \left\| |\cdot|^{-\alpha} \( (|\cdot|^{-(n-2)} \ast \left[ (-\Delta_{\ep})^{-1} V_{\ep i}^p)^{q_{\ep}} \right] \) \right\|_{L^b(\Omega_{\ep})};
\end{equation}
we verify the necessary conditions to apply \eqref{eq-1-1} in Check 1 at the end of the proof.
As shown in Check 2 below, we can select $a_2 > 1$ such that
\[\frac{1}{a_1} = \frac{1}{a_2} + \frac{pq_{\ep}-1}{p(q_{\ep}+1)}.\]
Thus, employing H\"older's inequality and Lemma \ref{lem-Fep} to \eqref{eq-2-1}, we obtain
\begin{equation}\label{eq-2-2}
\begin{aligned}
\left\| |\cdot|^{-\alpha} V_{\ep o} \right\|_{L^b(\Omega_{\ep})}
&\le C \|F_{\ep}\|_{L^{p(q_{\ep}+1) \over pq_{\ep}-1}(\Omega_{\ep})} \left\| |\cdot|^{-\alpha} ((-\Delta_{\ep})^{-1} V_{\ep o}^p)^{1 \over p} \right\|_{L^{a_2}(\Omega_{\ep})} + C \mca_{\ep}\\
&\le \eta \left\| |\cdot|^{-\alpha} ((-\Delta_{\ep})^{-1} V_{\ep o}^p)^{1 \over p} \right\|_{L^{a_2}(\Omega_{\ep})} + C \mca_{\ep}
\end{aligned}
\end{equation}
where $\eta > 0$ can be chosen arbitrarily small. If we set the number $a_3$ by
\[\frac{1}{a_3} - \frac{p}{a_2} = \frac{2}{n},\]
Check 3 below ensures that $a_3 > 1$ and $0 < b - a_3p < C\ep$ for some constant $C > 0$. Also, \eqref{eq-1-1} leads to
\begin{equation}\label{eq-2-3}
\begin{aligned}
\left\| |\cdot|^{-\alpha} ((-\Delta_{\ep})^{-1} V_{\ep o}^p)^{1 \over p} \right\|_{L^{a_2}(\Omega_{\ep})}
&= \left\| |\cdot|^{-\alpha p} ((-\Delta_{\ep})^{-1} V_{\ep o}^p) \right\|^{1 \over p}_{L^{a_2 \over p}(\Omega_{\ep})}
\le C \left\| |\cdot|^{-\alpha p} V_{\ep o}^p \right\|^{1 \over p}_{L^{a_3}(\Omega_{\ep})} \\
&= C \left\| |\cdot|^{-\alpha} V_{\ep o} \right\|_{L^{a_3p}(\Omega_{\ep})}
\le C \left\| |\cdot|^{-\alpha} V_{\ep o} \right\|_{L^b(\Omega_{\ep})}.
\end{aligned}
\end{equation}
Therefore \eqref{eq-2-2} reads as
\[\left\| |\cdot|^{-\alpha} V_{\ep o} \right\|_{L^b(\Omega_{\ep})} \le \frac{1}{2} \left\| |\cdot|^{-\alpha} V_{\ep o} \right\|_{L^b(\Omega_{\ep})} + C\mca_{\ep}.\]
From the above inequality, the relation $\alpha b < n$ and the fact that $V_{\ep} \in L^{\infty}(\Omega_{\ep})$ which holds thanks to standard elliptic regularity theory, we conclude
\begin{equation}\label{eq-2-31}
\left\| |\cdot|^{-\alpha} V_{\ep o} \right\|_{L^b(\Omega_{\ep})} \le C \mca_{\ep}.
\end{equation}

On the other hand, by \eqref{eq-Vi}, there is a constant $C > 0$ independent of $\ep > 0$ such that
\[\(|\cdot|^{-(n-2)} \ast \left[ (-\Delta_{\ep})^{-1} V_{\ep i}^p \right]^{q_{\ep}}\) (x) \le \frac{C}{1+|x|^{n-2}} \quad \text{for every } x \in \Omega_{\ep}.\]
It then follows from \eqref{eq-mca} that
\begin{equation}\label{eq-2-32}
\sup_{\ep > 0} \mca_{\ep} \le C \left[ \int_{\R^n} \frac{1}{|x|^{(n+\delta)-(n-2)b}} \frac{1}{1+|x|^{(n-2)b}} dx \right]^{1 \over b} < \infty.
\end{equation}
Putting \eqref{eq-2-31} and \eqref{eq-2-32} together completes the proof.

\medskip \noindent
\textbf{Check 1.} We have to show
\[\frac{1}{b} - \frac{n-2}{n} < \frac{1}{n} \left[\frac{n+\delta}{b} - (n-2)\right] < \frac{1}{b}.\]
The first inequality holds for all $\delta > 0$. The second one is reduced to $b > \frac{\delta}{n-2}$, which is true whenever $\delta < 1$.

\medskip \noindent
\textbf{Check 2.} It suffices to check that
\[1 < a_1 < \frac{p(q_{\ep}+1)}{pq_{\ep}-1}\]
for small $\ep > 0$. The first inequality is valid because of $b > \frac{n}{n-2}$ and \eqref{eq-alpha}. The second one comes from
\[\frac{1}{a_1} - \frac{2}{n} = \frac{1}{b} > \frac{2}{np} = \frac{pq_0-1}{p(q_0+1)} - \frac{2}{n}.\]

\medskip \noindent
\textbf{Check 3.} We see
\[\frac{1}{a_3} = \frac{p}{a_2} + \frac{2}{n} = p \(\frac{1}{b} + \frac{2}{n}\) - \frac{pq_{\ep}-1}{q_{\ep}+1} + \frac{2}{n} = \frac{p}{b} + (p+1)\ep < 1\]
for small $\ep > 0$. Hence $a_3 > 1$ and $0 < b - a_3p < C\ep$ for some $C > 0$. Moreover,
\[\frac{p}{a_2} - \frac{n-2}{n} < \frac{\alpha p}{n} < \frac{p}{a_2} \quad \text{if} \quad \frac{p}{b} - 1 < \frac{\alpha p}{n} < \frac{p}{b} - \frac{2}{n}.\]
The latter inequalities hold true since
\[b > \frac{n}{n-2} > \frac{\delta p}{(n-2)p-2}.\]
Thus one can use \eqref{eq-1-1} to deduce \eqref{eq-2-3}.
\end{proof}

We next prove that \eqref{eq-2-0} holds for any $b > \frac{n}{n-2}$, relieving the restriction \eqref{eq-2-4} on $b$.
It is notable that the condition $p < 1$ is necessary in the proof.
\begin{lem}\label{lem-decay-2}
Suppose that $p \in (\frac{2}{n-2}, 1)$. For each fixed $b > \frac{n}{n-2}$,
there is a constant $C > 0$ and a small number $\delta > 0$ depending only on $n$, $p$ and $b$ such that \eqref{eq-2-0} is satisfied.
\end{lem}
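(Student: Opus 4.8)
The plan is to bootstrap from the previous lemma, where \eqref{eq-2-0} is established only for exponents $b$ in the window $\frac{n}{n-2} < b < \frac{np}{2}$, up to arbitrarily large $b$. Since $p < 1$ makes the window $(\frac{n}{n-2}, \frac{np}{2})$ a bounded interval, a single application of the weighted Hardy--Littlewood--Sobolev inequality cannot reach all $b$; instead one iterates. First I would fix a target $b > \frac{n}{n-2}$, pick $\delta > 0$ small (to be shrunk finitely often along the iteration so that the $\mca_\ep$-type tail integrals stay convergent and Check~1 of the previous lemma keeps holding), and set up a finite sequence $b_0 < b_1 < \cdots < b_N$ with $b_0$ inside the already-known window and $b_N = b$, where each step gains a definite multiplicative factor. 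Concretely, from $V_\ep = (-\Delta_\ep)^{-1}\big[F_\ep\,((-\Delta_\ep)^{-1}V_{\ep o}^p)^{1/p}\big] + (\text{good term})$ exactly as in the previous proof, one applies \eqref{eq-1-1} to pass from a bound on $\||\cdot|^{-\alpha}V_{\ep o}\|_{L^{b_k}}$ to one on $\||\cdot|^{-\alpha}V_{\ep o}\|_{L^{b_{k+1}}}$; the point is that, because $F_\ep$ sits in a fixed $L^{p(q_\ep+1)/(pq_\ep-1)}$ space with small norm (Lemma \ref{lem-Fep}), the exponent arithmetic $\frac{1}{a_1} - \frac{1}{b_{k+1}} = \frac2n$, $\frac{1}{a_1} = \frac{1}{a_2} + \frac{pq_\ep-1}{p(q_\ep+1)}$, $\frac{1}{a_3} - \frac{p}{a_2} = \frac2n$ closes with $a_3 p$ essentially equal to $b_k$, so each absorption step trades an $L^{b_k}$-norm of $V_{\ep o}$ (raised to power $p<1$) against an $L^{b_{k+1}}$-norm, and solving the resulting relations shows $b_{k+1}$ can be taken strictly larger than $b_k$ by a fixed amount governed by $p$.

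The key steps, in order, are: (i) record the integral identity for $V_{\ep o}$ and split off the harmless interior piece $\mca_\ep$, which is bounded uniformly in $\ep$ by the same tail-integral estimate \eqref{eq-2-32} as before once $\delta$ is small; (ii) given a bound at level $b_k$ (for $\ep$ small), run one Brezis--Kato absorption step using H\"older and Lemma \ref{lem-Fep} to get $\frac12 \||\cdot|^{-\alpha_{k}}V_{\ep o}\|_{L^{b_k}} + C\mca_\ep$ on the right and hence a clean bound, then feed the output through \eqref{eq-1-1} once more to produce a bound at level $b_{k+1}$; (iii) check that the admissibility conditions (the analogues of Check~1--Check~3) hold at every level $b_k \le b$, which amounts to a handful of inequalities linear in $\frac1{b_k}$ and $\ep$ that are satisfied for $\ep$ small and $\delta$ small; (iv) verify the iteration terminates, i.e. that after finitely many steps $b_k$ exceeds the prescribed $b$, so that $\||\cdot|^{-\alpha}V_{\ep o}\|_{L^b(\Omega_\ep)} \le C$ with $\alpha = \frac{n+\delta}{b} - (n-2)$; the base case $b_0$ is supplied by the previous lemma.

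The main obstacle I anticipate is not any single estimate but making the exponent bookkeeping uniform: one must verify that the gain $b_{k+1} - b_k$ is bounded below by a constant independent of $k$ (so the process really does terminate in finitely many steps rather than converging to some finite ceiling), and simultaneously that the weight exponent $\alpha$ and the auxiliary Sobolev exponents $a_1, a_2, a_3$ remain in the legal ranges demanded by the doubly weighted HLS inequality throughout. Here the hypothesis $p < 1$ enters decisively: it is exactly what forces $q_\ep - \frac1p > 0$ to be bounded away from the degenerate regime and what makes the factor $F_\ep$ land in a space with a controllable (small) norm, so that the absorption $\eta\|\cdot\| \le \frac12\|\cdot\|$ is legitimate at each stage. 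One must also take care that the finitely many shrinkings of $\delta$ needed along the chain leave $\delta$ strictly positive; since the chain has a fixed finite length $N = N(n,p,b)$, this is harmless. Once these uniformities are in hand, the conclusion \eqref{eq-2-0} for the given $b$ follows by collecting the estimates, and the decay $V_\ep(x) \le C(1+|x|^{n-2})^{-1}$ (hence the bounds on $U_\ep$) then follows by inserting \eqref{eq-2-0} for a sufficiently large $b$ back into \eqref{eq-ext-sol}, as sketched in the introduction.
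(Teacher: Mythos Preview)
Your iteration plan has a genuine gap. If you run the absorption step of the previous lemma at a target exponent $b_{k+1}$, using only the smallness of $\|F_\ep\|_{L^{p(q_\ep+1)/(pq_\ep-1)}}$ from Lemma~\ref{lem-Fep}, then Check~3 of that lemma gives $\frac{1}{a_3} = \frac{p}{b_{k+1}} + (p+1)\ep$, so the exponent appearing on the right is $a_3 p$ with $\frac{1}{a_3 p} = \frac{1}{b_{k+1}} + \frac{(p+1)\ep}{p}$. Thus $b_{k+1} - a_3 p = O(\ep)$: each step gains only an $\ep$-dependent amount, not a fixed amount governed by $p$ alone. Reaching a prescribed $b$ from $b_0$ would then require roughly $1/\ep$ steps, destroying the uniformity in $\ep$ that the lemma asserts. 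In other words, the argument of the previous lemma is a closed loop at each fixed $b$ (this is precisely what made it an absorption argument rather than a bootstrap), so iterating it cannot extend the range of $b$.

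The paper's proof proceeds quite differently and avoids iteration. Its Step~1 first upgrades the integrability of $F_\ep$: using the already-known estimate \eqref{eq-2-0} for $b$ in the window $(\frac{n}{n-2},\frac{np}{2})$ and one application of \eqref{eq-1-1}, one bounds $\|(-\Delta_\ep)^{-1}V_{\ep o}^p\|_{L^\zeta(\Omega_\ep)}$ for arbitrarily large $\zeta$, and interpolating against Lemma~\ref{lem-Fep} gives $\|F_\ep\|_{L^s(\Omega_\ep)} \le C$ for every $s \ge \frac{p(q_0+1)}{pq_0-1}$. This is the missing idea. In Step~2, with $F_\ep$ now in all such $L^s$, one repeats the H\"older splitting but with $\frac{1}{a_1} = \frac{1}{a_4} + \frac{1}{s}$ where $a_4$ may be taken as large as desired; the resulting right-hand exponent $a_5 p$ (from $\frac{1}{a_5} = \frac{p}{a_4} + \frac{2}{n}$) then lands back in the known window, and \eqref{eq-2-0} at the given $b$ follows in a single stroke. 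The hypothesis $p<1$ is used precisely here (Check~4): for $a_4$ large the admissibility condition for \eqref{eq-1-1} reduces to $-\tfrac{n-2}{n} < \tfrac{p}{n}\bigl[\tfrac{n+\delta}{b}-(n-2)\bigr] < 0$, and the left inequality is equivalent to $(p-1)(n-2) < \tfrac{p(n+\delta)}{b}$, which holds for all large $b$ only when $p<1$.
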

\begin{proof}
Recall the function $F_{\ep}$ introduced in \eqref{eq-Fep}. We divide the proof into two steps.

\medskip
\noindent \textbf{Step 1}. We claim that for each $s \ge \frac{p(q_0+1)}{pq_0-1} > 1$, there exists a constant $C > 0$ independent of $\ep > 0$ such that
\begin{equation}\label{eq-Fep-2}
\|F_{\ep}\|_{L^s(\Omega_{\ep})} \le C.
\end{equation}

Given any large $\zeta > 1$, let $t > 0$ be the number such that
\[\frac{p}{t} = \frac{2}{n} + \frac{1}{\zeta}.\]
Then $t$ satisfies the second condition in \eqref{eq-2-4}. Hence, if we choose $\delta > 0$ small enough, we obtain from \eqref{eq-1-1} and \eqref{eq-2-0} that
\[\|(-\Delta_{\ep})^{-1} V_{\ep o}^p\|_{L^{\zeta}(\Omega_{\ep})} \le C \|V_{\ep o}^p\|_{L^{t \over p}(\Omega_{\ep})} = C \|V_{\ep o}\|_{L^t(\Omega_{\ep})}^p
\le C \left\| |\cdot|^{(n-2) - \frac{n+\delta}{t}} V_{\ep o} \right\|_{L^t(\Omega_{\ep})}^p \le C.\]
On the other hand, by H\"older's inequality and Lemmas \ref{lem-lam} and \ref{lem-Fep}, we have
\[\|F_{\ep}\|_{L^{p(q_0+1) \over pq_0-1}(\Omega_{\ep})} \le C.\] % ${p(q_0+1) \over pq_0-1} \le {p(q_{\ep}+1) \over pq_{\ep}-1}$
Interpolating the above two estimates, we conclude that \eqref{eq-Fep-2} holds for all $s \ge \frac{p(q_0+1)}{pq_0-1}$.

\medskip
\noindent \textbf{Step 2}. By using \eqref{eq-Fep-2}, we shall prove the lemma.

Fix any large $b > \frac{n}{n-2}$, and let $a_1$, $\alpha$ and $\mca_{\ep}$ be the numbers defined by \eqref{eq-alpha} and \eqref{eq-mca}.
Choosing $a_4 > 1$ so large that \eqref{eq-a4} holds, we set $s$ as the number satisfying
\[\frac{1}{a_4} = \frac{1}{a_1} - \frac{1}{s} \quad \text{and} \quad s \ge \frac{p(q_0+1)}{pq_0-1} = \frac{np}{2(p+1)}.\]
From \eqref{eq-2-1} and H\"older's inequality, we reach
\begin{align*}
\left\| |\cdot|^{-\alpha} V_{\ep o} \right\|_{L^b(\Omega_{\ep})}
&\le C \left\| |\cdot|^{-\alpha} F_{\ep} ((-\Delta_{\ep})^{-1} V_{\ep o}^p)^{1 \over p} \right\|_{L^{a_1}(\Omega_{\ep})} + C \mca_{\ep} \\
&\le C \|F_{\ep}\|_{L^s(\Omega_{\ep})} \left\| |\cdot|^{-\alpha} ((-\Delta_{\ep})^{-1} V_{\ep o}^p)^{1 \over p} \right\|_{L^{a_4}(\Omega_{\ep})} + C \mca_{\ep}.
\end{align*}
Owing to Check 4 below, if $a_5$ is a number satisfying
\[\frac{1}{a_5} - \frac{p}{a_4} = \frac{2}{n},\]
then $a_5p$ satisfies the second condition in \eqref{eq-2-4}, and so one can argue as in \eqref{eq-2-3} to deduce
\begin{align*}
\left\| |\cdot|^{-\alpha} ((-\Delta_{\ep})^{-1} V_{\ep o}^p)^{1 \over p} \right\|_{L^{a_4}(\Omega_{\ep})} \le C \left\| |\cdot|^{-\alpha} V_{\ep o} \right\|_{L^{a_5p}(\Omega_{\ep})}.
\end{align*}
Therefore we see from \eqref{eq-2-0} and \eqref{eq-Fep-2} that
\[\left\| |\cdot|^{-\alpha} V_{\ep o} \right\|_{L^b(\Omega_{\ep})} \le C \|F_{\ep}\|_{L^s(\Omega_{\ep})} \left\| |\cdot|^{-\alpha} V_{\ep o} \right\|_{L^{a_5p}(\Omega_{\ep})} + C \mca_{\ep} \le C,\]
which is the desired result.

\medskip \noindent
\textbf{Check 4.} We want to check that
\begin{equation}\label{eq-a4}
\frac{p}{a_4} + \frac{2}{n} < 1 \quad \text{and} \quad \frac{p}{a_4} - \frac{n-2}{n} < \frac{p}{n} \left[\frac{n+\delta}{b} - (n-2)\right] < \frac{p}{a_4}
\end{equation}
for sufficiently large $a_4 > 1$. It follows from the inequalities
\[\frac{2}{n} < 1 \quad \text{and} \quad - \frac{n-2}{n} < \frac{p}{n} \left[\frac{n+\delta}{b} - (n-2)\right] < 0\]
which hold for every $n \ge 3$ and $p < 1$.
\end{proof}

The above estimate allows us to conclude the proof of Proposition \ref{prop-dec}.
\begin{proof}[Completion of the proof of Proposition \ref{prop-dec}]
Given large $b > 1$ and small $\delta > 0$, it holds that
\begin{equation}\label{eq-4-1}
U_{\ep}(x) = (-\Delta_{\ep})^{-1}(V_{\ep}^p)(x) \le C \int_{\Omega_{\ep}}
\frac{1}{|x-y|^{n-2}} \left[ |y|^{(n-2)-{n+\delta \over b}} V_{\ep}(y) \right]^p |y|^{\left[{n+\delta \over b}-(n-2)\right] p} dy
\end{equation}
for $x \in \Omega_{\ep}$. For any fixed point $x \in \Omega_{\ep}$ such that $|x| \ge 1$, we set
\begin{equation}\label{eq-D1D2}
D_1 = \left\{y \in \Omega_{\ep}: |y| \le \frac{|x|}{2} \right\}, \quad D_2 = \left\{y \in \Omega_{\ep}: |y-x| \le \frac{|x|}{2} \right\}
\end{equation}
and
\begin{equation}\label{eq-D3}
D_3 = \left\{y \in \Omega_{\ep}: |y| > \frac{|x|}{2} \text{ and } |y-x| > \frac{|x|}{2} \right\}.
\end{equation}
Then we divide the integral in the right-hand side of \eqref{eq-4-1} as
\[A_1 + A_2 + A_3 := \(\int_{D_1} + \int_{D_2} + \int_{D_3}\) \frac{1}{|x-y|^{n-2}} \left[ |y|^{(n-2)-{n+\delta \over b}} V_{\ep}(y) \right]^p |y|^{\left[{n+\delta \over b}-(n-2)\right] p} dy;\]
namely, the domain of integration of $A_j$ is $D_j$ for $j = 1,\, 2,\, 3$.
We will estimate each of them.

We note that
\begin{equation}\label{eq-4-7}
\begin{cases}
|x-y| \ge |x|-|y| \ge \dfrac{|x|}{2} &\text{if } y \in D_1, \\
|y| \ge |x|-|y-x| \ge \dfrac{|x|}{2} &\text{if } y \in D_2, \\
|x-y| > \dfrac{|y-x|}{2} + \dfrac{|x|}{4} > \dfrac{|y|}{4} &\text{if } y \in D_3.
\end{cases}
\end{equation}
Throughout the proof, we use $\kappa(b)$ to denote a function of $b \in (1,\infty)$ such that $\kappa(b) \to 0$ as $b \to \infty$, which may vary from line to line.
Then, by employing \eqref{eq-4-7}, H\"older's inequality and Lemma \ref{lem-decay-2}, we discover
\begin{align*}
A_1 &\le \frac{C}{|x|^{n-2}} \int_{D_1} \left[ |y|^{(n-2)-{n+\delta \over b}} V_{\ep}(y) \right]^p
|y|^{\left[{n+\delta \over b}-(n-2)\right] p} dy \\
&\le \frac{C}{|x|^{n-2}} \left\| |\cdot|^{(n-2) - \frac{n+\delta}{b}} V_{\ep} \right\|_{L^b(\Omega_{\ep})}^p
\( \int_{0}^{|x| \over 2} r^{[n+\delta-(n-2)b] {p \over b-p}+(n-1)} dr\)^{\frac{b-p}{b}} \\
&\le \frac{C}{|x|^{n-2}} |x|^{n-(n-2)p + \kappa(b)} = \frac{C}{|x|^{(n-2)p -2+\kappa(b)}}.
\end{align*}
Using the second inequality of \eqref{eq-4-7}, we compute $A_2$ and $A_3$ as
\begin{align*}
A_2 &\le C |x|^{\left[ \frac{n+\delta}{b} - (n-2)\right] p} \int_{D_2} \frac{1}{|x-y|^{n-2}} \left[ |y|^{(n-2)-{n+\delta \over b}} V_{\ep}(y) \right]^p dy \\
&\le C |x|^{\left[\frac{n+\delta}{b} - (n-2)\right]p} \left\||\cdot|^{(n-2) - \frac{n+\delta}{b}} V_{\ep}\right\|_{L^s(\Omega_{\ep})}^p
\( \int_{0}^{|x| \over 2} \frac{r^{n-1}}{r^{(n-2)b \over b-p}} dr \)^{\frac{b-p}{b}} \\
&\le C |x|^{-(n-2)p +\kappa(b)} |x|^{2 + \kappa(b)} \le \frac{C}{|x|^{(n-2)p-2 +\kappa(b)}}
\end{align*}
and
\begin{align*}
A_3 &\le C \int_{D_3}  \frac{1}{|y|^{(n-2)(p+1) +\kappa(b)}} \left[ |y|^{(n-2)-{n+\delta \over b}} V_{\ep}(y) \right]^p dy
\\
&\le C \left\| |\cdot|^{(n-2) - \frac{n+\delta}{b}} V_{\ep} \right\|_{L^b(\Omega_{\ep})}^p \( \int_{|x| \over 2}^{\infty} \frac{r^{n-1}}{r^{(n-2)(p+1)+\kappa(b)}} dr \)^{\frac{b-p}{b}}
\le \frac{C}{|x|^{(n-2)p -2 +\kappa(b)}}.
\end{align*}
Therefore
\begin{equation}\label{eq-4-4}
U_{\ep}(x) \le \frac{C}{|x|^{(n-2)p-2 +\kappa(b)}} \quad \text{for all } x \in \Omega_{\ep} \text{ such that } |x| \ge 1.
\end{equation}
Notice that \eqref{eq-4-4} is almost same as the desired one in \eqref{eq-4-3}, but it contains a small remainder $\kappa(b)$ that should be removed.
To do it, we will first obtain the sharp decay of $V_{\ep}$ by putting \eqref{eq-4-4} into \eqref{eq-ext-sol}.
Then we will be able to derive the desired sharp decay of $U_{\ep}$.

Indeed, \eqref{eq-4-4} and \eqref{eq-ext-sol} give
\[V_{\ep}(x) \le C \int_{\R^n} \frac{1}{|x-y|^{n-2}} \frac{1}{1+|y|^{2p + (n+2) + \kappa(b) + o(1)}} dy\]
where $o(1) \to 0$ as $\ep \to 0$.
We divide the integral of the right-hand side as
\[B_1 + B_2 + B_3 := \(\int_{D_1} + \int_{D_2} + \int_{D_3}\) \frac{1}{|x-y|^{n-2}} \frac{1}{1+|y|^{2p + (n+2) + \kappa(b) + o(1)}} dy\]
where the domains $D_1$, $D_2$ and $D_3$ are set in \eqref{eq-D1D2} and \eqref{eq-D3}. We have
\[B_1 \le \frac{C}{|x|^{n-2}} \int_{0}^{\frac{|x|}{2}} \frac{r^{n-1}}{1+r^{2p +(n+2) +\kappa(b)}} dr \le \frac{C}{|x|^{n-2}}.\]
Furthermore,
\[B_2 \le \frac{C |x|^2}{|x|^{2p+(n+2)+\kappa(b)+o(1)}} = \frac{C}{|x|^{n+2p +\kappa(b)+o(1)}}
\quad \text{and} \quad
B_3 \le \frac{C}{|x|^{n+2p+\kappa(b)+o(1)}}.\]
Gathering the above estimates together, we find
\[V_{\ep}(x) \le \frac{C}{|x|^{n-2}}\quad \text{for all } |x| \ge 1.\]
Finally, we insert this into \eqref{eq-ext-sol} to get
\[U_{\ep} (x) \le C \int_{\Omega_{\ep}} \frac{1}{|x-y|^{n-2}} \frac{1}{1+|y|^{(n-2)p}} dy \le \frac{C}{|x|^{(n-2)p-2}} \quad \text{for all } |x| \ge 1.\]
The proof is finished.
\end{proof}

\section{Estimates for $v_{\ep}$ near the blow-up point}\label{sec-vep}
Let $\{(u_{\ep},v_{\ep})\}_{\ep >0}$ be a family of least energy solutions to \eqref{eq-main} with $q = q_{\ep}$ and $x_{\ep}$ the blow-up point given in \eqref{eq-lambda}.
Assume that $d_{\ep} = \frac{1}{4} \text{dist}(x_{\ep},\pa\Omega) \to 0$.
In this section, we derive sharp estimates for the functions $\{v_{\ep}\}_{\ep > 0}$ and their first-order derivatives on the sphere $\pa B^n(x_{\ep}, 2d_{\ep})$.
With a local Pohozaev-type identity
\begin{multline}\label{eq-poho-0}
-\int_{\pa B^n(x_{\ep}, 2d_{\ep})} \( \frac{\pa u_{\ep}}{\pa \nu} \frac{\pa v_{\ep}}{\pa x_j} + \frac{\pa v_{\ep}}{\pa \nu} \frac{\pa u_{\ep}}{\pa x_j} \) dS_x
+ \int_{\pa B^n(x_{\ep}, 2d_{\ep})} (\nabla u_{\ep} \cdot \nabla v_{\ep}) \nu_j\, dS_x \\
= \frac{1}{p+1} \int_{\pa B^n(x_{\ep}, 2d_{\ep})} v_{\ep}^{p+1} \nu_j\, dS_x + \frac{1}{q_{\ep}+1} \int_{\pa B^n(x_{\ep}, 2d_{\ep})} u_{\ep}^{q_{\ep}+1} \nu_j\, dS_x
\end{multline}
for $1 \le j \le n$ (see Lemma \ref{lem-poho} for its derivation), they will consist of essential tools in the proof of Theorems \ref{thm-1} and \ref{thm-2}.
As expected, our a priori assumption that $d_{\ep} \to 0$ as $\ep \to 0$ makes the analysis delicate.

Here and after, we use the following constants
\begin{equation}\label{eq-AUV}
A_{V_0} := \int_{\R^n} V_0^p(y)\, dy \quad \text{and} \quad A_{U_0} := \int_{\R^n} U_0^{q_0}(y)\, dy.
\end{equation}
Define also
\begin{equation}\label{eq-ab}
\alpha_0 = \frac{2(p+1)}{pq_0-1} \quad \text{and} \quad \beta_0 = \frac{2(q_0+1)}{pq_0-1}.
\end{equation}
Then one may check from \eqref{eq-cr-hy} that
\begin{equation}\label{eq-51}
\alpha_0 (q_0 +1) = n \quad \text{and } \beta_0 (p+1) = n.
\end{equation}

\begin{lem}\label{lem-g10}
Suppose that $p \in (\frac{2}{n-2}, \frac{n+2}{n-2}]$. For each point $x \in \pa B^n(x_{\ep}, 2d_{\ep})$, we have
\begin{equation}\label{eq-g1-0}
v_{\ep}(x) = \lambda_{\ep}^{-\alpha_0} A_{U_0} G(x,x_{\ep}) + o(d_{\ep}^{-(n-2)} \lambda_{\ep}^{-\alpha_0})
\end{equation}
and
\begin{equation}\label{eq-g1-8}
\nabla v_{\ep}(x) = \lambda_{\ep}^{-\alpha_0} A_{U_0} \nabla_x G(x,x_{\ep}) + o(d_{\ep}^{-(n-1)} \lambda_{\ep}^{-\alpha_0}).
\end{equation}
Here $o$ notation is understood as
\begin{equation}\label{eq-g1-18}
\lim_{\ep \to 0} \sup_{x \in \pa B^n(x_{\ep}, 2d_{\ep})} d_{\ep}^k \lambda_{\ep}^{\alpha_0} \cdot \left|o(d_{\ep}^{-k}\lambda_{\ep}^{-\alpha_0})\right| = 0
\quad \text{for } k = n-1 \text{ or } n-2.
\end{equation}
\end{lem}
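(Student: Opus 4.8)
\section*{Proof proposal for Lemma \ref{lem-g10}}

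The plan is to start from the Green representation formula
$v_{\ep}(x) = \int_{\Omega} G(x,y)\, u_{\ep}^{q_{\ep}}(y)\, dy$, which holds since $-\Delta v_{\ep} = u_{\ep}^{q_{\ep}}$ in $\Omega$ and $v_{\ep} = 0$ on $\pa\Omega$, together with its differentiated form $\nabla_x v_{\ep}(x) = \int_{\Omega} \nabla_x G(x,y)\, u_{\ep}^{q_{\ep}}(y)\, dy$. Fix $x \in \pa B^n(x_{\ep},2d_{\ep})$, so that $d_{\ep} \le |x-y| \le 3d_{\ep}$ whenever $|y-x_{\ep}| \le d_{\ep}$, and for a large parameter $R>1$ split $\Omega$ into a core $B^n(x_{\ep}, R\lambda_{\ep}^{-1})$, a neck $B^n(x_{\ep},d_{\ep}) \setminus B^n(x_{\ep},R\lambda_{\ep}^{-1})$, and a far region $\Omega \setminus B^n(x_{\ep},d_{\ep})$; since $\Lambda_{\ep} = d_{\ep}\lambda_{\ep} \to \infty$ by Lemma \ref{lem-lam}, this decomposition is meaningful for $\ep$ small. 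Throughout, $m$ denotes the decay exponent of $U_0$ provided by Proposition \ref{prop-dec} (so $m = n-2$ if $p > \frac{n}{n-2}$, $m = (n-2)p-2$ if $p < \frac{n}{n-2}$, with a logarithmic correction at $p = \frac{n}{n-2}$); the key structural fact is that $mq_0 > n$ in every range of $p$, a consequence of the critical hyperbola \eqref{eq-cr-hy} together with $p \le q_0$, so that $\int_{\R^n} U_0^{q_0} = A_{U_0} < \infty$.

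On the core, the change of variables $y = \lambda_{\ep}^{-1}z + x_{\ep}$ gives $u_{\ep}^{q_{\ep}}(y)\,dy = \lambda_{\ep}^{\alpha_{\ep}q_{\ep}-n} U_{\ep}^{q_{\ep}}(z)\,dz$, and since $\alpha_{\ep}q_{\ep} - \alpha_0 q_0 = O(\ep)$ while $\alpha_0(q_0+1) = n$ by \eqref{eq-51}, Lemma \ref{lem-lam} yields $\lambda_{\ep}^{\alpha_{\ep}q_{\ep}-n} = (1+o(1))\lambda_{\ep}^{-\alpha_0}$. Because $|y-x_{\ep}| \le R\lambda_{\ep}^{-1} \ll d_{\ep}$ on the core and $|x-y| \sim d_{\ep}$, the Green function estimates of Lemma \ref{lem-H-est} and Corollary \ref{cor-H-est} allow one to replace $G(x,y)$ by $G(x,x_{\ep})$ (resp. $\nabla_x G(x,y)$ by $\nabla_x G(x,x_{\ep})$) at the cost of an error $O(d_{\ep}^{-n} R\lambda_{\ep}^{-1})$ per unit mass in the gradient case (one power less in the value case); multiplied by $\int_{\mathrm{core}} u_{\ep}^{q_{\ep}} = O(\lambda_{\ep}^{-\alpha_0})$ this is $O(R\Lambda_{\ep}^{-1} d_{\ep}^{-(n-1)}\lambda_{\ep}^{-\alpha_0})$, which is $o(d_{\ep}^{-(n-1)}\lambda_{\ep}^{-\alpha_0})$, and similarly $o(d_{\ep}^{-(n-2)}\lambda_{\ep}^{-\alpha_0})$ for the value. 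Using the $C^2_{\mathrm{loc}}$-convergence $U_{\ep} \to U_0$, the main part of the core contribution is $(1+o(1))\lambda_{\ep}^{-\alpha_0}\bigl(\int_{|z|\le R} U_0^{q_0}\bigr) G(x,x_{\ep})$, with the evident gradient analogue.

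On the neck one has $G(x,y) \le c_n d_{\ep}^{-(n-2)}$ and $|\nabla_x G(x,y)| \le C d_{\ep}^{-(n-1)}$, so the neck contribution is bounded by $C d_{\ep}^{-(n-2)}\lambda_{\ep}^{-\alpha_0}\int_{|z|\ge R} U_0^{q_0}$ (resp. the $(n-1)$-analogue), up to routine $q_{\ep}$-versus-$q_0$ bookkeeping handled via $\lambda_{\ep}^{\ep}\to1$, boundedness of $\Lambda_{\ep}^{\ep}$, and $mq_{\ep} > n$ for $\ep$ small; this is $\le \omega(R)\, d_{\ep}^{-(n-2)}\lambda_{\ep}^{-\alpha_0}$ with $\omega(R) \to 0$ as $R \to \infty$. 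On the far region, Proposition \ref{prop-dec} gives $u_{\ep}^{q_{\ep}}(y) \le C\lambda_{\ep}^{(\alpha_{\ep}-m)q_{\ep}} |y-x_{\ep}|^{-mq_{\ep}} \le C\lambda_{\ep}^{(\alpha_{\ep}-m)q_{\ep}} |y-x_{\ep}|^{-mq_0}$ on the bounded set $\Omega$, and using $|x-x_{\ep}| = 2d_{\ep}$ and $mq_0 > n$ one finds $\int_{\Omega\setminus B^n(x_{\ep},d_{\ep})} |x-y|^{-(n-2)}|y-x_{\ep}|^{-mq_0}\,dy \le C d_{\ep}^{2-mq_0}$ (and $\le C d_{\ep}^{1-mq_0}$ with $|x-y|^{-(n-1)}$ in place of $|x-y|^{-(n-2)}$). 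Since $\alpha_{\ep}q_{\ep} \to \alpha_0 q_0 = n-\alpha_0$, the far contribution is $O\bigl(\Lambda_{\ep}^{\,n-mq_0}\lambda_{\ep}^{O(\ep)} d_{\ep}^{-(n-2)}\lambda_{\ep}^{-\alpha_0}\bigr) = o(d_{\ep}^{-(n-2)}\lambda_{\ep}^{-\alpha_0})$ because $n-mq_0 < 0$ and $\Lambda_{\ep}\to\infty$; the gradient version is identical with $n-2$ replaced by $n-1$. Finally, writing $\int_{|z|\le R} U_0^{q_0} = A_{U_0} - \int_{|z|>R} U_0^{q_0}$, bounding $G(x,x_{\ep}) \le c_n(2d_{\ep})^{-(n-2)}$ and $|\nabla_x G(x,x_{\ep})| \le C d_{\ep}^{-(n-1)}$, and running a diagonal argument in $R$, all the $R$-dependent tails collapse into genuine $o$-terms in the sense of \eqref{eq-g1-18}, which yields \eqref{eq-g1-0} and \eqref{eq-g1-8}.

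The main obstacle I anticipate is the far-region estimate: because $d_{\ep}\to0$ several factors blow up while $\lambda_{\ep}\to\infty$ forces others to vanish, and obtaining the bound $o(d_{\ep}^{-(n-2)}\lambda_{\ep}^{-\alpha_0})$ requires tracking the exponents exactly and noticing that they recombine into the single quantity $\Lambda_{\ep}^{\,n-mq_0}$ with $n-mq_0<0$ — this is precisely where the sharp decay of Theorem \ref{thm-0} and the inequality $mq_0>n$ are indispensable. The $q_{\ep}$-versus-$q_0$ bookkeeping is routine but must be carried consistently, and the three-range split for the decay of $U_0$ (including the logarithmic correction at $p=\frac{n}{n-2}$, which only produces a factor that is $o(\Lambda_{\ep}^{\sigma})$ for every $\sigma>0$) changes nothing essential.
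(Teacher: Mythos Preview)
Your proposal is correct and follows essentially the same strategy as the paper: Green's representation for $v_\ep$, the sharp decay of $U_\ep$ from Proposition~\ref{prop-dec}, and a splitting into regions around $x_\ep$, with the key algebraic fact $mq_0>n$ (equivalently $((n-2)p-2)q_0=n+2(p+1)>n$ when $p<\tfrac{n}{n-2}$) ensuring that all tail contributions collapse into powers of $\Lambda_\ep^{-1}$.

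The only notable difference is organizational. The paper first writes
\[
v_\ep(x)=G(x,x_\ep)\int_\Omega u_\ep^{q_\ep}+\int_\Omega\bigl[G(x,y)-G(x,x_\ep)\bigr]u_\ep^{q_\ep}(y)\,dy,
\]
uses dominated convergence on all of $\Omega_\ep$ to get $\lambda_\ep^{\alpha_0}\int_\Omega u_\ep^{q_\ep}\to A_{U_0}$ directly, and then splits the \emph{difference} integral at radii $d_\ep$ and $4d_\ep$. You instead split $\Omega$ itself at radii $R\lambda_\ep^{-1}$ and $d_\ep$, extract the main term only on the core, and close with a diagonal argument in $R$. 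The paper's route is slightly cleaner because it avoids the auxiliary parameter $R$ and the final $\limsup$--$R$ interchange; your route is also standard and makes the concentration of mass more explicit. Either way the far-region computation you flag as the main obstacle is exactly the one the paper carries out (see \eqref{eq-g1-3}--\eqref{eq-g1-4} and \eqref{eq-g1-17}), and your observation that all exponents recombine into $\Lambda_\ep^{\,n-mq_0}$ is precisely the point.
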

\begin{proof}
We first derive \eqref{eq-g1-0}. By Green's representation formula, we have
\begin{equation}\label{eq-g1-1}
v_{\ep}(x) = G(x,x_{\ep}) \( \int_{\Omega}u_{\ep}^{q_{\ep}} (y) dy\) + \int_{\Omega} [G(x,y) - G(x,x_{\ep})] u_{\ep}^{q_{\ep}} (y) dy.
\end{equation}

Owing to Lemma \ref{lem-lam} and Proposition \ref{prop-dec}, we can apply the dominated convergence theorem to yield
\[\lim_{\ep \to 0} \lambda_{\ep}^{\alpha_0} \int_{\Omega} u_{\ep}^{q_{\ep}}(y) dy
= \lim_{\ep \to 0} \int_{\Omega_{\ep}} U_{\ep}^{q_{\ep}}(y) dy = A_{U_0}.\]
Therefore, \eqref{eq-g1-9} implies that
\[G(x,x_{\ep}) \(\int_{\Omega} u_{\ep}^{q_{\ep}}(y) dy\) = \lambda_{\ep}^{-\alpha_0} A_{U_0} G(x,x_{\ep}) + o(d_{\ep}^{-(n-2)} \lambda_{\ep}^{-\alpha_0}).\]

To estimate the second integral in the right-hand side of \eqref{eq-g1-1}, we split it into three parts as follows:
\begin{align*}
&\ \int_{\Omega} [G(x,y) - G(x,x_{\ep})] u_{\ep}^{q_{\ep}} (y) dy \\
&= \(\int_{B^n(x_{\ep}, d_{\ep})} + \int_{B^n(x_{\ep},4d_{\ep}) \setminus B^n(x_{\ep},d_{\ep})} + \int_{\Omega \setminus B^n(x_{\ep}, 4d_{\ep})} \) [G(x,y) - G(x,x_{\ep})] u_{\ep}^{q_{\ep}} (y) dy \\
&=: I_1 (x) + I_2 (x) + I_3 (x).
\end{align*}
We assert that
\begin{equation}\label{eq-g1-16}
|I_1 (x)| + |I_2 (x)| + |I_3 (x)| = o(d_{\ep}^{-(n-2)} \lambda_{\ep}^{-\alpha_0}),
\end{equation}
which will lead the validity of \eqref{eq-g1-0}.

\medskip \noindent \textbf{Estimate of $I_1$.}
Because $|x-x_{\ep}|=2d_{\ep}$, it holds that $|x-y| \ge d_{\ep}$ for all $y \in B^n(x_{\ep}, d_{\ep})$. Thus $|\nabla_y G(x,y)| \le Cd_{\ep}^{-(n-1)}$ and so
\begin{equation}\label{eq-g1-22}
|G(x,y) - G(x,x_{\ep})| \le C d_{\ep}^{-(n-1)} |y-x_{\ep}|
\end{equation}
for every $y \in B^n(x_{\ep}, d_{\ep})$. By using Lemma \ref{lem-lam}, Proposition \ref{prop-dec}, \eqref{eq-51} and \eqref{eq-g1-22}, we estimate $I_1$ as
\begin{equation}\label{eq-g1-2}
\begin{aligned}
|I_1 (x)| &\le C d_{\ep}^{-(n-1)} \lambda_{\ep}^{\alpha_{\ep} q_{\ep}} \int_{B^n(x_{\ep}, d_{\ep})} |y-x_{\ep}|\, U_{\ep}^{q_{\ep}}(\lambda_{\ep} (y-x_{\ep})) dy
\\
& \le Cd_{\ep}^{-(n-2)} \lambda_{\ep}^{-\alpha_0} \Lambda_{\ep}^{-1} \int_{B^n(0, \Lambda_{\ep})} |y| U_{\ep}^{q_\ep} (y) dy = o(d_{\ep}^{-(n-2)} \lambda_{\ep}^{-\alpha_0})
\end{aligned}
\end{equation}
where $\Lambda_{\ep} = d_{\ep} \lambda_{\ep} \to \infty$ as $\ep \to 0$ as shown in Lemma \ref{lem-lam}. Indeed, the last equality holds since
\begin{equation}\label{eq-g1-17}
\begin{aligned}
\int_{B^n(0, \Lambda_{\ep})} |y| U_{\ep}^{q_{\ep}} (y) dy &\le \begin{cases}
C &\text{if } p \in (\frac{n+1}{n-2}, \frac{n+2}{n-2}], \\
C\log \Lambda_{\ep} &\text{if } p = \frac{n+1}{n-2}, \\
C\Lambda_{\ep}^{-(n-2)q_{\ep} +n+1} &\text{if } p \in (\frac{n}{n-2}, \frac{n+1}{n-2}), \\
C\Lambda_{\ep}^{-(n-2)q_{\ep} +n+1} \log \Lambda_{\ep} &\text{if } p = \frac{n}{n-2}, \\
C\Lambda_{\ep}^{-((n-2)p-2)q_{\ep} +n+1} &\text{if } p \in (\frac{2}{n-2}, \frac{n}{n-2})
\end{cases} \\
&= o(\Lambda_{\ep}).
\end{aligned}
\end{equation}

\noindent \textbf{Estimate of $I_2$.}
We infer from again Lemma \ref{lem-lam} and Proposition \ref{prop-dec} that
\begin{align*}
u_{\ep}(y) \le C \lambda_{\ep}^{\alpha_0} U_{\ep}(\lambda_{\ep}(y-x_{\ep})) &\le \begin{cases}
C \lambda_{\ep}^{\alpha_0} \Lambda_{\ep}^{-(n-2)} &\text{if } p \in (\frac{n}{n-2}, \frac{n+2}{n-2}],\\
C \lambda_{\ep}^{\alpha_0} \Lambda_{\ep}^{-(n-2)} \log \Lambda_{\ep} &\text{if } p = \frac{n}{n-2}, \\
C \lambda_{\ep}^{\alpha_0} \Lambda_{\ep}^{-((n-2)p-2)} &\text{if } p \in (\frac{2}{n-2}, \frac{n}{n-2})
\end{cases} \\
&= o\big(\lambda_{\ep}^{\alpha_0} \Lambda_{\ep}^{-{n \over q_0}}\big)
\end{align*}
for $y \in B^n(x_{\ep}, 4d_{\ep}) \setminus B^n(x_{\ep}, d_{\ep})$.
Moreover, since
\[|x-y| \le 6d_{\ep} \quad \text{for } x \in \pa B^n(x_{\ep}, 2d_{\ep}) \text{ and } y \in B^n(x_{\ep}, 4d_{\ep}),\]
we have that
\[|G(x,y) - G(x,x_{\ep})| \le |G(x,y)| + |G(x,x_{\ep})| \le \frac{C}{|x-y|^{n-2}} + Cd_{\ep}^{-(n-2)} \le \frac{C}{|x-y|^{n-2}}.\]
As a consequence, we obtain
\begin{equation}\label{eq-g1-3}
\begin{aligned}
|I_2 (x)| &= o\( \lambda_{\ep}^{\alpha_0q_0} \Lambda_{\ep}^{-n} \int_{B^n(x_{\ep},4d_{\ep})\setminus B^n(x_{\ep}, d_{\ep})} \frac{1}{|x-y|^{n-2}} dy \)
= o\(\lambda_{\ep}^{\alpha_0q_0} \Lambda_{\ep}^{-n} d_{\ep}^2\) \\
&= o(d_{\ep}^{-(n-2)} \lambda_{\ep}^{-\alpha_0}).
\end{aligned}
\end{equation}

\noindent \textbf{Estimate of $I_3$.}
There holds that
\[|G(x,y) - G(x,x_{\ep})| \le C d_{\ep}^{-(n-2)} \quad \text{for } x \in \pa B^n(x_{\ep}, 2d_{\ep}) \text{ and } y \in \Omega \setminus B^n(x_{\ep}, 4d_{\ep}).\]
Thus,
\begin{equation}\label{eq-g1-4}
\begin{aligned}
|I_3 (x)| &\le Cd_{\ep}^{-(n-2)} \int_{\Omega \setminus B^n(x_{\ep}, 4d_{\ep})} u_{\ep}^{q_{\ep}}(y) dy
\le C d_{\ep}^{-(n-2)} \lambda_{\ep}^{-\alpha_0} \int_{\R^n \setminus B^n(0,4\Lambda_{\ep})} U_{\ep}^{q_{\ep}}(y) dy \\
&= o(d_{\ep}^{-(n-2)} \lambda_{\ep}^{-\alpha_0})
\end{aligned}
\end{equation}
where the last equality can be justified as in \eqref{eq-g1-17}.

Collecting \eqref{eq-g1-2}, \eqref{eq-g1-3} and \eqref{eq-g1-4}, we obtain \eqref{eq-g1-16}.

\medskip
Similarly, one can deduce the gradient estimate \eqref{eq-g1-8}. In this time, we employ the estimates
\[|\nabla_x G(x,y) - \nabla_x G(x,x_{\ep})| \le \begin{cases}
C d_{\ep}^{-n} |y-x_{\ep}| &\text{for } y \in B^n(x_{\ep}, d_{\ep}),\\
C |x-y|^{-(n-1)} &\text{for } y \in B^n(x_{\ep}, 4d_{\ep}),\\
C d_{\ep}^{-(n-1)} &\text{for } y \in \Omega \setminus B^n(x_{\ep}, 4d_{\ep}),
\end{cases}\]
which is valid whenever $x \in \pa B^n(x_{\ep}, 2d_{\ep})$. Consequently, the lemma is proved.
\end{proof}

\section{Proof of Theorem \ref{thm-1} (Case 1: $p \in (\frac{n}{n-2}, \frac{n+2}{n-2}]$)}\label{sec-thm-11}
This section is devoted to the proof of Theorem \ref{thm-1} under the assumption that $p \in (\frac{n}{n-2}, \frac{n+2}{n-2}]$.
As already explained in the introduction, we suppose that the maximum point $x_{\ep}$ tends to $\pa \Omega$,
and then derive a contradiction from the Pohozaev-type identity \eqref{eq-poho-0} on the sphere $\pa B^n(x_{\ep}, 2d_{\ep})$.

\medskip
We first need estimates for $u_{\ep}$ near the blow-up point.
\begin{lem}
Suppose that $p \in (\frac{n}{n-2}, \frac{n+2}{n-2}]$, $\{(u_{\ep},v_{\ep})\}_{\ep > 0}$ is a family of least energy solutions to \eqref{eq-main} with $q = q_{\ep}$
and $d_{\ep} =  \frac{1}{4} \textnormal{dist}(x_{\ep}, \pa \Omega) \to 0$ as $\ep \to 0$.
Then, for each $x \in \pa B^n(x_{\ep}, 2d_{\ep})$, we have
\begin{equation}\label{eq-g1-20}
u_{\ep}(x) = \lambda_{\ep}^{-\beta_0} A_{V_0} G(x,x_{\ep}) + o(d_{\ep}^{-(n-2)} \lambda_{\ep}^{-\beta_0})
\end{equation}
and
\begin{equation}\label{eq-g1-7}
\nabla u_{\ep}(x) = \lambda_{\ep}^{-\beta_0} A_{V_0} \nabla_x G(x,x_{\ep}) + o(d_{\ep}^{-(n-1)} \lambda_{\ep}^{-\beta_0}).
\end{equation}
Here, the definition of the numbers $A_{V_0}$ and $\beta_0$ can be found in \eqref{eq-AUV} and \eqref{eq-ab},
and $o$ notation is uniform with respect to $x \in \pa B^n(x_{\ep}, 2d_{\ep})$ in the sense that \eqref{eq-g1-18} holds.
\end{lem}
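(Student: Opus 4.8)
The plan is to mirror the proof of Lemma~\ref{lem-g10} line by line, with the substitutions $-\Delta v_\ep = u_\ep^{q_\ep}\rightsquigarrow -\Delta u_\ep = v_\ep^p$, the decay of $U_\ep$ replaced by the decay of $V_\ep$ from Proposition~\ref{prop-dec}, and the pair $(\alpha_0,A_{U_0})$ replaced by $(\beta_0,A_{V_0})$. Starting from Green's representation formula,
\[
u_\ep(x) = G(x,x_\ep)\left(\int_\Omega v_\ep^p(y)\,dy\right) + \int_\Omega \bigl[G(x,y)-G(x,x_\ep)\bigr]v_\ep^p(y)\,dy,
\]
it suffices to show that the first summand equals $\lambda_\ep^{-\beta_0}A_{V_0}G(x,x_\ep)+o(d_\ep^{-(n-2)}\lambda_\ep^{-\beta_0})$ and that the second is $o(d_\ep^{-(n-2)}\lambda_\ep^{-\beta_0})$, uniformly for $x\in\pa B^n(x_\ep,2d_\ep)$ in the sense of \eqref{eq-g1-18}.

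\textbf{The leading term.} Changing variables $y\mapsto z=\lambda_\ep(y-x_\ep)$ via \eqref{eq-tuv} gives $\int_\Omega v_\ep^p(y)\,dy=\lambda_\ep^{\beta_\ep p-n}\int_{\Omega_\ep}V_\ep^p(z)\,dz$. From $\beta_0(p+1)=n$ in \eqref{eq-51} together with $\beta_\ep\to\beta_0$ and $q_\ep\to q_0$ one computes $\beta_\ep p-n+\beta_0=p(\beta_\ep-\beta_0)=O(\ep)$, so Lemma~\ref{lem-lam} ($\lambda_\ep^\ep\to1$) yields $\lambda_\ep^{\beta_\ep p-n}=\lambda_\ep^{-\beta_0}(1+o(1))$; this is the exact analogue of the $\lambda_\ep^{\alpha_\ep q_\ep}$ versus $\lambda_\ep^{-\alpha_0}$ matching used in Lemma~\ref{lem-g10}. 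Since $p>\frac{n}{n-2}$, Proposition~\ref{prop-dec} gives $V_\ep(z)\le C(1+|z|^{n-2})^{-1}$, hence $V_\ep^p$ is dominated by the fixed integrable majorant $C^p(1+|z|^{n-2})^{-p}$, and $\int_{\Omega_\ep}V_\ep^p\to\int_{\R^n}V_0^p=A_{V_0}$ by dominated convergence. Combined with the bound $G(x,x_\ep)\le C|x-x_\ep|^{-(n-2)}=C(2d_\ep)^{-(n-2)}$ from \eqref{eq-g1-9}, this gives the stated asymptotics of the leading term.

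\textbf{The remainder.} I would split $\Omega$ exactly as in Lemma~\ref{lem-g10} into $B^n(x_\ep,d_\ep)$, $B^n(x_\ep,4d_\ep)\setminus B^n(x_\ep,d_\ep)$ and $\Omega\setminus B^n(x_\ep,4d_\ep)$, calling the corresponding integrals $J_1(x)$, $J_2(x)$, $J_3(x)$. The pointwise bounds on $|G(x,y)-G(x,x_\ep)|$ needed on the three regions --- namely $\le Cd_\ep^{-(n-1)}|y-x_\ep|$, $\le C|x-y|^{-(n-2)}$, and $\le Cd_\ep^{-(n-2)}$ --- are precisely those used in Lemma~\ref{lem-g10} and come from \eqref{eq-g1-9}. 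After rescaling, controlling $J_1,J_2,J_3$ reduces to bounding $\int_{B^n(0,\Lambda_\ep)}|z|V_\ep^p(z)\,dz$, the quantity $\Lambda_\ep^{-(n-2)p}$ on the annulus $B^n(0,4\Lambda_\ep)\setminus B^n(0,\Lambda_\ep)$, and $\int_{\R^n\setminus B^n(0,4\Lambda_\ep)}V_\ep^p(z)\,dz$, respectively. The decisive structural fact is that $(n-2)p>n$ for $p>\frac{n}{n-2}$, which makes all these tail integrals converge (resp.\ decay) and, after multiplying by the appropriate negative power of $\Lambda_\ep=d_\ep\lambda_\ep\to\infty$, forces each $J_i(x)$ to be $o(d_\ep^{-(n-2)}\lambda_\ep^{-\beta_0})$; this plays the role of the estimate $o(\Lambda_\ep)$ in \eqref{eq-g1-17}. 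The gradient identity \eqref{eq-g1-7} follows by the same scheme applied to $\nabla u_\ep(x)=\int_\Omega\nabla_xG(x,y)v_\ep^p(y)\,dy$, using $|\nabla_xG(x,x_\ep)|\le Cd_\ep^{-(n-1)}$ together with the three bounds on $|\nabla_xG(x,y)-\nabla_xG(x,x_\ep)|$ recorded at the end of the proof of Lemma~\ref{lem-g10}. I do not anticipate a genuine obstacle here: the argument is parallel to Lemma~\ref{lem-g10}, and the only step demanding care is the verification $\beta_\ep p-n+\beta_0=O(\ep)$ so that Lemma~\ref{lem-lam} absorbs the mismatch between $\lambda_\ep^{\beta_\ep p-n}$ and $\lambda_\ep^{-\beta_0}$, plus the bookkeeping of the $\lambda_\ep$- and $\Lambda_\ep$-exponents in $J_1,J_2,J_3$.
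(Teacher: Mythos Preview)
Your proposal is correct and follows essentially the same approach as the paper: the paper's proof simply observes that Proposition~\ref{prop-dec} gives $\int_{B^n(0,\Lambda_\ep)}|y|V_\ep^p(y)\,dy = o(\Lambda_\ep)$ (the analogue of \eqref{eq-g1-17}, valid precisely because $(n-2)p>n$) and then defers entirely to the argument of Lemma~\ref{lem-g10}. Your write-up spells out the details of that parallel argument, including the $\lambda_\ep^{\beta_\ep p-n}=\lambda_\ep^{-\beta_0}(1+o(1))$ bookkeeping, but the route is the same.
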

\begin{proof}
In this case, it holds by Proposition \ref{prop-dec} that
\[\int_{B^n(0,\Lambda_{\ep})} |y| V_{\ep}^p(y) dy = o(\Lambda_{\ep});\]
compare with \eqref{eq-g1-17}.
Therefore, arguing as in the proof of Lemma \ref{lem-g10}, we establish \eqref{eq-g1-20} and \eqref{eq-g1-7}.
\end{proof}

We are now ready to prove Theorem \ref{thm-1} for the case that $p \in (\frac{n}{n-2}, \frac{n+2}{n-2}]$.
\begin{proof}[Proof of Theorem \ref{thm-1} (Case 1)]
To the contrary, we assume that $d_{\ep} = \frac{1}{4} \text{dist}(x_{\ep}, \pa \Omega) \to 0$ as $\ep \to 0$ passing to a subsequence.
For the sake of brevity, we keep using $\ep$ as the parameter for the subsequence instead of introducing new notation.

For each $1 \le j \le n$, let $L_j^{\ep}$ and $R_j^{\ep}$ be the left-hand and right-hand sides of the local Pohozaev-type identity \eqref{eq-poho-0}, respectively, so that $L_j^{\ep} = R_j^{\ep}$.
In the following, we shall estimate values of both $L_j^{\ep}$ and $R_j^{\ep}$, which will allow us to reach a contradiction.

\medskip \noindent \textbf{Estimate of $L_j^{\ep}$.}
An application of \eqref{eq-g1-7}, \eqref{eq-g1-8} and \eqref{eq-g1-9} shows
\begin{equation}\label{eq-g1-14}
\begin{aligned}
L_j^{\ep} &=-\lambda_{\ep}^{-(n-2)} A_{U_0} A_{V_0} \int_{\pa B^n(x_{\ep}, 2d_{\ep})}
\( \frac{\pa G}{\pa \nu} (x,x_{\ep}) \frac{\pa G}{\pa x_j} (x,x_{\ep}) + \frac{\pa G}{\pa \nu} (x,x_{\ep}) \frac{\pa G}{\pa x_j} (x,x_{\ep}) \) dS_x
\\
&\ + \lambda_{\ep}^{-(n-2)} A_{U_0} A_{V_0} \int_{\pa B^n(x_{\ep}, 2d_{\ep})} |\nabla_x G(x,x_{\ep})|^2 \nu_j\, dS_x \\
&\ + o \(|\pa B^n(x_{\ep}, 2d_{\ep})| d_{\ep}^{-2(n-1)} \lambda_{\ep}^{-(n-2)} \)
\\
&=-\lambda_{\ep}^{-(n-2)} A_{U_0} A_{V_0} \mci_1(2d_{\ep}) + o(d_{\ep}^{-(n-1)} \lambda_{\ep}^{-(n-2)})
\end{aligned}
\end{equation}
where
\begin{equation}\label{eq-g1-21}
\mci_1(r) := \int_{\pa B^n(x_{\ep},r)}\( 2 \frac{\pa G}{\pa \nu} (x,x_{\ep}) \frac{\pa G}{\pa x_j} (x,x_{\ep}) - |\nabla_x G(x,x_{\ep})|^2 \nu_j \) dS_x \quad \text{for } r > 0 \text{ small}.
\end{equation}
To compute the value of $\mci_1(2d_{\ep})$, we first observe that the value of $\mci_1(r)$ is independent of $r > 0$.
Indeed, testing $\frac{\pa G}{\pa x_j}(\cdot,x_{\ep})$ in
\[-\Delta_x G(\cdot, x_{\ep}) = 0 \quad \text{in } A_r := B^n(x_{\ep}, 2d_{\ep}) \setminus B^n(x_{\ep},r)\]
with $r \in (0, 2d_{\ep})$, we find
\begin{equation}\label{eq-g1-55}
\begin{aligned}
0 & = -\int_{\pa A_r}\frac{\pa G}{\pa \nu}(x,x_{\ep}) \frac{\pa G}{\pa x_j}(x,x_{\ep})\, dS_x + \int_{A_r} \nabla_x G(x,x_{\ep}) \cdot \nabla_x \frac{\pa G(x,x_{\ep})}{\pa x_j}\, dx
\\
& = -\int_{\pa A_r}\frac{\pa G}{\pa \nu}(x,x_{\ep}) \frac{\pa G}{\pa x_j}(x,x_{\ep})\, dS_x + \frac{1}{2} \int_{\pa A_r} |\nabla_x G(x,x_{\ep})|^2 \nu_j\, dS_x,
\end{aligned}
\end{equation}
which implies that $\mci_1(r)$ is constant on $(0,2d_{\ep}]$. By using this fact, we compute
\begin{align*}
\mci_1(2d_{\ep}) &=\lim_{r \to 0} \mci_1(r) \\
& = \lim_{r \to 0} \int_{\pa B^n(x_{\ep},r)} \left[ 2 \( \frac{c_n(n-2)}{|x-x_{\ep}|^{n-1}} + \frac{\pa H}{\pa \nu}(x,x_{\ep})\)
\(\frac{c_n (n-2)(x-x_{\ep})_j}{|x-x_{\ep}|^n} + \frac{\pa H}{\pa x_j}(x,x_{\ep})\) \right. \\
&\hspace{75pt} \left. - \left|\frac{c_n(n-2)(x-x_{\ep})}{|x-x_{\ep}|^n} + \nabla_x H(x,x_{\ep}) \right|^2 \nu_j \right] dS_x.
\end{align*}
On account of the oddness of the integrand, we have that
\[\int_{\pa B^n(x_{\ep},r)} \left[ 2 \( \frac{c_n(n-2)}{|x-x_{\ep}|^n} \)
\( \frac{c_n (n-2)(x-x_{\ep})_j}{|x-x_{\ep}|^n}\) - \left|  \frac{c_n(n-2)(x-x_{\ep})}{|x-x_{\ep}|^n} \right|^2 \nu_j \right]dS_x =0.\]
Furthermore, because $-\Delta_x H(\cdot, x_{\ep}) = 0$ holds in $B^n(x_{\ep}, 2d_{\ep})$, we may proceed as in \eqref{eq-g1-55} to obtain
\[\int_{\pa B^n(x_{\ep},r)} \left[ 2 \frac{\pa H}{\pa \nu}(x,x_{\ep}) \frac{\pa H}{\pa x_j}(x,x_{\ep}) - |\nabla_x H(x,x_{\ep})|^2 \nu_j \right] dS_x =0.\]
By considering the above equalities, we calculate
\begin{align*}
\mci_1(2d_{\ep}) &= 2c_n(n-2) \lim_{r \to 0} \int_{\pa B^n(x_{\ep},r)} \left[ \frac{\pa H}{\pa \nu}(x,x_{\ep}) \frac{(x-x_{\ep})_j}{|x-x_{\ep}|^n}
+ \frac{1}{|x-x_{\ep}|^{n-1}} \frac{\pa H}{\pa x_j}(x,x_{\ep}) \right. \\
&\hspace{130pt} \left. - \frac{(x-x_{\ep})}{|x-x_{\ep}|^n} \cdot \nabla_x H (x,x_{\ep})\nu_j \right] dS_x \\
& = 2c_n(n-2) |\S^{n-1}| \left. \( \frac{1}{n} \frac{\pa H}{\pa x_j}(x, x_{\ep})
+ \frac{\pa H}{\pa x_j}(x, x_{\ep}) - \frac{1}{n} \frac{\pa H}{\pa x_j}(x, x_{\ep}) \) \right|_{x = x_{\ep}}\\
& = 2c_n (n-2)|\S^{n-1}| \left. \frac{\pa H}{\pa x_j}(x, x_{\ep}) \right|_{x = x_{\ep}}.
\end{align*}
Inserting this into \eqref{eq-g1-14}, we get
\begin{equation}\label{eq-g1-45}
L_j^{\ep} = - 2c_n(n-2) A_{U_0} A_{V_0} |\S^{n-1}| \lambda_{\ep}^{-(n-2)}
\left. \frac{\pa H}{\pa x_j}(x, x_{\ep}) \right|_{x = x_{\ep}} + o(d_{\ep}^{-(n-1)} \lambda_{\ep}^{-(n-2)}).
\end{equation}
Now, if we denote by $\nu_{x_{\ep}} = (a_1, \cdots, a_n) \in \S^{n-1}$ the unique unit vector such that $x_{\ep} + \mfd(x_{\ep}) \nu_{x_{\ep}} \in \pa \Omega$, then we deduce with \eqref{eq-hn} that
\begin{equation}\label{eq-g1-5}
\begin{aligned}
- \sum_{j=1}^n a_j L_j^{\ep} & = 2c_n(n-2) A_{U_0} A_{V_0} |\S^{n-1}| \lambda_{\ep}^{-(n-2)}
\left. \frac{\pa H}{\pa \nu_{x_\ep}}(x, x_{\ep}) \right|_{x = x_{\ep}} + o(d_{\ep}^{-(n-1)} \lambda_{\ep}^{-(n-2)}) \\
& \ge C d_{\ep}^{-(n-1)} \lambda_{\ep}^{-(n-2)} = C\lambda_{\ep} \Lambda_{\ep}^{-(n-1)}
\end{aligned}
\end{equation}
for some $C > 0$.

\medskip \noindent \textbf{Estimate of $R_j^{\ep}$.}
From \eqref{eq-4-3}, we see that
\[v_{\ep} (x) \le C \lambda_{\ep}^{\beta_0} V_{\ep} (\lambda_{\ep}(x-x_{\ep})) \le C \lambda_{\ep}^{\beta_0} \Lambda_{\ep}^{-(n-2)} \quad \text{for all } x \in \pa B^n(x_{\ep}, 2d_{\ep}).\]
Hence, by \eqref{eq-51},
\begin{equation}\label{eq-g1-11}
\left| \int_{\pa B^n(x_{\ep}, 2d_{\ep})} v_{\ep}^{p+1} \nu_j\, dS_x \right| \le C|\pa B^n(x_{\ep}, 2d_{\ep})| \lambda_{\ep}^n \Lambda_{\ep}^{-(n-2)(p+1)} = C\lambda_{\ep} \Lambda_{\ep}^{(n-1)-(n-2)(p+1)}.
\end{equation}
Similarly, it holds that
\begin{equation}\label{eq-g1-12}
\left| \int_{\pa B^n(x_{\ep}, 2d_{\ep})} u_{\ep}^{q_{\ep}+1} \nu_j\, dS_x \right| \le C \lambda_{\ep} \Lambda_{\ep}^{(n-1) - (n-2) (q_{\ep}+1)}.
\end{equation}
Therefore, putting \eqref{eq-g1-11}, \eqref{eq-g1-12} and the fact that $p < q_{\ep}$ together, we arrive at
\begin{equation}\label{eq-g1-13}
|R_j^{\ep}| \le C \lambda_{\ep} \Lambda_{\ep}^{(n-1) - (n-2)(p+1)}.
\end{equation}

\medskip
As a result, we combine \eqref{eq-g1-5} and \eqref{eq-g1-13} to derive
\[C \lambda_{\ep} \Lambda_{\ep}^{-(n-1)} \le - \sum_{j=1}^n a_j L_j^{\ep} \le \sum_{j=1}^n |a_j| |R_j^{\ep}| \le C \lambda_{\ep} \Lambda_{\ep}^{(n-1) - (n-2)(p+1)}.\]
Since $\Lambda_{\ep} \to \infty$ as $\ep \to 0$, it holds that $-(n-1) \le (n-1) - (n-2) (p+1)$, which is reduced to $p \le \frac{n}{n-2}$.
This contradicts our assumption on $p$, and so $d_{\ep}$ must be away from 0. The proof is completed.
\end{proof}

\section{Proof of Theorem \ref{thm-1} (Case 2: $p = \frac{n}{n-2}$)}\label{sec-thm-12}
In this section, we prove Theorem \ref{thm-1} for the case that $p = \frac{n}{n-2}$.
Although our strategy is the same as that given in the previous section,
there is a difference due to the fact that the function $V_0^p$ defined over $\R^n$ is not integrable for $p = \frac{n}{n-2}$.

\medskip
As before, we first need estimates for $u_{\ep}$ near the blow-up point.
\begin{lem}
Suppose that $p = \frac{n}{n-2}$, $\{(u_{\ep},v_{\ep})\}_{\ep >0}$ is a family of least energy solutions to \eqref{eq-main} with $q = q_{\ep}$
and $d_{\ep} =  \frac{1}{4} \textnormal{dist}(x_{\ep}, \pa \Omega) \to 0$ as $\ep \to 0$.
Then, for each $x \in \pa B^n(x_{\ep}, 2d_{\ep})$, we have
\begin{equation}\label{eq-g2-20}
u_{\ep}(x) = \lambda_{\ep}^{-\beta_0} K_{\ep} G(x,x_{\ep}) + o(d_{\ep}^{-(n-2)} \lambda_{\ep}^{-\beta_0})
\end{equation}
and
\begin{equation}\label{eq-g2-21}
\nabla u_{\ep}(x) = \lambda_{\ep}^{-\beta_0} K_{\ep} \nabla_x G(x,x_{\ep}) + o(d_{\ep}^{-(n-1)} \lambda_{\ep}^{-\beta_0})
\end{equation}
where $K_{\ep} > 0$ is a constant satisfying
\begin{equation}\label{eq-g2-25}
c_1 \log \Lambda_{\ep} \le K_{\ep} \le c_2 \log \Lambda_{\ep}
\end{equation}
for all $\ep > 0$ small and some $0 < c_1 < c_2$ independent of $\ep > 0$, $\beta_0 > 0$ is the constant defined in \eqref{eq-ab},
and $o$ notation is uniform with respect to $x \in \pa B^n(x_{\ep}, 2d_{\ep})$ in the sense that \eqref{eq-g1-18} holds.
\end{lem}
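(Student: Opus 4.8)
The plan is to mirror the proof of Lemma~\ref{lem-g10} and of its Case~1 analogue, but now for $u_\ep$; the one genuinely new feature is that at $p=\frac{n}{n-2}$ one has $(n-2)p=n$, so Theorem~\ref{thm-0} provides only the borderline bound $V_\ep^p(z)\le C(1+|z|)^{-n}$. Consequently $\int_{\Omega_\ep}V_\ep^p\,dz$ diverges rather than converging to $A_{V_0}$, and the constant in front of $G(x,x_\ep)$ must be the running quantity $K_\ep:=\lambda_\ep^{\beta_0}\int_\Omega v_\ep^p\,dy$, which equals $\int_{\Omega_\ep}V_\ep^p\,dz$ up to a factor $\lambda_\ep^{O(\ep)}\to1$ (Lemma~\ref{lem-lam}, together with $\beta_0p=n-\beta_0$ from \eqref{eq-51}). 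With this choice the first term of Green's representation
\[
u_\ep(x)=G(x,x_\ep)\int_\Omega v_\ep^p\,dy+\int_\Omega\bigl[G(x,y)-G(x,x_\ep)\bigr]v_\ep^p(y)\,dy
\]
is exactly $\lambda_\ep^{-\beta_0}K_\ep G(x,x_\ep)$, and it remains to bound $K_\ep$ two-sidedly and to show that the remainder integral is of smaller order than $\lambda_\ep^{-\beta_0}K_\ep G(x,x_\ep)$, uniformly on $\pa B^n(x_\ep,2d_\ep)$.

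\medskip\noindent\textbf{Two-sided bound on $K_\ep$.} For the upper bound, $V_\ep^p\le C(1+|z|)^{-n}$ gives $\int_{B^n(0,4\Lambda_\ep)}V_\ep^p\le C\log\Lambda_\ep$; and on $\Omega_\ep\setminus B^n(0,4\Lambda_\ep)$ one returns to $\Omega$ and uses the far-field estimate $v_\ep\le C\lambda_\ep^{-\alpha_0}G(\cdot,x_\ep)\le C\lambda_\ep^{-\alpha_0}\mfd(x_\ep)\,|\cdot-x_\ep|^{-(n-1)}$ (valid since $\mfd(x_\ep)=4d_\ep\to0$, cf.\ Lemma~\ref{lem-H-est}), which together with the identity $\alpha_0p=\beta_0$ holding at $p=\frac{n}{n-2}$ makes this part contribute only $O(1)$ to $K_\ep$. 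For the lower bound, $U_\ep(0)=1$ and $U_\ep\to U_0$ in $C^2_{\mathrm{loc}}$ give $U_\ep\ge c_0>0$ on a fixed ball $B^n(0,R_0)$, while $\mathrm{dist}(0,\pa\Omega_\ep)=4\Lambda_\ep$ and the maximum principle bound the regular part of the Green's function of $\Omega_\ep$ by $C\Lambda_\ep^{-(n-2)}$ on $B^n(0,R_0)$; hence $V_\ep(z)\ge c\,|z|^{-(n-2)}$ for $R_0\le|z|\le\delta\Lambda_\ep$ with a small fixed $\delta>0$, so that $K_\ep\ge c\int_{R_0}^{\delta\Lambda_\ep}r^{-1}\,dr\ge c_1\log\Lambda_\ep$.

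\medskip\noindent\textbf{The remainder integral.} Split it at radius $2d_\ep$, i.e.\ as $\int_{B^n(x_\ep,2d_\ep)}+\int_{\Omega\setminus B^n(x_\ep,2d_\ep)}$. For the inner piece, decompose $v_\ep^p$ into its spherical average about $x_\ep$ plus the remaining oscillation: since $G(x,\cdot)$ is harmonic in $B^n(x_\ep,2d_\ep)$, the mean value property makes the contribution of the spherical average vanish identically, and the oscillation is $o(1)$ relative to $V_0^p$ uniformly up to scale $\Lambda_\ep$ — because $V_\ep=(1+o(1))V_0$ there (from the Green's-function representation of $v_\ep$, as in Lemma~\ref{lem-g10}) and $V_0$ is radial ($p=\frac{n}{n-2}>1$, so \cite{CLO} applies) — so its contribution is of smaller order than $\lambda_\ep^{-\beta_0}K_\ep G(x,x_\ep)$. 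The outer piece is handled as the outer terms $I_2,I_3$ in the proof of Lemma~\ref{lem-g10}, using Theorem~\ref{thm-0} with the $p=\frac{n}{n-2}$ decay. This yields \eqref{eq-g2-20}; the gradient estimate \eqref{eq-g2-21} follows identically from the differentiated Green's-function bounds, exactly as at the end of the proof of Lemma~\ref{lem-g10}, and the uniformity in the sense of \eqref{eq-g1-18} is propagated through each step.

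\medskip\noindent\textbf{Main obstacle.} The delicate points are (i) the lower bound $K_\ep\ge c_1\log\Lambda_\ep$, which needs a lower estimate on $V_\ep$ that is uniform in $\ep$ on a ball of radius comparable to $\Lambda_\ep$ — this is where the maximum-principle control of the regular part of the Green's function of $\Omega_\ep$ and the normalization $U_\ep(0)=1$ enter — and (ii) the borderline non-integrability of $V_\ep^p$ in the inner remainder, which is circumvented by isolating the radial part (killed by the mean value property) from the small oscillation, rather than by the crude size estimates that sufficed when $p>\frac{n}{n-2}$.
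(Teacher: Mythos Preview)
Your architecture is right---Green's representation, a two-sided logarithmic bound on the mass, and a remainder analysis---but two of the ingredients are over-engineered in a way that introduces real gaps, while the paper's route is substantially simpler.

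First, the paper takes $K_\ep = \lambda_\ep^{\beta_0}\int_{B^n(x_\ep,4d_\ep)} v_\ep^p\,dy$, \emph{not} the full integral over $\Omega$. Your choice forces you to control $\lambda_\ep^{\beta_0}\int_{\Omega\setminus B^n(x_\ep,4d_\ep)} v_\ep^p$, and for this you invoke the global pointwise bound $v_\ep\le C\lambda_\ep^{-\alpha_0}G(\cdot,x_\ep)$. That bound is nowhere established: Proposition~\ref{prop-dec} only gives $v_\ep(x)\le C\lambda_\ep^{-\alpha_0}|x-x_\ep|^{-(n-2)}$, which near $\pa\Omega$ is \emph{weaker} than the Green's-function bound you need. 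Without it the outer mass is only $\le C\log(1/d_\ep)$, and there is no a priori comparison between $\log(1/d_\ep)$ and $\log\Lambda_\ep$; for instance if $d_\ep\sim\lambda_\ep^{-1}\log\lambda_\ep$ then $\log(1/d_\ep)\sim\log\lambda_\ep\gg\log\log\lambda_\ep\sim\log\Lambda_\ep$, and your upper half of \eqref{eq-g2-25} would fail. Restricting $K_\ep$ to the ball avoids all of this: the leftover piece $\int_{\Omega\setminus B^n(x_\ep,4d_\ep)}G(x,y)v_\ep^p\,dy$ becomes a harmless $O(d_\ep^{-(n-2)}\lambda_\ep^{-\beta_0})$ error via $|x-y|\ge\tfrac12|y-x_\ep|$ and the crude decay.

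Second, your mean-value-property device for the inner remainder is unnecessary, and the hypothesis it rests on---that the non-radial part of $V_\ep^p$ is $o(1)\,V_0^p$ uniformly up to radius $2\Lambda_\ep$---is not proved and is in fact doubtful: at scale $\sim\Lambda_\ep$ the boundary of $\Omega_\ep$ sits at distance $4\Lambda_\ep$, so $V_\ep$ is genuinely non-radial of the same order as its radial part there. The paper simply splits $B^n(x_\ep,4d_\ep)$ at radius $d_\ep$ and reuses the estimates behind \eqref{eq-g1-2}--\eqref{eq-g1-3} verbatim. The gradient bound $|G(x,y)-G(x,x_\ep)|\le Cd_\ep^{-(n-1)}|y-x_\ep|$ on the inner ball supplies an extra factor $|z|$ in rescaled variables, so that the relevant integral is $\int_{|z|<\Lambda_\ep}|z|\,V_\ep^p(z)\,dz\le C\Lambda_\ep$, giving $O(d_\ep^{-(n-2)}\lambda_\ep^{-\beta_0})$ directly; the ``borderline non-integrability'' you flagged in (ii) is thus already absorbed by this gradient factor, and no cancellation argument is needed. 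Your lower bound on $K_\ep$ via control of the regular part of $G_{\Omega_\ep}$ is essentially what the paper does.
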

\begin{proof}
Fix any $x \in \pa B^n(x_{\ep}, 2d_{\ep})$. From \eqref{eq-main}, we have that
\begin{equation}\label{eq-g2-1}
\begin{aligned}
u_{\ep}(x) & = G(x,x_{\ep}) \int_{B^n(x_{\ep}, 4d_{\ep})} v_{\ep}^{p} (y) dy + \int_{B^n(x_{\ep}, 4d_{\ep})} [G(x,y)-G(x,x_{\ep})]\, v_{\ep}^p(y) dy \\
&\ + \int_{\Omega \setminus B^n(x_{\ep}, 4d_{\ep})} G(x,y) v_{\ep}^p(y) dy.
\end{aligned}
\end{equation}
We will derive \eqref{eq-g2-20} by examining each of the integrals in the right-hand side of \eqref{eq-g2-1}.

Firstly, we claim that if we set
\[K_{\ep} = \lambda_{\ep}^{\beta_0} \int_{B^n(x_{\ep}, 4d_{\ep})} v_{\ep}^{p} (y) dy,\]
then it satisfies \eqref{eq-g2-25}.
We infer from \eqref{eq-4-3} that
\[K_{\ep} \le C \int_{B^n(0,4\Lambda_{\ep})} V_{\ep}^p(y) dy \le C \log \Lambda_{\ep},\]
so an upper estimate of $K_{\ep}$ is obtained. In order to deduce its lower estimate, we will find a lower bound of the rescaled function $V_{\ep}$ on $B^n(0,r_0\Lambda_{\ep}) \setminus B^n(0,2)$
where $r_0 > 0$ is a sufficiently small constant independent of $\ep > 0$.
Thanks to \eqref{eq-ext-sol}, we know
\begin{equation}\label{eq-g2-12}
V_{\ep}(z) = \int_{\Omega_{\ep}} G_{\Omega_{\ep}}(z,y) U_{\ep}^{q_{\ep}}(y) dy
\end{equation}
where $G_{\Omega_{\ep}}$ is the Green's function of the Diriclet Laplacian in $\Omega_{\ep} = \lambda_{\ep} (\Omega - x_{\ep})$.
By the scaling property, we have
\begin{align*}
G_{\Omega_\ep} (z,y) &= \lambda_{\ep}^{-(n-2)}G(\lambda_{\ep}^{-1} z + x_{\ep}, \lambda_{\ep}^{-1} y + x_{\ep})
\\
&= \frac{c_n}{|z-y|^{n-2}} - \lambda_n^{-(n-2)}H(\lambda_{\ep}^{-1} z + x_{\ep}, \lambda_{\ep}^{-1} y + x_{\ep}) \quad \text{for } z \ne y \in \Omega_{\ep}.
\end{align*}
Moreover, the relation $\Lambda_{\ep} = \lambda_{\ep} d_{\ep}$ and estimate \eqref{eq-h1-1} imply
\[\sup_{y,z \in B^n(0, \Lambda_{\ep})} H(\lambda_{\ep}^{-1} z + x_{\ep}, \lambda_{\ep}^{-1} y + x_{\ep}) \le C d_{\ep}^{-(n-2)}.\]
Therefore, for $z \in B^n(0, r_0\Lambda_{\ep}) \setminus B^n(0,2)$ and $y \in B^n(0,1)$, it holds that
\[G_{\Omega_\ep}(z,y) = \frac{c_n}{|z-y|^{n-2}} + O(\Lambda_{\ep}^{-(n-2)})
\ge \frac{C}{|z-y|^{n-2}} \ge \frac{C}{|z|^{n-2}}\]
provided $r_0 > 0$ small enough.
Putting this estimate into \eqref{eq-g2-12} and using the fact that $U_{\ep} \to U_0$ in $C(\overline{B^n(0,1)})$ as $\ep \to 0$ reveal that
\[V_{\ep}(z) \ge \frac{C}{|z|^{n-2}} \int_{B^n(0,1)} U_{\ep}^{q_{\ep}}(y) dy \ge \frac{C}{|z|^{n-2}} \quad \text{for } z \in B^n(0,r_0\Lambda_{\ep}) \setminus B^n(0,2).\]
Consequently, we get the estimate
\[K_{\ep} = \int_{B^n(0,4\Lambda_{\ep})} V_{\ep}^{p}(y) dy \ge C \int_{B^n(0,r_0\Lambda_{\ep}) \setminus B^n(0,2)} \frac{1}{|y|^n} dy \ge C \log \Lambda_{\ep},\]
proving the assertion.

Now, it remains to deal with the second and third integrals in the right-hand side of \eqref{eq-g2-1}.
We decompose the second integral as
\begin{multline*}
\int_{B^n(x_{\ep}, 4d_{\ep})} [G(x,y)-G(x,x_{\ep})] v_{\ep}^p(y) dy \\
= \(\int_{B^n(x_{\ep}, d_{\ep})} + \int_{B^n(x_{\ep}, 4d_{\ep}) \setminus B^n(x_{\ep}, d_{\ep})}\) [G(x,y)-G(x,x_{\ep})] v_{\ep}^p(y) dy
\end{multline*}
and compute each integral as in \eqref{eq-g1-2} and \eqref{eq-g1-3}, achieving
\[\int_{B^n(x_{\ep}, 4d_{\ep})} [G(x,y)-G(x,x_{\ep})] v_{\ep}^p(y) dy = O(d_{\ep}^{-(n-2)} \lambda_{\ep}^{-\beta_0}).\]
In addition, applying the inequalities
\[|x-y| \ge |y-x_{\ep}| - |x-x_{\ep}| \ge |y-x_{\ep}| - 2d_{\ep} \ge \frac{|y-x_{\ep}|}{2} \quad \text{ for } y \in \Omega \setminus B^n(x_{\ep}, 4d_{\ep}),\]
we see that the last integral is handled as
\[\int_{\Omega \setminus B^n(x_{\ep}, 4d_{\ep})} G(x,y) v_{\ep}^p(y) dy
\le C \lambda_{\ep}^{\beta_0 p}\int_{\Omega \setminus B^n(0, 4d_{\ep})} \frac{1}{|y|^{n-2}} \frac{1}{|\lambda_{\ep} y|^n} dy = O(d_{\ep}^{-(n-2)} \lambda_{\ep}^{-\beta_0}).\]
This completes the justification of \eqref{eq-g2-20}.

\medskip
Estimate \eqref{eq-g2-21} for $\nabla u_{\ep}$ can be done analogously, so the proof is finished.
\end{proof}
\begin{proof}[Proof of Theorem \ref{thm-1} (Case 2)]
To the contrary, we assume that $d_{\ep} = \frac{1}{4} \textrm{dist}(x_{\ep}, \pa \Omega) \to 0$ as $\ep \to 0$ up to a subsequence. Again, we use $\ep$ as the parameter.

Just as in the previous section, we shall estimate the left-hand side $L_j^{\ep}$ and the right-hand side $R_j^{\ep}$ of the local Pohozaev-type identity \eqref{eq-poho-0} for each $1 \le j \le n$, respectively,
and then induce a contradiction from the identity $L_j^{\ep} = R_j^{\ep}$.

\medskip \noindent \textbf{Estimate of $L_j^{\ep}$.}
Similarly to \eqref{eq-g1-14}, we insert the estimates of $u_{\ep}$ and $v_{\ep}$ given in \eqref{eq-g2-21} and \eqref{eq-g1-8} into $L_j^{\ep}$ to get
\[L_j^{\ep} = -\lambda_{\ep}^{-(n-2)} A_{U_0} K_{\ep} \mci_1(2d_{\ep}) + O(d_{\ep}^{-(n-1)} \lambda_{\ep}^{-(n-2)}).\]
Here $\mci_1(r)$ is the constant function introduced in \eqref{eq-g1-21}.
Then, as in \eqref{eq-g1-45}, we discover
\[L_j^{\ep} = - 2c_n(n-2) A_{U_0} K_{\ep} |\S^{n-1}| \lambda_{\ep}^{-(n-2)} \left. \frac{\pa H}{\pa x_j}(x, x_{\ep}) \right|_{x = x_{\ep}} + O(d_{\ep}^{-(n-1)} \lambda_{\ep}^{-(n-2)})\]
by evaluating $\lim_{r \to 0} \mci_1(r)$. Let $\nu_{x_{\ep}} = (a_1, \cdots, a_n) \in \S^{n-1}$. Then \eqref{eq-g2-25} implies that
\begin{equation}\label{eq-g1-15}
\begin{aligned}
- \sum_{j=1}^n a_j L_j^{\ep} &\ge C (\log \Lambda_{\ep}) \lambda_{\ep}^{-(n-2)}
\left. \frac{\pa H}{\pa \nu_{x_\ep}}(x, x_{\ep}) \right|_{x = x_{\ep}} + O(d_{\ep}^{-(n-1)} \lambda_{\ep}^{-(n-2)}) \\
&\ge C (\log \Lambda_{\ep}) d_{\ep}^{-(n-1)}  \lambda_{\ep}^{-(n-2)} = C \lambda_{\ep} (\log \Lambda_{\ep}) \Lambda_{\ep}^{-(n-1)}
\end{aligned}
\end{equation}
for some $C > 0$.

\medskip \noindent \textbf{Estimate of $R_j^{\ep}$.}
By \eqref{eq-g1-11}, we have
\[\left| \int_{\pa B^n(x_{\ep}, 2d_{\ep})} v_{\ep}^{p+1} \nu_j\, dS_x \right| \le C \lambda_{\ep} \Lambda_{\ep}^{(n-1)-(n-2)(p+1)} = C \lambda_{\ep} \Lambda_{\ep}^{-(n-1)}.\]
Moreover, it holds that $u_{\ep}(x) \le C \lambda_{\ep}^{\alpha_0} \Lambda_{\ep}^{-(n-2)} \log \Lambda_{\ep}$ for any $x \in \pa B^n(x_{\ep}, 2d_{\ep})$ and that $q_{\ep} > p = \frac{n}{n-2}$, so
\[\left| \int_{\pa B^n(x_{\ep}, 2d_{\ep})} u_{\ep}^{q_{\ep}+1}(x) \nu_j\, dS_x \right|
\le C \lambda_{\ep} \Lambda_{\ep}^{(n-1) - (n-2) (q_{\ep}+1)} (\log \Lambda_{\ep})^{q_{\ep}+1} = o(\lambda_{\ep} \Lambda_{\ep}^{-(n-1)}).\]
Therefore
\begin{equation}\label{eq-g1-6}
|R_j^{\ep}| \le C \lambda_{\ep} \Lambda_{\ep}^{-(n-1)}.
\end{equation}

\medskip
By combining \eqref{eq-g1-15} and \eqref{eq-g1-6}, we obtain
\[C \lambda_{\ep} (\log \Lambda_{\ep}) \Lambda_{\ep}^{-(n-1)} \le - \sum_{j=1}^n a_j L_j^{\ep} \le \sum_{j=1}^n |a_j| |R_j^{\ep}| \le C \lambda_{\ep} \Lambda_{\ep}^{-(n-1)}.\]
Since $\Lambda_{\ep} \to \infty$ as $\ep \to 0$, a contradiction arises and so $d_{\ep}$ must be away from 0. This concludes the proof.
\end{proof}

\section{Proof of Theorem \ref{thm-2}}\label{sec-thm-2}
This section is devoted to the proof of Theorem \ref{thm-2}.
We keep assuming that $d_{\ep} \to 0$ as $\ep \to 0$.
We also recall that $\Lambda_{\ep} = d_{\ep} \lambda_{\ep} \to \infty$ as $\ep \to 0$ which is verified in Lemma \ref{lem-lam}.

\medskip
In the following lemma, we obtain a pointwise estimate for $v_{\ep}$ outside the blow-up point $x_{\ep}$, which can be regarded as an extension of Lemma \ref{lem-g10}.
This estimate is essential in deriving an estimate of $u_{\ep}$ that will be described in Lemma \ref{lem-8-2}.
\begin{lem}\label{lem-8-1}
Suppose that $p \in (\frac{2}{n-2}, \frac{n}{n-2})$. For any $x \in \Omega \setminus B^n(0, \frac{3d_{\ep}}{\sqrt{\Lambda_{\ep}}})$, we have
\begin{equation}\label{eq-v-2}
v_{\ep}(x) = \lambda_{\ep}^{-\alpha_0} A_{U_0} G(x,x_{\ep}) + \lambda_{\ep}^{-\alpha_0} Q_{\ep}(x)
\end{equation}
where $A_{U_0}$ and $\alpha_0$ are the positive constants defined in \eqref{eq-AUV} and \eqref{eq-ab}, respectively, and $Q_{\ep}$ is a remainder term which satisfies
\[\sup_{x \in \Omega \setminus B^n(0, \frac{3d_{\ep}}{\sqrt{\Lambda_{\ep}}})} |x-x_{\ep}|^{n-2}|Q_{\ep}(x)| \to 0 \quad \text{as } \ep \to 0.\]
\end{lem}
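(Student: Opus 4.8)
The plan is to run the argument of Lemma~\ref{lem-g10} with the single sphere $\pa B^n(x_{\ep},2d_{\ep})$ replaced by the whole region $\Omega\setminus B^n(x_{\ep},r_{\ep})$, where $r_{\ep}:=3d_{\ep}/\sqrt{\Lambda_{\ep}}$, making every error estimate uniform in the free point $x$. Write $\rho=|x-x_{\ep}|$; since $\Lambda_{\ep}\to\infty$ by Lemma~\ref{lem-lam} we have $r_{\ep}\to0$, $r_{\ep}/d_{\ep}\to0$ and, decisively, $\lambda_{\ep}\rho\ge\lambda_{\ep}r_{\ep}=3\sqrt{\Lambda_{\ep}}\to\infty$ uniformly over the region, so the concentration length $\lambda_{\ep}^{-1}$ of $u_{\ep}^{q_{\ep}}$ is always negligible against $\rho$. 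Starting from the Green representation I would split, as in \eqref{eq-g1-1},
\[
v_{\ep}(x)=G(x,x_{\ep})\int_{\Omega}u_{\ep}^{q_{\ep}}(y)\,dy+\int_{\Omega}\bigl[G(x,y)-G(x,x_{\ep})\bigr]u_{\ep}^{q_{\ep}}(y)\,dy.
\]
For the first term, Lemma~\ref{lem-lam} and Proposition~\ref{prop-dec} give $\lambda_{\ep}^{\alpha_0}\int_{\Omega}u_{\ep}^{q_{\ep}}\to A_{U_0}$ by dominated convergence, so it equals $\lambda_{\ep}^{-\alpha_0}A_{U_0}G(x,x_{\ep})+o(1)\,G(x,x_{\ep})$; the remainder is harmless since $|x-x_{\ep}|^{n-2}G(x,x_{\ep})\le c_n$ by \eqref{eq-g1-9}, so it is absorbed into $\lambda_{\ep}^{-\alpha_0}Q_{\ep}$.

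The core is the second integral. Setting $\sigma:=\min(\rho/2,2d_{\ep})$ and $R_{\ep}:=\max(2\rho,4d_{\ep})$, I would split $\Omega$ into four $y$-regions: (i) the inner ball $B^n(x_{\ep},\sigma)$; (ii) the part of the annulus $B^n(x_{\ep},R_{\ep})\setminus B^n(x_{\ep},\sigma)$ with $|y-x|\le\rho/4$; (iii) the rest of that annulus; (iv) the exterior $\Omega\setminus B^n(x_{\ep},R_{\ep})$. On (i) the segment $[x_{\ep},y]$ stays in $B^n(x_{\ep},\sigma)\subset\Omega$ and at distance $\ge\rho/2$ from $x$, so \eqref{eq-g1-9} yields $|G(x,y)-G(x,x_{\ep})|\le C\rho^{-(n-1)}|y-x_{\ep}|$; after rescaling, this piece is $\le C\rho^{-(n-1)}\lambda_{\ep}^{-\alpha_0-1}\int_{B^n(0,\lambda_{\ep}\sigma)}|z|U_{\ep}^{q_{\ep}}(z)\,dz$ (up to a factor $\lambda_{\ep}^{O(\ep)}\to1$), and since $\int_{B^n(0,T)}|z|U_{\ep}^{q_{\ep}}(z)\,dz=o(T)$ as $T\to\infty$ (cf.\ \eqref{eq-g1-17}, using $\int_{\R^n}U_0^{q_0}<\infty$) and $\lambda_{\ep}\sigma\ge\tfrac32\sqrt{\Lambda_{\ep}}\to\infty$ uniformly, it contributes $o\bigl(\rho^{-(n-2)}\lambda_{\ep}^{-\alpha_0}\bigr)$ uniformly. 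On (iv), $|x-y|\ge\tfrac12|y-x_{\ep}|$ forces $|G(x,y)-G(x,x_{\ep})|\le C\rho^{-(n-2)}$, so that piece is $\le C\rho^{-(n-2)}\lambda_{\ep}^{-\alpha_0}\int_{|z|\ge4\Lambda_{\ep}}U_{\ep}^{q_{\ep}}$, again $o\bigl(\rho^{-(n-2)}\lambda_{\ep}^{-\alpha_0}\bigr)$ since $\lambda_{\ep}R_{\ep}\ge4\Lambda_{\ep}\to\infty$; piece (iii) is identical, with the crude bound $|G(x,y)-G(x,x_{\ep})|\le C\rho^{-(n-2)}$ and the uniform tail $\int_{|z|\ge\lambda_{\ep}\sigma}U_{\ep}^{q_{\ep}}\to0$. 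On (ii), $|y-x_{\ep}|$ is comparable to $\rho$, so Proposition~\ref{prop-dec} gives $u_{\ep}^{q_{\ep}}(y)\le C\lambda_{\ep}^{-\alpha_0pq_0}\rho^{-((n-2)p-2)q_0}$ (up to a factor tending to $1$ uniformly), while $\int_{|y-x|\le\rho/4}\bigl(|x-y|^{-(n-2)}+\rho^{-(n-2)}\bigr)\,dy\le C\rho^{2}$; using the algebraic identity $(n-2)p-2=\alpha_0(p+1)$ — a consequence of \eqref{eq-cr-hy} and \eqref{eq-ab} — together with $\alpha_0(q_0+1)=n$, the exponent of $\lambda_{\ep}$ left after multiplying by $\rho^{n-2}\lambda_{\ep}^{\alpha_0}$ collapses to $O(\ep)$ while the surviving factor is a negative power of $\Lambda_{\ep}$, so this piece too is $o\bigl(\rho^{-(n-2)}\lambda_{\ep}^{-\alpha_0}\bigr)$ uniformly. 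Adding the four estimates gives $\sup_{x}|x-x_{\ep}|^{n-2}|Q_{\ep}(x)|\to0$.

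The step I expect to be the main obstacle is maintaining uniformity in $x$: the four regions change character as $\rho$ runs from $r_{\ep}$ (much smaller than $d_{\ep}$) up to $\textnormal{diam}\,\Omega$, and in the regime $\rho\gg d_{\ep}$ — where $G(x,x_{\ep})$ is itself already $O(\rho^{-(n-2)})$ — one must verify that the powers of $\lambda_{\ep}$, of $d_{\ep}=\Lambda_{\ep}\lambda_{\ep}^{-1}$ and of $\Lambda_{\ep}$ genuinely assemble into a negative power of $\Lambda_{\ep}$, times $\lambda_{\ep}^{O(\ep)}=(\lambda_{\ep}^{\ep})^{O(1)}\to1$. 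This is precisely where the critical-hyperbola identities and the \emph{sharp} (not merely qualitative) decay of Proposition~\ref{prop-dec} are indispensable; a secondary bookkeeping point is to keep all the $o(1)$'s and $O(\ep)$'s uniform in $x$ when passing between the variables $y$ and $z=\lambda_{\ep}(y-x_{\ep})$.
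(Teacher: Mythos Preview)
Your proposal is correct and takes essentially the same approach as the paper: Green's representation followed by a near-$x_{\ep}$/near-$x$/far decomposition of the $y$-domain, with the inner piece controlled by the mean value theorem and the outer pieces by the sharp pointwise decay of Proposition~\ref{prop-dec} together with the critical-hyperbola identities. The only cosmetic differences are that the paper uses a \emph{fixed} inner radius $d_{\ep}/\sqrt{\Lambda_{\ep}}$ (independent of $x$) and exploits the stronger fact $\int_{\R^n}|z|\,U_0^{q_0}(z)\,dz<\infty$ (valid here since $((n-2)p-2)q_0=n+2(p+1)>n+1$), whereas you use the $x$-dependent radii $\sigma,R_{\ep}$ and only the weaker $o(T)$ tail bound---both choices lead to the same uniform $o(|x-x_{\ep}|^{-(n-2)})$ remainder.
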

\begin{proof}
Assuming that $x \in \Omega \setminus B^n(0, \frac{3d_{\ep}}{\sqrt{\Lambda_{\ep}}})$, we write
\begin{equation}\label{eq-v-1}
\lambda_{\ep}^{\alpha_0} v_{\ep}(x) = \(\int_{B^n(x_{\ep}, {d_{\ep} \over \sqrt{\Lambda_{\ep}}})} + \int_{\Omega \setminus B^n(x_{\ep}, {d_{\ep} \over \sqrt{\Lambda_{\ep}}})}\) G(x,y) \lambda_{\ep}^{\alpha_0} u_{\ep}^{q_{\ep}}(y) dy.
\end{equation}
We will analyze two integrals in the right-hand side.

We consider the first term of \eqref{eq-v-1}. We decompose it by
\begin{align*}
&\ \int_{B^n(x_{\ep}, {d_{\ep} \over \sqrt{\Lambda_{\ep}}})} G(x,y) \lambda_{\ep}^{\alpha_0} u_{\ep}^{q_{\ep}}(y) dy
\\
&= G(x, x_{\ep}) \int_{B^n(x_{\ep}, {d_{\ep} \over \sqrt{\Lambda_{\ep}}})} \lambda_{\ep}^{\alpha_0} u_{\ep}^{q_{\ep}}(y) dy
+ \int_{B^n(x_{\ep}, {d_{\ep} \over \sqrt{\Lambda_{\ep}}})} [G(x,y)-G(x,x_{\ep})] \lambda_{\ep}^{\alpha_0} u_{\ep}^{q_{\ep}}(y) dy.
\end{align*}
By \eqref{eq-tuv}, Lemma \ref{lem-lam}, Proposition \ref{prop-dec} and the dominated convergence theorem,
\[\lim_{\ep \to 0} \int_{B^n(x_{\ep}, {d_{\ep} \over \sqrt{\Lambda_{\ep}}})} \lambda_{\ep}^{\alpha_0} u_{\ep}^{q_{\ep}}(y) dy
= \lim_{\ep \to 0} \int_{B^n(0,\sqrt{\Lambda_{\ep}})} \lambda_{\ep}^{\alpha_0 + \alpha_{\ep}q_{\ep}-n} U_{\ep}^{q_{\ep}}(y) dy  = \int_{\R^n} U_0^{q_0}(y) dy = A_{U_0}.\]
Moreover, for each $x \in \Omega \setminus B^n(0, \frac{3d_{\ep}}{\sqrt{\Lambda_{\ep}}})$ and $y \in B^n(x_{\ep}, \frac{d_{\ep}}{\sqrt{\Lambda_{\ep}}})$, we have
\begin{align*}
\left|\nabla_y G(x,y) \right| = \left|\nabla_y G(y,x) \right|
&\le \frac{C \sqrt{\Lambda_{\ep}}}{d_{\ep}} \sup_{z \in B^n(x_{\ep}, \frac{2d_{\ep}}{\sqrt{\Lambda_{\ep}}})} |G(z,x)| \\
&\le \frac{C \sqrt{\Lambda_{\ep}}}{d_{\ep}} \sup_{z \in B^n(x_{\ep}, \frac{2d_{\ep}}{\sqrt{\Lambda_{\ep}}})} \frac{1}{|x-z|^{n-2}}
\le \frac{C \sqrt{\Lambda_{\ep}}}{d_{\ep}} \frac{1}{|x-x_{\ep}|^{n-2}}.
\end{align*}
Hence the mean value theorem shows that
\begin{align*}
&\ \left| \int_{B^n(x_{\ep}, {d_{\ep} \over \sqrt{\Lambda_{\ep}}})} [G(x,y)-G(x,x_{\ep})] \lambda_{\ep}^{\alpha_0} u_{\ep}^{q_{\ep}}(y) dy \right| \\
&\le \frac{C \sqrt{\Lambda_{\ep}}}{d_{\ep}} \frac{1}{|x-x_{\ep}|^{n-2}} \int_{B^n(x_{\ep}, {d_{\ep} \over \sqrt{\Lambda_{\ep}}})} |y-x_{\ep}|
\lambda_{\ep}^{\alpha_0 + \alpha_{\ep}q_{\ep}} U_{\ep}^{q_{\ep}} (\lambda_{\ep} (y-x_{\ep})) dy \\
&\le \frac{C}{\sqrt{\Lambda_{\ep}}} \frac{1}{|x-x_{\ep}|^{n-2}} \int_{\R^n} |y| U_0^{q_0}(y) dy
\le \frac{C}{\sqrt{\Lambda_{\ep}}} \frac{1}{|x-x_{\ep}|^{n-2}},
\end{align*}
where the relation $((n-2)p-2)q_0 = n+2(p+1) > n+1$ guarantees that the value of the integral on the last line is finite. As a result, in view of \eqref{eq-g1-9}, we discover
\begin{equation}\label{eq-u-51}
\begin{aligned}
\int_{B^n(x_{\ep}, {d_{\ep} \over \sqrt{\Lambda_{\ep}}})} G(x,y) \lambda_{\ep}^{\alpha_0} u_{\ep}^{q_{\ep}}(y) dy
&= (A_{U_0} + o(1)) G(x,x_{\ep}) + O\(\frac{1}{\sqrt{\Lambda_{\ep}}} \frac{1}{|x-x_{\ep}|^{n-2}}\) \\
&= A_{U_0} G(x,x_{\ep}) + o\(\frac{1}{|x-x_{\ep}|^{n-2}}\)
\end{aligned}
\end{equation}
where $O$ and $o$ notations are uniform with respect to $x \in \Omega \setminus B^n(0, \frac{3d_{\ep}}{\sqrt{\Lambda_{\ep}}})$.

We turn to estimating the second integral in the right-hand side of \eqref{eq-v-1}. It holds that
\begin{equation}\label{eq-u-53}
\begin{aligned}
&\ \int_{\Omega \setminus B^n(x_{\ep}, {d_{\ep} \over \sqrt{\Lambda_{\ep}}})} G(x,y) \lambda_{\ep}^{\alpha_0} u_{\ep}^{q_{\ep}}(y) dy \\
&\le C \int_{\Omega \setminus B^n(x_{\ep}, {d_{\ep} \over \sqrt{\Lambda_{\ep}}})} \frac{1}{|x-y|^{n-2}}
\frac{\lambda_{\ep}^n}{(1+\lambda_{\ep} |y-x_{\ep}|)^{((n-2)p-2)q_{\ep}}} dy \\
&\le C \int_{\Omega \setminus B^n(x_{\ep}, {d_{\ep} \over \sqrt{\Lambda_{\ep}}})} \frac{1}{|x-y|^{n-2}}
\frac{\lambda_{\ep}^{n-((n-2)p-2)q_{\ep}}}{|y-x_{\ep}|^{((n-2)p-2)q_{\ep}}} dy.
\end{aligned}
\end{equation}
To calculate the last integral, we split its domain into
\[\Omega \setminus B^n\(x_{\ep}, {d_{\ep} \over \sqrt{\Lambda_{\ep}}}\) = D_4 \cup D_5 \cup D_6\]
where
\[D_4 := \left\{y \in \Omega: |y-x| \le \dfrac{|x-x_{\ep}|}{2} \right\}, \quad
D_5 := \left\{y \in \Omega: \frac{d_{\ep}}{\sqrt{\Lambda_{\ep}}} < |y-x_{\ep}| \le \dfrac{|x-x_{\ep}|}{2}\right\}\]
and
\[D_6 := \left\{y \in \Omega: |y-x| > \dfrac{|x-x_{\ep}|}{2},\ |y-x_{\ep}| > \dfrac{|x-x_{\ep}|}{2} \right\}.\]
We note that
\[\begin{cases}
|y-x_{\ep}| \ge |x-x_{\ep}| - |y-x| \ge \dfrac{|x-x_{\ep}|}{2} &\text{if } y \in D_4,\\
|y-x| \ge |x-x_{\ep}| - |y-x_{\ep}| \ge \dfrac{|x-x_{\ep}|}{2} &\text{if } y \in D_5,\\
|y-x_{\ep}| > \dfrac{|y-x_{\ep}|}{2} + \dfrac{|x-x_{\ep}|}{4} > \dfrac{|y-x|}{4} &\text{if } y \in D_6.
\end{cases}\]
Using this, we can compute the integral over $D_4$ as
\begin{align*}
\int_{D_4} \frac{1}{|x-y|^{n-2}} \frac{\lambda_{\ep}^{n-((n-2)p-2)q_{\ep}}}{|y-x_{\ep}|^{((n-2)p-2)q_{\ep}}} dy
&\le \frac{C \lambda_{\ep}^{n-((n-2)p-2)q_{\ep}}}{|x-x_{\ep}|^{((n-2)p-2)q_{\ep}}} \int_{D_4} \frac{1}{|x-y|^{n-2}} dy
\\
&\le \frac{C}{(\lambda_{\ep}|x-x_{\ep}|)^{((n-2)p-2)q_{\ep}-n}} \frac{1}{|x-x_{\ep}|^{n-2}}
\\
&\le \frac{C}{\Lambda_{\ep}^{[((n-2)p-2)q_{\ep}-n]/2}} \frac{1}{|x-x_{\ep}|^{n-2}}.
\end{align*}
Similarly, we estimate the integrals over $D_5$ and $D_6$ as
\begin{align*}
\int_{D_5} \frac{1}{|x-y|^{n-2}} \frac{\lambda_{\ep}^{n-((n-2)p-2)q_{\ep}}}{|y-x_{\ep}|^{((n-2)p-2)q_{\ep}}} dy
&\le \frac{C \lambda_{\ep}^{n-((n-2)p-2)q_{\ep}}}{|x-x_{\ep}|^{n-2}} \int_{\Omega \setminus B^n(x_{\ep}, {d_{\ep} \over \sqrt{\Lambda_{\ep}}})} \frac{1}{|y-x_{\ep}|^{((n-2)p-2)q_{\ep}}} dy \\
&\le \frac{C \lambda_{\ep}^{n-((n-2)p-2)q_{\ep}}}{|x-x_{\ep}|^{n-2}} \({d_{\ep} \over \sqrt{\Lambda_{\ep}}}\)^{n-((n-2)p-2)q_{\ep}}
\\
&= \frac{C}{\Lambda_{\ep}^{[((n-2)p-2)q_{\ep}-n]/2}} \frac{1}{|x-x_{\ep}|^{n-2}}
\end{align*}
and
\begin{align*}
\int_{D_6} \frac{1}{|x-y|^{n-2}} \frac{\lambda_{\ep}^{n-((n-2)p-2)q_{\ep}}}{|y-x_{\ep}|^{((n-2)p-2)q_{\ep}}} dy
&\le C \int_{\{y \in \Omega:\, |y-x| > \frac{|x-x_{\ep}|}{2}\}} \frac{\lambda_{\ep}^{n-((n-2)p-2)q_{\ep}}}{|x-y|^{n-2+((n-2)p-2)q_{\ep}}} dy \\
&\le \frac{\lambda_{\ep}^{n-((n-2)p-2)q_{\ep}}}{|x-x_{\ep}|^{((n-2)p-2)q_{\ep}-2}}
= \frac{C}{\Lambda_{\ep}^{[((n-2)p-2)q_{\ep}-n]/2}} \frac{1}{|x-x_{\ep}|^{n-2}}.
\end{align*}
Consequently,
\begin{equation}\label{eq-u-52}
\int_{\Omega \setminus B^n(x_{\ep}, {d_{\ep} \over \sqrt{\Lambda_{\ep}}})} G(x,y) \lambda_{\ep}^{\alpha_0} u_{\ep}^{q_{\ep}}(y) dy
\le \frac{C}{\Lambda_{\ep}^{[((n-2)p-2)q_{\ep}-n]/2}} \frac{1}{|x-x_{\ep}|^{n-2}} = o\(\frac{1}{|x-x_{\ep}|^{n-2}}\).
\end{equation}

Estimate \eqref{eq-v-2} now follows from \eqref{eq-u-51} and \eqref{eq-u-52}. The proof is completed.
\end{proof}

We deduce estimates for $u_{\ep}$ near the blow-up point.
\begin{lem}\label{lem-8-2}
Suppose that $p \in (\frac{2}{n-2}, \frac{n}{n-2})$. For each point $x \in \pa B^n(x_{\ep}, 2d_{\ep})$, we have
\begin{equation}\label{eq-u-70}
u_{\ep}(x) = \lambda_{\ep}^{-\alpha_0p} A_{U_0}^p \wtg(x,x_{\ep}) + o(d_{\ep}^{-(n-2)p+2} \lambda_{\ep}^{-\alpha_0p})
\end{equation}
and
\begin{equation}\label{eq-u-71}
\nabla u_{\ep}(x) = \lambda_{\ep}^{-\alpha_0p} A_{U_0}^p \nabla_x \wtg(x,x_{\ep}) + o(d_{\ep}^{-(n-2)p+1} \lambda_{\ep}^{-\alpha_0p}).
\end{equation}
Here, the definition of the numbers $A_{U_0}$ and $\alpha_0$ can be found in \eqref{eq-AUV} and \eqref{eq-ab},
and $o$ notation is uniform with respect to $x \in \pa B^n(x_{\ep}, 2d_{\ep})$ in the sense that \eqref{eq-g1-18} holds.
\end{lem}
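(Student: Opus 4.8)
The plan is to start from the Green representation
\[
u_\ep(x) = \int_\Omega G(x,y)\,v_\ep^p(y)\,dy, \qquad \nabla u_\ep(x) = \int_\Omega \nabla_x G(x,y)\,v_\ep^p(y)\,dy,
\]
and to split each integral over the ball $B^n(x_\ep,\rho_\ep)$ and its complement in $\Omega$, with
\[
\rho_\ep := \frac{3d_\ep}{\sqrt{\Lambda_\ep}}.
\]
This is the smallest radius for which the outer region $\Omega\setminus B^n(x_\ep,\rho_\ep)$ is contained in the set on which Lemma~\ref{lem-8-1} is available, and it also arranges $\lambda_\ep\rho_\ep = 3\sqrt{\Lambda_\ep}$, which is the scale that balances the two errors below. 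The gradient identity \eqref{eq-u-71} is proved exactly as \eqref{eq-u-70} after replacing the bound $G(x,y)\le c_n|x-y|^{-(n-2)}$ by $|\nabla_x G(x,y)|\le C|x-y|^{-(n-1)}$ from Corollary~\ref{cor-H-est} — which lowers every power of $d_\ep$ by one, matching the target $o(d_\ep^{-(n-2)p+1}\lambda_\ep^{-\alpha_0 p})$ — and using $\nabla_x\wtg(x,x_\ep)=\int_\Omega\nabla_x G(x,y)G^p(y,x_\ep)\,dy$ (standard potential theory, legitimate since $x\ne x_\ep$); so I will describe only the argument for \eqref{eq-u-70}.

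For the inner ball, $|x-x_\ep|=2d_\ep$ and $\rho_\ep\ll d_\ep$ give $|x-y|\sim d_\ep$, hence $G(x,y)\le Cd_\ep^{-(n-2)}$, while the sharp decay $V_\ep(z)\le C(1+|z|)^{-(n-2)}$ from Proposition~\ref{prop-dec}, rescaling, and the fact that $(n-2)p<n$ make $\int_{B^n(0,\lambda_\ep\rho_\ep)}V_\ep^p$ grow only like $(\lambda_\ep\rho_\ep)^{n-(n-2)p}=C\Lambda_\ep^{(n-(n-2)p)/2}$. Consequently
\[
\int_{B^n(x_\ep,\rho_\ep)} v_\ep^p(y)\,dy \le C\,\lambda_\ep^{-\beta_0}\,\Lambda_\ep^{\frac{n-(n-2)p}{2}},
\]
and multiplying by $d_\ep^{-(n-2)}\lambda_\ep^{\alpha_0 p}$ and using the relations $\alpha_0(q_0+1)=\beta_0(p+1)=n$ of \eqref{eq-51} together with the identity $\beta_0-\alpha_0 p = n-(n-2)p$ — a direct consequence of the critical hyperbola \eqref{eq-cr-hy} — one finds that the inner contribution to $\lambda_\ep^{\alpha_0 p}u_\ep(x)$ is $O\!\big(\Lambda_\ep^{-\frac{n-(n-2)p}{2}}\,d_\ep^{-(n-2)p+2}\big)$, negligible against the claimed main term $A_{U_0}^p\wtg(x,x_\ep)$ of order $d_\ep^{-(n-2)p+2}$.

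On the outer region I would insert the expansion of Lemma~\ref{lem-8-1}, written as $\lambda_\ep^{\alpha_0}v_\ep(y)=A_{U_0}G(y,x_\ep)+Q_\ep(y)$ with $\omega_\ep:=\sup|y-x_\ep|^{n-2}|Q_\ep(y)|\to0$, and decompose $(A_{U_0}G+Q_\ep)^p = A_{U_0}^p G^p + R_\ep$. The principal term $A_{U_0}^p\int_{\Omega\setminus B^n(x_\ep,\rho_\ep)}G(x,y)G^p(y,x_\ep)\,dy$ equals $A_{U_0}^p\wtg(x,x_\ep)$ minus the integral over $B^n(x_\ep,\rho_\ep)$, and the latter is $O\!\big(d_\ep^{-(n-2)}\rho_\ep^{\,n-(n-2)p}\big)=O\!\big(\Lambda_\ep^{-\frac{n-(n-2)p}{2}}d_\ep^{-(n-2)p+2}\big)$, again because $(n-2)p<n$. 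For $R_\ep$ I would use the elementary inequalities
\[
|(a+b)^p-a^p| \le C\big(|a|^{p-1}|b|+|b|^p\big) \ \ (p\ge1), \qquad |(a+b)^p-a^p| \le |b|^p \ \ (p<1),
\]
the second one valid for $a\ge0,\ a+b\ge0$ by subadditivity of $t\mapsto t^p$, to obtain $|R_\ep(y)|\le C(\omega_\ep+\omega_\ep^p)|y-x_\ep|^{-(n-2)p}$, and then the weighted convolution estimate
\[
\int_\Omega G(x,y)\,|y-x_\ep|^{-(n-2)p}\,dy \le C\,|x-x_\ep|^{2-(n-2)p} = C\,d_\ep^{-(n-2)p+2},
\]
which holds precisely because $\tfrac{2}{n-2}<p<\tfrac{n}{n-2}$ (so the exponents $n-2$ and $(n-2)p$ are $<n$ while their sum $(n-2)(p+1)$ is $>n$). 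Together these bound the $R_\ep$-contribution by $o(d_\ep^{-(n-2)p+2})$ in the units of $\lambda_\ep^{\alpha_0 p}u_\ep$. Collecting the inner and outer estimates and dividing by $\lambda_\ep^{\alpha_0 p}$ then gives \eqref{eq-u-70}; throughout, the passage from $\alpha_\ep,\beta_\ep$ to $\alpha_0,\beta_0$ in the exponents is absorbed using $\lambda_\ep^\ep\to1$ from Lemma~\ref{lem-lam}.

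The main obstacle is structural: unlike the cases $p\ge\tfrac{n}{n-2}$ handled in Sections~\ref{sec-thm-11}--\ref{sec-thm-12}, here $v_\ep^p$ does not concentrate at $x_\ep$ — by Lemma~\ref{lem-8-1} it carries the profile $\lambda_\ep^{-\alpha_0 p}A_{U_0}^p G^p(\cdot,x_\ep)$ on all of $\Omega$ outside a shrinking neighborhood of $x_\ep$, and convolving $G$ against this profile reproduces $\wtg$ by its very definition \eqref{eq-h2-2}. Making the approximation $u_\ep\approx\lambda_\ep^{-\alpha_0 p}A_{U_0}^p(-\Delta)^{-1}\big[G^p(\cdot,x_\ep)\big]$ quantitative is the whole point, the two error sources being (i) the shrinking neighborhood of $x_\ep$, where $v_\ep$ is not well approximated by $G$, and (ii) the mismatch $R_\ep$ on the outer region; balancing (i) against (ii) is what pins down $\rho_\ep$. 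The delicate point is the sublinear case $p<1$, where (ii) cannot be treated by a first-order Taylor expansion of $t\mapsto t^p$ (that would require $|Q_\ep|\ll G$ pointwise, which fails near the inner edge of the outer region) and one must instead use the subadditive bound $|(a+b)^p-a^p|\le|b|^p$; one also has to check that the weighted convolution estimate is uniform over $x\in\pa B^n(x_\ep,2d_\ep)$ and small $d_\ep$, which it is, since it depends only on $|x-x_\ep|=2d_\ep$.
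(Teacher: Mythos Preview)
Your proposal is correct and follows essentially the same route as the paper: the same splitting radius $\rho_\ep=3d_\ep/\sqrt{\Lambda_\ep}$, the same inner-ball estimate via Proposition~\ref{prop-dec}, the same identification of the outer main term with $A_{U_0}^p\wtg(x,x_\ep)$ via \eqref{eq-h2-2}, and the same reduction of the remainder to the weighted convolution $\int|x-y|^{-(n-2)}|y-x_\ep|^{-(n-2)p}\,dy$. The only cosmetic difference is that the paper handles the remainder with the single inequality $|a^p-b^p|\le 2^{p-1}p\,|a-b|(a^{p-1}+b^{p-1})$ and then evaluates the convolution by an explicit three-region split, whereas you separate $p\ge1$ from $p<1$ (using subadditivity in the latter case) and invoke the convolution bound by scaling; both lead to the same estimate $|R_\ep(y)|=o(1)\,|y-x_\ep|^{-(n-2)p}$.
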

\begin{proof}
It holds that
\begin{equation}\label{eq-u-1}
\begin{aligned}
\lambda_{\ep}^{\alpha_0p} u_{\ep}(x)
&= \lambda_{\ep}^{\alpha_0p} \int_{B^n(x_{\ep}, \frac{3d_{\ep}}{\sqrt{\Lambda_{\ep}}})} G(x,y) v_{\ep}^p(y) dy
+ A_{U_0}^p \int_{\Omega \setminus B^n(x_{\ep}, \frac{3d_{\ep}}{\sqrt{\Lambda_{\ep}}})} G(x,y) G^p(y,x_{\ep}) dy \\
&\ + \int_{\Omega \setminus B^n(x_{\ep}, \frac{3d_{\ep}}{\sqrt{\Lambda_{\ep}}})}
G(x,y) \left[ \lambda_{\ep}^{\alpha_0p}v_{\ep}^p(y) - A_{U_0}^p G^p(y,x_{\ep}) \right] dy.
\end{aligned}
\end{equation}
We shall compute each of three integrals in the right-hand side.

By the identity $\alpha_0 + \beta_0 = n-2$, \eqref{eq-g1-9} and \eqref{eq-4-3}, the first integral in the right-hand side of \eqref{eq-u-1} is estimated as
\begin{align*}
\lambda_{\ep}^{\alpha_0p} \int_{B^n(x_{\ep}, \frac{3d_{\ep}}{\sqrt{\Lambda_{\ep}}})} G(x,y) v_{\ep}^p(y) dy
&\le C \lambda_{\ep}^{(n-2)p} \int_{B^n(x_{\ep}, \frac{3d_{\ep}}{\sqrt{\Lambda_{\ep}}})} \frac{1}{|x-y|^{n-2}} V_{\ep}^p(\lambda_{\ep}(y-x_{\ep})) dy \\
&\le C \lambda_{\ep}^{(n-2)p-n} d_{\ep}^{-(n-2)} \int_{B^n(0, 3\sqrt{\Lambda_{\ep}})} V_{\ep}^p(y) dy \\
&\le C \lambda_{\ep}^{(n-2)p-n} d_{\ep}^{-(n-2)} (\sqrt{\Lambda_{\ep}})^{n-(n-2)p} \\
&= C (\sqrt{\Lambda_{\ep}})^{(n-2)p-n} d_{\ep}^{2-(n-2)p} = o(d_{\ep}^{2-(n-2)p}).
\end{align*}

Moreover, from the definition of $\wtg$ determined by \eqref{eq-h2-2}, we get
\[\int_{\Omega \setminus B^n(x_{\ep}, \frac{3d_{\ep}}{\sqrt{\Lambda_{\ep}}})} G(x,y) G^p(y,x_{\ep}) dy
= \wtg(x,x_{\ep}) - \int_{B^n(x_{\ep}, \frac{3d_{\ep}}{\sqrt{\Lambda_{\ep}}})} G(x,y) G^p(y,x_{\ep}) dy.\]
For $x \in \pa B^n(x_{\ep}, 2d_{\ep})$ and $y \in B^n(x_{\ep}, \frac{3d_{\ep}}{\sqrt{\Lambda_{\ep}}})$, we have that $|y-x| \ge d_{\ep}$. Therefore
\begin{align*}
\int_{B^n(x_{\ep}, \frac{3d_{\ep}}{\sqrt{\Lambda_{\ep}}})} G(x,y) G^p(y,x_{\ep}) dy
&\le C d_{\ep}^{-(n-2)} \(\frac{d_{\ep}}{\sqrt{\Lambda_{\ep}}}\)^{-(n-2)p+n} \\
&= C (\sqrt{\Lambda_{\ep}})^{(n-2)p-n} d_{\ep}^{2-(n-2)p} = o(d_{\ep}^{2-(n-2)p}),
\end{align*}
and thus the second integral in the right-hand side of \eqref{eq-u-1} is calculated as
\begin{equation}\label{eq-u-62}
A_{U_0}^p \int_{\Omega \setminus B^n(x_{\ep}, \frac{3d_{\ep}}{\sqrt{\Lambda_{\ep}}})} G(x,y) G^p(y,x_{\ep}) dy = A_{U_0}^p \wtg(x,x_{\ep}) + o(d_{\ep}^{2-(n-2)p}).
\end{equation}

On the other hand, by applying the elementary inequality
\[|a^p - b^p| \le 2^{p-1}p\, |a-b|(a^{p-1}+b^{p-1}) \quad \text{for any } a, b > 0\]
and Lemma \ref{lem-8-1}, we can easily deduce that
\begin{align*}
&\ \left| \int_{\Omega \setminus B^n(x_{\ep}, \frac{3d_{\ep}}{\sqrt{\Lambda_{\ep}}})}
G(x,y) \left[ \lambda_{\ep}^{\alpha_0p}v_{\ep}^p(y) - A_{U_0}^p G^p(y,x_{\ep}) \right] dy \right| \\
&\le C \int_{\Omega \setminus B^n(x_{\ep}, \frac{3d_{\ep}}{\sqrt{\Lambda_{\ep}}})} \frac{1}{|x-y|^{n-2}} \frac{|Q_{\ep}(y)|}{|y-x_{\ep}|^{(n-2)(p-1)}} dy \\
&= o(1) \int_{\Omega \setminus B^n(x_{\ep}, \frac{3d_{\ep}}{\sqrt{\Lambda_{\ep}}})} \frac{1}{|x-y|^{n-2}} \frac{1}{|y-x_{\ep}|^{(n-2)p}} dy.
\end{align*}
To examine the last integral, we divide the domain of integration $\Omega \setminus B^n(x_{\ep}, \frac{3d_{\ep}}{\sqrt{\Lambda_{\ep}}})$ into
\[\Omega \setminus B^n\(x_{\ep}, \frac{3d_{\ep}}{\sqrt{\Lambda_{\ep}}}\)
= B^n(x,d_{\ep}) \cup \left[ B^n(x_{\ep}, d_{\ep}) \setminus B^n\(x_{\ep}, \frac{3d_{\ep}}{\sqrt{\Lambda_{\ep}}}\) \right]
\cup \left[\Omega \setminus \(B^n(x,d_{\ep}) \cup B^n(x_{\ep},d_{\ep})\) \right],\]
and evaluate the integral of $|x-y|^{-(n-2)}|y-x_{\ep}|^{-(n-2)p}$ over each subdomain, as we did for the integral in the rightmost side of \eqref{eq-u-53}.
Then we observe that all integrals are bounded by $o(d_{\ep}^{2-(n-2)p})$.

Having this fact, \eqref{eq-u-1} and \eqref{eq-u-62}, we conclude that \eqref{eq-u-70} is true.

\medskip
In the same manner, we can prove that \eqref{eq-u-71} is valid. In this time, we have to use the gradient estimate of $G(x,y)$ in \eqref{eq-g1-9}.
\end{proof}
\begin{proof}[Proof of Theorem \ref{thm-2}]
To the contrary, we assume that $d_{\ep} = \frac{1}{4} \textrm{dist}(x_{\ep}, \pa \Omega) \to 0$ as $\ep \to 0$ along a subsequence. Let us keep using $\ep$ as the parameter.

For each $1 \le j \le n$, let
\begin{multline*}
\mcl_j^{\ep} = -\int_{\pa B^n(x_{\ep}, 2d_{\ep})} \( \frac{\pa u_{\ep}}{\pa \nu} \frac{\pa v_{\ep}}{\pa x_j} + \frac{\pa v_{\ep}}{\pa \nu} \frac{\pa u_{\ep}}{\pa x_j} \) dS_x \\
+ \int_{\pa B^n(x_{\ep}, 2d_{\ep})} (\nabla u_{\ep} \cdot \nabla v_{\ep}) \nu_j\, dS_x - \frac{1}{p+1} \int_{\pa B^n(x_{\ep}, 2d_{\ep})} v_{\ep}^{p+1} \nu_j\, dS_x
\end{multline*}
and
\[\mcr_j^{\ep} = \frac{1}{q_{\ep}+1} \int_{\pa B^n(x_{\ep}, 2d_{\ep})} u_{\ep}^{q_{\ep}+1} \nu_j\, dS_x.\]
By the Pohozaev identity \eqref{eq-poho-0}, it holds that $\mcl_j^{\ep} = \mcr_j^{\ep}$.
As in the proof of Theorem \ref{thm-1}, we will estimate $\mcl_j^{\ep}$ and $\mcr_j^{\ep}$, respectively, and derive a contradiction by comparing them.

\medskip \noindent \textbf{Estimate of $\mcl_j^{\ep}$.}
The standard gradient estimate of Poisson's equation yields
\begin{align*}
&\ \left\|\nabla_x \wtg(x,x_{\ep}) \right\|_{L^{\infty}(B^n(x_{\ep},3d_{\ep}) \setminus B(x_{\ep},d_{\ep}))} \\
&\le \frac{C}{d_{\ep}} \left\| \wtg(x,x_{\ep}) \right\|_{L^{\infty}(B^n(x_{\ep},4d_{\ep}) \setminus B^n(x_{\ep}, d_{\ep}/2))}
+ C d_{\ep} \left\| G^p(\cdot,x_{\ep}) \right\|_{L^{\infty}(B^n(x_{\ep},4d_{\ep}) \setminus B^n(x_{\ep},d_{\ep}/2))} \\
&\le C d_{\ep}^{1-(n-2)p},
\end{align*}
which indicates
\[\int_{\pa B^n(x_{\ep},2d_{\ep})} |\nabla_x \wtg (x,x_{\ep})|\, dS_x \le C d_{\ep}^{n-(n-2)p}.\]
Accordingly, by \eqref{eq-u-71}, \eqref{eq-g1-0}, \eqref{eq-g1-8}, \eqref{eq-g1-9} and the previous estimate,
\begin{equation}\label{eq-h3-1}
\mcl_j^{\ep} = \lambda_{\ep}^{-\frac{n(p+1)}{q_0+1}} \mci_2(2d_{\ep}) + o\(d_{\ep}^{1-(n-2)p} \lambda_{\ep}^{-\frac{n(p+1)}{q_0+1}}\)
\end{equation}
where
\begin{equation}\label{eq-h3-4}
\begin{aligned}
\mci_2(r) &:= - \int_{\pa B^n(x_{\ep}, r)} \( \frac{\pa \wtg}{\pa \nu} (x,x_{\ep}) \frac{\pa G}{\pa x_j}(x,x_{\ep}) + \frac{\pa G}{\pa \nu}(x,x_{\ep}) \frac{\pa \wtg}{\pa x_j}(x,x_{\ep})\) dS_x \\
&\ + \int_{\pa B^n(x_{\ep}, r)} \( \nabla_x G (x,x_{\ep}) \cdot \nabla_x \wtg(x,x_{\ep}) \) \nu_j\, dS_x \\
&\ - \frac{1}{p+1} A_{U_0}^{p+1} \int_{\pa B^n(x_{\ep}, r)} G^{p+1}(x,x_{\ep}) \nu_j\, dS_x.
\end{aligned}
\end{equation}

To compute the value of $\mci_2(2d_{\ep})$, we first observe that the value of $\mci_2(r)$ is independent of $r>0$.
To this end, we recall from \eqref{eq-h2-2} that
\[-\Delta_x \wtg(x,x_{\ep}) = G^p(x,x_{\ep}) \quad \text{and} \quad -\Delta_x G(x,x_{\ep}) = 0 \quad \text{in } A_r = B^n(x_{\ep},2d_{\ep}) \setminus B^n(x_{\ep},r)\]
with $r \in (0,2d_{\ep})$. Integrating by parts, we get
\begin{multline*}
\frac{1}{p+1} \int_{\pa A_r} G^{p+1}(x,x_{\ep}) \nu_j\, dS_x
=\int_{A_r} G^p(x,x_{\ep}) \frac{\pa G}{\pa x_j}(x,x_{\ep}) dx
= \int_{A_r} -\Delta_x \wtg (x,x_{\ep})\frac{\pa G}{\pa x_j}(x,x_{\ep}) dx \\
= - \int_{\pa A_r} \frac{\pa \wtg}{\pa \nu}(x,x_{\ep}) \frac{\pa G}{\pa x_j} (x,x_{\ep})dS_x + \int_{A_r} \nabla_x \wtg (x,x_{\ep}) \cdot \nabla_x \frac{\pa G}{\pa x_j}(x,x_{\ep}) dx
\end{multline*}
and
\begin{align*}
0 &= \int_{A_r} \frac{\pa \wtg}{\pa x_j} (x,x_{\ep}) (-\Delta_x G) (x,x_{\ep}) dx \\
&= -\int_{\pa A_r}\frac{\pa \wtg}{\pa x_j}(x,x_{\ep}) \frac{\pa G}{\pa \nu} (x,x_{\ep}) dS_x + \int_{A_r} \nabla_x \frac{\pa \wtg}{\pa x_j}(x,x_{\ep}) \cdot \nabla_x G(x,x_{\ep}) dx.
\end{align*}
By summing these two equalities and performing a further integration by parts, we obtain
\begin{align*}
\frac{1}{p+1} \int_{\pa A_r} G^{p+1} (x,x_{\ep}) \nu_j\, dS_x &= \int_{A_r} \( \nabla_x G (x,x_{\ep}) \cdot \nabla_x \wtg(x,x_{\ep}) \) \nu_j\, dS_x \\
&\ - \int_{A_r} \( \frac{\pa \wtg}{\pa \nu} (x,x_{\ep}) \frac{\pa G}{\pa x_j}(x,x_{\ep}) + \frac{\pa G}{\pa \nu}(x,x_{\ep}) \frac{\pa \wtg}{\pa x_j}(x,x_{\ep})\) dS_x,
\end{align*}
which implies that $\mci_2(r)$ is a constant function on $r \in (0,2d_{\ep})$. In particular,
\[\mci_2(2d_{\ep}) = \lim_{r \to 0} \mci_2(r).\]

We now determine this limit. For the moment, we assume that $p \in [\frac{n-1}{n-2}, \frac{n}{n-2})$. Then \eqref{eq-h2-1} implies
\begin{align*}
&\ \lim_{r \to 0} \int_{\pa B^n(x_{\ep}, r)} \frac{\pa \wtg}{\pa \nu} (x,x_{\ep}) \frac{\pa G}{\pa x_j}(x,x_{\ep}) dS_x \\
&= \lim_{r \to 0} \int_{\pa B^n(x_{\ep},r)} \left[ -\frac{\gamma_1 ((n-2)p-2)}{|x-x_{\ep}|^{(n-2)p-1}}
+ \frac{\gamma_2((n-2)p-n)}{|x-x_{\ep}|^{(n-2)p-n+1}} H(x,x_{\ep}) + \frac{\gamma_2}{|x-x_{\ep}|^{(n-2)p-n}} \frac{\pa H}{\pa \nu}(x,x_{\ep}) \right. \\
&\hspace{80pt} \left. - \frac{\pa \wth}{\pa \nu}(x,x_{\ep}) \right] \cdot \left[ -(n-2)c_n\frac{(x-x_{\ep})_j}{|x-x_{\ep}|^n} - \frac{\pa H}{\pa x_j}(x,x_{\ep}) \right] dS_x
\\
&= J_1 + J_2 + J_3 + J_4
\end{align*}
where
\begin{align*}
J_1 &:= \gamma_1c_n (n-2)((n-2)p-2) \lim_{r \to 0} \int_{\pa B^n(x_{\ep},r)} \frac{(x-x_{\ep})_j}{|x-x_{\ep}|^{(n-2)p+n-1}} dS_x,
\\
J_2 &:= -(n-2)c_n \lim_{r \to 0} \int_{\pa B^n(x_{\ep},r)} \left[ \frac{\gamma_2((n-2)p-n)}{|x-x_{\ep}|^{(n-2)p-n+1}} H(x,x_{\ep})
+ \frac{\gamma_2}{|x-x_{\ep}|^{(n-2)p-n}} \frac{\pa H}{\pa \nu}(x,x_{\ep}) \right.
\\
&\hspace{140pt} \left. - \frac{\pa \wth}{\pa \nu}(x,x_{\ep}) \right] \cdot \frac{(x-x_{\ep})_j}{|x-x_{\ep}|^n}\, dS_x,
\\
J_3 &:= \gamma_1 ((n-2)p-2) \lim_{r \to 0} \int_{\pa B^n(x_{\ep},r)} \frac{1}{|x-x_{\ep}|^{(n-2)p-1}} \frac{\pa H}{\pa x_j}(x,x_{\ep}) dS_x,
\\
J_4 &:= - \lim_{r \to 0} \int_{\pa B^n(x_{\ep},r)} \left[ \frac{\gamma_2((n-2)p-n)}{|x-x_{\ep}|^{(n-2)p-n+1}} H(x,x_{\ep})
+ \frac{\gamma_2}{|x-x_{\ep}|^{(n-2)p-n}} \frac{\pa H}{\pa \nu}(x,x_{\ep}) - \frac{\pa \wth}{\pa \nu}(x,x_{\ep}) \right]
\\
&\hspace{95pt} \times \frac{\pa H}{\pa x_j}(x,x_{\ep}) dS_x.
\end{align*}
We immediately observe that $J_1 = 0$ since the integrand is odd. Furthermore, it is easy to see that $J_3 = J_4 =0$
by concerning the order of the singularities in the integrands (refer to Lemma \ref{lem-H-est}) and the condition that $p < \frac{n}{n-2}$.
It is worth to mention that we are conducting the computations for each fixed parameter $\ep > 0$, and in particular, for each fixed number $d_{\ep} > 0$.

We turn to compute $J_2$. Because $p < \frac{n}{n-2}$, it is true that
\begin{align*}
\int_{\pa B^n(x_{\ep},r)} \frac{(x-x_{\ep})_j}{|x-x_{\ep}|^{(n-2)p+1}} H(x,x_{\ep}) dS_x & = \int_{\pa B^n(x_{\ep},r)} \frac{(x-x_{\ep})_j}{|x-x_{\ep}|^{(n-2)p+1}} [H(x,x_{\ep})- H(x_{\ep}, x_{\ep})] dS_x \\
&= O \( \int_{\pa B^n(x_{\ep},r)} \frac{dS_x}{|x-x_{\ep}|^{(n-2)p-1}} \) \\
&= O(r^{n-(n-2)p}) \to 0 \quad \text{as } r \to 0.
\end{align*}
By considering the order of the singularity, we obtain
\[\lim_{r \to 0} \int_{\pa B^n(x_{\ep}, r)} \frac{(x-x_{\ep})_j}{|x-x_{\ep}|^{(n-2)p}} \frac{\pa H}{\pa \nu} (x,x_{\ep}) dS_x = 0\]
and
\[(n-2) c_n \lim_{r \to 0} \int_{\pa B^n(x_{\ep},r)} \frac{(x-x_{\ep})_j}{|x-x_{\ep}|^n} \frac{\pa \wth}{\pa \nu}(x,x_{\ep}) dS_x
= \frac{(n-2)c_n}{n} \left. \frac{\pa \wth}{\pa x_j}(x, x_{\ep}) \right|_{x = x_{\ep}} |\S^{n-1}|.\]
As a result,
\begin{equation}\label{eq-h3-8}
\lim_{r \to 0} \int_{\pa B^n(x_{\ep}, r)} \frac{\pa \wtg}{\pa \nu} (x,x_{\ep}) \frac{\pa G}{\pa x_j}(x,x_{\ep}) dS_x
= J_2 = \frac{(n-2)c_n}{n} \left. \frac{\pa \wth}{\pa x_j}(x, x_{\ep}) \right|_{x = x_{\ep}} |\S^{n-1}|.
\end{equation}

By performing similar calculations, we discover
\begin{equation}\label{eq-h3-9}
\lim_{r \to 0} \int_{\pa B^n(x_{\ep}, r)} \frac{\pa G}{\pa \nu}(x,x_{\ep}) \frac{\pa \wtg}{\pa x_j}(x,x_{\ep}) dS_x
= (n-2) c_n \left. \frac{\pa \wth}{\pa x_j}(x, x_{\ep}) \right|_{x = x_{\ep}} |\S^{n-1}|
\end{equation}
and
\begin{equation}\label{eq-h3-10}
\lim_{r \to 0} \int_{\pa B^n(x_{\ep}, r)} \( \nabla_x G (x,x_{\ep}) \cdot \nabla_x \wtg(x,x_{\ep}) \) \nu_j\, dS_x
= \frac{(n-2)c_n}{n} \left. \frac{\pa \wth}{\pa x_j}(x, x_{\ep}) \right|_{x = x_{\ep}} |\S^{n-1}|.
\end{equation}

Finally, by \eqref{eq-g-decom} and the identity
\[\int_{\pa B^n(x_{\ep},r)} \( \frac{c_n}{|x-x_{\ep}|^{n-2}}\)^{p+1} \nu_j\, dS_x = 0 \quad \text{for } j = 1,\cdots, n,\]
we have
\begin{align*}
&\ \int_{\pa B^n(x_{\ep}, r)} G^{p+1}(x,x_{\ep}) \nu_j\, dS_x
\\
& = \int_{\pa B^n(x_{\ep},r)} \left[ \( \frac{c_n}{|x-x_{\ep}|^{n-2}} - H(x,x_{\ep})\)^{p+1} - \( \frac{c_n}{|x-x_{\ep}|^{n-2}}\)^{p+1} + \frac{(p+1) H(x,x_{\ep})}{|x-x_{\ep}|^{(n-2)p}} \right] \nu_j \, dS_x
\\
&\ - (p+1) \int_{\pa B^n(x_\ep,r)} \frac{H(x,x_{\ep})}{|x-x_{\ep}|^{(n-2)p}} \nu_j\, dS_x.
\end{align*}
Also, Taylor's theorem and the condition $p < \frac{n}{n-2} \le \frac{2n-3}{n-2}$ imply
\begin{align*}
&\left| \int_{\pa B^n(x_\ep,r)} \left[ \( \frac{c_n}{|x-x_{\ep}|^{n-2}} - H(x,x_{\ep})\)^{p+1} - \( \frac{c_n}{|x-x_{\ep}|^{n-2}}\)^{p+1}
+ \frac{(p+1)H(x,x_{\ep})}{|x-x_{\ep}|^{(n-2)p}} \right] \nu_j\, dS_x \right| \\
&\le C\int_{\pa B^n(x_\ep,r)} \frac{1}{|x-x_{\ep}|^{(n-2)(p-1)}} dS_x = O(r^{(n-1)-(n-2)(p-1)}) \to 0 \quad \text{as } r \to 0
\end{align*}
and
\begin{align*}
\left| \int_{\pa B^n(x_\ep,r)} \frac{H(x,x_{\ep})}{|x-x_{\ep}|^{(n-2)p}}\, \nu_j\, dS_x\right|
&= \left| \int_{\pa B^n(x_\ep,r)} \frac{H(x,x_{\ep})-H(x_{\ep},x_{\ep})}{|x-x_{\ep}|^{(n-2)p}}\, \nu_j\, dS_x\right| \\
&= O(r^{n-(n-2)p}) \to 0 \quad \text{as } r \to 0.
\end{align*}
Hence
\begin{equation}\label{eq-h3-5}
\frac{1}{p+1} \lim_{r \to 0} \int_{\pa B^n(x_{\ep}, r)} G^{p+1}(x,x_{\ep}) \nu_j\, dS_x = 0.
\end{equation}

Plugging \eqref{eq-h3-8}-\eqref{eq-h3-5} into \eqref{eq-h3-4}, we conclude that
\begin{equation}\label{eq-h3-2}
\mci_2(2d_{\ep}) = \lim_{r \to 0} \mci_2(r) = \frac{(n-2)(n+2)c_n}{n} |\S^{n-1}| \left. \frac{\pa \wth}{\pa x_j}(x, x_{\ep}) \right|_{x = x_{\ep}}
\end{equation}
for $p \in [\frac{n-1}{n-2}, \frac{n}{n-2})$. If $p \in (\frac{2}{n-2}, \frac{n-1}{n-2})$, we can deduce \eqref{eq-h3-2} in a similar but simpler way.

Now, denoting $\nu_{x_{\ep}} = (a_1, \cdots, a_n) \in \S^{n-1}$ and employing \eqref{eq-h3-1} and \eqref{eq-h3-2}, we derive
\[\sum_{j=1}^n a_j \mcl_j^{\ep} = \frac{(n-2)(n+2)c_n}{n} |\S^{n-1}| \lambda_{\ep}^{-\frac{n(p+1)}{q_0+1}}
\left. \frac{\pa \wth}{\pa \nu_{x_{\ep}}}(x, x_{\ep}) \right|_{x = x_{\ep}}
+ o\(d_{\ep}^{1-(n-2)p} \lambda_{\ep}^{- \frac{n(p+1)}{q_0+1}}\).\]
Therefore, by applying \eqref{eq-h2} and \eqref{eq-cr-hy} to the above inequality, we obtain
\begin{equation}\label{eq-h3-3}
\sum_{j=1}^n a_j \mcl_j^{\ep} \ge C \lambda_{\ep}^{-\frac{n(p+1)}{q_0+1}} \lambda_{\ep}^{-1 + (n-2)p} \Lambda_{\ep}^{1-(n-2)p} =C \lambda_{\ep} \Lambda_{\ep}^{1-(n-2)p}.
\end{equation}

\medskip \noindent \textbf{Estimate of $\mcr_j^{\ep}$.}
Using \eqref{eq-tuv} and Proposition \ref{prop-dec}, we find
\begin{equation}\label{eq-h3-7}
\begin{aligned}
|\mcr_j^{\ep}| = \frac{1}{q_{\ep}+1} \left|\int_{\pa B^n(x_{\ep}, 2d_{\ep})} u_{\ep}^{q_{\ep}+1} \nu_j\, dS_x\right|
&\le C \lambda_{\ep} \int_{\pa B^n(x_{\ep}, 2d_{\ep})} U_{\ep}^{q_{\ep}+1}(x)dS_x \\
&= \lambda_{\ep} \Lambda_{\ep}^{(n-1)- ((n-2)p-2)(q_{\ep}+1)} \\
&= \lambda_{\ep} \Lambda_{\ep}^{-np-1+\ep(p+1)(q_{\ep}+1)}
\end{aligned}
\end{equation}
where \eqref{eq-pq-e} was used for the last equality.

\medskip
From \eqref{eq-h3-3} and \eqref{eq-h3-7}, we observe
\[C\lambda_{\ep} \Lambda_{\ep}^{1-(n-2)p} \le \sum_{j=1}^n a_j \mcl_j^{\ep} \le \sum_{j=1}^n |a_j| |\mcr_j^{\ep}| \le C \lambda_{\ep} \Lambda_{\ep}^{-np-1+\ep(p+1)(q_{\ep}+1)}.\]
Because $\Lambda_{\ep} \to \infty$ as $\ep \to 0$, the above estimate implies that $2p +2 \le 0$, which is nonsense in that $p > \frac{2}{n-2}$.
Therefore $d_{\ep} \nrightarrow 0$ as $\ep \to 0$. In other words, the maximum point $x_{\ep}$ of $u_{\ep}$ should be away from $\pa \Omega$, and the proof is done.
\end{proof}

\appendix
\section{Interior Regularity of $\wth$ (Proof of Lemma \ref{lem-wth-reg})}\label{sec-app-1}
In this appendix, we prove $C^1$ interior regularity of the regular part $\wth$ of the function $\wtg$ defined by \eqref{eq-h2-2}.
For this aim, we need an elementary lemma which we state now.
\begin{lem}\label{lem-cal-1}
\textnormal{(1)} If $p \in (0,1]$, it holds that $0 \le a^p - (a-b)^p \le b^p$ for $0 \le b \le a$.

\medskip \noindent \textnormal{(2)} If $p \in [1,2]$, it holds that $-\min\{(p-1) a^{p-2}b^2, b^p\} \le a^p - (a-b)^p - pa^{p-1}b \le 0$ for $0 \le b \le a$.

\medskip \noindent \textnormal{(3)} If $p \ge 2$, it holds that $-\frac{p(p-1)}{2} a^{p-2}b^2 \le a^p - (a-b)^p - pa^{p-1}b \le 0$ for $0 \le b \le a$.
\end{lem}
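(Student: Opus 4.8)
The plan is to treat all three items as elementary one-variable inequalities. Each is homogeneous of degree $p$ under the scaling $(a,b)\mapsto(\lambda a,\lambda b)$, so after the trivial case $a=0$ is set aside I would normalise $a=1$ and write $t=b/a\in[0,1]$; the assertions become $0\le 1-(1-t)^p\le t^p$ for (1), $-\min\{(p-1)t^2,\,t^p\}\le 1-(1-t)^p-pt\le 0$ for (2), and $-\tfrac{p(p-1)}{2}t^2\le 1-(1-t)^p-pt\le 0$ for (3). The inputs are that $\phi(s)=s^p$ is nondecreasing on $[0,\infty)$, convex when $p\ge1$, and subadditive when $0<p\le1$.

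For item (1): the left inequality is just monotonicity of $\phi$. For the right inequality it suffices to show $g(t):=(1-t)^p+t^p\ge1$ on $[0,1]$; since $g(0)=g(1)=1$ and $g'(t)=p\big[t^{p-1}-(1-t)^{p-1}\big]$ is $\ge0$ on $[0,\tfrac12]$ and $\le0$ on $[\tfrac12,1]$ when $p\le1$, one gets $g\ge\min\{g(0),g(1)\}=1$ (this is just the subadditivity of $\phi$). For the \emph{upper} bounds in (2) and (3), convexity of $\phi$ for $p\ge1$ gives that its graph lies above the tangent line at $s=a$, i.e. $(a-b)^p\ge a^p-pa^{p-1}b$, which is exactly $a^p-(a-b)^p-pa^{p-1}b\le 0$.

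For the lower bounds I would use the identity $a^p-(a-b)^p-pa^{p-1}b=-\int_0^b p\big[a^{p-1}-(a-s)^{p-1}\big]\,ds$, obtained by differentiating in $b$ and noting that both sides vanish at $b=0$, together with $a^{p-1}-(a-s)^{p-1}=(p-1)\int_{a-s}^{a}r^{p-2}\,dr$. If $p\ge2$ then $r\mapsto r^{p-2}$ is nondecreasing, so this is at most $(p-1)a^{p-2}s$, whence $a^p-(a-b)^p-pa^{p-1}b\ge -p(p-1)a^{p-2}\int_0^b s\,ds=-\tfrac{p(p-1)}{2}a^{p-2}b^2$, proving (3). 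For item (2), the bound $\ge -b^p$ is a direct application of item (1) with exponent $q=p-1\in[0,1]$, which gives $a^{p-1}-(a-s)^{p-1}\le s^{p-1}$ and hence $a^p-(a-b)^p-pa^{p-1}b\ge -\int_0^b p\,s^{p-1}\,ds=-b^p$.

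The one genuinely delicate point, and the step I expect to be the main obstacle, is the remaining estimate $a^p-(a-b)^p-pa^{p-1}b\ge -(p-1)a^{p-2}b^2$ for $p\in[1,2]$: here the Lagrange-type remainder involves $r^{p-2}$ with $r$ possibly near $0$ (i.e. $b$ near $a$), so the naive bound is too lossy. Instead I would work with $h(t)=1-(1-t)^p-pt+(p-1)t^2$ on $[0,1]$ and show $h\ge0$ directly. One has $h(0)=h(1)=0$, $h'(0)=0$, $h'(1)=p-2\le0$, and $h''(t)=(p-1)\big[2-p(1-t)^{p-2}\big]$; the cases $p=1$ and $p=2$ give $h\equiv0$, while for $p\in(1,2)$ the function $t\mapsto(1-t)^{p-2}$ is strictly increasing, so $h''$ is positive near $t=0$ and changes sign to negative exactly once on $(0,1)$. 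Consequently $h'$ increases from $0$ and then decreases to $p-2\le0$, so $h'$ is first nonnegative and then nonpositive on $[0,1]$; hence $h$ is unimodal there and $h\ge\min\{h(0),h(1)\}=0$. Assembling these pieces yields all three items, everything other than this last monotonicity argument being routine bookkeeping.
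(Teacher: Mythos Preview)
Your proof is correct and complete. The paper itself states this lemma as an ``elementary lemma'' and gives no proof, so there is nothing to compare against; you have supplied exactly the kind of routine but careful argument the authors leave to the reader, including the one nontrivial point---the bound $a^p-(a-b)^p-pa^{p-1}b\ge -(p-1)a^{p-2}b^2$ for $p\in(1,2)$---which you handle cleanly via the unimodality of $h$.
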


\begin{proof}[Proof of Lemma \ref{lem-wth-reg}]
Fix $y \in \Omega$. By virtue of elliptic regularity theory, it is enough to check that the function $x \in \Omega \mapsto -\Delta_x \wth(x,y)$ is contained in $L^{n+\eta}_{\text{loc}}(\Omega)$ for some $\eta > 0$.
By \eqref{eq-h2-1} and \eqref{eq-h2-2}, we have
\begin{equation}\label{eq-tilh}
-\Delta_x \wth(x,y) = \begin{cases}
\dfrac{c_n^p}{|x-y|^{(n-2)p}} - G^p (x,y) &\text{for } p \in (\frac{2}{n-2}, \frac{n-1}{n-2}), \\
\begin{aligned}
& \frac{c_n^p}{|x-y|^{(n-2)p}} - G^p (x,y) - \frac{pc_n^{p-1} H(x,y)}{|x-y|^{(n-2)(p-1)}}
\\
&\quad + \frac{2pc_n^{p-1} \nabla_x H(x,y)}{(n-2)p-2(n-1)} \cdot \frac{x-y}{|x-y|^{(n-2)(p-1)}}
\end{aligned}
&\text{for } p \in [\frac{n-1}{n-2}, \frac{n}{n-2})
\end{cases}
\end{equation}
whenever $x \in \Omega$ and $x \ne y$.

\medskip \noindent \textbf{Case 1}.
Assume that $p \in (\frac{2}{n-2}, \frac{n-1}{n-2})$.
Plugging \eqref{eq-g-decom} into \eqref{eq-tilh} and using Lemma \ref{lem-cal-1} (1) and (2), we find a constant $C > 0$ (dependent on $y \in \Omega$) such that
\[\left| - \Delta_x \wth(x,y) \right| = \left| \frac{c_n^p}{|x-y|^{(n-2)p}} - \(\frac{c_n}{|x-y|^{n-2}} - H(x,y)\)^p \right| \le C \max \left\{1, \frac{1}{|x-y|^{(n-2)(p-1)}}\right\}\]
for all $x \in \Omega \setminus \{y\}$. Since $(n-2)(p-1) < 1$, there exists a number $\eta > 0$ such that $-\Delta_x \wth(x,y) \in L_{\text{loc}}^{n+\eta}(\Omega)$.

\medskip \noindent \textbf{Case 2}.
Assume that $p \in [\frac{n-1}{n-2}, \frac{n}{n-2})$. By Lemma \ref{lem-cal-1} (2) and (3), there exists a constant $C > 0$ (dependent on $y \in \Omega$) such that
\[\left| \frac{c_n^p}{|x-y|^{(n-2)p}} - \(\frac{c_n}{|x-y|^{n-2}} - H(x,y)\)^p - \frac{pc_n^{p-1} H(x,y)}{|x-y|^{(n-2)(p-1)}} \right|
\le \begin{cases}
C &\text{if } n \ge 4,\\
\dfrac{C}{|x-y|^{p-2}} &\text{if } n = 3
\end{cases}\]
for all $x \in \Omega \setminus \{y\}$. On the other hand,
\[\left| {\nabla_x H(x,y)} \cdot \frac{x-y}{|x-y|^{(n-2)(p-1)}} \right| \le \frac{C|x-y|}{|x-y|^{(n-2)(p-1)}} = \frac{C}{|x-y|^{(n-2)p-(n-1)}}\]
for all $x \in \Omega \setminus \{y\}$.
Since $(n-2)p-(n-1) < 1$, there is a number $\eta > 0$ such that $-\Delta_x \wth(x,y) \in L_{\text{loc}}^{n+\eta}(\Omega)$. The lemma is proved.
\end{proof}

\section{Boundary Behavior of $\wth$ (Proof of Proposition \ref{prop-h2})}\label{sec-app-2}
This section is devoted to the proof of Proposition \ref{prop-h2}, which requires a delicate quantitative analysis of the Green's function $G$.
It is decomposed into two parts: In Subsection \ref{subsec-red},
we show that checking \eqref{eq-h2} can be reduced to verifying some strict inequalities involving several integrals over the half-space $\R^n_+$ and its boundary $\R^{n-1} = \pa \R^n_+$.
In Subsection \ref{subsec-ver}, we prove the validity of such inequalities.

\subsection{Reduction of \eqref{eq-h2}} \label{subsec-red}
Firstly, we derive the following result.
\begin{lem}\label{lem-h2-r}
Denote $x = (\bx, x_n) \in \R^n_+$ and $e_n = (0, \cdots, 0, 1) \in \R^n_+$.
Let also $c_n > 0$, $\gamma_1 > 0$ and $\gamma_2 < 0$ be the constants introduced in Subsection \ref{subsec-Green} and \eqref{eq-gamma}, respectively.

\medskip \noindent \textnormal{(1)} If $p \in (\frac{2}{n-2}, \frac{n-1}{n-2})$ and it holds that
\begin{equation}\label{eq-as-0}
\begin{aligned}
&c_n^p \int_{\R^n_+} \left[ \frac{1-x_n}{|x-e_n|^n} - \frac{1+x_n}{|x+e_n|^n}\right]
\left[ \frac{1}{|x-e_n|^{(n-2)p}} - \(\frac{1}{|x-e_n|^{n-2}} - \frac{1}{|x+e_n|^{n-2}}\)^p \right] dx
\\
&> 2 \gamma_1 \int_{\R^{n-1}} \left[\frac{1}{|(\bx-e_n)|^{(n-2)(p+1)}} - \frac{n}{|(\bx-e_n)|^{(n-2)p+n}} \right] d\bx,
\end{aligned}
\end{equation}
then there exist small numbers $C > 0$ and $\delta > 0$ such that \eqref{eq-h2} holds for all $x \in \Omega$ with $\mfd(x) = \textnormal{dist}(x,\pa \Omega) < \delta$.

\medskip \noindent \textnormal{(2)} If $p \in [\frac{n-1}{n-2}, \frac{n}{n-2})$ and it holds that
\begin{align}
&c_n^p \int_{\R^n_+} \left[ \frac{(1-x_n)}{|x-e_n|^{n}} - \frac{(1+x_n)}{|x+e_n|^n}\right]
\left[ \frac{1}{|x-e_n|^{(n-2)p}} - \(\frac{1}{|x-e_n|^{n-2}} - \frac{1}{|x+e_n|^{n-2}}\)^p \right. \nonumber
\\
&\quad \left. - \frac{p}{|x-e_n|^{(n-2)(p-1)}} \frac{1}{|x+e_n|^{n-2}} - \frac{2(n-2)p}{(n-2)p-2(n-1)} \frac{x-e_n}{|x-e_n|^{(n-2)(p-1)}} \cdot \frac{x+e_n}{|x+e_n|^n} \right] dx \nonumber
\\
&> 2(\gamma_1 - c_n \gamma_2) \int_{\R^{n-1}} \left[\frac{1}{|(\bx-e_n)|^{(n-2)(p+1)}} - \frac{n}{|(\bx-e_n)|^{(n-2)p+n}} \right] d\bx, \label{eq-as-1}
\end{align}
then there exist small numbers $C > 0$ and $\delta > 0$ such that \eqref{eq-h2} holds for all $x \in \Omega$ with $\mfd(x) < \delta$.
\end{lem}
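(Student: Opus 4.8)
The plan is to prove \eqref{eq-h2} by a boundary blow-up argument: rescaling $\Omega$ around the boundary point nearest to $x$ turns $\wth$ into (approximately) its half-space counterpart, and \eqref{eq-h2} reduces to the strict positivity of a single universal constant $\mathcal{C}_0$ attached to $\R^n_+$, which is then identified with $c_n(n-2)$ times the difference of the two sides of \eqref{eq-as-0} (resp.\ \eqref{eq-as-1}). Concretely, I would fix $x\in\Omega$ with $d:=\mfd(x)$ small, let $x_*\in\pa\Omega$ be its nearest boundary point, and apply an isometry so that $x_*=0$ and the inward normal at $0$ is $e_n$; then $x=de_n$ and $\nu_x=-e_n$. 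With $\Omega_d:=d^{-1}\Omega$ and the identity $\wtg_\Omega(x,y)=\int_\Omega G_\Omega(x,z)G_\Omega^p(z,y)\,dz$ coming from \eqref{eq-h2-2}, one has the exact scalings $G_\Omega(da,db)=d^{2-n}G_{\Omega_d}(a,b)$ and $\wtg_\Omega(da,db)=d^{2-(n-2)p}\wtg_{\Omega_d}(a,b)$, where $\wtg_{\Omega_d}(a,b)=\int_{\Omega_d}G_{\Omega_d}(a,c)G_{\Omega_d}^p(c,b)\,dc$; since $|da-db|^{2-(n-2)p}=d^{2-(n-2)p}|a-b|^{2-(n-2)p}$ and $H_\Omega$ scales like $G_\Omega$, the definition \eqref{eq-h2-1} shows $\wth_\Omega$ is also homogeneous of degree $2-(n-2)p$, so that
\[
\nu_x\cdot\nabla_x\wth_\Omega(x,y)\big|_{y=x}=d^{1-(n-2)p}\Big[(-e_n)\cdot\nabla_a\wth_{\Omega_d}(a,b)\big|_{a=b=e_n}\Big].
\]
It thus suffices to bound the bracket below by a fixed positive constant, uniformly in small $d$ and in $x_*\in\pa\Omega$.

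Second, I would pass to the limit $d\to0$. Since $\pa\Omega$ is compact and $C^2$, the rescaled domains $\Omega_d$ converge to $\R^n_+=\{x_n>0\}$ uniformly in $x_*$, and $G_{\Omega_d}\to G_{\R^n_+}$ with first derivatives, uniformly on compact subsets of $\{a\ne b\}$ (by the estimates of Lemma \ref{lem-H-est}, which are uniform as the principal curvatures of $\pa\Omega_d$ tend to $0$); the crude bound $G_{\Omega_d}(a,c)\le c_n|a-c|^{2-n}$ together with $p>\tfrac{2}{n-2}$ makes the tails of the defining integrals uniformly small, so $\wtg_{\Omega_d}\to\wtg_{\R^n_+}$ locally uniformly away from the diagonal. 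For the regular part I would argue as in Appendix \ref{sec-app-1}: from the half-space analogue of \eqref{eq-tilh} one gets $-\Delta_a\wth_{\Omega_d}(\cdot,b)\in L^{n+\eta}_{\mathrm{loc}}$ with bounds uniform in $d$ and in $b$ near $e_n$, whence $\wth_{\Omega_d}(\cdot,b)\to\wth_{\R^n_+}(\cdot,b)$ in $C^1_{\mathrm{loc}}$ by Calder\'{o}n--Zygmund estimates and the embedding $W^{2,n+\eta}\hookrightarrow C^{1,\alpha}$. Consequently the bracket above converges, uniformly in $x_*$, to $\mathcal{C}_0:=-\pa_{a_n}\wth_{\R^n_+}(a,e_n)|_{a=e_n}$, and it remains to prove $\mathcal{C}_0>0$ under the hypothesis.

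Third, I would identify $\mathcal{C}_0$. From $G_{\R^n_+}(x,a)=c_n(|x-a|^{2-n}-|x-\bar a|^{2-n})$ one checks that $\phi(x):=\pa_{a_n}G_{\R^n_+}(x,a)|_{a=e_n}$ vanishes on $\pa\R^n_+$ and equals $-c_n(n-2)$ times the first bracket of the integrand in \eqref{eq-as-0}, while $c_n^p$ times the second bracket equals $-\Delta_x\wth_{\R^n_+}(x,e_n)$ by the half-space analogue of \eqref{eq-tilh} --- in case (2) the extra terms in the integrand of \eqref{eq-as-1} are exactly the corresponding $H$-terms of $-\Delta_x\wth_{\R^n_+}$. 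Applying Green's representation formula to $w:=\wth_{\R^n_+}(\cdot,e_n)$, which has Dirichlet data $(\gamma_1-c_n\gamma_2)|\bx-e_n|^{2-(n-2)p}$ on $\pa\R^n_+$ (with $\gamma_2=0$ understood in case (1)), gives
\[
\pa_{a_n}\wth_{\R^n_+}(a,e_n)\big|_{a=e_n}=\int_{\R^n_+}\phi(x)\,\big(-\Delta_x\wth_{\R^n_+}(x,e_n)\big)\,dx-\int_{\R^{n-1}}\pa_\nu\phi(x)\,w(x)\,dS_x,
\]
legitimately after excising $B^n(e_n,\rho)$ and letting $\rho\to0$ --- the spherical boundary terms produce the left-hand side, and the bulk integral converges absolutely because the subtractions in \eqref{eq-h2-1} force $-\Delta_x w\in L^{n+\eta}_{\mathrm{loc}}$. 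Computing $\pa_\nu\phi=-2c_n(n-2)\big[\,|\bx-e_n|^{-n}-n|\bx-e_n|^{-n-2}\,\big]$ on $\pa\R^n_+$, the first integral above is $-c_n(n-2)$ times the left-hand side and the second is $c_n(n-2)$ times the right-hand side of \eqref{eq-as-0} (resp.\ \eqref{eq-as-1}); hence $\mathcal{C}_0=c_n(n-2)(\mathrm{LHS}-\mathrm{RHS})>0$. Taking $\delta$ so small that the uniform limit of the second step stays above $\mathcal{C}_0/2$ then yields \eqref{eq-h2} with $C=\tfrac12\mathcal{C}_0$.

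The main obstacle is the third step: executing Green's identity through the singular kernel $\phi$ --- justifying the $\rho\to0$ limit of the auxiliary spherical boundary terms, which hinges on the $C^1$-regularity of $\wth$ proved in Lemma \ref{lem-wth-reg}, and confirming that the resulting bulk and boundary integrals are precisely the absolutely convergent expressions written in \eqref{eq-as-0}--\eqref{eq-as-1}. In case (2) one must also verify that the two extra terms in the integrand of \eqref{eq-as-1} and the modified coefficient $\gamma_1-c_n\gamma_2$ arise exactly from the additional $H$-subtraction in \eqref{eq-h2-1}. The uniformity over $x_*\in\pa\Omega$ in the second step is less delicate but still relies on the uniform Green's function estimates of Lemma \ref{lem-H-est}.
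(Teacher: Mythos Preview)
Your approach is essentially the same as the paper's: rescale by $d=\mfd(x)$, show $C^1$ convergence of the rescaled $\wth$ to its half-space analogue (the paper's Lemma \ref{lem-hd}), and then identify $-\pa_{z_n}\wth_{\R^n_+}(e_n,e_n)$ with $c_n(n-2)(\mathrm{LHS}-\mathrm{RHS})$ via Green's representation. The only difference is emphasis: you flag the Green's-identity step as the main obstacle, whereas in the paper the real technical weight is in the convergence step (your second step), which requires careful splitting of error terms and uniform Green's function estimates, while the representation-formula computation is short once $C^1$ regularity of $\wth_{\R^n_+}$ is known.
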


To obtain this lemma, we need a preliminary result, that is, Lemma \ref{lem-hd}.

Let $x_0 \in \Omega$ be a fixed point sufficiently close to $\pa \Omega$.
By virtue of the translational and rotational invariance of the problem,
we can assume that $0 \in \pa \Omega$, $x_0 = (0, \cdots, 0, \kappa) = \kappa e_n$ and $x_0^* = x_0 + 2\kappa\nu_{x_0} = - \kappa e_n$
where $\kappa = \mfd(x_0) > 0$ and $\nu_{x_0} \in \S^{n-1}$ is the unique vector such that $x_0 + \kappa\nu_{x_0} \in \pa \Omega$.
Then \eqref{eq-h2} can be written as
\begin{equation}\label{eq-h2-0}
\left. \frac{\pa}{\pa x_n} \wth(x,\kappa e_n) \right|_{x=\kappa e_n} \ge C \kappa^{1-(n-2)p} \quad \text{for } \kappa > 0 \text{ small}.
\end{equation}
Moreover, since $\pa \Omega$ is smooth, it can be locally parameterized by a smooth function $f: \R^{n-1} \to \R^n$ satisfying $f(0) = \nabla f(0) = 0$
so that $\pa \Omega \cap B^n (0,r_1) \subset \{ (x,f(x)): x \in B^{n-1}(0,r_2)\}$ for small fixed values $r_1, r_2 > 0$.

To show \eqref{eq-h2-0}, we look at the behavior of the left-hand side as $\kappa \to 0$, considering the following rescaling:
Letting $\Omega_{\kappa}$ be a rescaled domain $\frac{1}{\kappa} \Omega$, we also define $\mch_k: \Omega_k \to \R$ for each small $\kappa > 0$ by
\begin{equation}\label{eq-g-0}
\mch_{\kappa}(z) = \kappa^{(n-2)p-2} \wth(\kappa z, \kappa e_n) \quad \text{for } z \in \Omega_{\kappa}.
\end{equation}
Then it satisfies that
\begin{equation}\label{eq-g-1}
\mch_{\kappa}(z) = \begin{cases}
\dfrac{\gamma_1}{|z-e_n|^{(n-2)p-2}} &\text{if } p \in (\frac{2}{n-2}, \frac{n-1}{n-2}), \\
\dfrac{\gamma_1 - c_n\gamma_2}{|z-e_n|^{(n-2)p-2}} &\text{if } p \in [\frac{n-1}{n-2}, \frac{n}{n-2})
\end{cases}
\end{equation}
for $z \in \pa \Omega_{\kappa}$.
Since we are dealing with the situation when points are near the boundary,
we expect that $\Omega_{\kappa}$ converges to the half-space $\R^n_+$ as $\kappa \to 0$.
Hence it is natural to introduce the function $\wth_0: \R^n_+ \times \R^n_+ \to \R$ satisfying
\begin{equation}\label{eq-h0-1}
-\Delta_z \wth_0 (z,y)
= \begin{cases}
\dfrac{c_n^p}{|z-y|^{(n-2)p}} - \( \dfrac{c_n}{|z-y|^{n-2}} - \dfrac{c_n}{|z-\ty|^{n-2}} \)^p &\text{if } p \in (\frac{2}{n-2}, \frac{n-1}{n-2}), \\
\begin{aligned}
& \dfrac{c_n^p}{|z-y|^{(n-2)p}} - \( \dfrac{c_n}{|z-y|^{n-2}} - \dfrac{c_n}{|z-\ty|^{n-2}}\)^p \\
&\quad - \frac{pc_n^p}{|z-y|^{(n-2)(p-1)}} \frac{1}{|z-\ty|^{n-2}}
\\
&\quad - \frac{2(n-2)p c_n^p}{(n-2)p-2(n-1)} \frac{z-y}{|z-y|^{(n-2)(p-1)}} \cdot \frac{z-\ty}{|z-\ty|^n}
\end{aligned}
&\text{if } p \in [\frac{n-1}{n-2}, \frac{n}{n-2})
\end{cases}
\end{equation}
for $z \ne y \in \R^n_+$ and
\begin{equation}\label{eq-h0-2}
\wth_0(z,y) = \begin{cases}
\dfrac{\gamma_1}{|z-y|^{(n-2)p-2}} &\text{if } p \in (\frac{2}{n-2}, \frac{n-1}{n-2}), \\
\dfrac{\gamma_1 - c_n\gamma_2}{|z-y|^{(n-2)p-2}} &\text{if } p \in [\frac{n-1}{n-2}, \frac{n}{n-2})
\end{cases}
\end{equation}
for $z \in \R^{n-1}$ and $y \in \R^n_+$ where $\ty := (y_1, \cdots, y_{n-1}, -y_n)$ is the reflection of $y = (y_1, \cdots, y_{n-1}, y_n)$ with respect to the space $\R^{n-1}$.
Indeed, if we define $\mch_0: \R^n_+ \to \R$ as
\[\mch_0(z) = \wth_0(z,e_n) \quad \text{for } z \in \R^n_+,\]
then $\mch_{\kappa} \to \mch_0$ as $\kappa \to 0$ in local $C^1$-sense, as we shall see in
\begin{lem}\label{lem-hd}
The function $\mch_{\kappa}$ converges to $\mch_0$ in $C^1(B^n(e_n, \frac{1}{4}))$ as $\kappa \to 0$.
\end{lem}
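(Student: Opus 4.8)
The plan is to pass to the limit $\kappa\to0$ in rescaled forms of the identities defining $\wth$ and $\wth_0$. Write $G_{\Omega_\kappa}$, $H_{\Omega_\kappa}$ for the Green's function of the Dirichlet Laplacian in $\Omega_\kappa=\kappa^{-1}\Omega$ and its regular part. Changing variables in \eqref{eq-h2-1}--\eqref{eq-h2-2} and using the homogeneities $G(\kappa z,\kappa w)=\kappa^{-(n-2)}G_{\Omega_\kappa}(z,w)$, $H(\kappa z,\kappa w)=\kappa^{-(n-2)}H_{\Omega_\kappa}(z,w)$ together with $\wtg(x,y)=\int_\Omega G(x,w)G^p(w,y)\,dw$, one checks that
\[
\mch_\kappa(z)=\frac{\gamma_1}{|z-e_n|^{(n-2)p-2}}-\frac{\gamma_2\,H_{\Omega_\kappa}(z,e_n)}{|z-e_n|^{(n-2)p-n}}-\int_{\Omega_\kappa}G_{\Omega_\kappa}(z,w)\,G_{\Omega_\kappa}^p(w,e_n)\,dw
\]
(the middle term being present only for $p\in[\tfrac{n-1}{n-2},\tfrac{n}{n-2})$) and, equivalently, that $-\Delta_z\mch_\kappa=\Phi_\kappa$ in $\Omega_\kappa$, where $\Phi_\kappa$ is the right-hand side of \eqref{eq-tilh} with $G,H,y$ replaced by $G_{\Omega_\kappa},H_{\Omega_\kappa},e_n$. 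The same computation applied to the defining relations \eqref{eq-h0-1}--\eqref{eq-h0-2} of $\wth_0$ shows that $\mch_0$ has the identical representation and equation with $\Omega_\kappa$, $G_{\Omega_\kappa}$, $H_{\Omega_\kappa}(\cdot,e_n)$ replaced by $\R^n_+$, $G_{\R^n_+}$, $H_{\R^n_+}(\cdot,e_n)=c_n|\cdot+e_n|^{-(n-2)}$; denote by $\Phi_0$ the corresponding right-hand side. Since $\textnormal{dist}(e_n,\pa\Omega_\kappa)=\kappa^{-1}\mfd(x_0)=1$, the ball $B^n(e_n,\tfrac12)$ lies compactly in $\Omega_\kappa$ for every $\kappa$, so the analysis on $B^n(e_n,\tfrac14)$ is purely interior.

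Next I would record the geometric convergence $\Omega_\kappa\to\R^n_+$. Parametrizing $\pa\Omega$ near $0$ as the graph of a smooth $f$ with $f(0)=0$, $\nabla f(0)=0$, the boundary $\pa\Omega_\kappa$ is locally the graph of $f_\kappa(\bx)=\kappa^{-1}f(\kappa\bx)$, so $\|f_\kappa\|_{C^2(B^{n-1}(0,R))}=O(\kappa)$ for every $R$; in particular the principal curvatures of $\pa\Omega_\kappa$ are $O(\kappa)$, and Lemma \ref{lem-H-est} applies on $\Omega_\kappa$ with constants uniform in $\kappa$. From this I would deduce $G_{\Omega_\kappa}\to G_{\R^n_+}$ and $H_{\Omega_\kappa}\to H_{\R^n_+}$ in $C^1_{\textnormal{loc}}$ (away from the diagonal for $G$): the harmonic function $H_{\Omega_\kappa}(\cdot,e_n)-c_n|\cdot+e_n|^{-(n-2)}$ is uniformly bounded by Lemma \ref{lem-H-est}, and equals $c_n(|z-e_n|^{-(n-2)}-|z+e_n|^{-(n-2)})$ on $\pa\Omega_\kappa$, which is $O(\kappa)$ on the portion of the boundary near $0$ and $O(\kappa^{n-2})$ elsewhere, so it tends to $0$ in $C^1$ on compact subsets of $\R^n_+$ by the maximum principle and interior estimates. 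Consequently $\Phi_\kappa\to\Phi_0$ pointwise on $\R^n_+\setminus\{e_n\}$, and by \eqref{eq-g-decom} and Lemmas \ref{lem-H-est} and \ref{lem-cal-1} there is a $\kappa$-uniform bound $|\Phi_\kappa(z)|\le C(1+|z-e_n|^{-\sigma})$ near $e_n$ with $\sigma<1$ --- here the bound $p<\tfrac{n}{n-2}$, i.e. $(n-2)p<n$, is what forces $\sigma<1$. Since $\sigma<1$, such a majorant lies in $L^q$ for some $q>n$, so dominated convergence gives $\Phi_\kappa\to\Phi_0$ in $L^q(B^n(e_n,\tfrac12))$.

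A parallel application of dominated convergence to the integral term --- using $0<G_{\Omega_\kappa}(z,w)<c_n|z-w|^{2-n}$, $G_{\Omega_\kappa}(w,e_n)\le c_n|w-e_n|^{2-n}$, the pointwise convergence $G_{\Omega_\kappa}\to G_{\R^n_+}$, and the $\kappa$-uniform majorant $c_n^{p+1}|z-w|^{2-n}|w-e_n|^{-(n-2)p}$, which is globally integrable on $\R^n$ precisely because $(n-2)p>2$ (controls the behavior at $\infty$) and $(n-2)p<n$ (controls the behavior at $w=e_n$), together with a far-tail cutoff to accommodate the moving domains --- yields $\mch_\kappa\to\mch_0$ pointwise on $B^n(e_n,\tfrac14)\setminus\{e_n\}$ and a $\kappa$-uniform bound $|\mch_\kappa(z)|\le C(1+|z-e_n|^{2-(n-2)p})$; hence $\mch_\kappa\to\mch_0$ in $L^1(B^n(e_n,\tfrac12))$. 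Applying the interior $W^{2,q}$ estimate to $\mch_\kappa-\mch_0$ on $B^n(e_n,\tfrac12)\subset\subset\Omega_\kappa$ then gives
\[
\|\mch_\kappa-\mch_0\|_{W^{2,q}(B^n(e_n,1/4))}\le C\big(\|\Phi_\kappa-\Phi_0\|_{L^q(B^n(e_n,1/2))}+\|\mch_\kappa-\mch_0\|_{L^1(B^n(e_n,1/2))}\big)\longrightarrow0,
\]
and since $q>n$ the embedding $W^{2,q}\hookrightarrow C^1$ finishes the proof. I expect the main obstacle to lie in the second and third steps: establishing the $C^1_{\textnormal{loc}}$ convergence of $G_{\Omega_\kappa}$, $H_{\Omega_\kappa}$ to their half-space analogues and, above all, extracting $\kappa$-uniform integrable majorants for $\Phi_\kappa$ and for the integrand defining $\wtg_\kappa$, since this is exactly where the restriction $\tfrac{2}{n-2}<p<\tfrac{n}{n-2}$ enters and where the unbounded, moving domains $\Omega_\kappa$ require care.
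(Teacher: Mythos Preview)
Your approach is correct and genuinely different from the paper's. The paper writes $\mce_\kappa=\mch_0-\mch_\kappa$ via Green's representation, splits it into a volume piece $\mce_{\kappa1}$ (itself cut into five subregions of $\Omega_\kappa\cup\R^n_+$) and a boundary piece $\mce_{\kappa2}$, and estimates every fragment by hand using Lemma~\ref{lem-H-est} and the local graph description of $\pa\Omega_\kappa$; this yields explicit decay rates such as $O(\kappa^\eta)$ and handles the sets $\Omega_\kappa\setminus\R^n_+$ and $\R^n_+\setminus\Omega_\kappa$ via the interior/exterior ball condition. Your route is softer: you first upgrade the domain convergence $\Omega_\kappa\to\R^n_+$ to $C^1_{\textnormal{loc}}$ convergence of $G_{\Omega_\kappa},H_{\Omega_\kappa}$, then use dominated convergence (with the $\kappa$-uniform majorants $|z-e_n|^{-\sigma}$, $\sigma<1$, for $\Phi_\kappa$ and $|z-w|^{2-n}|w-e_n|^{-(n-2)p}$ for the $\wtg$-integral, both integrable exactly in the range $\tfrac{2}{n-2}<p<\tfrac{n}{n-2}$) to obtain $\Phi_\kappa\to\Phi_0$ in $L^q$, $q>n$, and $\mch_\kappa\to\mch_0$ in $L^1$; the interior $W^{2,q}$ estimate plus Sobolev then closes the argument. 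Your method is cleaner and avoids the lengthy case analysis, at the cost of losing the explicit rate; the paper's hands-on estimates, while tedious, make the $\kappa$-dependence quantitative and also make transparent exactly where the curvature of $\pa\Omega$ enters (e.g.\ the derivation of \eqref{eq-hd-93}--\eqref{eq-hd-94}). One small point worth making explicit in your write-up: the pointwise convergence $G_{\Omega_\kappa}(z,w)G_{\Omega_\kappa}^p(w,e_n)\to G_{\R^n_+}(z,w)G_{\R^n_+}^p(w,e_n)$ for a.e.\ $w\in\R^n$ requires checking that any fixed $w$ with $w_n<0$ eventually leaves $\Omega_\kappa$ and any fixed $w$ with $w_n>0$ eventually enters it---this follows from $f_\kappa(\bw)=O(\kappa|\bw|^2)$, but deserves a sentence.
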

\noindent In its proof, we use the following elementary lemma which is a slight variant of Lemma \ref{lem-cal-1}.
\begin{lem}\label{lem-cal-2}
Fix $p \in \R$ and $\eta \in (0,1)$.
For any pair $(a,b)$ such that $0 \le |b| < \eta a$, we have
\[(a-b)^p = a^p - pa^{p-1}b + O(a^{p-2}b^2) \quad \text{and} \quad
(a-b)^{p-1} = a^{p-1} + O(a^{p-2}b).\]
\end{lem}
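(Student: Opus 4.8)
The plan is to reduce both expansions to the elementary one-variable Taylor expansion of $s \mapsto (1-s)^r$ near $s=0$, while keeping careful track of the fact that the implied constants depend only on $p$ and $\eta$ and not on the particular pair $(a,b)$. Since $0 \le |b| < \eta a$ with $\eta \in (0,1)$, we automatically have $a > 0$ and $a - b > 0$, so all the powers below are well defined, and we may set $t := b/a \in (-\eta, \eta)$ and factor $(a-b)^p = a^p(1-t)^p$ and $(a-b)^{p-1} = a^{p-1}(1-t)^{p-1}$. The whole statement is then a matter of expanding $(1-t)^p$ and $(1-t)^{p-1}$ in $t$ and rescaling back by the appropriate power of $a$.

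First I would handle the first expansion. Let $g(s) = (1-s)^p$ for $s \in [-\eta,\eta]$; this is smooth on the interval in question (which stays away from $s=1$), with $g(0)=1$, $g'(0)=-p$ and $g''(s) = p(p-1)(1-s)^{p-2}$. Taylor's theorem with the Lagrange form of the remainder gives $g(t) = 1 - pt + \tfrac12 g''(\xi)\,t^2$ for some $\xi$ between $0$ and $t$. Since $1-\xi \in [1-\eta, 1+\eta]$, a compact subset of $(0,\infty)$, the quantity $|g''(\xi)| \le |p(p-1)| \sup_{\sigma \in [1-\eta,1+\eta]} \sigma^{p-2} =: C_1$ is bounded by a constant depending only on $p$ and $\eta$. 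Hence $|g(t) - (1-pt)| \le \tfrac12 C_1 t^2$, and multiplying through by $a^p$ yields
\[(a-b)^p = a^p - p a^{p-1} b + R_1, \qquad |R_1| \le \tfrac12 C_1\, a^{p-2} b^2,\]
which is precisely the first assertion.

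Next I would treat the second expansion in the same way with $h(s) = (1-s)^{p-1}$, now only needing the first-order Taylor expansion $h(t) = 1 + h'(\xi)\,t$ with $h'(\xi) = -(p-1)(1-\xi)^{p-2}$ and $|\xi| \le \eta$. As before $|h'(\xi)| \le |p-1| \sup_{\sigma \in [1-\eta,1+\eta]} \sigma^{p-2} =: C_2$ depends only on $p$ and $\eta$, so $|h(t)-1| \le C_2 |t|$; multiplying by $a^{p-1}$ gives $(a-b)^{p-1} = a^{p-1} + R_2$ with $|R_2| \le C_2\, a^{p-2}|b|$, which is the second assertion. There is no genuine obstacle in this argument; the only point requiring a little care is the uniformity of the $O$-constants in $(a,b)$, and this is immediate once one observes that $1-\xi$ always stays in the fixed compact interval $[1-\eta,1+\eta]$, which is bounded away from $0$, so that every power $\sigma^{p-2}$ is bounded there independently of $(a,b)$.
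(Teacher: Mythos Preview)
Your proof is correct. The paper itself does not supply a proof of this lemma, treating it as an elementary fact (introduced as ``a slight variant of Lemma \ref{lem-cal-1}''); your argument via the substitution $t=b/a$ and Taylor's theorem with Lagrange remainder on $(1-s)^p$ and $(1-s)^{p-1}$, together with the observation that $1-\xi \in [1-\eta,1+\eta]$ keeps all derivative bounds uniform in $(a,b)$, is exactly the standard justification one would give.
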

%\begin{proof}
%We have
%\[(a-b)^p = a^p (1-a^{-1}b)^p = a^p \(1 - pa^{-1}b + O(a^{-2}b^2)\) = a^p - pa^{p-1}b + O(a^{p-2}b^2)\]
%and
%\[(a-b)^{p-1} = a^{p-1} (1-a^{-1}b)^{p-1} = a^{p-1} \(1 + O(a^{-1}b)\) = a^{p-1} + O(a^{p-2}b).\]
%\end{proof}

\begin{proof}[Proof of Lemma \ref{lem-hd}]
Suppose that $p \in [\frac{n-1}{n-2}, \frac{n}{n-2})$.
Let $\mce_{\kappa}: B^n(e_n, \frac{1}{4}) \to \R$ be the function defined by $\mce_{\kappa} = \mch_0 - \mch_{\kappa}$,
and $G_{\kappa}$ and $G_0$ be the Green's functions of the Dirichlet Laplacians $-\Delta$ in $\Omega_{\kappa}$ and $\R^n_+$, respectively. Then
\[\begin{cases}
G_{\kappa}(z, w) = \kappa^{n-2} G(\kappa z, \kappa w) &\text{for } z \ne w \in \Omega_{\kappa},\\
G_0(z, w) = \dfrac{c_n}{|z-w|^{n-2}} - \dfrac{c_n}{|z-\tilde{w}|^{n-2}} &\text{for } z \ne w \in \R^n_+
\end{cases}\]
where $\tilde{w}$ is the reflection of $w$ with respect to $\R^{n-1}$. By Green's representation formula, we have
\begin{align*}
\mce_{\kappa}(z) &= \left[ \int_{\R^n_+} G_0(z,w) (- \Delta_w \wth_0)(w,e_n) dw
- \kappa^{(n-2)(p+1)} \int_{\Omega_{\kappa}} G(\kappa z, \kappa w) (- \Delta_x \wth)(\kappa w, \kappa e_n) dw \right] \\
&\ + \left[ \int_{\R^{n-1}} \frac{2(n-2)c_n z_n}{|z-\bw|^n} \mch_0(\bw) d\bw
+ \kappa^{n-1} \int_{\pa \Omega_{\kappa}} \frac{\pa G}{\pa \nu_x}(\kappa z,\kappa w) \mch_{\kappa}(w) dS_w \right] \\
&=: \mce_{\kappa 1}(z) + \mce_{\kappa 2}(z)
\end{align*}
for $z \in B^n(e_n, \frac{1}{4})$ where $w = (\bw, w_n) \in \R^n$, $x = \kappa w \in \R^n$ and $\nu_x$ is the outward unit normal vector on $\pa \Omega_{\kappa}$.
In order to deduce the lemma, it suffices to show that $\mce_{\kappa 1},\, \mce_{\kappa 2} \to 0$ in $C^1(B^n(e_n, \frac{1}{4}))$ as $\kappa \to 0$.

\medskip \noindent \textbf{Estimate of $\mce_{\kappa 1}$.}
 Pick any $z \in B^n(e_n, \frac{1}{4})$ and rewrite the function $\mce_{\kappa 1}(z)$ as
\begin{align*}
&\ \mce_{\kappa 1}(z) \\
&= \mce_{\kappa 11}(z) + \mce_{\kappa 12}(z) + \mce_{\kappa 13}(z) + \mce_{\kappa 14}(z) + \mce_{\kappa 15}(z) \\
&:= \(\int_{B^n(e_n, \frac{1}{2})} + \int_{(\Omega_{\kappa} \cap \R^n_+) \setminus B^n(e_n, \frac{1}{2})}\) G_0(z,w) \left[ (-\Delta_w \wth_0)(w,e_n) - \kappa^{(n-2)p} (-\Delta_x \wth)(\kappa w, \kappa e_n) \right] dw \\
&\ + \int_{\Omega_{\kappa} \cap \R^n_+} \left[G_0(z,w) - \kappa^{n-2} G(\kappa z, \kappa w)\right] \kappa^{(n-2)p} (- \Delta_x \wth)(\kappa w, \kappa e_n) dw \\
&\ + \int_{\R^n_+ \setminus \Omega_{\kappa}} G_0(z,w) (- \Delta_w \wth_0)(w,e_n) dw
- \kappa^{(n-2)(p+1)} \int_{\Omega_{\kappa} \setminus \R^n_+} G(\kappa z, \kappa w) (- \Delta_x \wth)(\kappa w, \kappa e_n) dw.
\end{align*}
Here, the integrands of $\mce_{\kappa 11}$ and $\mce_{\kappa 12}$ are both $G_0(z,\cdot) [(-\Delta \wth_0)(\cdot,e_n) - \kappa^{(n-2)p} (-\Delta_x \wth)(\kappa \cdot, \kappa e_n)]$,
but they have different domains $B^n(e_n, \frac{1}{2})$ and $(\Omega_{\kappa} \cap \R^n_+) \setminus B^n(e_n, \frac{1}{2})$, respectively.

Firstly, let us estimate the term $\mce_{\kappa 11}$ and its derivative.
By \eqref{eq-h0-1} and \eqref{eq-tilh}, we have
\[(-\Delta_w \wth_0)(w,e_n) - \kappa^{(n-2)p} (-\Delta_x \wth)(\kappa w, \kappa e_n) = c_n^p \(\mcf_{\kappa 1}(w) + \mcf_{\kappa 2}(w)\) \quad \text{in } \Omega_{\kappa} \cap \R^n_+\]
where
\begin{equation}\label{eq-hd-11}
\begin{aligned}
\mcf_{\kappa 1}(w) &:= c_n^{-p} {\kappa}^{(n-2)p} G^p({\kappa}w, {\kappa} e_n) - \( \frac{1}{|w-e_n|^{n-2}} - \frac{1}{|w+e_n|^{n-2}}\)^p \\
&\ - \frac{p}{|w-e_n|^{(n-2)(p-1)}} \(\frac{1}{|w+e_n|^{n-2}} - c_n^{-1} {\kappa}^{n-2} H({\kappa}w, {\kappa} e_n) \)
\end{aligned}
\end{equation}
and
\begin{equation}\label{eq-hd-12}
\begin{aligned}
\mcf_{\kappa 2}(w) &:= - \frac{2p}{(n-2)p-2(n-1)} \frac{w-e_n}{|w-e_n|^{(n-2)(p-1)}} \\
&\quad \times \((n-2) \frac{w+e_n}{|w+e_n|^n} + c_n^{-1} {\kappa}^{n-1} \nabla_x H({\kappa}w, {\kappa} e_n)\)
\end{aligned}
\end{equation}
in $\Omega_{\kappa} \cap \R^n_+$.
On the other hand, if we define functions $T_{\kappa 1}$ and $T_{\kappa 2}$ in $\Omega_{\kappa}$ by
\begin{equation}\label{eq-hd-2}
\begin{cases}
{\kappa}^{n-2} H(\kappa w, \kappa e_n) = c_n \(\dfrac{1}{|w+e_n|^{n-2}} + T_{\kappa 1}(w)\), \\
{\kappa}^{n-1} \nabla_x H({\kappa}w, {\kappa} e_n) = c_n \(-(n-2) \dfrac{w+e_n}{|w+e_n|^n} + T_{\kappa 2}(w)\),
\end{cases}
\end{equation}
then \eqref{eq-h1-1} and \eqref{eq-h1-3} give
\begin{equation}\label{eq-hd-5}
T_{\kappa 1}(w) = O\(\frac{\kappa}{|w+e_n|^{n-2}}\) \quad \text{in } \Omega_{\kappa}
\end{equation}
and
\begin{equation}\label{eq-hd-51}
T_{\kappa 2}(w) = O\(\frac{\kappa}{|w+e_n|^{n-2}}\) \quad \text{on } \Omega_{\kappa}^{\delta_0} := \{w \in \Omega_{\kappa}: \mfd(\kappa w) \ge \kappa \delta_0\}
\end{equation}
for any fixed small number $\delta_0 \in (0, \frac{1}{2})$.
Putting \eqref{eq-hd-2} into \eqref{eq-hd-11} and \eqref{eq-hd-12}, we find that
\begin{multline}\label{eq-hd-3}
\mcf_{\kappa 1}(w) = \(\frac{1}{|w-e_n|^{n-2}} - \frac{1}{|w+e_n|^{n-2}} - T_{\kappa 1}(w)\)^p \\
- \(\frac{1}{|w-e_n|^{n-2}} - \frac{1}{|w+e_n|^{n-2}}\)^p + \frac{p}{|w-e_n|^{(n-2)(p-1)}} T_{\kappa 1}(w)
\end{multline}
and
\begin{equation}\label{eq-hd-4}
\mcf_{\kappa 2}(w) = - \frac{2p}{(n-2)p-2(n-1)} \frac{w-e_n}{|w-e_n|^{(n-2)(p-1)}} T_{\kappa 2}(w)
\end{equation}
in $\Omega_{\kappa} \cap \R^n_+$. An application of \eqref{eq-hd-5}, \eqref{eq-hd-51} and Lemma \ref{lem-cal-2} shows
\[|\mcf_{\kappa 1}(w)| = O\(\kappa |w-e_n|^{(n-2)(2-p)}\)
\quad \text{and} \quad
|\mcf_{\kappa 2}(w)| = O\(\kappa |w-e_n|^{1-(n-2)(p-1)}\)\]
in $\Omega_{\kappa}^{\delta_0} \cap \R^n_+$.
For $\kappa > 0$ small enough, it holds that $B^n(e_n, \frac{1}{2}) \subset \Omega_{\kappa}^{\delta_0} \cap \R^n_+$.
Therefore, for $k = 0$ or $1$,
\[\left| \int_{B^n(e_n, \frac{1}{2})} \nabla_z^k G_0(z,w) \mcf_{\kappa 1}(w) dw \right| \le C \kappa \int_{B^n(e_n, \frac{1}{2})} \frac{|w-e_n|^{(n-2)(2-p)}}{|z-w|^{n-2+k}} dw.\]
Since $n+(n-2)(2-p)-(n-2+k) > 0$ holds for any $p < \frac{n}{n-2} \le \frac{2n-3}{n-2}$, the right-hand side goes to $0$ as $\kappa \to 0$ uniformly in $B^n(e_n, \frac{1}{2})$.
Furthermore, for $k = 0$ or $1$,
\[\left| \int_{B^n(e_n, \frac{1}{2})} \nabla_z^k G_0(z,w) \mcf_{\kappa 2}(w) dw \right| \le C\kappa \int_{B^n(e_n, \frac{1}{2})} \frac{1}{|z-w|^{n-2+k}} \frac{|w-e_n|}{|w-e_n|^{(n-2)(p-1)}} dw,\]
which also goes to $0$ as $\kappa \to 0$ uniformly in $B^n(e_n, \frac{1}{2})$, because $n+1-(n-2+k)-(n-2)(p-1) > 0$ holds for any $p < \frac{n}{n-2}$.
Consequently, $\mce_{\kappa 11}$ and its derivative tend to 0 as $\kappa \to 0$ uniformly in $B^n(e_n, \frac{1}{4})$.

Secondly, we estimate the term $\mce_{\kappa 12}$ and its derivative.
Because $\pa \Omega$ is smooth and compact, there is a constant $C > 1$ (independent of the choice of the point $x_0 \in \Omega$) such that
\begin{equation}\label{eq-hd-90}
\frac{1}{C} |w+e_n| \le |w-e_n| \le C|w+e_n| \quad \text{for all } w \in \Omega_{\kappa} \setminus B^n\(e_n, \frac{1}{2}\).
\end{equation}
Hence it follows from \eqref{eq-hd-3} and \eqref{eq-hd-5} that
% $a^p - (a-b)^p \le b^p$ if $p \in (0,1)$, $a^p - (a-b)^p \le pa^{p-1}b$ if $p \in [1,2)$
\begin{equation}\label{eq-hd-91}
|\mcf_{\kappa 1}(w)| = O\(\dfrac{T_{\kappa 1}(w)}{|w+e_n|^{(n-2)(p-1)}}\) = O\(\dfrac{\kappa}{|w+e_n|^{(n-2)p}}\) \quad \text{in } (\Omega_{\kappa} \cap \R^n_+) \setminus B^n\(e_n, \dfrac{1}{2}\),
\end{equation}
and from \eqref{eq-hd-4} and \eqref{eq-hd-51} that
\begin{equation}\label{eq-hd-92}
\begin{aligned}
|\mcf_{\kappa 2}(w)| &= O\(\dfrac{|w-e_n|}{|w-e_n|^{(n-2)(p-1)}} T_{\kappa 2}(w)\) \\
&= O\(\dfrac{\kappa |w+e_n|}{|w+e_n|^{(n-2)p}}\)
\end{aligned}
\quad \text{in } (\Omega_{\kappa}^{\delta_0} \cap \R^n_+) \setminus B^n\(e_n, \dfrac{1}{2}\).
\end{equation}
In order to estimate $\mcf_{\kappa 2}$ in $\Omega_{\kappa} \setminus \overline{\Omega_{\kappa}^{\delta_0}}$, we take a small number $\vep_1 > 0$ (independent of the choice of the point $x_0 \in \Omega$)
such that
\begin{equation}\label{eq-hd-9}
\Omega \cap Q^n(\vep_1) = \left\{ (\bx, x_n) \in Q^n(\vep_1): x_n > f(\bx) \right\}
\end{equation}
where $Q^n(\vep_1) := B^{n-1}(0,\vep_1) \times (-\vep_1,\vep_1)$ and $f: B^{n-1}(0,\vep_1) \to (-\vep_1,\vep_1)$ is a smooth function satisfying $|f(\bx)| \le C|\bx|^2$ for all $\bx \in B^{n-1}(0,\vep_1)$.
Then, as computed at the end of the proof, we have
\begin{equation}\label{eq-hd-93}
|\mcf_{\kappa 2}(w)| = O\({\kappa (1+|w|^2) \over |w+e_n|^{(n-2)p+1}}\) \quad \text{in } \(\Omega_{\kappa} \setminus \overline{\Omega_{\kappa}^{\delta_0}}\) \cap Q^n\(\frac{\vep_1}{\kappa}\)
\end{equation}
and
\begin{equation}\label{eq-hd-94}
|\mcf_{\kappa 2}(w)| = O(\kappa^{(n-2)p}) \quad \text{in } \(\Omega_{\kappa} \setminus \overline{\Omega_{\kappa}^{\delta_0}}\) \setminus Q^n\(\frac{\vep_1}{\kappa}\).
\end{equation}
Therefore \eqref{eq-hd-91} implies that
\[\left| \int_{(\Omega_{\kappa} \cap \R^n_+) \setminus B^n(e_n, \frac{1}{2})} \nabla_z^k G_0(z,w) \mcf_{\kappa 1}(w) dw \right|
\le C\kappa \int_{\R^n_+ \setminus B^n(e_n, \frac{1}{2})} \frac{1}{|z-w|^{n-2+k}} \frac{1}{|w+e_n|^{(n-2)p}} dw\]
for $k = 0$ or $1$, and the right-hand side goes to 0 as ${\kappa} \to 0$ since $n-(n-2+k)-(n-2)p < 0$ for any $p > \frac{2}{n-2}$.
Moreover, we infer from \eqref{eq-hd-92}, \eqref{eq-hd-93} and \eqref{eq-hd-94} that
\begin{multline*}
\left| \int_{(\Omega_{\kappa} \cap \R^n_+) \setminus B^n(e_n, \frac{1}{2})} \nabla_z^k G_0(z,w) \mcf_{\kappa 2}(w) dw \right| \\
\le C\kappa \int_{\R^n_+ \setminus B^n(e_n, \frac{1}{2})} \frac{1}{|z-w|^{n-2+k}} \frac{1+|w|^2}{|w+e_n|^{(n-2)p+1}} dw + C \kappa^{-n+(n-2+k)+(n-2)p}
\end{multline*}
for $k = 0$ or $1$, and the right-hand side again goes to 0 as ${\kappa} \to 0$ since $n+2-(n-2+k)-((n-2)p+1) < 0$ for any $n \ge 5$ and $p \ge \frac{n-1}{n-2} > \frac{3}{n-2}$.
Consequently, $\mce_{\kappa 12}$ and its derivative tend to 0 as $\kappa \to 0$ uniformly in $B^n(e_n, \frac{1}{4})$.

Thirdly, we compute $\mce_{\kappa 13}$ and its derivative. By virtue of \eqref{eq-h1-1}, we have that
\[G_0(z,w) - \kappa^{n-2} G(\kappa z, \kappa w) = c_n \left[{1 \over |w-\tilde{z}|^{n-2}} - {1 + O(\kappa) \over |w-\kappa^{-1}(\kappa z)^*|^{n-2}} \right]\]
for $z \in B^n(e_n, \frac{1}{4})$ and $w \in \Omega_{\kappa} \cap \R^n_+$
where $\tilde{z}$ is the reflection of $z$ with respect to $\R^{n-1}$, and that
\[\kappa^{-1}(\kappa z)^* \to \tilde{z} \quad \text{as } \kappa \to 0 \text{ uniformly in } B^n(e_n, \frac{1}{4}).\]
Also, inspecting the proof of Lemma \ref{lem-wth-reg} and employing \eqref{eq-g1-9}, we see that
\[\left| \kappa^{(n-2)p} (- \Delta_x \wth)(\kappa w, \kappa e_n) \right|
%\le \begin{cases}
%\kappa^{(n-2)p} H(\kappa w, \kappa e_n)^p &\text{for } p \in (\frac{2}{n-2}, 1],\\
%{\kappa^{n-2} H(\kappa w, \kappa e_n) \over |w-e_n|^{(n-2)(p-1)}} &\text{for } p \in (1, \frac{n-1}{n-2}),\\
%\kappa^{(n-2)p} H(\kappa w, \kappa e_n)^p + {\kappa^{n-1} |(\nabla_x H)(\kappa w, \kappa e_n)| \over |w-e_n|^{(n-2)p-(n-1)}}
%&\text{for } p \in [\frac{n-1}{n-2}, \frac{n}{n-2}) \text{ and } n \ge 4,\\
%{\kappa^2 H(\kappa w, \kappa e_n)^2 \over |w-e_n|^{p-2}} + {\kappa^2 |(\nabla_x H)(\kappa w, \kappa e_n)| \over |w-e_n|^{p-2}}
%&\text{for } p \in [\frac{n-1}{n-2}, \frac{n}{n-2}) \text{ and } n = 3\\
%\end{cases}
\le C {1 \over |w+e_n|^{n-1}}{1 \over |w-e_n|^{(n-2)p-(n-1)}}
+ \begin{cases}
\dfrac{C}{|w+e_n|^{(n-2)p}} &\text{if } n \ge 4,\\
0 &\text{if } n = 3.
\end{cases}\]
Hence it follows that $\mce_{\kappa 13} \to 0$ as $\kappa \to 0$ uniformly in $B^n(e_n, \frac{1}{4})$.
A similar argument with \eqref{eq-h1-2} shows that the derivative of $\mce_{\kappa 13}$ tends to 0 uniformly in $B^n(e_n, \frac{1}{4})$ as well.

For the estimate of $\mce_{\kappa 14}$, $\mce_{\kappa 15}$ and their derivatives,
we use the fact that there exists a small number $\vep_2 > 0$ (independent of the choice of the point $x_0 \in \Omega$) such that
\[B^n(\vep_2 e_n, \vep_2) \subset \Omega \quad \text{and} \quad B^n(-\vep_2 e_n, \vep_2) \subset \R^n \setminus \Omega\]
and so
\[\R^n_+ \setminus \Omega_{\kappa} \subset \R^n_+ \setminus B^n\({\vep_2 e_n \over \kappa}, {\vep_2 \over \kappa}\)
\quad \text{and} \quad
\Omega_{\kappa} \setminus \R^n_+ \subset \R^n_- \setminus B^n\(-{\vep_2 e_n \over \kappa}, {\vep_2 \over \kappa}\).\]
Then, for $k = 0$ or $1$,
\begin{align*}
\left| \nabla^k_z \mce_{\kappa 14}(z) \right| &\le C \int_{\R^n_+ \setminus B^n\({\vep_2 e_n \over \kappa}, {\vep_2 \over \kappa}\)} {1 \over |z-w|^{n-2+k}} {1 \over |w-e_n|^{(n-2)p}} dw \\
&\le C \( \int_{\R^n_+ \setminus (B^{n-1}(0, {\vep_2 \over \kappa}) \times (0, {\vep_2 \over \kappa}))}
+ \int_{\left\{ 0 < w_n < {\vep_2 \over \kappa} - \sqrt{({\vep_2 \over \kappa})^2 - |\bw|^2} \right\}} \) {1 \over |w-e_n|^{(n-2)(p+1)+k}} dw \\
&\le C \({\kappa \over \vep_2}\)^{(n-2)p-2+k} \to 0,
\end{align*}
and similarly, $\left| \nabla^k_z \mce_{\kappa 15}(z) \right| \to 0$ as $\kappa \to 0$ uniformly in $B^n(e_n, \frac{1}{4})$.

In conclusion, $\mce_{\kappa 1} \to 0$ in $C^1(B^n(e_n,\frac{1}{4}))$ as $\kappa \to 0$.

\medskip \noindent \textbf{Estimate of $\mce_{\kappa 2}$.}
Fix $z \in B^n(e_n, \frac{1}{4})$.
In this step, we will separately deal with the cases when $w \in \pa \Omega_{\kappa}$ is close to the origin and when it is not.

For the former case, we compute employing \eqref{eq-g-1}, \eqref{eq-h0-2}, \eqref{eq-h1-2} and the mean value theorem that \begin{align}
&\ \int_{B^{n-1}(0, {\vep_1 \over \kappa})} \left[\frac{2(n-2)c_n z_n}{|z-\bw|^n} \mch_0(\bw) + \kappa^{n-1} \mch_{\kappa}(w) \right. \nonumber\\
&\hspace{125pt} \left. \times \left\{\nabla_{\bx} f(\kappa \bw) \cdot \nabla_{\bx} G(\kappa z, \kappa w)
- {\pa G \over \pa x_n}(\kappa z, \kappa w) \right\} \sqrt{1+|(\nabla f)(\kappa \bw)|^2} \right] d\bw \nonumber \\
&= \int_{B^{n-1}(0, {\vep_1 \over \kappa})} \left[\frac{2(n-2)c_n z_n}{|z-\bw|^n}
- \kappa^{n-1} {\pa G \over \pa x_n}(\kappa z, \kappa w)\right] \dfrac{\gamma_1 - c_n\gamma_2}{|\bw-e_n|^{(n-2)p-2}} d\bw \nonumber \\
&\ + (\gamma_1 - c_n\gamma_2) \int_{B^{n-1}(0, {\vep_1 \over \kappa})} \kappa^{n-1} {\pa G \over \pa x_n}(\kappa z, \kappa w)
\(\dfrac{1}{|\bw-e_n|^{(n-2)p-2}} - \dfrac{1}{|w-e_n|^{(n-2)p-2}}\) d\bw + O(\kappa^{\eta}) \nonumber \\
&= O(\kappa^{\eta}) \label{eq-hd-6}
\end{align}
where $\vep_1 > 0$ and $f: B^{n-1}(0,\vep_1) \to (-\vep_1, \vep_1)$ were defined in the sentence containing \eqref{eq-hd-9},
$w = (\bw, \kappa^{-1} f(\kappa \bw)) \in \pa \Omega_{\kappa}$, $x = (\bx, x_n) = \kappa w \in \R^n$ and $\eta > 0$ is a sufficiently small number.

For the latter case, we observe from \eqref{eq-g-1}, \eqref{eq-h0-2} and \eqref{eq-g1-9} that
\begin{equation}\label{eq-hd-7}
\begin{aligned}
\left| \int_{\R^{n-1} \setminus B^{n-1}(0, {\vep_1 \over \kappa})} \frac{2(n-2)c_n z_n}{|z-\bw|^n} \mch_0(\bw) d\bw \right|
&\le C \int_{\R^{n-1} \setminus B^{n-1}(0, {\vep_1 \over \kappa})} \frac{1}{|z-\bw|^{(n-2)(p+1)}} d\bw \\
&= O(\kappa^{(n-2)p-1})
\end{aligned}
\end{equation}
and
\begin{multline}\label{eq-hd-8}
\left| \kappa^{n-1} \int_{\pa \Omega_{\kappa} \setminus \left\{ (\bw, w_n): |\bw| < {\vep_1 \over \kappa},\, w_n
= {f(\kappa \bw) \over \kappa} \right\}} \frac{\pa G}{\pa \nu_x}(\kappa z,\kappa w) \mch_{\kappa}(w) dS_w \right| \\
\le C \int_{\pa \Omega_{\kappa} \setminus \left\{ (\bw, w_n): |\bw| < {\vep_1 \over \kappa},\, w_n
= {f(\kappa \bw) \over \kappa} \right\}} \frac{|z-w|}{|z-w|^{(n-2)(p+1)}} dS_w = O(\kappa^{(n-2)p-2}).
\end{multline}
Estimates \eqref{eq-hd-6}-\eqref{eq-hd-8} together show that $\mce_{\kappa 2} \to 0$ uniformly in $B^n(e_n,\frac{1}{4})$ as $\kappa \to 0$.
With \eqref{eq-h1-4} in hand, one can also check that the derivative of $\mce_{\kappa 2}$ converges to 0 uniformly in $B^n(e_n,\frac{1}{4})$.

\medskip
From the above estimates on $\mce_{\kappa 1}$ and $\mce_{\kappa 2}$, we conclude that $\mch_{\kappa} \to \mch_0$ in $C^1(B^n(e_n, \frac{1}{4}))$ as $\kappa \to 0$ for all $p \in [\frac{n-1}{n-2}, \frac{n}{n-2})$.
Treating the remaining range $p \in (\frac{2}{n-2}, \frac{n-1}{n-2})$ is easier, so we leave it to the reader.
The proof of Lemma \ref{lem-hd} will be completed once we justify \eqref{eq-hd-93} and \eqref{eq-hd-94}, which we now do.

\medskip \noindent
\textbf{Derivation of \eqref{eq-hd-93}.}
Given $\delta > 0$ small enough, let $x \in \Omega$ be a point such that $\mfd(x) < \delta$.
We know that there exists a unique element $x' = (\bx', f(\bx')) \in \pa \Omega$ such that
\begin{equation}\label{eq-hd-930}
x = x' + \mfd(x) \nu_x
\quad \text{where }
\nu_x = (\nu_{x1}, \cdots, \nu_{xn}) = {(-\nabla_{\bx} f(\bx'), 1) \over \sqrt{1+ |\nabla_{\bx} f(\bx')|^2}} \in \S^{n-1}.
\end{equation}
Set $x^*(x) = (x^*_1(x), \cdots, x^*_n(x)) = x + 2\mfd(x)\nu_x$.
For any fixed point $w_1 = (\bw_1, w_{1n}) \in (\Omega_{\kappa} \setminus \overline{\Omega_{\kappa}^{\delta_0}}) \cap Q^n(\frac{\vep_1}{\kappa})$ and $j = 1, \cdots, n$, we see from \eqref{eq-h1-2} that
\begin{multline}\label{eq-hd-931}
c_n^{-1} {\kappa}^{n-1} \pa_{x_j} H(\kappa w_1, \kappa e_n) \\
= -(n-2) \frac{\kappa^{-1}(\kappa w_1)^*-e_n}{|\kappa^{-1}(\kappa w_1)^*-e_n|^n} \cdot \pa_{x_j} x^*(\kappa w_1) + O\(\frac{\kappa}{|\kappa^{-1}(\kappa w_1)^*-e_n|^{n-2}}\).
\end{multline}

We expand $\kappa^{-1}(\kappa w_1)^*$. If we write $x = \kappa w_1$, using the shape of the function $f: B^{n-1}(0,\vep_1) \to (-\vep_1,\vep_1)$ and \eqref{eq-hd-930}, we can infer that
\[\kappa w_1 = (\kappa \bw_1, \kappa w_{1n}) = (\bx', f(\bx')) + \mfd(\kappa w_1) \nu_{\kappa w_1} = (\bx' + O(\kappa \delta_0 |\bx'|), \mfd(\kappa w_1) + O(|\bx'|^2)).\]
Comparing each component and applying the implicit function theorem, we observe
\[\bx' = \kappa \bw_1 + O(\kappa^2 \delta_0 |\bw_1|)
\quad \text{and} \quad
w_{1n} %= \kappa^{-1} \mfd(\kappa w_1) + O(\kappa^{-1}|\bx'|^2)
= \kappa^{-1} \mfd(\kappa w_1) + O(\kappa|\bw_1|^2).\]
Inserting this into
\[(\kappa w_1)^* = (\bx', f(\bx')) - \mfd(\kappa w_1) \nu_{\kappa w_1} = (\bx' + O(\kappa \delta_0 |\bx'|), -\mfd(\kappa w_1) + O(|\bx'|^2)),\]
we get
\begin{equation}\label{eq-hd-932}
\kappa^{-1} (\kappa w_1)^* = (\bw_1 + O(\kappa \delta_0 |\bw_1|), - w_{1n} + O(\kappa |\bw_1|^2)).
\end{equation}

We next calculate $\pa_{x_j} x^*(\kappa w_1)$. For $i, j = 1, \cdots, n$, there holds that
\[\pa_{x_j} x^*_i(x) = \delta_{ij} - 2\nu_{xi}\nu_{xj} + 2\mfd(x)\pa_{x_j}\nu_{xi}\]
where $\delta_{ij}$ is the Kronecker delta.
By estimating $\pa_{x_j}\nu_{xi}$ with \eqref{eq-hd-930} and  the implicit function theorem, we deduce
\begin{equation}\label{eq-hd-933}
\pa_{x_j} x^*_i(\kappa w_1) = \begin{cases}
\delta_{ij} + O(\kappa \delta_0) + O(\kappa^2 |\bw_1|^2) &\text{if } 1 \le i, j \le n-1,\\
O(\kappa |\bw_1|) &\text{if } i = n \text{ and } 1 \le j \le n-1,\\
O(\kappa |\bw_1|) &\text{if } 1 \le i \le n-1 \text{ and } j = n,\\
- 1 + O(\kappa^2 |\bw_1|^2) &\text{if } i = j = n.
\end{cases}
\end{equation}

Putting \eqref{eq-hd-932} and \eqref{eq-hd-933} into \eqref{eq-hd-931} and using the mean value theorem, we find
\[c_n^{-1} {\kappa}^{n-1} \pa_{x_j} H(\kappa w_1, \kappa e_n) = -(n-2) \frac{w_1+e_n}{|w_1+e_n|^n}
+ O\(\frac{\kappa(1+|w_1|^2)}{|w_1+e_n|^n}\).\]
Hence we obtain \eqref{eq-hd-93} from \eqref{eq-hd-12} and \eqref{eq-hd-90}.

\medskip \noindent
\textbf{Derivation of \eqref{eq-hd-94}.}
By \eqref{eq-hd-12}, \eqref{eq-hd-90} and \eqref{eq-g1-9},
\[\mcf_{\kappa 2}(w) = O\(\frac{|w-e_n|}{|w-e_n|^{(n-2)(p-1)}} \cdot \frac{|w+e_n|}{|w+e_n|^n}\) = O\(\frac{1}{|w+e_n|^{(n-2)p}}\) = O(\kappa^{(n-2)p})\]
for $w \in (\Omega_{\kappa} \setminus \overline{\Omega_{\kappa}^{\delta_0}}) \setminus Q^n(\frac{\vep_1}{\kappa})$.
\end{proof}
\noindent The function $\mch_{\kappa}$ and the domain $\Omega_{\kappa}$ depend on the point $x_0 \in \Omega$ implicitly.
Nonetheless, the previous proof shows that $\mch_{\kappa} \to \mch_0$ (or equivalently, $\mce_{\kappa} \to 0$)
in $C^1(B^n(e_n, \frac{1}{4}))$-uniformly in $x_0$ provided that $\pa \Omega$ is of class $C^2$;
notice that the principal curvatures of $\pa \Omega$ are well-defined and uniformly bounded if $\pa \Omega \in C^2$.

\begin{proof}[Proof of Lemma \ref{lem-h2-r}]
As before, let $x_0$ be an arbitrary point near $\pa \Omega$ identified with $\kappa e_n = \mfd(x_0) e_n \in \R^n_+$. Then
\begin{align*}
\nu_{x_0} \cdot \nabla_x \wth(x,x_0)|_{x = x_0} &= - \left. \frac{\pa \wth}{\pa x_n}(x,x_0) \right|_{x = x_0}
= - \left. \frac{\pa \wth}{\pa x_n}(\kappa z, \kappa e_n) \right|_{z = e_n}\\
&= - \kappa^{1-(n-2)p} \frac{\pa \mch_{\kappa}}{\pa z_n}(e_n) \quad \text{(by \eqref{eq-g-0})}\\
&= \mfd(x_0)^{1-(n-2)p} \(-\frac{\pa \mch_0}{\pa z_n}(e_n) + o(1)\) \quad \text{(by Lemma \ref{lem-hd})}.
\end{align*}
On the other hand, Green's representation formula gives us that
\[\mch_0(z) = \int_{\R^n_+} \(\frac{c_n}{|z-x|^{n-2}} - \frac{c_n}{|z-\tilde{x}|^{n-2}}\) (-\Delta \mch_0)(x) dx
+ 2(n-2)c_n \int_{\R^{n-1}} \frac{z_n}{|z-\bx|^n} \mch_0(\bx) d\bx\]
where $x = (\bx, x_n) \in \R^n_+$ and $\tilde{x}$ is the reflection of $x$ with respect to $\R^{n-1}$.
Differentiating it with respect to the $z_n$-variable and putting \eqref{eq-h0-1}, \eqref{eq-h0-2} and $z = e_n$ into the result,
we find that either \eqref{eq-as-0} or \eqref{eq-as-1} is equivalent to $-\frac{\pa \mch_0}{\pa z_n}(e_n) > 0$ according to the value of $p \in (\frac{2}{n-2}, \frac{n}{n-2})$.
\end{proof}

\subsection{Verification of \eqref{eq-as-0} and \eqref{eq-as-1}} \label{subsec-ver}
To establish Proposition \ref{prop-h2}, it remains to check \eqref{eq-as-0} and \eqref{eq-as-1}.
We consider three mutually exclusive cases in order.

\medskip \noindent
\textbf{Case 1.} Assume that $n \ge 5$ and $p \in (\frac{2}{n-2}, 1]$.

\begin{proof}[Proof of Proposition \ref{prop-h2} (Case 1)]
Let $\mcv_L$ and $\mcv_R$ be the left-hand and right-hand sides of \eqref{eq-as-0}, respectively.

We have
\begin{equation}\label{eq-mcv-1}
\mcv_L = c_n^p |\S^{n-2}| \int_0^{\infty} \int_0^{\infty} r^{n-2} \left[ \frac{1-t}{(r^2+(t-1)^2)^{n/2}} - \frac{1+t}{(r^2+(t+1)^2)^{n/2}} \right] \mck_1(r,t) dr dt
\end{equation}
where
\begin{equation}\label{eq-mcv-5}
\mck_1(r,t) := \frac{1}{(r^2+(t-1)^2)^{(n-2)p/2}} - \left[ \frac{1}{(r^2+(t-1)^2)^{(n-2)/2}} - \frac{1}{(r^2+(t+1)^2)^{(n-2)/2}} \right]^p.
\end{equation}
Owing to Lemma \ref{lem-cal-1} (1),
\begin{equation}\label{eq-mcv-4}
0 \le \mck_1(r,t) \le \frac{1}{(r^2+(t+1)^2)^{(n-2)p/2}} \quad \text{for } r, t \in (0, \infty).
\end{equation}
By making substitutions $t \mapsto t^{-1}$ for $t \in (1, \infty)$ and then $rt \mapsto r$, we conclude that
\begin{multline}\label{eq-mcv-3}
\(\int_0^1 + \int_1^{\infty}\) \int_0^{\infty} r^{n-2} \frac{1-t}{(r^2+(t-1)^2)^{n/2}} \mck_1(r,t) dr dt \\
= \int_0^1 \int_0^{\infty} r^{n-2} \frac{(1-t) (1-t^{(n-2)p-2})}{(r^2+(t-1)^2)^{n/2}} \mck_1(r,t) dr dt \ge 0.
\end{multline}
From \eqref{eq-mcv-1}, \eqref{eq-mcv-3} and \eqref{eq-mcv-4}, we obtain
\begin{align*}
\mcv_L &\ge - c_n^p |\S^{n-2}| \int_0^{\infty} \int_0^{\infty} r^{n-2} \frac{1+t}{(r^2+(t+1)^2)^{((n-2)p+n)/2}} dr dt \\
&= - c_n^p |\S^{n-2}| \int_0^{\infty} {dt \over (t+1)^{(n-2)p}} \int_0^{\infty} \frac{r^{n-2}}{(r^2+1)^{((n-2)p+n)/2}} dr \quad \text{(substitute $(t+1)r$ for $r$)} \\
&= - \frac{c_n^p |\S^{n-2}|}{2[(n-2)p-1]} \cdot \text{B}\({n-1 \over 2}, {(n-2)p+1 \over 2}\).
\end{align*}

On the other hand,
\begin{align}
\mcv_R &= \frac{2 c_n^p |\S^{n-2}|}{[(n-2)p-2][n-(n-2)p]} \int_0^{\infty} r^{n-2} \left[\frac{1}{(r^2+1)^{(n-2)(p+1)/2}} - \frac{n}{(r^2+1)^{((n-2)p+n)/2}} \right] dr \nonumber \\
&= - \frac{c_n^p |\S^{n-2}|}{n-(n-2)p} \cdot \text{B}\({n+1 \over 2}, {(n-2)p-1 \over 2}\). \label{eq-mcv-2}
\end{align}

As a consequence,
\[\mcv_L - \mcv_R \ge \frac{(n-2)(p+1) c_n^p |\S^{n-2}|}{2(n-1)[n-(n-2)p]} \cdot
\text{B}\({n+1 \over 2}, {(n-2)p-1 \over 2}\) > 0\]
as desired.
\end{proof}

\medskip \noindent
\textbf{Case 2.} Assume that $n \ge 5$ and $p \in (1, \frac{n-1}{n-2})$.

\begin{proof}[Proof of Proposition \ref{prop-h2} (Case 2)]
As in the previous case, let $\mcv_L$ and $\mcv_R$ be the left-hand and right-hand sides of \eqref{eq-as-0} so that they are the same as \eqref{eq-mcv-1} and \eqref{eq-mcv-2}, respectively.

By Lemma \ref{lem-cal-1} (2), we have that $\mck_1(r,t) \ge 0$ and
\begin{align}
-(p-1) \frac{(r^2+(t-1)^2)^{(n-2)(2-p)/2}}{(r^2+(t+1)^2)^{n-2}}
&\le \mck_1(r,t) - \frac{p}{(r^2+(t-1)^2)^{(n-2)(p-1)/2}} \frac{1}{(r^2+(t+1)^2)^{(n-2)/2}} \nonumber \\
&\le 0 \label{eq-mcv-6}
\end{align}
for $r, t \in (0, \infty)$. Hence \eqref{eq-mcv-3} shows
\begin{align*}
\mcv_L &\ge - p c_n^p |\S^{n-2}| \int_0^{\infty} \int_0^{\infty} r^{n-2} \frac{1+t}{(r^2+(t+1)^2)^{n-1}} \frac{1}{(r^2+(t-1)^2)^{(n-2)(p-1)/2}} dr dt \\
&> - p c_n^p |\S^{n-2}| \int_0^{\infty} \int_0^{\infty} r^{(n-2)(2-p)} \frac{1+t}{(r^2+(t+1)^2)^{n-1}} dr dt \\
&= - \frac{p c_n^p |\S^{n-2}|}{4(n-2)} \cdot \text{B}\({2n-3-(n-2)p \over 2}, {(n-2)p-1 \over 2}\).
\end{align*}

Using the condition that $p \in (1, \frac{n-1}{n-2})$, we deduce
\begin{align*}
\mcv_L - \mcv_R &> c_n^p |\S^{n-2}| \Gamma\({(n-2)p-1 \over 2}\)
\left[\frac{1}{n-(n-2)p} \cdot \frac{\Gamma\({n+1 \over 2}\)}{\Gamma\({(n-2)p+n \over 2}\)}
- \frac{p}{4} \cdot \frac{\Gamma\({2n-3-(n-2)p \over 2}\)}{\Gamma(n-1)}\right] \\
&> c_n^p |\S^{n-2}| \Gamma\({(n-2)p-1 \over 2}\)
\left[\frac{1}{2} \cdot \frac{\Gamma\({n+1 \over 2}\)}{\Gamma\({2n-1 \over 2}\)}
- \frac{n-1}{4(n-2)} \cdot \frac{\Gamma\({n-1 \over 2}\)}{\Gamma(n-1)}\right].
\end{align*}
By applying induction on $n \in \N$, we observe that the quantity surrounded by parentheses in the rightmost side is positive for all $n \ge 5$. Thus $\mcv_L - \mcv_R > 0$.
\end{proof}

\medskip \noindent
\textbf{Case 3.} Assume that $n \ge 5$ and $p \in [\frac{n-1}{n-2}, \frac{n}{n-2})$.

Let $\mcw_L$ and $\mcw_R$ be the left-hand and right-hand sides of \eqref{eq-as-1}, respectively, and $\mck_1(r,t)$ the function defined in \eqref{eq-mcv-5}. It holds that
\[\mcw_L = c_n^p |\S^{n-2}| \int_0^{\infty} \int_0^{\infty} r^{n-2} \left[ \frac{1-t}{(r^2+(t-1)^2)^{n/2}} - \frac{1+t}{(r^2+(t+1)^2)^{n/2}} \right] \mck_2(r,t) drdt\]
where
\begin{align*}
\mck_2(r,t) &:= \mck_1(r,t) - \frac{p}{(r^2+(t-1)^2)^{(n-2)(p-1)/2}} \frac{1}{(r^2+(t+1)^2)^{(n-2)/2}} \\
&\ + \frac{2(n-2)p}{2(n-1)-(n-2)p} \frac{1}{(r^2+(t-1)^2)^{(n-2)(p-1)/2}}
\frac{r^2+t^2-1}{(r^2+(t+1)^2)^{n/2}}.
\end{align*}
Arguing as for \eqref{eq-mcv-3} and using the nonnegativity of $\mck_1$ and \eqref{eq-mcv-6}, we discover
\begin{align*}
\mcw_L &\ge c_n^p |\S^{n-2}| \left[-(p-1) \int_0^1 \int_0^{\infty} r^{n-2} \frac{(1-t) (1-t^{(n-2)p-2})}{(r^2+(t-1)^2)^{((n-2)(p-1)+2)/2}} \frac{1}{(r^2+(t+1)^2)^{n-2}} drdt \right.\\
&\ + \frac{2(n-2)p}{2(n-1)-(n-2)p} \int_0^{\infty} \int_0^{\infty} r^{n-2} \frac{1-t}{(r^2+(t-1)^2)^{((n-2)(p-1)+n)/2}} \frac{r^2+t^2-1}{(r^2+(t+1)^2)^{n/2}} dr dt \\
&\ \left. - \frac{2(n-2)p}{2(n-1)-(n-2)p} \int_0^{\infty} \int_0^{\infty} r^{n-2} \frac{1+t}{(r^2+(t-1)^2)^{(n-2)(p-1)/2}} \frac{r^2+t^2-1}{(r^2+(t+1)^2)^n} drdt \right] \\
&=: c_n^p |\S^{n-2}| \left[ -\mcx_1 + \frac{2(n-2)p}{2(n-1)-(n-2)p} \mcx_2 - \frac{2(n-2)p}{2(n-1)-(n-2)p} \mcx_3 \right].
\end{align*}
Also,
\[\mcw_R = - \frac{c_n^p |\S^{n-2}|}{n-(n-2)p} \left[ 1 + \frac{[(n-2)p-2]p}{2(n-1)-(n-2)p} \right] \mcy\]
where
\[\mcy := \text{B}\({n+1 \over 2}, {(n-2)p-1 \over 2}\).\]
Therefore, to deduce the desired inequality $\mcw_L - \mcw_R > 0$, it suffices to verify
\begin{equation}\label{eq-mcv-7}
\frac{2(n-1)-(n-2)p}{2(n-2)p} \mcx_1 - \mcx_2 + \mcx_3 < \frac{1}{2(n-2)p} \left[ \frac{2(n-1)-np + (n-2)p^2}{n-(n-2)p} \right] \mcy.
\end{equation}

In Lemmas \ref{lem-mcx-1}-\ref{lem-mcy}, we obtain estimates for $\mcx_1$, $\mcx_2$, $\mcx_3$ and $\mcy$.
\begin{lem}\label{lem-mcx-1}
Suppose that $p \in [\frac{n-1}{n-2}, \frac{n}{n-2})$. Then
\begin{equation}\label{eq-b30}
\mcx_1 < {p-1 \over 2[(n-2)p-2]} \cdot \textnormal{B}\({2n-3-(n-2)p \over 2}, {(n-2)p-1 \over 2}\)
\end{equation}
for all $n \ge 5$. In particular,
\begin{equation}\label{eq-b35}
\frac{2(n-1)-(n-2)p}{2(n-2)p} \mcx_1 < \frac{17}{16n(n-2)} \cdot \frac{1}{2^n}
\end{equation}
for all $n \ge 100$.
\end{lem}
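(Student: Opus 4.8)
The proof of Lemma \ref{lem-mcx-1} splits naturally according to its two displayed inequalities: first the general bound \eqref{eq-b30}, valid for all $n\ge5$, and then its quantitative consequence \eqref{eq-b35} for $n\ge100$. The plan is to establish \eqref{eq-b30} by a direct estimation of the double integral $\mcx_1$ against the Beta function on its right-hand side, and to deduce \eqref{eq-b35} from \eqref{eq-b30} by an explicit asymptotic analysis of the resulting expression.

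For \eqref{eq-b30}, the key structural fact is that the numerator $(1-t)\big(1-t^{(n-2)p-2}\big)$ vanishes to \emph{second} order at $t=1$, which is exactly what is needed to absorb the extra unit in the exponent $\tfrac{(n-2)(p-1)+2}{2}$ of the weight $(r^2+(t-1)^2)$. Concretely, I would write $(1-t)\big(1-t^{(n-2)p-2}\big)\le [(n-2)p-2]\,(1-t)^2$ using the elementary inequality $1-t^s\le s(1-t)$ on $[0,1]$ (note $(n-2)p-2\ge n-3\ge2$ here), and then use $(1-t)^2=(t-1)^2\le r^2+(t-1)^2$ to cancel one power of $r^2+(t-1)^2$, bringing the weight down to $(r^2+(t-1)^2)^{(n-2)(p-1)/2}$. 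One is then left to estimate
\[
[(n-2)p-2]\int_0^1\!\!\int_0^\infty \frac{r^{n-2}\,dr\,dt}{(r^2+(t-1)^2)^{(n-2)(p-1)/2}\,(r^2+(t+1)^2)^{n-2}},
\]
whose inner $r$-integral is a Beta-type integral and whose resulting $t$-integral is a power of $1+t$, which integrates to a quantity of order $\tfrac{1}{(n-2)p-2}$. The two occurrences of $(n-2)p-2$ then cancel, leaving precisely a constant multiple of $\frac{p-1}{(n-2)p-2}\,\text{B}\big(\tfrac{2n-3-(n-2)p}{2},\tfrac{(n-2)p-1}{2}\big)$, and one checks (via $\Gamma(z+1)=z\Gamma(z)$) that the constant is strictly below $\tfrac12$, which gives the claimed strict inequality.

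The delicate point — and the step I expect to be the main obstacle — is the treatment of the weight $(r^2+(t+1)^2)^{n-2}$: replacing it crudely (e.g. by $(r^2+(t-1)^2)^{n-2}$, or by discarding its $t$-dependence) loses a polynomial-in-$n$ factor and the target constant is lost. One must retain the correlation between the size of the numerator and that of this weight. A clean way to do this is the substitution $r=(1+t)\rho$, which turns the inner integral into $(1+t)^{3-n-(n-2)(p-1)-2}\,\Psi\!\big(\tfrac{1-t}{1+t}\big)$ for a bounded profile $\Psi$; the factor $(1+t)^{-(n+\mathrm{O}(1))}$ then localizes the $t$-integral near $t=0$, where the numerator is harmless, and after the change of variable $q=\tfrac{1-t}{1+t}$ the whole thing becomes a genuine Beta integral that can be compared term by term with the right-hand side using monotonicity of $z\mapsto\text{B}(z,c-z)$ (convex, minimized at $z=c/2$).

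For \eqref{eq-b35}, substitute \eqref{eq-b30}; it then suffices to bound $\frac{2(n-1)-(n-2)p}{2(n-2)p}\cdot\frac{p-1}{2[(n-2)p-2]}\,\text{B}\big(\tfrac{2n-3-(n-2)p}{2},\tfrac{(n-2)p-1}{2}\big)$ uniformly over $p\in[\tfrac{n-1}{n-2},\tfrac{n}{n-2})$. Bound the first factor by $\tfrac12$; bound the second factor by its supremum over the $p$-interval (elementary calculus in $p$); and bound the Beta function by $\text{B}\big(\tfrac{n-3}{2},\tfrac{n-1}{2}\big)$, which is the value at the endpoint $p=\tfrac{n}{n-2}$ and dominates on the whole range since the first argument lies in $(\tfrac{n-3}{2},\tfrac{n-2}{2}]$ and $\text{B}(z,n-2-z)$ decreases for $z\le\tfrac{n-2}{2}$. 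Then use the duplication formula $\Gamma(n-2)=\tfrac{2^{n-3}}{\sqrt\pi}\Gamma(\tfrac{n-2}{2})\Gamma(\tfrac{n-1}{2})$ together with an explicit Gautschi/Wendel-type bound for $\Gamma(\tfrac{n-3}{2})/\Gamma(\tfrac{n-2}{2})$ to get an upper estimate of the shape $\frac{c}{(n-2)\sqrt n\,2^n}$ with an explicit constant $c$; finally verify that for $n\ge100$ this is $<\frac{17}{16n(n-2)2^n}$, which is where the threshold $n=100$ and the constant $\tfrac{17}{16}$ are pinned down.
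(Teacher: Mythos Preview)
Your argument for \eqref{eq-b30} has a genuine gap at the bookkeeping step. After bounding $(1-t)\bigl(1-t^{(n-2)p-2}\bigr)\le [(n-2)p-2](1-t)^2$ and absorbing $(1-t)^2\le r^2+(t-1)^2$, the remaining double integral --- once you also use $(r^2+(t-1)^2)^{(n-2)(p-1)/2}\ge r^{(n-2)(p-1)}$, which is what is needed to obtain a Beta integral in $r$ --- evaluates via $r=(1+t)\rho$ to $\tfrac12\,\textnormal{B}(\cdots)\int_0^1(1+t)^{1-(n-2)p}\,dt=\tfrac12\,\textnormal{B}(\cdots)\cdot\frac{1-2^{\,2-(n-2)p}}{(n-2)p-2}$. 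Hence the bound you actually obtain is
\[
\mcx_1 \;\le\; (p-1)\,\frac{1-2^{\,2-(n-2)p}}{2}\;\textnormal{B}\!\left(\frac{2n-3-(n-2)p}{2},\frac{(n-2)p-1}{2}\right),
\]
not $\frac{p-1}{2[(n-2)p-2]}\,\textnormal{B}(\cdots)$: the two copies of $(n-2)p-2$ do cancel, but that leaves \emph{no} $(n-2)p-2$ in the denominator. Since $(n-2)p-2\in[n-3,n-2)$ on the given range of $p$, your estimate exceeds \eqref{eq-b30} by a factor of order $n$, and the loss propagates to \eqref{eq-b35}, whose right-hand side carries an extra $\tfrac1n$ that your method cannot supply. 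The substitution $r=(1+t)\rho$ and the change $q=\tfrac{1-t}{1+t}$ you propose merely reparametrize the same integral and do not recover the missing factor.

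The paper avoids introducing the factor $[(n-2)p-2]$ altogether. It \emph{undoes} the folding behind \eqref{eq-mcv-3}, rewriting
\[
\mcx_1=(p-1)\int_0^{\infty}\!\!\int_0^{\infty}\frac{r^{n-2}(1-t)}{(r^2+(t-1)^2)^{((n-2)(p-1)+2)/2}\,(r^2+(t+1)^2)^{n-2}}\,dr\,dt,
\]
now with $t\in(0,\infty)$ and the \emph{signed} factor $1-t$ in place of $(1-t)\bigl(1-t^{(n-2)p-2}\bigr)$. Bounding this by $(p-1)\int_0^{\infty}\!\int_0^{\infty} r^{(n-2)(2-p)}(r^2+(t+1)^2)^{-(n-2)}\,dr\,dt$, the $t$-integral over all of $(0,\infty)$ supplies precisely the denominator $\frac{1}{(n-2)p-2}$. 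The point is that your inequality $1-t^s\le s(1-t)$ discards the sign cancellation coming from $t>1$, and that cancellation is exactly what saves the factor of order $n$.
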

\begin{proof}
Arguing as \eqref{eq-mcv-3}, we get
\begin{align*}
\mcx_1 &= (p-1) \int_0^{\infty} \int_0^{\infty} r^{n-2} \frac{1-t}{(r^2+(t-1)^2)^{((n-2)(p-1)+2)/2}} \frac{1}{(r^2+(t+1)^2)^{n-2}} drdt \\
&< (p-1) \int_0^{\infty} \int_0^{\infty} r^{(n-2)(2-p)} \frac{1}{(r^2+(t+1)^2)^{n-2}} drdt.
\end{align*}
The value of the rightmost integral equals to the right-hand side of \eqref{eq-b30}.

Besides, if we set $\zeta = (n-2)p-n+1 \in [0,1)$, then
\begin{equation}\label{eq-b20}
\begin{aligned}
\text{B}\(\frac{n-2-\zeta}{2}, \frac{n-2+\zeta}{2}\)
&= \int_0^1 t^{\frac{n-4-\zeta}{2}} (1-t)^{\frac{n-4+\zeta}{2}} dt \\
&= \frac{1}{2^{n-3}} \int_{-1}^1 (1+t)^{\frac{n-4-\zeta}{2}} (1-t)^{\frac{n-4+\zeta}{2}} dt \\
&=\frac{1}{2^{n-3}} \int_0^1 (1-t^2)^{\frac{n-4-\zeta}{2}} \left[(1-t)^{\zeta} + (1+t)^{\zeta}\right] dt \\
&\le \frac{1}{2^{n-3}} \int_0^1 (1-t^2)^{\frac{n-5}{2}} (2+t) dt
\end{aligned}
\end{equation}
where the substitution $t \mapsto \frac{1}{2}(1+t)$ was made to derive the second equality. Therefore
\begin{equation}\label{eq-b10}
\text{B}\(\frac{n-2-\zeta}{2}, \frac{n-2+\zeta}{2}\) \le \frac{1}{2^{n-3}} \int_0^1 (1-t^2)^{\frac{95}{2}} (2+t) dt \le \frac{17}{2^{n+3}}
\end{equation}
for all $n \ge 100$. This together with \eqref{eq-b30} and the inequalities
\[0 < \frac{2(n-1)-(n-2)p}{2(n-2)p} \cdot {p-1 \over 2[(n-2)p-2]} < \frac{1}{2n(n-2)}\]
implies \eqref{eq-b35}.
\end{proof}

Estimating the integral $\mcx_2$ is exceptionally difficult, because one needs to balance two factors in the denominator of the integrand: $(r^2 + (t-1)^2)^{((n-2)(p-1)+n)/2}$ and $(r^2 + (t+1)^2)^{n/2}$.
Coordinate changes such as \eqref{eq-R} turn out to be effective.
\begin{lem}
Suppose that $p \in [\frac{n-1}{n-2}, \frac{n}{n-2})$. Then
\begin{align}
- \mcx_2 &< \frac{1}{2^n} \left[ \frac{4}{n-2} + \frac{4}{(n-1)^2} + \frac{9391}{360(n-1)} + \frac{1}{2n} + \frac{981}{200} \(\frac{8}{9}\)^{n \over 2} \right. \nonumber \\
&\hspace{30pt} \left. - \frac{1}{n-1} \left\{\frac{64}{9} \({23 \over 24}\)^{n-1 \over 2} + \frac{589}{40} \({19 \over 20}\)^{n-1 \over 2} \right\}
- \frac{9}{n-2} \(\frac{2}{3}\)^n + \frac{18}{n-3} \(\frac{2}{3}\)^n \right] \nonumber \\
&+ \frac{1}{2^n} \frac{1}{n-(n-2)p} \left[\frac{1}{n-1} \(3+ \frac{64}{3\sqrt{3}}\)
+ \frac{223}{200} \(\frac{8}{9}\)^{n \over 2} - \frac{128}{3\sqrt{11}(n-1)} \({11 \over 12}\)^{n \over 2} \right. \nonumber \\
&\hspace{90pt} \left. - \frac{2\sqrt{2}}{n-1} \(\frac{1}{2}\)^{n \over 2} \right] \label{eq-mcv-9}
\end{align}
for all $n \ge 5$. In particular,
\begin{equation}\label{eq-b40}
-\mcx_2 < \frac{1}{n-2} \(\frac{983}{32} + \frac{16}{n-(n-2)p}\) \cdot \frac{1}{2^n}
\end{equation}
for all $n \ge 100$.
\end{lem}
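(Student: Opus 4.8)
The asserted inequality is equivalent to the lower bound $\mcx_2>-(\text{right‑hand side of \eqref{eq-mcv-9}})$ and is vacuous when $\mcx_2\ge 0$, so it suffices to control the negative part of the double integral defining $\mcx_2$. The plan is first to collapse the two regions $\{t<1\}$ and $\{t>1\}$ into one. Substituting $t\mapsto t^{-1}$ on $\{t>1\}$ and then $r\mapsto r/t$, exactly as in the passage leading to \eqref{eq-mcv-3}, the power of $t$ produced by the Jacobian and by the rescaling of the two denominators turns out to be $t^{(n-2)p-2}$, so that, writing $\mcj(r,t):=r^2\big(1-t^{(n-2)p-2}\big)-(1-t^2)\big(1+t^{(n-2)p-2}\big)$,
\[
\mcx_2=\int_0^1(1-t)\left[\int_0^\infty\frac{r^{n-2}\,\mcj(r,t)\,dr}{\big(r^2+(1-t)^2\big)^{\frac{(n-2)(p-1)+n}{2}}\big(r^2+(1+t)^2\big)^{\frac n2}}\right]dt.
\]
On $(0,1)$ one has $1-t^{(n-2)p-2}>0$ (recall $(n-2)p>2$), hence $\mcj(r,t)\ge 0$ precisely when $r\ge R_0(t):=\big[(1-t^2)(1+t^{(n-2)p-2})/(1-t^{(n-2)p-2})\big]^{1/2}$. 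Since $1-t>0$, the part of the integral over $\{r\ge R_0(t)\}$ is nonnegative and may be discarded, while on $\{r\le R_0(t)\}$ we use the crude bound $\mcj(r,t)\ge-(1-t^2)(1+t^{(n-2)p-2})$; this gives
\[
-\mcx_2\le\int_0^1(1-t)(1-t^2)\big(1+t^{(n-2)p-2}\big)\left[\int_0^{R_0(t)}\frac{r^{n-2}\,dr}{\big(r^2+(1-t)^2\big)^{\frac{(n-2)(p-1)+n}{2}}\big(r^2+(1+t)^2\big)^{\frac n2}}\right]dt.
\]

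The main difficulty is a sufficiently sharp evaluation of the inner $r$‑integral, since the two factors in its denominator concentrate at the incomparable scales $r\sim 1-t$ and $r\sim 1$, so no single substitution diagonalizes them. I would break the interval $(0,R_0(t))$ at the thresholds $r\sim 1-t$ and $r\sim 1$ (and, where convenient, $r\sim R_0(t)$); on each subinterval I replace the factor living on the ``wrong'' scale by a monomial bound — e.g.\ $\big(r^2+(1+t)^2\big)^{n/2}\ge(1+t)^n$ for $r\lesssim 1$ and $\ge r^n$ for $r\gtrsim 1$, and similarly for $\big(r^2+(1-t)^2\big)^{((n-2)(p-1)+n)/2}$ — and apply the coordinate change \eqref{eq-R}, which converts the remaining integral into an incomplete Beta integral that is then bounded by the complete Beta function $\text{B}(\cdot,\cdot)$. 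The piece coming from $r\lesssim 1-t$ is the delicate one: there the $r$‑integral is of size $(1-t)^{\,n-1-((n-2)(p-1)+n)}=(1-t)^{-1-(n-2)(p-1)}$, so the outer $t$‑integrand, which carries the extra factor $(1-t)(1-t^2)\sim(1-t)^2$ near $t=1$, is of order $(1-t)^{1-(n-2)(p-1)}$; this is integrable exactly because $p<\tfrac n{n-2}$, and its integral $\int_0^\epsilon s^{1-(n-2)(p-1)}\,ds$ produces the factor $\tfrac1{n-(n-2)p}$ that appears in \eqref{eq-mcv-9}.

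Carrying out the $t$‑integration — a substitution $t=1-s$ near $t=1$ and direct monomial bounds near $t=0$ — leaves a finite sum of terms of the form $\text{B}\big(\tfrac k2,\tfrac{(n-2)p-1}{2}\big)$ together with truncated integrals of the shape $\int_0^{1/c}(1-u^2)^{(n-k)/2}q(u)\,du$, with $q$ a fixed linear polynomial (as in \eqref{eq-b20}). Replacing $(1-u^2)$ by the constant $c_0=1-c^{-2}<1$ on the relevant subinterval and using monotonicity in $n$ is what turns these into the geometric‑decay terms $(8/9)^{n/2}$, $(11/12)^{n/2}$, $(23/24)^{(n-1)/2}$, $(2/3)^n,\dots$ decorating \eqref{eq-mcv-9}. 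Summing all the pieces, each estimated with its explicit constant, gives \eqref{eq-mcv-9}. The \emph{main obstacle} is precisely this bookkeeping: there are many pieces, and since the whole purpose of \eqref{eq-mcv-9} is to be fed into \eqref{eq-mcv-7}, a single wasteful estimate can break the final inequality, so the constants must be kept honest throughout.

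Finally, \eqref{eq-b40} for $n\ge 100$ is crude numerics: after multiplication by $2^{-n}=4^{-n/2}$ each geometric term becomes $(2/9)^{n/2}$, $(11/48)^{n/2}$, $3^{-n},\dots$, all negligible against $2^{-n}/(n-2)$ for $n\ge 100$; the algebraically small terms satisfy $\frac{4}{n-2}+\frac{4}{(n-1)^2}+\frac{9391}{360(n-1)}+\frac1{2n}<\frac{983}{32}\cdot\frac1{n-2}$ for $n\ge 100$, and the block carrying the $\frac1{n-(n-2)p}$ factor is bounded by $\frac{16}{n-2}\cdot\frac1{n-(n-2)p}$; adding the two bounds yields \eqref{eq-b40}.
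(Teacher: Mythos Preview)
Your initial symmetrization via $t\mapsto t^{-1}$ is a genuinely different opening from the paper's --- the paper never folds the two half-lines together but instead splits the $t$-range as $[0,1)\cup[1,2)\cup[2,\infty)$ and treats each piece separately. After that point, however, your plan has a real gap: the monomial replacements you propose cannot recover the factor $2^{-n}$. For $t$ bounded away from $1$ (say $t\in(0,\tfrac12)$) one has $R_0(t)\asymp 1$, so the $r$-integral runs up to $r\sim 1$; there the two denominator scales $1-t$ and $1+t$ are \emph{comparable}, hence neither factor is on a ``wrong scale'', and your sample bound $(r^2+(1+t)^2)^{n/2}\ge(1+t)^n$ discards a factor $(1+r^2/(1+t)^2)^{n/2}\asymp 2^{n/2}$ at $r\sim R_0(t)$. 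With either factor replaced this way, the resulting upper bound on $I(t)$ is at best of order $2^{-A}/n\sim 2^{-n/2}/n$ rather than $2^{-n}/n$, so your bound on $-\mcx_2$ is exponentially weaker than \eqref{eq-mcv-9} or \eqref{eq-b40}. The crude replacement $\mcj\ge -(1-t^2)(1+t^{(n-2)p-2})$ compounds this, since it throws away the vanishing of $\mcj$ at $r=R_0(t)$ that suppresses precisely the dominant $r\sim 1$ contribution.

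The paper extracts $2^{-n}$ by a different mechanism that your outline does not reproduce. After substituting $t\to 1-t$ and $r\to rt$ in $\mcx_{2,[0,1)}$, the change of variables \eqref{eq-R} rewrites $(r^2t^2+(t-2)^2)^{n/2}$ as $2^n\bigl[1+t(R-2)/4\bigr]^{n/2}$, making the $2^{-n}$ explicit, while the remaining denominator becomes $((R+2)/t)^A$; the two combine into the factor $\bigl[\tfrac{1-t/(R+2)}{1+t(R-2)/4}\bigr]^{(n-3)/2}$ of \eqref{eq-mcj-1}, and it is the exponential localization coming from this factor, not any monomial bound, that drives the five-region splitting \eqref{eq-mcj-21} producing every constant in \eqref{eq-mcv-9}. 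Your reference to ``\eqref{eq-R}'' does not substitute for this: in your (unshifted) coordinates the analogous change would be $r^2=(1-t)(R+1+t)$, and even after making it the integral is not an incomplete Beta integral but still requires the same delicate region-by-region analysis the paper carries out.
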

\begin{proof}
We split the integral $\mcx_2$ as
\begin{align}
\mcx_2 &:= \mcx_{2,[0,1)} + \mcx_{2,[1,2)} + \mcx_{2,[2,\infty)} \label{eq-mcx_2} \\
&:= \( \int_0^1 + \int_1^2 + \int_2^{\infty} \) \int_0^{\infty} r^{n-2} \frac{1-t}{(r^2+(t-1)^2)^{((n-2)(p-1)+n)/2}} \frac{r^2+t^2-1}{(r^2+(t+1)^2)^{n/2}} dr dt. \nonumber
\end{align}

\medskip
We estimate the term $\mcx_{2,[0,1)}$ that is the most delicate.
Substituting $t$ with $1-t$ and then $r$ with $rt$, we get
\begin{align}
\mcx_{2,[0,1)} &= \int_0^1 \int_0^{\infty} r^{n-2} \frac{1-t}{(r^2+(t-1)^2)^{((n-2)(p-1)+n)/2}} \frac{r^2+t^2-1}{(r^2+(t+1)^2)^{n/2}} dr dt \nonumber \\
&= \int_0^1 \(\int_0^{2\sqrt{2} \over \sqrt{t}} + \int_{2\sqrt{2} \over \sqrt{t}}^{\infty}\) \frac{r^{n-2}}{t^{(n-2)(p-1)}}
\frac{1}{(r^2+1)^{((n-2)(p-1)+n)/2}} \frac{r^2t^2 + t^2-2t}{(r^2t^2+(t-2)^2)^{n/2}} dr dt \nonumber \\
&=: \mcj_1 + \mcj_2. \label{eq-mcj-0}
\end{align}

If we set
\begin{equation}\label{eq-R}
R = t(r^2+1)-2 \quad \text{for } r \in \(0, \frac{2\sqrt{2}}{\sqrt{t}}\),
\end{equation}
then $R \in (t-2, t+6)$ and
\begin{align}
\mcj_1 &= \frac{1}{2^{n+1}} \int_0^1 \int_{t-2}^{t+6} \frac{t^{(3-(n-2)(p-1))/2} R} {(R+2)^{((n-2)(p-1)+3)/2}}
\left[ \frac{1- \frac{t}{R+2}}{1 + \frac{t(R-2)}{4}} \right]^{\frac{n-3}{2}} \frac{1}{[1 + \frac{t(R-2)}{4}]^{3/2}} dR dt \label{eq-mcj-1} \\
&> - \frac{1}{2^{n+1}} \int_0^1 \int_t^2 t^{\frac{3-(n-2)(p-1)}{2}} \frac{2-R} {R^{((n-2)(p-1)+3)/2}}
\left[ \frac{1- \frac{t}{R}}{1 + \frac{t(R-4)}{4}} \right]^{\frac{n-3}{2}} \frac{1}{[1 + \frac{t(R-4)}{4}]^{3/2}} dR dt. \nonumber
\end{align}
Write the rightmost term in \eqref{eq-mcj-1} as $\mcj_1'$. We have
\begin{align}
- \mcj_1 < - \mcj_1' &\le \frac{1}{2^{n+1}} \left[ \int_0^{1 \over 3} \int_t^{3t}
t^{\frac{3-(n-2)(p-1)}{2}} \frac{2}{R^{((n-2)(p-1)+3)/2}} \(\frac{8}{9}\)^{\frac{n-3}{2}} \(\frac{6}{5}\)^3 dR dt \right. \nonumber \\
&\hspace{40pt} + \int_{1 \over 3}^1 \int_t^{\frac{3}{4}(t+1)}
t^{\frac{3-(n-2)(p-1)}{2}}
\frac{2}{R^{((n-2)(p-1)+3)/2}} \(\frac{8}{9}\)^{\frac{n-3}{2}} 8\, dR dt \nonumber \\
&\hspace{40pt} + \int_0^{1 \over 3} \int_{3t}^1 t^{\frac{3-(n-2)(p-1)}{2}}
\frac{2}{R^{((n-2)(p-1)+3)/2}} \(1-\frac{t}{4R}\)^{\frac{n-3}{2}} \(\frac{4}{3}\)^{3 \over 2} dR dt \label{eq-mcj-21} \\
&\hspace{40pt} + \int_0^{1 \over 3} \int_1^2 t^{\frac{3-(n-2)(p-1)}{2}}
(2-R) \left\{ 1 - \frac{t(2-R)^2}{8} \right\}^{\frac{n-3}{2}} \(\frac{4}{3}\)^{3 \over 2} dR dt \nonumber \\
&\hspace{40pt} \left. + \int_{1 \over 3}^1 \int_{\frac{3}{4}(t+1)}^2 t^{\frac{3-(n-2)(p-1)}{2}}
(2-R) \left\{ 1 - \frac{t(2-R)^2}{8} \right\}^{\frac{n-3}{2}} \(\frac{8}{3}\)^{3 \over 2} dR dt \right]. \nonumber
\end{align}
Computing each term, we obtain
\begin{equation}\label{eq-mcj-2}
\begin{aligned}
- \mcj_1 < - \mcj_1' &\le \frac{1}{2^{n+1}} \left[ \frac{1}{n-(n-2)p} \cdot \frac{223}{100} \(\frac{8}{9}\)^{n \over 2}
+ \frac{981}{100} \(\frac{8}{9}\)^{n \over 2} \right.\\
&\hspace{40pt} + \frac{1}{n-(n-2)p} \cdot \frac{128}{3\sqrt{3}(n-1)} \left\{1 - \({11 \over 12}\)^{n-1 \over 2} \right\} \\
&\hspace{40pt} \left. + \frac{128}{9(n-1)} \left\{1 - \({23 \over 24}\)^{n-1 \over 2} \right\}
+ \frac{589}{20(n-1)} \left\{1 - \({19 \over 20}\)^{n-1 \over 2} \right\} \right].
\end{aligned}
\end{equation}
In dealing with the third integral in \eqref{eq-mcj-21}, we substituted ${t \over R}$ with $R$ and extended the domain of the resulting integral to be $(0,\frac{1}{3}) \times (0,\frac{1}{3})$.
For the fourth and fifth integrals, we substituted $t(2-R)^2$ with $R$.

On the other hand, because
\[r^2 t^2 + (t-2)^2 > \begin{cases}
(t+2)^2 &\text{for } r > \frac{2\sqrt{2}}{\sqrt{t}},\\
\frac{r^2t^2}{2}+(t-2)^2+4 &\text{for } r > \frac{2\sqrt{2}}{t},
\end{cases}\]
we have
\begin{align*}
|\mcj_2| &\le \int_0^1 \frac{1}{t^{(n-2)(p-1)}}
\left[ \frac{t^2}{(t+2)^{n-1}} \int_{2\sqrt{2} \over \sqrt{t}}^{2\sqrt{2} \over t} \frac{1}{r^{(n-2)(p-1)}} dr \right. \\
&\hspace{80pt} + \frac{t^{(n-2)(p-1)+1}}{((t-2)^2+4)^{((n-2)(p-1)+n-1)/2}}
\int_{2 \over \sqrt{(t-2)^2+4}}^{\infty} \frac{1}{r^{(n-2)(p-1)}} \frac{1}{(r^2+1)^{n/2}} dr \\
&\hspace{80pt} \left. + \frac{t}{(t+2)^{n-1}} \int_{2\sqrt{2} \over \sqrt{t}}^{\infty} \frac{1}{r^{(n-2)(p-1)+2}} dr \right] dt.
\end{align*}
Estimating each term in the right-hand side, we deduce
\begin{equation}\label{eq-mcj-3}
|\mcj_2| \le \frac{1}{2^n} \left[ \frac{4}{n-1} + \frac{1}{2n} + \frac{1}{4(n-1)} \right] = \frac{1}{2^n} \cdot \frac{19n-2}{4n(n-1)}.
\end{equation}

\medskip
Secondly, we handle the term $\mcx_{2,[1,2)}$. As in \eqref{eq-mcj-0}, we write
\begin{align*}
\mcx_{2,[1,2)} &= \int_1^2 \int_0^{\infty} r^{n-2} \frac{1-t}{(r^2+(t-1)^2)^{((n-2)(p-1)+n)/2}} \frac{r^2+t^2-1}{(r^2+(t+1)^2)^{n/2}} dr dt \\
&= \int_0^1 \(\int_0^1 + \int_1^{\infty}\) \frac{r^{n-2}}{t^{(n-2)(p-1)}}
\frac{-1}{(r^2+1)^{((n-2)(p-1)+n)/2}} \frac{r^2t^2 + t^2+2t}{(r^2t^2+(t+2)^2)^{n/2}} dr dt \\
&=: \mcj_3 + \mcj_4.
\end{align*}
The term $\mcj_3$ can be computed as
\begin{equation}\label{eq-mcj-4}
\begin{aligned}
0 < -\mcj_3 &\le \int_0^1 \int_0^1 \frac{r^{n-2}(r^2+1)}{(t+2)^n} dr dt + 2 \int_0^1 \int_0^1 \frac{r^{n-2} t^{1-(n-2)(p-1)}}{(t+2)^n} dr dt \\
&\le \frac{1}{2^n (n-1)} \left[ \frac{4}{n-1} + \frac{1}{n-(n-2)p} \right].
\end{aligned}
\end{equation}
Let $\mcj_{41}$ and $\mcj_{42}$ be the integrals obtained by replacing the term $r^2t^2 + t^2 + 2t$ with $(r^2+1)t^2$ and $2t$, respectively.
Then clearly $\mcj_4 = \mcj_{41} + \mcj_{42}$ and there holds that
\begin{equation}\label{eq-mcj-51}
\begin{aligned}
0 < -\mcj_{41} &\le \int_0^1 \int_1^{\infty} \frac{r^{n-4} (r^2 + 1)}{(r^2+1)^{n/2}} \frac{(rt)^{2-(n-2)(p-1)}}{(r^2 t^2 + (t+2)^2 )^{n/2}} drdt \\
&\le \int_0^1 \int_1^{\infty} \frac{1}{r^2} \frac{1}{(t+2)^{n-1}} dr dt = \frac{1}{2^n} \frac{4}{n-2} \left\{1 - \(\frac{2}{3}\)^{n-2} \right\}
\end{aligned}
\end{equation}
and
\begin{equation}\label{eq-mcj-52}
\begin{aligned}
0 < -\mcj_{42} &\le 2 \int_0^1 \int_1^{\infty} r^{n-2} t^{1-(n-2)(p-1)} \frac{1}{(r^2+1)^{((n-2)(p-1)+n)/2}} \frac{1}{2^n} dr dt \\
&\le \frac{1}{2^{n-1}} \frac{1}{n-(n-2)p} \int_1^{\infty} \frac{r^{n-2}}{(r^2+1)^{(n+1)/2}} dr \\
&= \frac{1}{2^{n-1}} \frac{1}{n-(n-2)p} \cdot \frac{1}{n-1} \left\{1 - \(\frac{1}{2}\)^{n-1 \over 2} \right\}.
\end{aligned}
\end{equation}

\medskip
Finally,
\begin{equation}\label{eq-mcj-6}
\begin{aligned}
\mcx_{2,[2,\infty)} &:= \int_2^{\infty} \int_0^{\infty} r^{n-2} \frac{1-t}{(r^2+(t-1)^2)^{((n-2)(p-1)+n)/2}} \frac{r^2+t^2-1}{(r^2+(t+1)^2)^{n/2}} dr dt \\
&\ge - \int_2^{\infty}\int_0^{\infty} r^{n-2} \frac{t-1}{(r^2 + (t-1)^2)^{((n-2)(p-1)+n)/2}} \frac{1}{(r^2 + (t+1)^2)^{(n-2)/2}} drdt \\
&\ge - \int_2^{\infty} \int_0^{\infty} \frac{r^{n-2}}{(r^2+1)^{n/2}} \frac{1}{(t+1)^{n-2}} dr dt
\ge - \frac{1}{3^n} \frac{18}{n-3}.
\end{aligned}
\end{equation}

\medskip
Inequality \eqref{eq-mcv-9} now follows from \eqref{eq-mcx_2}-\eqref{eq-mcj-1} and \eqref{eq-mcj-2}-\eqref{eq-mcj-6}.
The values of the quantities surrounded by the parentheses in \eqref{eq-mcv-9} are obviously of order $O(\frac{1}{n})$ as $n \to \infty$.
In fact, they are bounded by $\frac{983}{32(n-2)}$ and $\frac{16}{n-2}$ provided $n \ge 100$, respectively. Hence \eqref{eq-b40} holds.
\end{proof}

\begin{lem}\label{lemma-mcx_3}
Suppose that $p \in [\frac{n-1}{n-2}, \frac{n}{n-2})$. Then
\begin{equation}\label{eq-mcx_3-1}
\mcx_3 < \frac{1}{4(n-2)} \cdot \textnormal{B}\({2n-3-(n-2)p \over 2}, {(n-2)p-1 \over 2}\)
\end{equation}
for all $n \ge 5$. In particular,
\begin{equation}\label{eq-mcx_3-2}
\mcx_3 < \frac{17}{32(n-2)} \cdot \frac{1}{2^n}
\end{equation}
for all $n \ge 100$.
\end{lem}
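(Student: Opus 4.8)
The plan is to bound $\mcx_3$ by a single crude domination and then reduce the resulting Beta function to the normalization in \eqref{eq-mcx_3-1}, exactly paralleling the treatment of $\mcx_1$ in Lemma \ref{lem-mcx-1}; estimate \eqref{eq-mcx_3-2} will then follow at once from \eqref{eq-b10}.

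First I would deal with the sign of the factor $r^2+t^2-1$ in the numerator of the integrand of $\mcx_3$. Since $(r^2+(t+1)^2)-(r^2+t^2-1)=2(t+1)>0$, this factor is always strictly smaller than $r^2+(t+1)^2$, and it is negative precisely on $\{r^2+t^2<1\}$, where the whole integrand is negative. Discarding that part of the domain only increases $\mcx_3$, and on the rest we may replace $\dfrac{r^2+t^2-1}{(r^2+(t+1)^2)^n}$ by $\dfrac{1}{(r^2+(t+1)^2)^{n-1}}$. Next, because $(n-2)(p-1)\ge 0$ and $r^2+(t-1)^2\ge r^2$, we have $r^{n-2}(r^2+(t-1)^2)^{-(n-2)(p-1)/2}\le r^{(n-2)(2-p)}$, the inequality being strict off a null set. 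Combining these,
\[
\mcx_3 < \int_0^\infty\!\!\int_0^\infty r^{(n-2)(2-p)}\,\frac{1+t}{(r^2+(t+1)^2)^{n-1}}\,dr\,dt .
\]

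The double integral separates after substituting $r=(t+1)s$: it equals the product of $\int_0^\infty(t+1)^{-(n-2)p}\,dt=\dfrac{1}{(n-2)p-1}$ (finite since $(n-2)p\ge n-1>1$) and $\int_0^\infty s^{(n-2)(2-p)}(s^2+1)^{-(n-1)}\,ds=\dfrac{1}{2}\,\textnormal{B}\!\left(\dfrac{2n-3-(n-2)p}{2},\dfrac{(n-2)p+1}{2}\right)$. To land in the exact form of \eqref{eq-mcx_3-1}, I would apply $\Gamma(z+1)=z\Gamma(z)$ to the second argument, obtaining $\textnormal{B}\!\left(\dfrac{2n-3-(n-2)p}{2},\dfrac{(n-2)p+1}{2}\right)=\dfrac{(n-2)p-1}{2(n-2)}\,\textnormal{B}\!\left(\dfrac{2n-3-(n-2)p}{2},\dfrac{(n-2)p-1}{2}\right)$; the factor $(n-2)p-1$ then cancels against the one coming from the $t$-integral and \eqref{eq-mcx_3-1} drops out.

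For \eqref{eq-mcx_3-2} I would set $\zeta:=(n-2)p-n+1\in[0,1)$, so that $\dfrac{2n-3-(n-2)p}{2}=\dfrac{n-2-\zeta}{2}$ and $\dfrac{(n-2)p-1}{2}=\dfrac{n-2+\zeta}{2}$; the Beta value occurring in \eqref{eq-mcx_3-1} is then precisely the one estimated in \eqref{eq-b10}, which is at most $\dfrac{17}{2^{n+3}}$ for $n\ge 100$, and substituting gives $\mcx_3<\dfrac{17}{32(n-2)}\cdot\dfrac{1}{2^n}$. I do not expect a genuine obstacle here: in contrast to $\mcx_2$ there is no competition between the two factors in the denominator, so a one-step bound suffices, and the only points needing care are tracking the sign of $r^2+t^2-1$ and carrying out the Gamma-function bookkeeping so that the estimate matches the stated constant.
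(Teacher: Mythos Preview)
Your proposal is correct and follows essentially the same route as the paper: bound $r^2+t^2-1$ above by $r^2+(t+1)^2$ (the paper does this via the identity $r^2+t^2-1=r^2+(t+1)^2-2(t+1)$, which makes the sign discussion unnecessary), then replace $(r^2+(t-1)^2)^{-(n-2)(p-1)/2}$ by $r^{-(n-2)(p-1)}$, separate variables, and invoke \eqref{eq-b10}. Your Beta-function bookkeeping is accurate and yields exactly the constant $\tfrac{1}{4(n-2)}$ in \eqref{eq-mcx_3-1}.
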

\begin{proof}
Since $r^2+t^2-1 = r^2 + (t+1)^2 - 2(t+1)$, we have
\begin{align*}
\mcx_3 &\le \int_0^{\infty} \int_0^{\infty} r^{n-2} \frac{1+t}{(r^2+(t-1)^2)^{(n-2)(p-1)/2}} \frac{1}{(r^2+(t+1)^2)^{n-1}} drdt \\
&< \int_0^{\infty} \int_0^{\infty} r^{(n-2)(2-p)} \frac{1+t}{(r^2+(t+1)^2)^{n-1}} drdt
= \frac{1}{4(n-2)} \cdot \text{B}\(\frac{n-2-\zeta}{2}, \frac{n-2+\zeta}{2}\)
\end{align*}
where $\zeta = (n-2)p-n+1 \in [0,1)$. Hence \eqref{eq-mcx_3-1} is valid.
Estimate \eqref{eq-mcx_3-2} follows from \eqref{eq-mcx_3-1} and \eqref{eq-b10}.
\end{proof}

\begin{lem}\label{lem-mcy}
Suppose that $p \in [\frac{n-1}{n-2}, \frac{n}{n-2})$. Then
\begin{equation}\label{eq-mcy}
\frac{1}{2(n-2)p} \left[ \frac{2(n-1)-np + (n-2)p^2}{n-(n-2)p} \right] \mcy > \frac{99}{20\sqrt{n}\, [n-(n-2)p]} \cdot \frac{1}{2^n}
\end{equation}
for all $n \ge 100$.
\end{lem}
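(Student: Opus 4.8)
The plan is to first strip the common factor $[n-(n-2)p]^{-1}$ from both sides of \eqref{eq-mcy}, which is legitimate because $n-(n-2)p>0$ for $p<\frac{n}{n-2}$. Thus it suffices to prove
\[
\frac{2(n-1)-np+(n-2)p^2}{2(n-2)p}\,\mcy > \frac{99}{20\sqrt{n}}\cdot\frac{1}{2^n}
\]
for all $n\ge 100$ and $p\in[\frac{n-1}{n-2},\frac{n}{n-2})$. I would then bound the rational prefactor and the Beta factor $\mcy$ from below separately and combine the two estimates.

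For the prefactor, set $p=1+s$ with $s\in[\frac{1}{n-2},\frac{2}{n-2})$. A short expansion gives $2(n-1)-np+(n-2)p^2 = 2(n-2)+(n-4)s+(n-2)s^2$, and polynomial division in $s$ yields
\[
\frac{2(n-1)-np+(n-2)p^2}{2(n-2)p} = \frac{1}{2(n-2)}\left[(n-2)s-2+\frac{2n-2}{1+s}\right].
\]
On $s\in[\frac{1}{n-2},\frac{2}{n-2})$ the bracketed expression is strictly decreasing, since its $s$-derivative $(n-2)-\frac{2n-2}{(1+s)^2}$ is negative because $(1+s)^2<2<\tfrac{2n-2}{n-2}$ for $n\ge100$; hence its infimum is the limiting value at $s=\frac{2}{n-2}$, namely $\frac{2(n-1)(n-2)}{n}$. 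Consequently the prefactor is strictly larger than $\frac{n-1}{n}$.

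For $\mcy=\text{B}\big(\frac{n+1}{2},\frac{(n-2)p-1}{2}\big)$, I would use monotonicity of $\text{B}(a,\cdot)$ in the second variable together with $(n-2)p-1<n-1$ to get $\mcy\ge\text{B}\big(\frac{n+1}{2},\frac{n-1}{2}\big)$. Using $\Gamma(\frac{n+1}{2})=\frac{n-1}{2}\Gamma(\frac{n-1}{2})$, Legendre's duplication formula in the form $\Gamma(\frac{n-1}{2})\Gamma(\frac{n}{2})=2^{2-n}\sqrt{\pi}\,\Gamma(n-1)$, and $\Gamma(n)=(n-1)\Gamma(n-1)$, this Beta value simplifies to $\frac{\sqrt{\pi}}{2^{n-1}}\cdot\frac{\Gamma(\frac{n-1}{2})}{\Gamma(\frac{n}{2})}$. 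Since $\Gamma$ is log-convex, $\Gamma(\frac{n}{2})=\Gamma\big(\tfrac12\cdot\tfrac{n-1}{2}+\tfrac12\cdot\tfrac{n+1}{2}\big)\le\Gamma(\frac{n-1}{2})^{1/2}\Gamma(\frac{n+1}{2})^{1/2}=\big(\tfrac{n-1}{2}\big)^{1/2}\Gamma(\frac{n-1}{2})$, so $\frac{\Gamma(\frac{n-1}{2})}{\Gamma(\frac{n}{2})}\ge\sqrt{\frac{2}{n-1}}$ and therefore $\mcy\ge\frac{\sqrt{2\pi}}{2^{n-1}\sqrt{n-1}}$.

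Combining the two bounds, \eqref{eq-mcy} will follow once $\frac{n-1}{n}\cdot\frac{\sqrt{2\pi}}{2^{n-1}\sqrt{n-1}}>\frac{99}{20\sqrt{n}\,2^n}$, which after clearing denominators becomes $3200\pi(n-1)>9801\,n$; since $3200\pi>10053>9801$, this holds exactly when $n>\frac{9801}{3200\pi-9801}$, a number below $40$, hence for every $n\ge100$. All computations here are elementary, and the only step requiring genuine care is the choice of the $\Gamma$-ratio estimate: the constant $\frac{99}{20}$ in \eqref{eq-mcy} is tuned so that a sufficiently sharp lower bound for $\mcy$ (such as the one from log-convexity above) is needed to close the final numerical comparison, while a lossier bound would fail for the smaller admissible values of $n$.
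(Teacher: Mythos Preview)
Your proof is correct and follows essentially the same strategy as the paper: strip off the factor $[n-(n-2)p]^{-1}$, bound the rational prefactor from below by $\frac{n-1}{n}$ (the paper records this as $\frac{2(n-1)-np+(n-2)p^2}{2p}>\frac{(n-1)(n-2)}{n}$, which is the same inequality), and bound $\mcy$ from below by a constant times $n^{-1/2}2^{-n}$. The only technical difference lies in the $\mcy$ estimate: the paper rewrites the Beta integral via the substitution $t\mapsto\frac12(1+t)$ and bounds the integrand pointwise to obtain $\mcy\ge\frac{\sqrt{\pi}}{2^{n-1}}\frac{\Gamma((n+1)/2)}{\Gamma((n+2)/2)}\ge\frac{5}{\sqrt{n}\,2^n}$, while you use monotonicity of $\textnormal{B}(a,\cdot)$, Legendre duplication, and log-convexity of $\Gamma$ to get $\mcy\ge\frac{\sqrt{2\pi}}{2^{n-1}\sqrt{n-1}}$. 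Since $2\sqrt{2\pi}\approx 5.01$, the two bounds are numerically equivalent and the final comparison closes identically.
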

\begin{proof}
Arguing as in \eqref{eq-b20}, we find
\[\mcy = \int_0^1 t^{\frac{n-1}{2}} (1-t)^{\frac{n-4+\zeta}{2}} dt = \frac{1}{2^{n-3/2+\zeta/2}} \int_0^1 (1-t^2)^{\frac{n-1}{2}} \left[(1-t)^{\frac{-3+\zeta}{2}} + (1+t)^{\frac{-3+\zeta}{2}}\right] dt\]
where $\zeta = (n-2)p-n+1 \in [0,1)$.
It is easy to check that the map $f(t) = (1-t)^{\frac{-3+\zeta}{2}} + (1+t)^{\frac{-3+\zeta}{2}}$ is increasing in $[0,1)$ so that $f(t) \ge 2$ in $[0,1)$.
Thus an induction argument shows
\begin{equation}\label{eq-mcy-1}
\begin{aligned}
\mcy \ge \frac{1}{2^{n-2}} \int_0^1 (1-t^2)^{\frac{n-1}{2}}dt = \frac{\sqrt{\pi}}{2^{n-1}} \frac{\Gamma\({n+1 \over 2}\)}{\Gamma\({n+2 \over 2}\)} \ge \frac{5}{\sqrt{n}} \cdot \frac{1}{2^n}
\end{aligned}
\end{equation}
for all $n \ge 100$. On the other hand, we have
\begin{equation}\label{eq-mcy-2}
\frac{2(n-1)-np + (n-2)p^2}{2p} > \frac{(n-1)(n-2)}{n} \ge \frac{99(n-2)}{100}
\end{equation}
for all $n \ge 100$. Consequently, we deduce \eqref{eq-mcy} from \eqref{eq-mcy-1} and \eqref{eq-mcy-2}.
\end{proof}

We are now in position to finish the proof of Proposition \ref{prop-h2}. \begin{proof}[Completion of the proof of Proposition \ref{prop-h2} (Case 3)]
It is enough to show that \eqref{eq-mcv-7} holds. We divide the cases according to the magnitude of $n \in \N$.

\medskip
\noindent \textbf{Case 3 (i).} Suppose that $n \ge 100$. In light of \eqref{eq-b35}, \eqref{eq-b40}, \eqref{eq-mcx_3-2} and \eqref{eq-mcy}, inequality \eqref{eq-mcv-7} is reduced to
\begin{equation}\label{eq-b50}
\frac{125}{4} + \frac{17}{16n} + \frac{16}{n-(n-2)p} \le \frac{99(n-2)}{20\sqrt{n}} \cdot \frac{1}{n-(n-2)p}.
\end{equation}
Because it holds that $n-(n-2)p \ge 1$ and
\[\frac{125}{4} + \frac{17}{16n} \le \frac{99(n-2)}{20\sqrt{n}} - 16,\]
\eqref{eq-b50} must be valid for any $p \in [\frac{n-1}{n-2}, \frac{n}{n-2})$.

\medskip
\noindent \textbf{Case 3 (ii).} Suppose that $5 \le n \le 99$.
Although estimates \eqref{eq-b30}, \eqref{eq-mcv-9} and \eqref{eq-mcx_3-1} capture the asymptotic behavior of the left-hand side of \eqref{eq-mcv-7} as $n \to \infty$ fairly well, it is possible to improve them considerably.
For example, the third integral of \eqref{eq-mcj-21} can be computed precisely.
As a result, one can further reduce \eqref{eq-mcv-7} into an inequality involving the Gauss hypergeometric function $_2F_1$.
However, the resulting inequality is too complex to check by hand, so we verify it using a computer software.
See the supplement \cite{CK2} for more details.
\end{proof}

{\footnotesize \medskip \noindent
\textbf{Acknowledgements.} This work is a full generalization of the unpublished paper `The Lane-Emden system near the critical hyperbola on nonconvex domains' (arXiv:1505.06978)  written by W. Choi.
In that paper, Theorem \ref{thm-main} in our paper was verified under the additional assumptions that $p \ge 1$ and \eqref{eq-h2} holds.
Also, the validity of \eqref{eq-h2} was shown provided that $p$ is sufficiently near 1; the proof is based on our Lemma \ref{lem-h2-r} and a perturbation argument.
W. Choi was supported by Basic Science Research Program through the National Research Foundation of Korea(NRF) funded by the Ministry of Education (NRF2017R1C1B5076348).
S. Kim was partially supported by Basic Science Research Program through the National Research Foundation of Korea(NRF) funded by the Ministry of Education (NRF2017R1C1B5076384).}

\end{document}